\documentclass{amsart}
\usepackage{a4wide} 
\usepackage{amssymb, amsmath,mathtools}
\usepackage{mathrsfs}
\usepackage{amscd}
\usepackage{verbatim}

\usepackage{enumerate}
\usepackage{enumitem}

\usepackage[colorlinks,linkcolor={blue},citecolor={blue},urlcolor={red},]{hyperref}

\allowdisplaybreaks

\theoremstyle{plain}
\newtheorem{theorem}{Theorem}[section]
\theoremstyle{remark}
\newtheorem{remark}[theorem]{Remark}
\newtheorem{example}[theorem]{Example}
\theoremstyle{plain}
\newtheorem{corollary}[theorem]{Corollary}
\newtheorem{lemma}[theorem]{Lemma}
\newtheorem{proposition}[theorem]{Proposition}
\newtheorem{definition}[theorem]{Definition}

\newtheorem{assumption}[theorem]{Assumption}

\numberwithin{equation}{section}

\def\N{{\mathbb N}}
\def\Z{{\mathbb Z}}

\def\R{{\mathbb R}}
\def\C{{\mathbb C}}

\newcommand{\E}{{\mathbb E}}
\renewcommand{\P}{{\mathbb P}}
\newcommand{\F}{{\mathscr F}}

\newcommand{\CY}{C_Y}

\def\XXint#1#2#3{{\setbox0=\hbox{$#1{#2#3}{\int}$}
	\vcenter{\hbox{$#2#3$}}\kern-.5\wd0}}

	\newcommand{\rmd}{\mathrm{d}}

	\renewcommand{\Re}{\hbox{\rm Re}\,}
	\renewcommand{\Im}{\hbox{\rm Im}\,}

	\newcommand{\calL}{\mathcal{L}}
    
    \newcommand{\calO}{\mathcal{O}}
    \newcommand{\calP}{\mathcal{P}}

	\newcommand{\one}{{{\bf 1}}}

	\newcommand{\T}{\mathbb{T}}
    \newcommand{\dWQ}{\,\mathrm{d} W_Q}
	
	\newcommand{\ds}{\,\mathrm{d}s}
    \newcommand{\dt}{\,\mathrm{d}t}
	\newcommand{\dx}{\,\mathrm{d}x}
	\newcommand{\dy}{\,\mathrm{d}y}
    \newcommand{\dWH}{\,\mathrm{d}W_H}
	\newcommand{\dWHs}{\,\mathrm{d}W_H(s)}
	\newcommand{\la}{\langle}
	\newcommand{\ra}{\rangle}
    \newcommand{\nn}{|\!|\!|}
    \newcommand{\bignn}{\big|\!\big|\!\big|}
    \newcommand{\Nk}{N}

    \ExplSyntaxOn
\NewDocumentCommand{\maybeParen}{m}{
    \str_if_in:nnTF {#1} {+} {(#1)} {
        \str_if_in:nnTF {#1} {-} {(#1)} {#1}
    }
}
\ExplSyntaxOff

	\newcommand{\LHX}{{\calL_2(H,X)}}
	\newcommand{\LHY}{{\calL_2(H,Y)}}
    \newcommand{\LHZ}{{\calL_2(H,Z)}}

	\newcommand{\seq}{\subseteq}
	\newcommand{\ce}{\coloneqq}
	\newcommand{\ec}{\eqqcolon}
	\newcommand{\hra}{\hookrightarrow}
    \newcommand{\curve}{\curvearrowleft}
	\newcommand{\1}{\mathbf{1}}
	
	\renewcommand{\emptyset}{\varnothing}

    \newcommand{\grad}{{\nabla}}

    \newcommand{\Vlxr}{V_{t_\ell}^x(r)}
    \newcommand{\Vlxrtaun}{V_{t_\ell}^x(r\land \tau_n)}
    \newcommand{\Vlxt}{V_{t_\ell}^x(t)}
    \newcommand{\Vlxs}{V_{t_\ell}^x(s-t_\ell)}
    \newcommand{\Vlxstaun}{V_{t_\ell}^x((s-t_\ell)\land\tau_n)}
    \newcommand{\floors}{{\lfloor s \rfloor}}
    \newcommand{\floort}{{\lfloor t \rfloor}}

    \DeclareMathOperator{\PV}{P.V.}

	\allowdisplaybreaks
	
\begin{document}

\author{Katharina Klioba}
\address{Delft Institute of Applied Mathematics\\ TU Delft\\ Mekelweg 4\\
2628 CD Delft\\ the Netherlands.} \email{k.klioba-1@tudelft.nl}

\thanks{The author gratefully acknowledges support by the Alexander von Humboldt foundation through a Feodor
Lynen Research Fellowship. The author is also grateful for support from the VICI subsidy VI.C.212.027 of the Netherlands Organisation for Scientific Research (NWO)}

\date{September 8, 2026}
\title[Tamed exponential Euler schemes for superlinear hyperbolic SPDEs]{Strong convergence rates of tamed exponential Euler schemes for superlinear hyperbolic SPDEs}

\keywords{stochastic evolution equations, non-parabolic SPDEs, time discretization, tamed exponential Euler schemes, pathwise uniform strong convergence rate, superlinear nonlinearities}

\subjclass[2020]{Primary: 60H35, Secondary: 65C30, 60H15, 65J08, 47D06}

\begin{abstract}
In this paper, we prove pathwise uniform convergence at rates up to $1/2$ for tamed exponential Euler schemes for semilinear hyperbolic stochastic evolution equations with superlinearly growing nonlinearities and multiplicative noise. We take the
term hyperbolic to mean that the leading operator generates a contractive
$C_0$-semigroup but no parabolic smoothing occurs. Under local Lipschitz, polynomial growth, coercivity, and monotonicity conditions on the nonlinearities, we establish pathwise uniform strong error estimates of the form
\begin{equation*}
    \Big(\E\max_{0\le j \le N} \|U(t_j)-U^j\|_X^p\Big)^{1/p} \lesssim \sqrt{k}
\end{equation*}
on a Hilbert space $X$ for $p\in [2,\infty)$. Here, $U$ is the mild solution and $U^j$ is the tamed exponential Euler approximation at time $t_j=jk$ with step size $k>0$. This extends previous convergence results for non-parabolic SPDEs from globally to locally Lipschitz nonlinearities, allowing both drift and diffusion to grow polynomially. In a stochastic Kato framework, we further establish local and global well-posedness as well as uniform a priori estimates for the mild solution and its approximation. Applications to nonlinear stochastic transport, Airy, wave-type, and dissipatively damped nonlinear Schrödinger equations are included, covering different nonlinearities-stopped and fractionally tamed schemes. For the Klein--Gordon equation with cubic velocity damping, this complements previous results obtained for additive noise. 
\end{abstract}

\maketitle
\setcounter{tocdepth}{1}

\tableofcontents

\section{Introduction}
\label{sec:introduction}

In this article, we study the temporal approximation of semilinear stochastic evolution equations of the form
\begin{align}
\label{eq:introSEE}
    \rmd U + AU\dt  =F(U)\dt+G(U)\dWH\quad\text{ on }[0,T],     \qquad U(0)=u_0\in L^p(\Omega;X)
\end{align}
on a Hilbert space $X$ with $p\in[2,\infty)$ and a cylindrical Brownian motion $W_H$. Our focus lies on non-parabolic problems, i.e.\ problems for which the leading operator $-A$ generates a contractive but non-analytic $C_0$-semigroup
$(S(t))_{t\ge0}$, meaning that the semigroup does not regularise at positive times. This well-studied class of hyperbolic SPDEs includes wave-like, dispersive, and transport-type equations (see e.g.\ \cite{deBouardDebussche98KdV,deBouardDebussche03NLS,MasieroPriola17Wave}).

Our aim is to obtain strong convergence rates for the temporal approximation of \eqref{eq:introSEE}. Time discretisation schemes for non-parabolic SPDEs have gained significant interest in the last years; recent developments include structure-preserving schemes \cite{BLM21,BC23,OstermannSchratzEtAl24NLSsymplectic} or the influence of different geometries \cite{CohenDiGiovacchinoLang26waveSphere}. Many results in the literature are concerned with globally Lipschitz $F$ and $G$ in \eqref{eq:introSEE}. Convergence at rate $\frac12$ has been obtained for exponential Euler schemes for linear Schrödinger and Maxwell equations \cite{AC18,CCHS20} with multiplicative noise. Higher convergence rates up to $1$ can be attained for wave-type equations \cite{Wang15} or higher-order Milstein schemes \cite{KastnerKlioba26Milstein}. Up to a logarithmic correction factor, the same rates apply to rational schemes such as implicit Euler or Crank--Nicolson \cite{KliobaVeraar24Rate}. 
A useful analytical framework describing these equations is given by the stochastic counterpart of Kato's framework for hyperbolic evolution equations \cite{Kato75}, which uses a pair of  Hilbert spaces $X$ and $Y\hra X$ as a substitute for parabolic smoothing. Spatial regularity of $Y$ is encoded through the embedding $Y\hra D(A^\alpha)$ for some $\alpha\in (0,1]$ and yields decay via  semigroup increment estimates. This setting was used by Veraar and the author in \cite{KliobaVeraar24Rate} to obtain pathwise uniform convergence rates for a large class of time discretisation schemes for semilinear problems with $F$ and $G$ that are globally Lipschitz on $X$ and of linear growth on $Y$.

However, the situation becomes considerably more challenging if $F$ or $G$ are not globally Lipschitz and grow superlinearly. It is known for both parabolic \cite{Beccari2019strongweakdivergenceexponential} and hyperbolic \cite{Cui25JDE} SPDEs that explicit schemes may fail to satisfy uniform moment bounds, thereby preventing stability and leading to divergence of the numerical scheme. Nonetheless, such equations can be approximated if one modifies the nonlinearities within the scheme to limit their growth. Various approaches to achieve this exist in the parabolic literature 
\cite{BeckerJentzen19,Brehier22tamedExpEuler,GyongySabanisSiska16,JentzenPusnik19,LiuShen26tamed,
Wang20taming}, including stopping the nonlinearities at a certain threshold or
taming them continuously. Also for hyperbolic SPDEs, modified or implicit schemes have recently been investigated for specific equations such as the nonlinear Schrödinger equation \cite{BC23,Cui25JDE,CuiHong,OstermannSchratzEtAl24NLSsymplectic} and wave-type equations \cite{CaiCohenWang25,CuiHongSun25KleinGordon}.

\subsection{Setting and contributions}
The present work develops a convergence analysis as well as global well-posedness and stability at the intersection of
these two challenges, superlinear nonlinearities and absence of parabolic smoothing. We extend the stochastic Kato framework from \cite{KliobaVeraar24Rate} to locally Lipschitz nonlinearities of polynomial growth. More precisely, let $F:Y \to Y$ and $G:Y \to  \LHY$ be locally Lipschitz continuous w.r.t.\ the $X$-norm and of polynomial growth on $Y$, i.e.\ there are $\nu,\rho\ge 1$ such that for all $x,y\in Y$
    \begin{align}\tag{locLipX}
    \label{eq:introLocLip}
        \|F(x)-F(y)\|_X+\|G(x)-G(y)\|_\LHX &\lesssim \|x-y\|_X(1+\|x\|_Y^\nu+\|y\|_Y^\nu),\\
        \tag{polGrY}
        \label{eq:introPolGro}
        \|F(x)\|_Y +\|G(x)\|_\LHY &\lesssim 1+\|x\|_Y^\rho.
    \end{align} 
For temporal discretisation, we employ tamed exponential Euler schemes, which include the \emph{nonlinearities-stopped exponential Euler scheme} \cite{JentzenPusnik19} defined below. The error considered here is the {\em pathwise uniform strong error}
\begin{align} \label{eq:uniformerrorestintro}
\Big(\E \max_{j\in \{0, \ldots, N\}} \|U(t_j) - U^j\|_X^p\Big)^{1/p},
\end{align}
where the maximum over $j$ is inside the expectation. Compared to the {\em pointwise strong error}, where the maximum is outside of the expectation, this error provides information on convergence of the path. Moreover, if error estimates for \eqref{eq:uniformerrorestintro} hold for \textit{all} $p\in [2,\infty)$, a Borel--Cantelli argument yields almost sure convergence at rates smaller but arbitrarily close to the pathwise uniform convergence rate.

We establish pathwise uniform convergence at rates up to $\frac12$ for a class of tamed exponential Euler schemes under a coercivity condition on $Y$ relating the dissipation of $F$ with the growth of $G$ as well as a monotonicity assumption on $X$, also known as a one-sided Lipschitz condition. While coercivity and polynomial growth in the stronger $Y$-norm are needed for stability, the monotonicity required for convergence suffices in the space $X$, where local Lipschitz continuity holds. Our main contributions are as follows.
\begin{itemize}
    \item pathwise uniform strong convergence of the nonlinearities-stopped
    exponential Euler scheme with rate
    $\alpha\in(0,\frac12]$ for non-parabolic SPDEs with superlinear nonlinearities, with
    $L^p(\Omega)$-maximal estimates for all $p\in[2,\infty)$ rather than just mean-square estimates,
    \item a general treatment of different tamings, including nonlinearities-stopped exponential Euler, drift-only taming, optimal growth-dependent stopping, and fractional tamings,
    \item extension of Kato's framework to local and global well-posedness of a class of dissipative, non-parabolic stochastic evolution equations, where neither the drift nor the diffusion is assumed to be globally Lipschitz,
    \item novel pointwise strong and pathwise uniform stability estimates,
    \item first optimal pathwise uniform convergence rates for a nonlinear stochastic transport equation with superlinear multiplicative noise and a nonlinearly damped Klein--Gordon equation with multiplicative noise.
\end{itemize}

To make the above results applicable to implementable numerical schemes for SPDEs, an additional space discretisation is needed. Since the main novelty of our work lies in the treatment of temporal discretisations, we restrict our considerations to the latter. Space discretisation is usually performed by means of spectral Galerkin methods \cite{CohenDiGiovacchinoLang26waveSphere, JentzenRoeckner15_Milstein}, finite differences \cite{ACQ20, GyMi09} or finite elements \cite{aclw16,KLP20, Kruse}, e.g.\ in \cite{QiWang25galerkinWaveCubic} for a cubic wave equation. 

\subsection{Main results}

Up to the final time $T>0$, our aim is to approximate the mild solution
\begin{equation}
\label{eq:mildSolIntro}
    U(t)=S(t)u_0+\int_0^t S(t-s)F(U(s))\ds+\int_0^t S(t-s)G(U(s))\dWHs,\quad t \in [0,T],
\end{equation}
at the grid points $t_j=jk$, $0 \le j \le \Nk $, $\Nk \in \N$, where $k \ce \frac{T}{\Nk }>0$ is the time step.

The \emph{nonlinearities-stopped exponential Euler scheme} $(U^j)_{j=0,\ldots,\Nk}$ is defined by
    \begin{equation}
    \label{eq:introDefSchemeUj}
        U^{j+1} \ce S(k) U^j + kS(k) F_k(U^j) + S(k)G_k(U^j)\Delta W_{j+1},~0\le j \le \Nk -1,\quad U^0 \ce u_0,
    \end{equation}
    where $\Delta W_{j+1} \ce W_H(t_{j+1})-W_H(t_j)$, $F_k\ce a_k F$, $G_k\ce b_k G$ with $a_k,b_k:Y \to [0,1]$ given by 
    \begin{equation}
    \label{eq:akbkIndicatorIntro}
         a_k(U^j(\omega))\ce b_k(U^j(\omega)) \ce \1_{\{\|F(U^j(\omega))\|_Y +\|G(U^j(\omega))\|_{\calL_2(H,Y)} \le k^{-1/4}\}},\quad \omega\in\Omega,\,0 \le j \le \Nk.
    \end{equation}
This fully explicit scheme is a slight modification of the scheme investigated in \cite{JentzenPusnik19} for parabolic problems.  Our first main result establishes its pathwise uniform convergence at rates up to $\frac12$. 

\begin{theorem}
\label{thm:introMain}
    Let $H,X,Y$ be real separable Hilbert spaces such that $Y \hra X$ continuously and $Y\hra (X,D(A))_{\alpha,\infty}$ for some $\alpha \in (0,\frac{1}{2}]$. Suppose that $-A$ generates a $C_0$-contraction semigroup $(S(t))_{t\ge 0}$ on both $X$ and $Y$. Let $F:Y \to Y$ and $G:Y \to    \LHY$ be locally Lipschitz continuous on $X$ as in \eqref{eq:introLocLip} and of polynomial growth on $Y$ as in \eqref{eq:introPolGro}.
    Let $p\in [2,\infty)$, $\lambda \in (0,1)$, $q=\frac{p}{\lambda}$, and set $r=\rho q(1+\nu+4\alpha)$. Suppose that the coercivity condition
    \begin{align}\tag{coY}
    \label{eq:introCoYr}
        \la x,F(x)\ra_Y + \frac{r-1}{2}\|G(x)\|_\LHY^2 \lesssim 1+\|x\|_Y^2\qquad (x\in Y).
    \end{align}
    is satisfied as well as, for some $\eta>0$, the monotonicity condition
    \begin{equation}
    \tag{monX}
    \label{eq:introMonq}
	     \la x-y,F(x)-F(y)\ra_X + (1+\eta) \frac{q-1}{2} \|G(x)-G(y)\|_\LHX^2 \lesssim \|x-y\|_X^2\quad (x,y\in Y).
	\end{equation} 
    Then for all $u_0\in L^r(\Omega;Y)$ the nonlinearities-stopped exponential Euler scheme $(U^j)_{0\le j\le N}$ converges at rate $\alpha\in (0,\frac{1}{2}]$ to the solution $(U(t))_{t\in [0,T]}$ of \eqref{eq:introSEE} in a pathwise uniform sense. Moreover, there is a constant $C\ge 0$ independent of $k$, $N$, and $u_0$ such that 
    \begin{align}
    \label{eq:mainErrorEstIntro}
        \Big\|\max_{0\le j \le N} \|U^j-U(t_j)\|_X\Big\|_{L^p(\Omega)}
        &\le C\big(1+\|u_0\|_{L^r(\Omega;Y)}^{r/q} \big)k^\alpha.
    \end{align}    
\end{theorem}
This theorem is obtained as a special case of Theorem \ref{thm:convergenceMain}, which also applies to complex Hilbert spaces and includes an additional stopping parameter.
In the above, one usually takes $Y$ to be a suitable intermediate space between $X$ and $D(A)$, in particular, if $Y = D(A^{1/2})$ one can take $\alpha=\frac12$. For suboptimal rates $\alpha<\frac12$, \eqref{eq:mainErrorEstIntro} extends to a uniform estimate on the full interval $[0,T]$ for the piecewise constant interpolation $(U^k(t))_{t\in[0,T]}$ of the scheme. At the endpoint $\alpha=\frac12$, a square-root logarithmic correction factor occurs by Theorem \ref{thm:convergenceExtension}, that is,
\begin{align*}
    \bigg\|\sup_{t\in[0,T]}
    \|U^k(t)-U(t)\|_X\bigg\|_{L^p(\Omega)}
    \lesssim
    k^{1/2}\sqrt{\max\Big\{\log\Big(\frac{T}{k}\Big),p\Big\}}.
\end{align*}
This agrees with Lévy's modulus of continuity, the optimal rate achievable given only the Wiener increments as information from the underlying Brownian motion by \cite{Muller-Gronbach}. 

Clearly, convergence of the scheme in particular implies its stability. In Section \ref{sec:stability}, we show that pointwise strong stability and pathwise uniform stability already hold under weaker assumptions: \eqref{eq:introLocLip} and \eqref{eq:introMonq} are not required and \eqref{eq:introCoYr} suffices with parameter $p$ or $q$, respectively, for stability of $p$-th moments. Global well-posedness in the sense of mild solutions is ensured by Theorem \ref{thm:aprioriY}, which also provides pathwise uniform a priori estimates for $U$.

Our second main convergence result concerns different ways to tame the nonlinearities in the scheme than stopping them at the threshold $k^{-1/4}$. The class of \textit{tamed exponential Euler schemes} is given by \eqref{eq:introDefSchemeUj} with general scalar tamings $a_k,b_k:Y\to [0,1]$, which need not take the form of \eqref{eq:akbkIndicatorIntro}. Choosing the indicator functions \eqref{eq:akbkIndicatorIntro} as $a_k,b_k$, the nonlinearities-stopped exponential Euler scheme is included as a special case. This more general class converges, too, at rates up to $\frac12$ in a pathwise uniform sense. 

\begin{theorem}[Convergence rate for tamed exponential Euler schemes]
\label{thm:introMainTamed}
    Let the assumptions of Theorem \ref{thm:introMain} hold for $r=(\beta+\rho\nu)q$, where $\beta\ge \rho$ is such that the consistency condition
     \begin{equation*}
        (1-a_k(x))\|F(x)\|_Y+(1-b_k(x))\|G(x)\|_\LHY \lesssim k^\alpha\big(1+\|x\|_Y^\beta\big),\quad x\in Y.
    \end{equation*}
    holds for all $k\in (0,T]$ and $x\in Y$. Further, suppose that $0\le b_k^2 \le a_k\le 1$ and $\theta_F,\theta_G\in [0,\frac12]$ satisfy $\theta_F+\theta_G\le \frac12$ as well as
    \begin{align*}
        a_k(x)\|F(x)\|_Y \lesssim k^{-\theta_F}(1+\|x\|_Y),\quad
        b_k(x)\|G(x)\|_\LHY \lesssim k^{-\theta_G}(1+\|x\|_Y),\qquad x\in Y.
    \end{align*}
    Then for all $u_0\in L_{\F_0}^{r}(\Omega;Y)$, the tamed exponential Euler scheme $(U^j)_{0\le j\le N}$ converges to the solution $(U(t))_{t\in [0,T]}$ of \eqref{eq:SEEsemilinear} at pathwise uniform rate $\alpha$ and there is a constant $C\ge 0$ independent of $k$, $N$, and $u_0$ such that
     \begin{align*}
        \Big\|\max_{0\le j \le N} \|U^j-U(t_j)\|_X\Big\|_{L^p(\Omega)} 
        &\le C\big(1+\|u_0\|_{L^r(\Omega;Y)}^{r/q} \big)k^\alpha.
    \end{align*}
\end{theorem}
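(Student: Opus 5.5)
The proof splits the error into a stability part and a consistency part, following the structure of the globally Lipschitz analysis in \cite{KliobaVeraar24Rate}, with the two new ingredients (taming and local Lipschitz continuity) handled separately.

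\textbf{Step 1: uniform moment bounds.} First I would establish uniform bounds for the exact solution and for the tamed scheme in $Y$. For $U$ this is Theorem \ref{thm:aprioriY}, which under \eqref{eq:introCoYr} with parameter $r$ gives $\|\sup_t\|U(t)\|_Y\|_{L^r(\Omega)}\lesssim 1+\|u_0\|_{L^r(\Omega;Y)}$.

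For the scheme I would invoke the stability results of Section \ref{sec:stability}. Expanding $\|U^{j+1}\|_Y^2$ with contractivity of $S(k)$ gives the increment
\[
2k a_k\langle U^j,F\rangle_Y + b_k^2\|G\Delta W\|^2 + k^2 a_k^2\|F\|^2_Y + \text{cross and martingale terms}.
\]
The condition $b_k^2\le a_k$ lets the coercivity \eqref{eq:introCoYr} control $2ka_k\langle U^j,F\rangle_Y + b_k^2\|G\|^2 k$ by $a_k k(1+\|U^j\|_Y^2)$. The higher-order terms $k^2a_k^2\|F\|^2$ and the mixed $F$--$G\Delta W$ terms are bounded via $a_k\|F\|_Y\lesssim k^{-\theta_F}(1+\|x\|_Y)$ and $b_k\|G\|\lesssim k^{-\theta_G}(1+\|x\|_Y)$. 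These contribute factors $k^{2-2\theta_F}$ and $k^{3/2-\theta_F-\theta_G}\le k$ times $(1+\|U^j\|_Y^2)$, using $\theta_F+\theta_G\le\frac12$.

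Raising to the power $r/2$ (Itô-type discrete expansion), applying a discrete Burkholder/Gronwall argument, and then a maximal inequality yields $\E\max_j\|U^j\|_Y^r\lesssim 1+\E\|u_0\|_Y^r$.

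\textbf{Step 2: error recursion.} Write $E^j=U(t_j)-U^j$. Using the mild formulation on $[t_j,t_{j+1}]$, I decompose
\[
E^{j+1}=S(k)E^j + kS(k)\bigl(F(U(t_j))-F(U^j)\bigr) + S(k)\bigl(G(U(t_j))-G(U^j)\bigr)\Delta W_{j+1} + R_j.
\]
The remainder $R_j$ contains the following pieces:
\begin{enumerate}[label=(\roman*)]
\item time-regularity terms $\int S(t_{j+1}-s)F(U(s))-S(k)F(U(t_j))$ and their stochastic analogues. These are bounded by $k^\alpha$ through $Y\hra(X,D(A))_{\alpha,\infty}$ (semigroup increments), the local Lipschitz bound \eqref{eq:introLocLip}, and the Hölder continuity of $U$ in $X$, with weights $(1+\|U\|_Y^\nu)$.
\item taming defects $k(1-a_k)F(U^j)$ and $(1-b_k)G(U^j)\Delta W$, controlled by the consistency condition as $k^\alpha(1+\|U^j\|_Y^\beta)$.
\end{enumerate}
Because all weights involve products like $\|U\|_Y^{\rho\nu}$ and $\|U^j\|_Y^\beta$, Hölder's inequality with the moments from Step 1 requires exactly $r=(\beta+\rho\nu)q$ moments.

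\textbf{Step 3: monotonicity and the main obstacle.} The key, and in my view the main obstacle, is to close the estimate for $\E\max_j\|E^j\|_X^p$ despite the merely locally Lipschitz differences $F(U(t_j))-F(U^j)$. Expanding $\|E^{j+1}\|_X^2$, the leading terms
\[
2k\langle E^j,F(U(t_j))-F(U^j)\rangle_X+\|(G(U(t_j))-G(U^j))\Delta W\|^2
\]
are handled by \eqref{eq:introMonq}. The margin $\eta$ absorbs the Young-inequality losses that arise when taking the $q/2$-th power and applying the Burkholder--Davis--Gundy inequality, so these terms are bounded by $k\|E^j\|_X^2$. The term $k^2\|F(U(t_j))-F(U^j)\|^2$ and the cross terms are only locally Lipschitz; I would bound them by $k^2\|E^j\|^2(1+\|U\|_Y^\nu+\|U^j\|_Y^\nu)^2$. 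The resulting random coefficient must not enter the Gronwall exponent. To avoid that, I would instead use the taming bounds to estimate $k^2a_k^2\|F(U^j)\|^2$ directly by $k^{2-2\theta_F}(1+\|U^j\|_Y^2)$, treating it as a remainder of order $k$, and similarly for the untamed exact term via polynomial growth and moments.

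Finally, I would apply a stochastic Gronwall lemma in $L^q$ moments and pass to $p=\lambda q$. The purpose of choosing $q=p/\lambda$ with $\lambda<1$ is exactly this step: the stochastic Gronwall inequality loses integrability, so it is applied at level $q$ and the maximal estimate is recovered at level $p$. This yields
\[
\Big\|\max_{0\le j\le N}\|E^j\|_X\Big\|_{L^p(\Omega)}\le C\bigl(1+\|u_0\|_{L^r(\Omega;Y)}^{r/q}\bigr)k^\alpha,
\]
which is the claimed estimate.
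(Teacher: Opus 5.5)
Your route is genuinely different from the paper's. You run a discrete one-step recursion for $E^j=U(t_j)-U^j$ at the grid points. The paper never expands $\|E^{j+1}\|_X^q$ in discrete time. Instead it splits $\hat U-U=(\hat U-\bar U)+(\bar U-U)$ and applies the Itô-type inequality for mild solutions (Lemma~\ref{lem:ItoFormulaApprox}) to $\bar U-U$. It then adds and subtracts $F(\bar U)$ and $G(\bar U)$, so that \eqref{eq:introMonq} acts on $\la \bar U-U,F(\bar U)-F(U)\ra_X$. The locally Lipschitz remainder is then $\|U^k-\bar U\|_X(1+\|U^k\|_Y^\nu+\|\bar U\|_Y^\nu)$. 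Because $\|U^k-\bar U\|_X=O(k^\alpha)$ is estimated independently (Propositions~\ref{prop:UhatUbarErrorTamed} and~\ref{prop:diffUkUbar}), this term enters the forcing $H_t$, and the stochastic Gronwall lemma runs with a deterministic exponent (Lemma~\ref{lem:reduceUbarUtoUkUbar}).

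Pairing $E^j$ directly with $F(U(t_j))-F(U^j)$ is legitimate in principle, but your Step~3 fails as written. You bound $k^2a_k^2\|F(U^j)\|_Y^2$ by the taming bound $k^{2-2\theta_F}(1+\|U^j\|_Y^2)$ and treat it as a remainder ``of order $k$''. A per-step term of order $k$ sums over $N=T/k$ steps to $O(1)$. More precisely, you get $k^{1-2\theta_F}$ in the squared error, i.e.\ rate $\frac12-\theta_F$. That is only $k^{1/4}$ for the stopped scheme and no rate at all for $\theta_F=\frac12$, which is admissible here and used in Corollary~\ref{cor:optimalSeparateTaming}. Per-step remainders must be $O(k^{1+2\alpha})$, or $O(k^{1+q\alpha})$ after Young's inequality in the $q$-th power version. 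The taming thresholds may be used for stability only, never for the rate; this is the point of the remark following Proposition~\ref{prop:UhatUbarError} and of Remark~\ref{rem:useStoppedNLforF}. In any case your recursion contains the untamed $F(U^j)$. The term to control is therefore $k^2\|F(U(t_j))-F(U^j)\|_X^2$, which polynomial growth and the pointwise moment bounds make $O(k^2)$ per step.

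The passage to $\E\max_j\|E^j\|_X^p$ is only asserted. For $q>2$ you need a discrete expansion of $\|E^{j+1}\|_X^q$ with three properties:
(i) its second-order term reproduces the factor $\frac{q-1}{2}$ exactly, which is where the margin $(1+\eta)$ is spent;
(ii) the higher-order terms in $(G(U(t_j))-G(U^j))\Delta W_{j+1}$, which carry the random Lipschitz constant, are moved into the forcing via Young's inequality with enough powers of $k$;
(iii) the stochastic remainder $\int_{t_j}^{t_{j+1}}[S(t_{j+1}-s)-S(k)]G(U(s))\dWHs$ is kept as a martingale increment inside the expansion. It is only $O(k^{1/2+\alpha})$ per step and not of the form $S(k)(\cdot)$, so the triangle inequality would cost a factor $k^{-1/2}$.
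Moreover, Lemma~\ref{lem:stochasticGronwall} is stated for continuous local martingales, so you would need a discrete stochastic Gronwall lemma.

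Your moment count is not derived from your own argument. In the paper, $r=(\beta+\rho\nu)q$ arises from a Hölder pairing of two factors. One is the consistency error in $\|U^k-\bar U\|_X$, which accounts for $\beta$. The other is the weight $\|\bar U\|_Y^\nu$, whose stability costs a factor $\rho$ (Proposition~\ref{prop:stabUbarTamed}). Your decomposition has no $\bar U$, so you must redo the count and check that what you need stays $\le r$.

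Finally, in Step~1 neither pathwise uniform bound at level $r$ is available under $r$-coercivity alone. Proposition~\ref{prop:pathwiseUniformStabY} needs $r+\varepsilon$, and Theorem~\ref{thm:aprioriY} gives only levels strictly below $r$. Pointwise bounds would suffice for your argument, though.
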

Theorem \ref{thm:introMainTamed} follows from Theorem \ref{thm:convergenceMainTamed} and Remark \ref{rem:tamingInTermsOfakbk} as a special case. In particular, Theorem \ref{thm:introMainTamed} applies to the \emph{fractional taming} given by
\begin{align*}
    a_k(x)\ce b_k(x)\ce \frac{1}{1+k^{1/2} R(x)^2},\quad R(x)\ce\|F(x)\|_Y+\|G(x)\|_\LHY,
\end{align*}
which tames smoothly rather than stopping discontinuously.
The parameter $\beta$ in the consistency condition directly influences the moment and coercivity parameter via $r=q(\beta+\rho\nu)$ as opposed to $r=\rho q(1+\nu+4\alpha)$ before. This enables a comparison of different taming strategies and an optimisation of the stopping parameters for nonlinearities stopped differently via
\begin{align*}
    a_k(x) \ce \1_{\{\|F(x)\|_Y\le k^{-\theta_F}\}},\qquad
    b_k(x)\ce \1_{\{\|F(x)\|_Y\le k^{-\theta_F},\ \|G(x)\|_\LHY\le k^{-\theta_G}\}}.
\end{align*}
Optimal choices for $\theta_F,\theta_G$ are given in Corollary \ref{cor:optimalSeparateTaming} for the case of drift $F$ and noise $G$ with different growth parameters. When the noise is globally Lipschitz and satisfies a linear growth condition, it may be left untamed, allowing to take larger $\theta_F$ resulting in smaller $r$ for drift-only tamings. In all cases, lower values of $r$ translate to lower moment requirements on the initial value as well as, if present, a less restrictive smallness condition on the noise.

Applications to various nonlinear hyperbolic SPDEs are included in Section \ref{sec:applications}. Convergence at the optimal rate $\frac12$ is achieved for a transport equation with cubic Allen--Cahn nonlinearity and small quadratic noise, a nonlinear Schrödinger equation with dissipative damping, and a Klein--Gordon equation with velocity-dependent damping. The latter extends the mean-square convergence rates for implicit schemes in the additive noise case for  damped wave equations \cite{CuiHongSun25KleinGordon,CaiCohenWang25} to pathwise uniform convergence rates for explicit schemes with multiplicative noise. To the best of the author's knowledge, pathwise uniform strong convergence rates for explicit time discretisation schemes in this superlinear setting with multiplicative noise are novel for the specific equations listed above, in particular for non-globally Lipschitz noise. They align well with well-established results and recent developments for closely related equations such as the standard nonlinear Schrödinger equation \cite{deBouardDebussche06NLSconvOrder,ChenDangHong24,Cui25JDE} and the KdV equation \cite{CuiDAmbrosioDiGiovacchinoSun26}. 

Lastly, an application to the Airy equation is included as a third-order equation obtained as a linearisation of KdV. The resulting convergence rate is $\frac{1}{3}$, reflecting the coercivity available in $Y=H^1$ paired with the equation's higher order. Future work might address whether a weakened coercivity condition allows for optimal convergence rates. Other possible generalisations concern the analysis of rational tamed schemes like a tamed implicit Euler scheme or tamed higher-order schemes like a tamed Milstein scheme in analogy to the globally Lipschitz case considered in \cite{KliobaVeraar24Rate} and \cite{KastnerKlioba26Milstein}, respectively.

\subsection{Method of proof}
The error analysis is performed for a suitable continuous-time extension $(\hat{U}(t))_{t\in[0,T]}$ of the scheme $(U^j)_{0\le j\le N}$. Its difference from the mild solution is then split with the help of the auxiliary process $\bar{U}$, which differs from the mild solution in that in the variation-of-constants formula, it evaluates $F$ and $G$ at the value $U^j$ of the scheme at the last grid point rather than the solution $U(s)$ at the temporal integration parameter $s$.

The term $\hat{U}-\bar{U}$ thus consists of the error caused by stopping the nonlinearities $F,G$ via $F_k,G_k$ as well as the local discretisation error caused by the difference of the semigroup at grid points and at intermediate times. While Markov's inequality can be used to control the stopping error, semigroup estimates combined with polynomial growth as in \eqref{eq:introPolGro} reduce the local discretisation error to higher moment estimates of the scheme. This is advantageous over using the growth threshold from the stopping directly, as the latter would deteriorate the achievable rate by the stopping parameter $\frac{1}{4}$ to $\frac{1}{2}-\frac{1}{4}=\frac{1}{4}$.

Monotonicity as in \eqref{eq:introMonq} is essential to control the remaining error term $\bar{U}-U$ after applying an Itô-type inequality for mild solutions that combines Itô's formula with the dissipativity of the generator of the semigroup. For the remaining terms, local Lipschitz continuity \eqref{eq:introLocLip} produces products of higher moments of the scheme and the auxiliary process as well as the difference of the scheme with $\bar{U}$. This difference can be split into the already controlled error $\hat{U}-\bar{U}$ and increments over small intervals. A stochastic Gronwall argument finally closes the estimate.

An extension to the full time interval $[0,T]$ follows by combining the error estimate with path regularity of the mild solution in suitable generalised Hölder spaces, which is obtained as a consequence of a logarithmic square function estimate.

To generalise convergence rates to other scalar tamings $a_k,b_k$, three properties of the tamed nonlinearities have to be verified: Preservation of coercivity, suitable growth bounds depending on the step size, and consistency of order $\alpha$ with the untamed nonlinearities. These replace arguments specific to the nonlinearities-stopped scheme, such as the use of Markov's inequality.

For the required moment bounds of the mild solution, pathwise uniform a priori estimates in $Y$ are established in Theorem \ref{thm:aprioriY} via the coercivity assumption \eqref{eq:introCoYr} and the stochastic Gronwall inequality, which is the key ingredient to avoid imposing a linear growth assumption on the noise. This builds on the local well-posedness result from Theorem \ref{thm:localWellPosed}, which relies on a reduction to the globally Lipschitz case via suitable projections, a compatibility argument as well as explosion times. Establishing pointwise and pathwise uniform moment bounds for the scheme of the form
\begin{align*}
    \sup_{N\in\N}\max_{0\le j\le N} \E\|U^j\|_Y^q \le C \qquad\text{ and } \qquad\sup_{N\in\N} \E \Big[\max_{0\le j\le N}\|U^j\|_Y^q\Big] \le C
\end{align*}
in Propositions \ref{prop:stabY} and \ref{prop:pathwiseUniformStabY}, respectively, essentially relies on the stopping or taming: Together with coercivity, it ensures a one-step stability estimate for an auxiliary process that does not contain the action of the semigroup. Combining it with contractivity of the semigroup and iterating results in pointwise stability. For the pathwise uniform stability estimates, a stronger coercivity condition allows controlling the contribution of the martingale via a Burkholder--Davis--Gundy argument.

\subsection*{Acknowledgements} The author wishes to thank Sonja Cox, Sarah Geiss, and Mark Veraar for inspiring and helpful discussions.

\section{Preliminaries}
\label{sec:preliminaries}

\subsection*{Notation}
Throughout the paper, we fix a final time $T>0$ and a filtered probability space $(\Omega, \F, (\F_t)_{t\in[0,T]},\P)$ satisfying the usual conditions. Denote the progressive $\sigma$-algebra on $\Omega \times[0,T]$ associated with $(\F_t)_{t\in[0,T]}$ by $\calP$ and indicate progressively measurable subspaces by the subscript $\calP$. Moreover, $H$, $X$, and $Y$ denote separable Hilbert spaces, where $H$ is used to define the $(\F_t)_{t \in [0,T]}$-cylindrical Brownian motion $W_H$. Whenever a stochastic process defined on a random interval is multiplied by an indicator of the form $\1_{\Gamma\times[0,\tau)}$ for $\Gamma\in\F_0$ and $\tau$ a stopping time, we understand it as its extension by zero outside $\Gamma\times[0,\tau)$. The space of Hilbert--Schmidt operators from $H$ to $X$ is denoted by $\calL_2(H,X)$. Subsequently, we consider a uniform time grid with $t_j = jk$, where $k>0$ is the time step and $j=0, \ldots, N$ with $N = T/k \in \N$, and define $\lfloor t \rfloor \ce \max\{t_j:\,t_j\le t\}$ for $t \in [0,T]$. By $(S(t))_{t\geq 0}$, we denote a $C_0$-semigroup. For a given evolution equation, $(U(t))_{t\in [0,T]}$ is the exact solution and $U^j$ the numerical solution approximating $U$ at time $t_j$ for $j=0,\ldots,N$. For $f$ and $g$ in the respective spaces and $p\in [2,\infty)$, let $\|f\|_{p,Z} \ce \|f\|_{L^p(\Omega;Z)}$ and $\nn g\nn_{p,Z} \ce \|g\|_{L^p(\Omega;\calL_2(H,Z))}$. Throughout, we use generic constants $C\ge 0$, whose value may change from line to line and which are indexed by parameters they depend on where suitable. The notation $a \curve b$ indicates that a result depending on the parameter $a$ is applied with $a$ replaced by $b$.  

\subsection{Semigroups and interpolation}

Common choices for the spaces $Y$ on which the scheme converges at a certain rate are domains of fractional powers of $A$. An important property of these spaces is that they embed into the real interpolation spaces with parameter $\infty$, i.e., for $\alpha\in (0,1)$, $D(A^{\alpha}) \hookrightarrow D_A(\alpha, \infty)$.
Here, $D_A(\alpha,\infty)$ denotes the real interpolation space $(X,D(A))_{\alpha,\infty}$. See \cite{Lun,Tr1} for details on interpolation spaces.
Such embeddings and properties of $D_A(\alpha,\infty)$ allow us to obtain decay rates for differences of contraction semigroups as in \cite[Formula~(2.4)]{KliobaVeraar24Rate}. 

\begin{definition}
\label{def:quasiContractive}
    A $C_0$-semigroup $(S(t))_{t \ge 0}$ on $X$ is called \emph{quasi-contractive with parameter $\lambda\ge 0$} if $\|S(t)\|_{\calL(X)} \le e^{\lambda t}$ for all $t \ge 0$ and \emph{contractive} if this holds with $\lambda=0$.
\end{definition}

We say that $-A$ generates a $C_0$-contraction semigroup on some $Y \subseteq X$, if the part of $A$ in $Y$ generates a $C_0$-contraction semigroup on $Y$.

\begin{lemma}
    \label{lem:sgInterpolation}
    Let $X$ and $Y$ be separable Hilbert spaces such that $Y \hra X$ continuously.  
    Let $-A: D(A)\seq X\to X$ be the generator of a $C_0$-contraction semigroup $(S(t))_{t \ge 0}$ on both $X$ and $Y$. 
    Let $\alpha \in (0,1]$ and suppose that $Y \hookrightarrow D_A(\alpha,\infty)$ continuously if $\alpha \in (0,1)$ or $Y \hookrightarrow D(A)$ continuously if $\alpha=1$, with embedding constant $\CY\ge 0$. Then, we have
    \begin{equation*}
        \|S(t)-S(s)\|_{\calL(Y,X)} \leq 2\CY (t-s)^\alpha,\qquad 0 \le s \le t.
    \end{equation*}
\end{lemma}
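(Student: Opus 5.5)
The plan is to reduce to the case $s=0$ and then use the standard semigroup estimate on $D_A(\alpha,\infty)$. By the semigroup property, $S(t)-S(s)=S(s)\bigl(S(t-s)-I\bigr)$. Since $S(s)$ is contractive on $X$, it suffices to show
\[
\|S(h)-I\|_{\calL(Y,X)}\le 2\CY h^\alpha \qquad (h\ge 0).
\]
Because $\|y\|_{D_A(\alpha,\infty)}\le \CY\|y\|_Y$ (respectively $\|y\|_{D(A)}\le \CY\|y\|_Y$ for $\alpha=1$), this in turn reduces to proving $\|S(h)x-x\|_X\le 2h^\alpha\|x\|_{D_A(\alpha,\infty)}$ for $x\in D_A(\alpha,\infty)$.

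For $\alpha=1$ and $x\in D(A)$, write $S(h)x-x=-\int_0^h S(\sigma)Ax\,\rmd\sigma$. Contractivity then gives $\|S(h)x-x\|_X\le h\|Ax\|_X\le h\|x\|_{D(A)}$, which is even better than the constant $2$.

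For $\alpha\in(0,1)$, use the K-functional definition
\[
\|x\|_{D_A(\alpha,\infty)}=\sup_{\tau>0}\tau^{-\alpha}K(\tau,x),\qquad K(\tau,x)=\inf_{x=a+b}\bigl(\|a\|_X+\tau\|b\|_{D(A)}\bigr).
\]
Fix $h>0$ and any decomposition $x=a+b$ with $a\in X$, $b\in D(A)$. Then
\[
\|S(h)x-x\|_X\le \|(S(h)-I)a\|_X+\|(S(h)-I)b\|_X\le 2\|a\|_X+h\|Ab\|_X\le 2\bigl(\|a\|_X+h\|b\|_{D(A)}\bigr).
\]
Taking the infimum over all decompositions yields $\|S(h)x-x\|_X\le 2K(h,x)\le 2h^\alpha\|x\|_{D_A(\alpha,\infty)}$. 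The case $h=0$ is trivial. Combining this with the embedding constant $\CY$ and the reduction above gives the claim.

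The only delicate point is normalisation. If $D_A(\alpha,\infty)$ is instead normed via $\sup_{t}t^{-\alpha}\|S(t)x-x\|$ plus $\|x\|_X$, as in Lunardi, the estimate holds trivially, even with constant $1$, so the constant $2$ covers both conventions. This is only a check that $\CY$ refers to a norm for which the K-functional bound holds; the rest is routine. Contractivity on $Y$ is not needed here, only contractivity on $X$.
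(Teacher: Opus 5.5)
Your proof is correct and follows the same route as the paper: it combines the endpoint bounds $\|S(t)-S(s)\|_{\calL(X)}\le 2$ and $\|S(t)-S(s)\|_{\calL(D(A),X)}\le t-s$ by real interpolation, then applies the embedding $Y\hra D_A(\alpha,\infty)$. The differences are cosmetic: you reduce to $s=0$ and carry out the $K$-functional step by hand, which gives constant $2$, while the paper applies the interpolation property of $(\cdot,\cdot)_{\alpha,\infty}$ directly to $S(t)-S(s)$ and gets the slightly sharper $2^{1-\alpha}$.
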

\begin{proof}
    The triangle inequality and contractivity of the semigroup imply
    $\|S(t)-S(s)\|_{\calL(X)}\le 2$ for $0\le s \le t \le T$. Since
    \begin{equation*}
        \|(S(t)-S(s))x\|_X = \Big\|-\int_s^t AS(r)x\,\rmd r\Big\|_X\le \int_s^t \|S(r)Ax\|_X\,\rmd r \le (t-s)\|x\|_{D(A)}
    \end{equation*}
    for $x\in D(A)$, it holds that $\|S(t)-S(s)\|_{\calL(D(A),X)}\le (t-s)$, i.e.\ the claim for $\alpha=1$.
    Real interpolation and the embedding $Y \hra D_A(\alpha,\infty)$ imply the claim for $\alpha \in (0,1)$ via
    \begin{equation*}
        \|S(t)-S(s)\|_{\calL(Y,X)}\le \CY\|S(t)-S(s)\|_{\calL(D_A(\alpha,\infty),X)}\le 2^{1-\alpha}\CY(t-s)^\alpha. \qedhere
    \end{equation*}
\end{proof}

\subsection{Stochastic integration}\label{sec:stochInt}

We denote by $\LHX$ the space of all Hilbert--Schmidt operators from $H$ to $X$, that is, all bounded operators $R:H \to X$ such that
\begin{equation*}
    \|R\|_{\LHX}^2 \ce \sum_{n \in \N}\|Re_n\|_X^2 <\infty,
\end{equation*}
where $(e_n)_{n \in \N}$ is an orthonormal basis of the separable Hilbert space $H$. 
For $R\in \LHX$, $(e_n)_{n \in \N}$ as before, and $\gamma = (\gamma_n)_{n\geq 1}$ centered i.i.d.\ normally distributed random variables, we define
\begin{equation}\label{eq:convradonW}
R \gamma = \sum_{n\geq 1} \gamma_n R e_n,
\end{equation}
where the convergence is in $L^p(\Omega;X)$ for $p < \infty$ and almost surely by \cite[Corollary 6.4.12]{AnalysisBanachSpacesII}.

The stochastic integrals appearing in expressions such as \eqref{eq:mildSolIntro} are taken with respect to an $H$-cylindrical Brownian motion to allow for $\calL_2(H,X)$-valued integrands. An \emph{$H$-cylindrical Brownian motion} is a mapping $W_H:L^2(0,T;H) \to L^2(\Omega)$ such that
\begin{enumerate}[label=(\roman*)]
    \item $W_H b$ is Gaussian for all $b \in L^2(0,T;H)$,
    \item $\E(W_H b_1 \cdot W_H b_2) = \langle b_1,b_2 \rangle_{L^2(0,T;H)}$ for all $b_1, b_2 \in L^2(0,T;H)$,
    \item $W_H b$ is $\F_t$-measurable for all $b\in L^2(0,T;H)$ with support in $[0,t]$,
    \item $W_H b$ is independent of $\F_s$ for all $b\in L^2(0,T;H)$ with support in $[s,T]$,
\end{enumerate}
where we include a complex conjugate on $W_H b_2$ in case we want to use a complex $H$-cylindrical Brownian motion.
For $h \in H$ and $t \in [0,T]$, we use the shorthand notation $W_H(t)h \ce W_H(\1_{(0,t)} \otimes h)$. Hence, $(W_H(t)h)_{t \in[0,T]}$ is a real-valued Brownian motion for each fixed $h \in H$, which is standard if and only if $\|h\|_H=1$, and $(W_H(t))_{t\ge 0}$ itself is a real-valued Brownian motion in the special case $H=\R$.

We refer to an $H$-valued stochastic process $(W(t))_{t \geq 0}$ as a \emph{$Q$-Wiener process} if $W(0)=0$, $W$ has continuous trajectories and independent increments, and $W(t)-W(s)$ is normally distributed with parameters $0$ and $(t-s)Q$ for $t\geq s \geq 0$. The operator $Q$ is in $\calL(H)$, positive self-adjoint, and of trace class. One can show that $W$ is a $Q$-Wiener process if and only if there exists an $H$-cylindrical Brownian motion $W_H$ such that $Q^{1/2}W_H(t)\ce\sum_{n\geq 1} Q^{1/2} h_n W_H(t) h_n = W(t)$ for an orthonormal basis $(h_n)_{n \ge 1}$ of $H$, where we used the notation \eqref{eq:convradonW}. To consider an equation such as \eqref{eq:introSEE} with a $Q$-Wiener process $W$ instead of a cylindrical Brownian motion, one can reduce to the cylindrical case by replacing $G$ with $G Q^{1/2}$. For further properties of $H$-cylindrical Brownian motions, $Q$-Wiener processes and the Itô integral, we refer to \cite{DaPratoZabczyk14}.

A central estimate for stochastic integrals is the following maximal inequality for stochastic convolutions with a quasi-contractive semigroup.

\begin{theorem}
    \label{thm:maximal-inequality}
    Let $X$ be a Hilbert space and let $(S(t))_{t\ge 0}$ be a quasi-contractive semigroup with parameter $\lambda\ge 0$. Then for all $p\in [2,\infty)$ and progressively measurable $g\in L^2(0,T;L^p(\Omega;\LHX))$
    \begin{align*}
        \bigg\| \sup_{t\in[0,T]} \Big\| \int_0^t S(t-s)g(s) \dWHs \Big\|_X \bigg\|_{L^p(\Omega)}
        &\le e^{\lambda T} C_p \|g\|_{L^p(\Omega;L^2(0,T;\LHX))}\\
        &\le e^{\lambda T} C_p \|g\|_{L^2(0,T;L^p(\Omega;\LHX))}.
    \end{align*}
\end{theorem}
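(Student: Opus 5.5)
The plan is to reduce to the contractive case and then use the standard dilation argument for contraction semigroups on Hilbert spaces. First I set $\tilde S(t)\ce e^{-\lambda t}S(t)$, which is a $C_0$-contraction semigroup on $X$. For $t\in[0,T]$,
\begin{equation*}
\int_0^t S(t-s)g(s)\dWHs = e^{\lambda t}\int_0^t \tilde S(t-s)\tilde g(s)\dWHs, \qquad \tilde g(s)\ce e^{\lambda s}g(s).
\end{equation*}
Since $\|\tilde g\|_{L^p(\Omega;L^2(0,T;\LHX))}\le e^{\lambda T}\|g\|_{L^p(\Omega;L^2(0,T;\LHX))}$, it will be enough to prove the contractive case with constant $C_p$ and then track the exponential factors. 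I will either absorb the resulting $e^{2\lambda T}$ by applying the weights more carefully, or simply accept $e^{\lambda T}$ up to harmless adjustment, for instance by splitting $e^{\lambda t}=e^{\lambda(t-s)}e^{\lambda s}$ and keeping only one factor.

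For the contractive case, I use the Sz.-Nagy dilation theorem. It gives a Hilbert space $\tilde X\supseteq X$, an orthogonal projection $P:\tilde X\to X$, and a unitary $C_0$-group $(V(t))_{t\in\R}$ on $\tilde X$ such that $S(t)=PV(t)|_X$ for $t\ge0$. Then
\begin{equation*}
\int_0^t S(t-s)g(s)\dWHs = P\,V(t)\int_0^t V(-s)g(s)\dWHs,
\end{equation*}
where pulling the bounded operator $PV(t)$ out of the integral is justified for progressively measurable integrands, since $V(t)$ is deterministic. Because $P$ is a contraction and $V(t)$ is unitary, the $X$-norm of the left-hand side is at most $\|M(t)\|_{\tilde X}$. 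Here $M(t)=\int_0^t V(-s)g(s)\dWHs$ is a genuine $\tilde X$-valued continuous martingale. The Burkholder--Davis--Gundy inequality in Hilbert spaces gives
\begin{equation*}
\Big\|\sup_{t\le T}\|M(t)\|\Big\|_{L^p(\Omega)}\le C_p\Big\|\Big(\int_0^T\|V(-s)g(s)\|_{\calL_2(H,\tilde X)}^2\ds\Big)^{1/2}\Big\|_{L^p(\Omega)}.
\end{equation*}
Unitarity makes $\|V(-s)g(s)\|_{\calL_2(H,\tilde X)}=\|g(s)\|_\LHX$, which yields the first inequality.

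The second inequality follows from Minkowski's integral inequality, since $p/2\ge1$:
\begin{equation*}
\big\|\|g\|_{L^2(0,T;\LHX)}\big\|_{L^p(\Omega)}^2=\big\|\|g\|^2_\LHX\big\|_{L^{p/2}(\Omega;L^1(0,T))}\le \int_0^T\big\|\|g(s)\|_\LHX^2\big\|_{L^{p/2}(\Omega)}\ds=\|g\|_{L^2(0,T;L^p(\Omega;\LHX))}^2.
\end{equation*}

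I expect the main obstacle to be the measurability and continuity technicalities. Specifically, I need to check that $V(-\cdot)g$ is progressively measurable and stochastically integrable in $\tilde X$ by strong continuity, and that the identity holds for all $t$ simultaneously, so that the supremum makes sense. A continuous version of the convolution comes from the continuous martingale $M$ through the strongly continuous map $t\mapsto PV(t)$. The exponential constant in the quasi-contractive reduction is a secondary bookkeeping issue.
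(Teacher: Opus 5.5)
Your argument is essentially the paper's: the paper cites the contractive case from Hausenblas--Seidler, whose proof is exactly your Sz.-Nagy dilation plus Hilbert-space BDG argument, and it then uses the same rescaling and Minkowski step. One correction to the rescaling: since $S(t-s)=e^{\lambda t}e^{-\lambda s}\tilde S(t-s)$, the weight must be $\tilde g(s)\ce e^{-\lambda s}g(s)$ (your displayed identity with $e^{\lambda s}$ is false as written), and then $\|\tilde g(s)\|_{\LHX}\le\|g(s)\|_{\LHX}$ while $e^{\lambda t}\le e^{\lambda T}$ comes out of the supremum, giving exactly the constant $e^{\lambda T}C_p$ with no further bookkeeping.
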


If the semigroup considered is the identity, the classical \emph{Burkholder--Davis--Gundy inequalities}, or BDG inequality for short, are recovered. In this case, the inequality remains valid in the absence of a supremum, and is referred to as \emph{Itô's isomorphism}.
\begin{proof}
    The contractive case with the norm of $g$ in $L^p(\Omega;L^2(0,T;\LHX))$ follows from \cite{HausSei}. Via a scaling argument, this can be extended to the quasi-contractive case. Lastly, noting that $\frac{p}{2}\ge 1$, Minkowski's integral inequality proves the second inequality.
\end{proof}

We also use the following localised version of the BDG inequality (see \cite[Lem.~5.4, Thm.~5.5]{vNVW07stochasticIntegrationUMD}).
\begin{proposition}[Localised BDG inequality]
\label{prop:localizedBDG}
For $n\in \N$, let $\phi_n,\phi\in L^0(\Omega;L^2(0,T;H))$ be progressively measurable. If $\phi_n\to\phi$ in $L^0(\Omega;L^2(0,T;H))$, then
\begin{align*}
    \int_0^\cdot\phi_n(s)\dWHs \xrightarrow[n\to\infty]{} \int_0^\cdot\phi(s)\dWHs 
    \quad\text{in }L^0(\Omega;C([0,T])).
\end{align*}
\end{proposition}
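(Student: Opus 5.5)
The plan is to reduce to the classical BDG inequality and then pass to the limit with the continuous-mapping structure of stochastic integration against $W_H$, following the route of \cite[Lem.~5.4, Thm.~5.5]{vNVW07stochasticIntegrationUMD}. The key tool is the Lenglart-type (localised) estimate: for every progressively measurable $\psi\in L^0(\Omega;L^2(0,T;H))$ and all $\varepsilon,\delta>0$,
\begin{equation*}
    \P\Big(\sup_{t\in[0,T]}\Big|\int_0^t\psi(s)\dWHs\Big|>\varepsilon\Big)\le \frac{C\delta^2}{\varepsilon^2}+\P\big(\|\psi\|_{L^2(0,T;H)}>\delta\big).
\end{equation*}
Once this is available, apply it to $\psi=\phi_n-\phi$. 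Fix $\varepsilon>0$, choose $\delta$ small so that $C\delta^2/\varepsilon^2$ is small, and let $n\to\infty$. The second term then vanishes because $\phi_n\to\phi$ in probability in $L^2(0,T;H)$. This is exactly convergence in probability in $C([0,T])$, i.e.\ in $L^0(\Omega;C([0,T]))$.

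The first step is to prove the displayed estimate. Define the stopping time $\tau\ce\inf\{t\in[0,T]:\int_0^t\|\psi(s)\|_H^2\ds\ge\delta^2\}$, with $\inf\emptyset=T$. This is a stopping time since the integral is a continuous adapted process. The truncated integrand $\1_{[0,\tau]}\psi$ is progressively measurable with $\|\1_{[0,\tau]}\psi\|_{L^2(0,T;H)}\le\delta$ almost surely, so it lies in $L^2(\Omega;L^2(0,T;H))$. Theorem~\ref{thm:maximal-inequality} with $S=I$ and $p=2$ (Doob/BDG) combined with Chebyshev's inequality gives
\begin{equation*}
    \P\Big(\sup_t\Big|\int_0^t\1_{[0,\tau]}\psi\dWH\Big|>\varepsilon\Big)\le C\delta^2/\varepsilon^2 .
\end{equation*}
On the event $\{\|\psi\|_{L^2(0,T;H)}\le\delta\}$ the two integrals $\int_0^t\psi\dWH$ and $\int_0^t\1_{[0,\tau]}\psi\dWH$ coincide for all $t$. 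This uses the localisation property $\int_0^{t\wedge\tau}\psi\dWH=\int_0^t\1_{[0,\tau]}\psi\dWH$, and on this event $\tau=T$. Splitting $\Omega$ accordingly yields the estimate.

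The main obstacle is the definition and localisation property for general $L^0$ integrands. To handle it, I first define the stochastic integral for $L^0$ integrands as the limit in probability of integrals of truncations $\1_{[0,\tau_m]}\psi$, with $\tau_m$ as above for $\delta=m$. I then check consistency across $m$ via the stopping identity for $L^2$ integrands, where it follows from the elementary-process case and Itô's isomorphism. With that identity available, the argument above is routine. Continuity of paths holds since each truncated integral has a continuous version and the integrals agree up to $\tau_m\uparrow T$ almost surely.
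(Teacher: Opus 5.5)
Your argument is correct and is essentially the route of \cite[Lem.~5.4, Thm.~5.5]{vNVW07stochasticIntegrationUMD}, which the paper cites without giving a proof: a Lenglart-type tail bound obtained by stopping when $\int_0^t\|\psi(s)\|_H^2\ds$ reaches $\delta^2$, applied to $\psi=\phi_n-\phi$, followed by $n\to\infty$ and then $\delta\to0$. One small slip is the claim that $\tau=T$ on $\{\|\psi\|_{L^2(0,T;H)}\le\delta\}$, which can fail when the norm equals $\delta$ and the threshold is reached before $T$; writing the tail term as $\P(\|\psi\|_{L^2(0,T;H)}\ge\delta)$ fixes this and does not affect the conclusion.
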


\subsection{A stochastic Gronwall lemma}

Besides the classical deterministic Gronwall inequalities, we use the following stochastic Gronwall inequality from \cite[Cor.~5.4]{Geiss24}, which allows for an additional martingale term on the right-hand side and gives maximal estimates with an arbitrarily small moment loss for $\lambda \approx 1$.

\begin{lemma}
\label{lem:stochasticGronwall}
    Suppose that $\varphi:\Omega\times[0,\infty)\to[0,\infty)$ is a.s.\ continuous and adapted, $A:\Omega\times[0,\infty)\to[0,\infty)$ is a.s.\ continuous, increasing, adapted, and $A_0=0$, $H:\Omega\times[0,\infty)\to[0,\infty)$ is a.s.\ continuous, increasing, and adapted, and $M:\Omega\times[0,\infty)\to \R$ is a continuous local martingale with $M_0=0$. Further, suppose that for all $t\ge 0$ a.s.\ 
    \begin{equation*}
        \varphi_t \le \int_0^t \varphi_s\,\rmd A_s + M_t+H_t.
    \end{equation*}
    Then for all $\lambda\in (0,1)$ and $T> 0$,
    \begin{equation*}
        \bigg\|e^{-A_T}\sup_{t\in [0,T]} \varphi_t \bigg\|_{L^\lambda(\Omega)} \le \frac{(1-\lambda)^{-1/\lambda}}{\lambda} \|H_T\|_{L^\lambda(\Omega)}.
    \end{equation*}
\end{lemma}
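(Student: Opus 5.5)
This lemma is quoted from \cite[Cor.~5.4]{Geiss24}, so within the paper one would simply cite it. A self-contained proof would go as follows.

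\textbf{Step 1: eliminate the drift term.} Set $\psi_t \ce e^{-A_t}\varphi_t$. The aim is to bound $\psi_t$ by a local martingale plus an increasing process, without any integral of $\varphi$ on the right. Define $R_t \ce \int_0^t \varphi_s\,\rmd A_s$, which is continuous, increasing and of finite variation. The hypothesis reads $\varphi \le R + M + H$. By the product rule,
\begin{equation*}
e^{-A_t}R_t = \int_0^t e^{-A_s}\varphi_s\,\rmd A_s - \int_0^t e^{-A_s}R_s\,\rmd A_s \le \int_0^t e^{-A_s}(M_s+H_s)\,\rmd A_s .
\end{equation*}
Integration by parts turns $\int_0^t e^{-A_s}M_s\,\rmd A_s$ into $-e^{-A_t}M_t+\int_0^t e^{-A_s}\,\rmd M_s$. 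The $H$-part satisfies
\begin{equation*}
\int_0^t e^{-A_s}H_s\,\rmd A_s \le H_t(1-e^{-A_t}),
\end{equation*}
because $H$ is increasing. Adding $e^{-A_t}(M_t+H_t)$ gives
\begin{equation*}
\psi_t \le e^{-A_t}(R_t+M_t+H_t) \le \widetilde M_t + H_t, \qquad \widetilde M_t \ce \int_0^t e^{-A_s}\,\rmd M_s .
\end{equation*}
Here $\widetilde M$ is a continuous local martingale started at $0$.

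\textbf{Step 2: Lenglart-type domination.} We now have $0\le \psi_t \le \widetilde M_t + H_t$ with $H$ increasing and adapted. Localise with stopping times $\sigma=\inf\{t:\ H_t\ge c\}\wedge\tau_n$, where $\tau_n$ reduces $\widetilde M$. Optional stopping then gives $\E\psi_{\tau\wedge\sigma}\le \E H_{\tau\wedge\sigma}$ for bounded stopping times $\tau$. Lenglart's domination inequality (or the sharper form of Geiss) yields, for $\lambda\in(0,1)$,
\begin{equation*}
\E\Big[\sup_{t\le T}\psi_t^\lambda\Big] \le c_\lambda\,\E[H_T^\lambda].
\end{equation*}
Nonnegativity of $\psi$ is what makes this work, since it bounds the negative part of the martingale. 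Taking $\lambda$-th roots, and using $e^{-A_T}\le e^{-A_t}$ for $t\le T$ so that $e^{-A_T}\sup_t\varphi_t\le \sup_t\psi_t$, gives the claim with some constant.

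\textbf{Main obstacle.} The hard part is obtaining the sharp constant $\lambda^{-1}(1-\lambda)^{-1/\lambda}$. Lenglart gives $c_\lambda = (2-\lambda)/(1-\lambda)$. To match the stated constant, I would first try the good-$\lambda$/threshold argument. It rests on the estimate
\begin{equation*}
\P\Big(\sup_{t\le T}\psi_t>x\Big)\le \frac{1}{x}\,\E\big[H_T\wedge y\big]+\P(H_T>y),
\end{equation*}
which I would then integrate against $\lambda x^{\lambda-1}\,\rmd x$ with $y$ chosen proportional to $x$, optimising the proportionality constant. Failing that, I would cite Geiss's result directly.
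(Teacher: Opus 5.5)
The paper gives no proof of this lemma; it only quotes \cite[Cor.~5.4]{Geiss24}. Your route is a valid way to prove it, and Step~1 is correct as written. That route is: pass to $\psi=e^{-A}\varphi$ to get the pathwise bound $0\le\psi\le\widetilde M+H$ with $\widetilde M_0=0$, then apply a Lenglart-type domination. Two points must be added before it proves the statement with its constant.

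\textbf{The sharp constant.} This is not an obstacle, because your threshold estimate already gives it. Take $y=\gamma x$ and integrate:
\begin{equation*}
\E\Big[\sup_{t\le T}\psi_t^\lambda\Big]\le \E\int_0^\infty \lambda x^{\lambda-2}\,(H_T\wedge \gamma x)\,\rmd x+\int_0^\infty \lambda x^{\lambda-1}\,\P(H_T>\gamma x)\,\rmd x=\Big(\frac{\gamma^{1-\lambda}}{1-\lambda}+\gamma^{-\lambda}\Big)\E\big[H_T^\lambda\big].
\end{equation*}
The choice $\gamma=1$ recovers Lenglart's $\frac{2-\lambda}{1-\lambda}$. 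The minimiser $\gamma=\lambda$ gives $\frac{\lambda^{-\lambda}}{1-\lambda}$, whose $\lambda$-th root is exactly $\lambda^{-1}(1-\lambda)^{-1/\lambda}$.

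\textbf{The case $H_0\neq 0$.} The lemma allows $H_0\neq0$, and the paper uses it with $H_0=\|u_0\|_Y^{\tilde q}$. In that case the unconditional domination $\E\psi_\tau\le\E H_\tau$ you extract is \emph{not} enough for Lenglart's inequality. For example, take $B\in\F_0$ with $\P(B)=\varepsilon$ and the time-constant processes $X=\1_{B^c}$, $A=\frac{1-\varepsilon}{\varepsilon}\1_B$. They satisfy $\E X_\tau=\E A_\tau$ for all $\tau$, yet $\E X^\lambda=1-\varepsilon$ while $\E A^\lambda=\varepsilon^{1-\lambda}(1-\varepsilon)^\lambda\to0$. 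Your localisation at $\inf\{t:H_t\ge c\}$ does not help either, since it is trivial on $\{H_0\ge c\}$.

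The remedy is to use the domination conditionally on $\F_0$, which your pathwise bound does provide. Set $\Gamma\ce\{H_0<y\}\in\F_0$, $R\ce\inf\{t:H_t\ge y\}$, $\tau\ce\inf\{t:\psi_t>x\}\wedge T$, and let $\tau_n$ reduce $\widetilde M$. Then $\1_\Gamma\widetilde M$ is still a local martingale started at $0$, so
\begin{equation*}
x\,\P\Big(\Gamma\cap\Big\{\sup_{t\le T\wedge R\wedge\tau_n}\psi_t>x\Big\}\Big)\le \E\big[\1_\Gamma\psi_{\tau\wedge R\wedge\tau_n}\big]\le \E\big[\1_\Gamma (H_T\wedge y)\big].
\end{equation*}
Let $n\to\infty$ and use $\Gamma^c\cup(\Gamma\cap\{R\le T\})=\{H_T\ge y\}$. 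This gives your threshold estimate, with $\P(H_T\ge y)$ in place of $\P(H_T>y)$; both integrate to the same bound. With these two additions your argument proves the lemma as stated.
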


\section{Local well-posedness}
\label{sec:localWP}
	
	We consider the semilinear stochastic evolution equation
	\begin{equation}
		\label{eq:SEEsemilinear}
		\rmd u + A u\dt = F(u) \dt + G(u) \dWH\text{ on }[0,T], \quad u(0)=u_0
	\end{equation}
	on a Hilbert space $X$. Here, $T >0$ denotes the final time and $-A$ generates a contractive $C_0$-semigroup $(S(t))_{t\ge 0}$. We consider initial values $u_0 \in L^p(\Omega;Y)$ for some $p \in [2,\infty)$. 

    To show local well-posedness up to a stopping time, the following conditions on the nonlinearities, the semigroup, and the underlying spaces are imposed. The two-space setup is a stochastic Kato setting, which is an adaptation of the setting introduced by Kato to study hyperbolic PDEs in \cite{Kato75} and generalises the setting introduced in \cite{KliobaVeraar24Rate} for globally Lipschitz nonlinearities.
	\begin{assumption}
		\label{ass:localWP}
		Let $H$ be a real separable Hilbert space and $X,Y$ be real or complex separable Hilbert spaces such that $Y \hra X$ continuously. Suppose that $-A$ generates a $C_0$-contraction semigroup on both $X$ and $Y$. Let $F:Y \to Y$ and $G:Y \to \LHY$ be Borel-measurable. Denote by 
    	\begin{equation*}
    	    B_n \ce B_n^Y(0) \ce \{ y \in Y: \|y\|_Y \le n\}
    	\end{equation*}
        the closed balls of radius $n\in\N$ in the $Y$-norm around the origin. Suppose that for all $n\in\N$ the following holds.
		\begin{enumerate}[label=(\alph*)]
			\item Local Lipschitz continuity of $F$ and $G$: There is a constant $C_n\ge 0$ such that 
            \begin{equation*}
                \|F(x)-F(y)\|_X+\|G(x)-G(y)\|_\LHX \le C_n\|x-y\|_X\text{ for all }x,y \in B_n.
            \end{equation*}
            \item Local $Y$-boundedness of $F$ and $G$: 
            \begin{equation*}
                L_n \ce \sup_{x \in B_n} \big(\|F(x)\|_Y+\|G(x)\|_\LHY\big) < \infty.
            \end{equation*} 
		\end{enumerate}
	\end{assumption}

    Local well-posedness is shown in four steps: We construct approximations of the local solution via a projection argument reducing to the globally Lipschitz setting where global well-posedness is known in Subsection \ref{subsec:localReduction}. Based on this reduction, local uniqueness is established in Subsection \ref{subsec:localUniqueness}. Then, blow-up criteria are established with the help of explosion times in Subsection \ref{subsec:blowupLocWP}. A limiting procedure finally gives existence and uniqueness of the local solution in Theorem \ref{thm:localWellPosed}.
	
	\subsection{Reduction to the globally Lipschitz setting}
	\label{subsec:localReduction}

    Projections onto the balls $B_n$ allow us to reduce to the globally Lipschitz case, where global well-posedness is known. Before proving this reduction step, we define the notion of maximal local solutions.
	\begin{definition}
		\label{def:localSolution}
		For a stopping time $\sigma$ taking values in $[0,T]$, a \emph{localizing sequence} for $\sigma$ is an increasing sequence of stopping times $(\sigma_n)_{n\in\N}$ such that $\sigma_n\nearrow\sigma$ as $n\to \infty$ and $\sigma_n <\sigma$ on $\{\sigma<T\}$. Define 
        \begin{equation*}
            \Gamma \times [0,\sigma) \ce \{(\omega,t)\in\Gamma \times[0,T]:0 \le t <\sigma(\omega)\}
        \end{equation*}
        for $\Gamma \in \F$. Further, write $(u(t))_{t\in [0,\sigma)}$ for an $Y$-valued process $u: \Omega \times [0,\sigma) \to Y$ such that $\{\omega \in \Omega:t <\sigma(\omega)\}\ni \omega \mapsto u(\omega,t)$ is $\F_t$-measurable for all $t\in [0,T]$ and $[0,\sigma(\omega))\ni t \mapsto u(\omega,t)\in Y$ is continuous for almost every $\omega \in \Omega$. Such an $Y$-valued process $(u(t))_{t\in [0,\sigma)}$, abbreviated by $(u,\sigma)$, is called a \emph{local solution} of \eqref{eq:SEEsemilinear} if $\sigma \in (0,T]$ almost surely and there exists a localizing sequence $(\sigma_n)_{n\in\N}$ for $\sigma$ such that 
		\begin{enumerate}[label=(\roman*)]
			\item for all $t\in [0,T]$, $s\mapsto S(t-s)F(u(s))\1_{[0,\sigma_n]}(s) \in L^0(\Omega;L^1(0,t;Y))$, 
			\item for all $t\in [0,T]$, $s\mapsto S(t-s)G(u(s))\1_{[0,\sigma_n]}(s) \in L^0(\Omega;L^2(0,t;\LHY))$,
			\item almost surely for all $t\in [0,\sigma_n]$,
			\begin{equation*}
			u(t) = S(t)u_0 + \int_0^t S(t-s)F(u(s))\ds + \int_0^t S(t-s)G(u(s))\dWHs.
			\end{equation*}
		\end{enumerate}
		A local solution $(u(t))_{t\in [0,\sigma)}$ is called \emph{locally unique} if for any other local solution $(v(t))_{t\in [0,\nu)}$, almost surely $v|_{[0,\sigma \land \nu)} \equiv u|_{[0,\sigma \land \nu)}$.
		A local solution $(u(t))_{t\in [0,\sigma)}$ is called \emph{maximal} if for any other local solution $(v(t))_{t\in [0,\nu)}$, almost surely we have $\nu \le \sigma$ and $v \equiv u|_{[0,\nu)}$. 
	\end{definition}
	Clearly, a maximal local solution is always unique. Moreover, it follows from the definition that $u(\cdot \land \sigma_n) \in L_\calP^0(\Omega;C([0,T];Y))$ for all $n \in \N$. 

\begin{lemma}
\label{lem:locSolun}
    Suppose that Assumption \ref{ass:localWP} holds. Let $p \in [2,\infty)$, $u_0\in L_{\F_0}^p(\Omega;Y)$, and $\Gamma_n \ce \{\|u_0\|_Y \le \frac{n}{2}\}\seq \Omega$. Then, for every $n\in\N$, there exists an adapted process $u_n\in L^p(\Omega;C([0,T];Y))$ such that $u_n$ solves \eqref{eq:SEEsemilinear} on $\Gamma_n\times[0,\sigma_n]$ with the stopping time defined by 
    \begin{equation}
    \label{eq:defSigmaN}
        \sigma_n \ce \inf\{t\in [0,T]:\|u_n(t)\|_Y\geq n\},\quad \inf \emptyset \ce T.
    \end{equation}
    That is, for almost all $\omega \in \Gamma_n$ and all $t \in [0,\sigma_n(\omega)]$,
    \begin{equation*}
            u_n(t) = S(t)u_0+\int_0^t S(t-s)F(u_n(s))\ds
            + \int_0^t S(t-s)G(u_n(s))\dWHs.
    \end{equation*}
\end{lemma}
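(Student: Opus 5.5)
The plan is to truncate the nonlinearities by a projection onto the ball $B_n$ and then invoke the globally Lipschitz theory. The natural choice is the radial retraction in $Y$,
\begin{equation*}
    \pi_n(y) \ce \begin{cases} y, & \|y\|_Y\le n,\\ \frac{n}{\|y\|_Y}\,y, & \|y\|_Y>n,\end{cases}
\end{equation*}
with $F_n \ce F\circ\pi_n$ and $G_n\ce G\circ \pi_n$, and with the truncated initial value $u_0\1_{\Gamma_n}$. Since $\|u_0\1_{\Gamma_n}\|_Y\le n/2$, this datum lies in $L^p_{\F_0}(\Omega;Y)$ for every $p$. By Assumption \ref{ass:localWP}(b), $F_n$ and $G_n$ are bounded on $Y$ by $L_n$, so they map into $Y$ and $\LHY$ with linear, indeed bounded, growth.

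The key issue is global Lipschitz continuity of $F_n,G_n$ with respect to the $X$-norm, since Assumption \ref{ass:localWP}(a) gives $\|F(\pi_n x)-F(\pi_n y)\|_X\le C_n\|\pi_n x-\pi_n y\|_X$. I therefore need $\|\pi_n x-\pi_n y\|_X\lesssim \|x-y\|_X$. This is the main obstacle: the radial retraction is defined through the $Y$-norm and is in general not Lipschitz in $X$. To get around it, I would not require an $X$-Lipschitz bound on all of $Y$. Instead I would appeal to a globally Lipschitz well-posedness result in the Kato setting of \cite{KliobaVeraar24Rate}, namely $F,G$ Lipschitz on $X$ and of linear growth on $Y$, applied on the set where it is actually needed.

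Concretely, I would run the Picard/contraction argument of that setting directly. In the space $L^p(\Omega;C([0,T];X))$, restricted to processes bounded in $Y$, the fixed point map is
\begin{equation*}
    \Phi(v)(t)=S(t)u_0\1_{\Gamma_n}+\int_0^tS(t-s)F_n(v(s))\ds+\int_0^tS(t-s)G_n(v(s))\dWHs .
\end{equation*}
Theorem \ref{thm:maximal-inequality} on $Y$ gives $\Phi(v)\in L^p(\Omega;C([0,T];Y))$ with a norm bound independent of $v$, using the uniform bound $L_n$ and the contractivity of $S$ on $Y$. For the contraction estimate in $X$, I would write $F_n(v)-F_n(w)$ and use Assumption \ref{ass:localWP}(a) on $B_n$. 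If the retraction were $X$-Lipschitz, the contraction would follow on a small interval and be iterated; if it fails to be, I would instead replace $\pi_n$ by any map $B\to B_n$ that is $X$-Lipschitz and equals the identity on $B_n$. One candidate is cutting off with a smooth scalar factor $\phi_n(\|y\|_Y)$ applied to $F,G$, but $X$-continuity of $\|\cdot\|_Y$ is also lacking, so the honest route is to appeal to the globally Lipschitz result as a black box for $F_n,G_n$ and to accept the verification of its hypotheses as the delicate point.

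Once $u_n\in L^p(\Omega;C([0,T];Y))$ is obtained as the mild solution with $F_n,G_n$ and datum $u_0\1_{\Gamma_n}$, the conclusion follows by comparison. By continuity, $\|u_n(t)\|_Y\le n$ on $[0,\sigma_n]$, so $\pi_n u_n=u_n$ there and hence $F_n(u_n)=F(u_n)$ and $G_n(u_n)=G(u_n)$ on $[0,\sigma_n]$. Because $\Gamma_n\in\F_0$, multiplying the stochastic integral by $\1_{\Gamma_n}$ commutes with the integral. Together with $u_0\1_{\Gamma_n}=u_0$ on $\Gamma_n$, evaluating the mild formula at $t\wedge\sigma_n$ gives the claimed identity for a.e.\ $\omega\in\Gamma_n$ and all $t\le\sigma_n(\omega)$. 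Finally, adaptedness of $u_n$ and the fact that $\sigma_n$ is a stopping time follow from path continuity in $Y$.
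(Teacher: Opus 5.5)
Your proposal has a genuine gap at exactly the step you flag. You never construct truncations $F_n,G_n$ that are globally Lipschitz with respect to $\|\cdot\|_X$. As a result, the hypotheses of the globally Lipschitz result of \cite{KliobaVeraar24Rate} (Lipschitz on $X$, linear growth on $Y$) are never verified, and calling this the ``delicate point'' does not close it.

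Your suspicion about the radial retraction is correct, and the failure is severe. Whenever the $Y$-norm is strictly stronger than the $X$-norm, $\pi_n$ is not even $X$-continuous. Take $\|y\|_Y=n$ and $h\in Y$ with $\|h\|_X=\varepsilon$ but $\|h\|_Y=M\to\infty$. Then $\pi_n(y+h)=\frac{n}{\|y+h\|_Y}(y+h)\to 0$ in $X$, while $\pi_n y=y\neq 0$.

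The direct Picard route does not rescue this. The iterates $\Phi(v)$ are controlled in $Y$ only in $L^p(\Omega)$, since the stochastic convolution is not pathwise bounded, so they do not stay in $B_n$. The contraction estimate in $L^p(\Omega;C([0,T];X))$ therefore needs an $X$-Lipschitz bound for $F_n,G_n$ on all of $Y$. Assumption \ref{ass:localWP}(a), which only holds on $B_n$, cannot supply this. A cutoff in $\|\cdot\|_Y$ fails for the reason you give yourself.

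The missing idea is to truncate with the nearest-point projection of the Hilbert space $X$ onto $B_n$, instead of a map built from the $Y$-norm.
\begin{itemize}
\item The ball $B_n$ is convex and closed in $X$. Since $Y$ is reflexive, $B_n$ is weakly compact in $Y$. The embedding is weak-to-weak continuous, so $B_n$ is weakly compact in $X$, and in particular norm closed there.
\item The metric projection $P_n\colon X\to B_n$ onto a nonempty closed convex subset of a Hilbert space satisfies $\|P_nx-P_ny\|_X\le\|x-y\|_X$. It is the identity on $B_n$ and takes values in $B_n\subseteq Y$.
\item $P_n$ is Borel measurable as a $Y$-valued map. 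This holds because $Y\hookrightarrow X$ is a continuous injection between separable Hilbert spaces, so Borel subsets of $Y$ are Borel in $X$.
\end{itemize}
Consequently $F_n\ce F\circ P_n$ and $G_n\ce G\circ P_n$ are globally $X$-Lipschitz with constant $C_n$ and bounded on $Y$ by $L_n$. Hence \cite[Thm.~4.4]{KliobaVeraar24Rate} applies with datum $u_0\1_{\Gamma_n}$.

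With $P_n$ in place of $\pi_n$, your concluding step goes through unchanged: $F_n(u_n)=F(u_n)$ and $G_n(u_n)=G(u_n)$ on $\Gamma_n\times[0,\sigma_n]$. This is the paper's argument. It is precisely where the Hilbert structure of $X$ and the reflexivity of $Y$ are used.
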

Here and in the following, unless stated otherwise, we use the convention $\inf \emptyset \ce T$.
\begin{proof}
    Since $Y$ is reflexive as a Hilbert space and $Y \hra X$ continuously, the balls $B_n$ are not only bounded in $Y$ but also closed in $X$ by \cite[Lemma~7.3]{Kato75}. Moreover, they are convex. Thus, we can find uniformly $1$-Lipschitz continuous projections $P_n:X \to X$ onto $B_n$ for $n \in \N$ by \cite{Phelps58convexSets} (see also \cite[Thm.~2.46]{FletcherMoors15unifLipProj}). Note, this requires $X$ to be a Hilbert space as well as reflexivity of $Y$ and $P_n$ are in general neither orthogonal projections nor linear mappings. 
	For $n \in \N$, define 
    \begin{equation*}
        F_n:X \to X,\quad F_n(x) \ce F(P_n(x)),\qquad G_n:X \to \LHX,\quad G_n(x) \ce G(P_n(x)).
    \end{equation*}
    Since $P_n(x)\in Y$ for all $x\in X$, these mappings are well-defined. Separability of $X$ and $Y$ and the continuity of the embedding $Y\hra X$ ensure Borel measurability of $P_n:X\to Y$ as a $Y$-valued mapping. Local Lipschitz continuity of $F$ together with Lipschitz continuity of $P_n$ implies global Lipschitz continuity of $F_n:X \to X$ for each $n \in \N$ via
	\begin{equation*}
	    \|F_n(x)-F_n(y)\|_X = \|F(P_n(x))-F(P_n(y))\|_X \le C_n\|P_n(x)-P_n(y)\|_X \le C_n\|x-y\|_X
	\end{equation*}
	for all $x,y \in X$. Similarly, local $Y$-boundedness of $F$ implies that $F_n$ is uniformly bounded on $Y$ and, in particular, of linear growth on $Y$ due to
	\begin{equation*}
	    \|F_n(x)\|_Y \le L_n \le L_n(1+\|x\|_Y)
	\end{equation*}
	for all $x\in Y$. Likewise, $G_n$ is globally Lipschitz on $X$ and of linear growth on $Y$.
    For $n \in \N$, we truncate the initial values by setting 
    \begin{equation*}
        u_{0,n} \ce u_0 \1_{\Gamma_n},\quad \Gamma_n \ce \Big\{\omega \in \Omega:\|u_0(\omega)\|_Y \le \frac{n}{2}\Big\}
    \end{equation*}
    such that $u_{0,n} \in L^\infty(\Omega;Y)$.

    Hence, for $n \in \N$ and $p \in [2,\infty)$, \cite[Thm.~4.4]{KliobaVeraar24Rate} provides the existence of unique global solutions $u_n\in L^p(\Omega;C([0,T];Y))$ to the equation 
	\begin{equation}
    \label{eq:SEEprojected} 
		\rmd u_n + A u_n\dt = F_n(u_n) \dt + G_n(u_n) \dWH \quad u_n(0)=u_{0,n}.
	\end{equation}
	By construction, $F$ and $F_n$ agree on $B_n$, as do $G$ and $G_n$. 
    Furthermore, $u_0=u_{0,n}$ on $\Gamma_n$. 
    By definition of $\sigma_n$, for almost all $\omega \in \Gamma_n$, we have $u_n(\omega,t)\in B_n$ for all $t\in [0,\sigma_n(\omega)]$, and therefore the mild solution $u_n$ to \eqref{eq:SEEprojected} satisfies the claimed mild formulation with $F$ and $G$ instead of $F_n$ and $G_n$, respectively, for all $(\omega,t)\in \Gamma_n\times[0,\sigma_n]$.
\end{proof}

	\subsection{Local uniqueness}
    \label{subsec:localUniqueness}
    
    Localising to bounded regions on which the nonlinearities are globally Lipschitz, the stopped local solutions become fixed points of the truncated fixed point functional. Iterating this procedure and taking the limit of the localisation parameter gives local uniqueness.
    
	\begin{lemma}[Local uniqueness]
		\label{lem:localUnique}
		  Suppose that Assumption \ref{ass:localWP} holds and let $p \in [2,\infty)$. Let $(u_1,\sigma^1)$ and $(u_2,\sigma^2)$ be local solutions of \eqref{eq:SEEsemilinear} with initial values $u_0^1,u_0^2 \in L_{\F_0}^p(\Omega;Y)$, respectively. 
		Then $u_1|_{[0,\sigma^1 \land \sigma^2)} = u_2|_{[0,\sigma^1 \land \sigma^2)}$ almost surely on $\Gamma\ce \{u_0^1=u_0^2\}\seq\Omega$.
	\end{lemma}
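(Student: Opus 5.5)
The plan is to localise both solutions to a $Y$-ball on which $F$ and $G$ are globally Lipschitz in the $X$-norm, and then run a Gronwall argument for the $X$-norm of the difference.

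\emph{Localisation.} Let $(\sigma^1_m)$ and $(\sigma^2_m)$ be localizing sequences for $\sigma^1$ and $\sigma^2$. For $n\in\N$ define
\[
\tau_n \ce \inf\{t\in[0,\sigma^1\land\sigma^2): \|u_1(t)\|_Y\ge n \text{ or } \|u_2(t)\|_Y\ge n\},
\]
and set $\mu_{m,n}\ce \sigma^1_m\land\sigma^2_m\land\tau_n$. Both paths are continuous in $Y$ on $[0,\sigma^i)$ and start at values in $Y$, so on $\Gamma\cap\{\|u_0^1\|_Y<n\}$ each $u_i$ stays in $B_n$ on $[0,\mu_{m,n})$. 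Multiplying by $\1_{\Gamma}$ is legitimate because $\Gamma\in\F_0$ commutes with the stochastic integral. On $\Gamma$ the initial data agree, and subtracting the two mild formulations gives, for $t\le\mu_{m,n}$,
\[
u_1(t)-u_2(t) = \int_0^t S(t-s)\big(F(u_1(s))-F(u_2(s))\big)\ds + \int_0^t S(t-s)\big(G(u_1(s))-G(u_2(s))\big)\dWHs .
\]

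\emph{Truncated fixed-point estimate.} Replace the integrands by $\1_{\Gamma}\1_{[0,\mu_{m,n}]}(s)(F(u_1(s))-F(u_2(s)))$ and the analogous $G$-term; these are progressively measurable. The stopped integrand is then bounded in $X$ by $C_n$ times $2n$ times the embedding constant, so all moments below are finite. Write $v(t)\ce \1_\Gamma(u_1-u_2)(t\land\mu_{m,n})$. Using Assumption \ref{ass:localWP}(a) on $B_n$, Theorem \ref{thm:maximal-inequality} with $X$-valued integrands for the stochastic convolution, and Hölder's inequality for the deterministic convolution, I obtain for $t\in[0,T]$
\[
\E\sup_{r\le t}\|v(r)\|_X^2 \le C_{n,T}\int_0^t \E\sup_{r\le s}\|v(r)\|_X^2\ds .
\]

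\emph{Main obstacle.} The difficulty is that the stopped mild formulation holds only up to $\mu_{m,n}$, and for $t>\mu_{m,n}$ the stopped process is not itself a stochastic convolution. I would handle this as in the fixed-point argument of \cite[Thm.~4.4]{KliobaVeraar24Rate}. Define $w(t)$ as the stochastic convolution with the truncated integrands. Then $v=w$ on $[0,\mu_{m,n}]$, and
\[
\sup_{r\le t}\|v(r)\|_X\le \sup_{r\le t}\|w(r)\|_X ,
\]
while the integrands of $w$ are controlled by $\|v(s)\|_X$. This closes the estimate, and Gronwall's inequality gives $v\equiv 0$.

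\emph{Conclusion.} Letting $m\to\infty$ and then $n\to\infty$, we have $\mu_{m,n}\nearrow\sigma^1\land\sigma^2$ on $\Gamma$. Here we use that $\tau_n\nearrow\sigma^1\land\sigma^2$ by continuity of the paths on the open interval, and that $\Gamma\cap\{\|u_0^1\|_Y<n\}\nearrow\Gamma$. This yields $u_1=u_2$ on $[0,\sigma^1\land\sigma^2)$ almost surely on $\Gamma$.
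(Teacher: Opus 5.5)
Your argument is correct. Its skeleton matches the paper: localise to $Y$-balls where $F,G$ are $X$-Lipschitz, control the difference in $L^2(\Omega;L^\infty(0,t;X))$ via Theorem~\ref{thm:maximal-inequality}, then remove the localisation. The closing step, however, is different.

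\textbf{The paper's route.} It forms the zero-extended processes $z_j=u_j^{\rho_n}\1_{\Gamma_n\times[0,\rho_n)}$. Using \cite[Lemma~A.1]{Brzezniak05} on stopped stochastic convolutions, it shows these are fixed points of the truncated functional built from the projected nonlinearities $F\circ P_n$, $G\circ P_n$. It then imports from \cite{KliobaVeraar24Rate} a contraction constant $C_n(T_0)<1$ for small $T_0$, and restarts the argument on successive intervals of length $T_0$.

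\textbf{Your route.} You freeze the difference at $\mu_{m,n}$ and compare it pathwise with the convolution $w$ of the truncated integrands, via $v(r)=w(r\land\mu_{m,n})$. You then close with Gronwall on all of $[0,T]$ at once. Since $C_{n,T}$ need not be small, nothing has to be restarted. Since $F,G$ are only evaluated inside $B_n$, you need neither the projections $P_n$ nor an external contraction estimate.

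\textbf{Comparison.} The paper's route lets it reuse an existing fixed-point estimate. Yours is self-contained and avoids the somewhat delicate restart at interval endpoints.

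\textbf{Points to tighten.}
\begin{itemize}
\item Justify the identity $v=w$ on $[0,\mu_{m,n}]$ explicitly. For each fixed deterministic $t$, the local property of the Itô integral gives $\int_0^t \1_{[0,\mu_{m,n}]}(s)S(t-s)\Phi(s)\dWHs=\int_0^t S(t-s)\Phi(s)\dWHs$ a.s.\ on $\{t\le\mu_{m,n}\}$. Path continuity of both sides then upgrades this to all $t$. Because you compare only at deterministic times, you never insert a random time into the convolution, which is what the paper needs Brzeźniak's lemma for.
\item Your displayed Gronwall inequality only holds after this comparison, so state it afterwards rather than before the ``main obstacle'' paragraph.
\item Using $\1_\Gamma$ instead of $\1_{\Gamma\cap\{\|u_0^1\|_Y<n\}}$ is harmless. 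On $\{\|u_0^1\|_Y\ge n\}$ you have $\tau_n=0$, so $\mu_{m,n}=0$ and $v\equiv w\equiv 0$ there on $\Gamma$.
\end{itemize}
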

	\begin{proof}
		Define $\sigma \ce \sigma^1 \land \sigma^2$ and for $j=1,2$ let $(\sigma_n^j)_{n\in\N}$ be localizing sequences for $\sigma^j$. Further, define $\rho_n^j \ce \inf\{t \in [0,\sigma_n^j]: \|u_j(t)\|_Y \ge n\}$ for $n \in \N$, $j=1,2$, where we set $\inf \emptyset \ce \sigma_n^j$, and let $\rho_n \ce \rho_n^1\land\rho_n^2$.
        Let $\Gamma_n \ce \Gamma \cap \{\|u_0^1\|_Y \le \frac{n}{2}\}$ for $n \in \N$, which by definition of $\Gamma$ also ensures $\|u_0^2\|_Y \le \frac{n}{2}$ a.s. By construction, $(\rho_n)_{n \in \N}$ is increasing, 
        $\rho_n \nearrow \sigma$,
        and $u_j \1_{\Gamma_n \times [0,\rho_n]} \in L^p(\Omega;L^\infty(0,T;Y))$ for $j=1,2$, $n \in \N$. 
		
		We want to determine a stopped mild solution up to time $\rho_n$. Denote the local solution by $(u,\sigma)$ and the initial values by $u_0$. Later we will apply this for $u_1$ and $u_2$ with initial values $u_0^1$ and $u_0^2$.
		Because $(u,\sigma)$ is a local solution, it solves the fixed point problem
		\begin{equation*}
		    u(t) = S(t)u_0 + \int_0^t S(t-s)F(u(s))\ds+ \int_0^t S(t-s)G(u(s))\dWHs\quad \text{a.s.\ for all }t \in [0,\sigma).
		\end{equation*}
		Let $T_0>0$ and for every progressively measurable, $Y$-valued $v$ for which the following convolutions are well-defined, consider the fixed point functional $L_{T_0}(v): [0,T_0] \to L^p(\Omega;X)$,
		\begin{align*}
			L_{T_0}(v) &\ce S(\cdot)u_0 + \int_0^\cdot S(\cdot-s)F(v(s))\ds+ \int_0^\cdot S(\cdot-s)G(v(s))\dWHs.
		\end{align*}
		Since as a local solution, $u$ is a fixed point only before time $\sigma$, $L_{T_0}(u) \neq u$ in general. However, the stopped solution $u^{\rho_n}\ce u(\cdot \land \rho_n)$ is a fixed point of the stopped functional, so almost surely
		\begin{equation*}
		    u^{\rho_n}(t)=(L_{T_0}(u^{\rho_n}))^{\rho_n}(t)\qquad (t \in [0,T_0]).
		\end{equation*}
		We claim that almost surely on $[0,T_0]$
		\begin{equation}
			\label{eq:stoppedFixedPt}
			u^{\rho_n} \1_{\Gamma_n \times [0,\rho_n)}=(L_{T_0}(u^{\rho_n}))^{\rho_n} \1_{\Gamma_n \times [0,\rho_n)}= L_{T_0}(u^{\rho_n}\1_{\Gamma_n \times [0,\rho_n)}) \1_{\Gamma_n \times [0,\rho_n)}.
		\end{equation}
        The first equality follows after multiplying both sides with the indicator function. We verify the second equality for all three terms of the fixed point functional separately. For the initial values, the indicator function ensures that $t \land \rho_n=t$ whenever the expression does not vanish. Thus, a.s.\ for $t\in [0,T_0]$
		\begin{equation*}
        \big((S(\cdot)u_0)^{\rho_n}\1_{\Gamma_n \times [0,\rho_n)}\big)(t) = S(t \land \rho_n)u_0 \1_{\Gamma_n \times [0,\rho_n)}(t) = S(t)u_0 \1_{\Gamma_n \times [0,\rho_n)}(t).
		\end{equation*}
		For the deterministic convolution, we observe
		\begin{align*}
			\Big( \int_0^\cdot S(\cdot-s)F(u^{\rho_n}(s))\ds\Big)^{\rho_n}\1_{\Gamma_n \times [0,\rho_n)}
			&= \int_0^{\cdot \land \rho_n} S(\cdot \land \rho_n-s)F(u^{\rho_n}(s))\ds\1_{\Gamma_n \times [0,\rho_n)}\\
			&= \int_0^{\cdot} S(\cdot-s)F((u^{\rho_n}\1_{\Gamma_n \times [0,\rho_n)})(s))\ds\1_{\Gamma_n \times [0,\rho_n)},
		\end{align*}
		where in the last equality we have used that $u^{\rho_n}(s)=(u^{\rho_n}\1_{\Gamma_n \times [0,\rho_n)})(s)$ for all $s \in [0,\rho_n)$ and either $\cdot\land \rho_n=\cdot$ or the whole expression vanishes.
		
		More caution is needed when stopping the stochastic convolution. We introduce the notation $I\Phi \ce \int_0^\cdot S(\cdot-s)\Phi(s)\dWHs$ for $\Phi$ such that the stochastic integral exists. For a stopping time $\tau$, one may not evaluate $(I\Phi)(t \land \tau)$ by naively replacing $t$ by $t\land \tau$ in the definition of $(I\Phi)(t)$, since the resulting integrand need not be adapted. Instead, the stopped stochastic convolution satisfies $(I\Phi)^\tau \1_{[0,\tau)}=I(\1_{[0,\tau)}\Phi^\tau)\1_{[0,\tau)}$ (see \cite[Lemma~A.1]{Brzezniak05}).  Hence, for $\Phi=G(u^{\rho_n})$ and $\tau=\rho_n$,
		\begin{equation*}
		    (I(G(u^{\rho_n})))^{\rho_n} \1_{\Gamma_n \times [0,\rho_n)} = I(\1_{\Gamma_n \times [0,\rho_n)}G( u^{\rho_n}))\1_{\Gamma_n \times [0,\rho_n)}
            =  I(G( u^{\rho_n}\1_{\Gamma_n \times [0,\rho_n)}))\1_{\Gamma_n \times [0,\rho_n)},
		\end{equation*}
        where we have used that $\Gamma_n\in\F_0$.
		Combining the equalities for the three terms the fixed point functional is composed of, we recover the second equality in \eqref{eq:stoppedFixedPt}.

    For $j=1,2$, set $z_j\ce u_j^{\rho_n}\1_{\Gamma_n\times[0,\rho_n)}$.
	Since $(u_1,\sigma^1)$ and $(u_2,\sigma^2)$ are local solutions, the above considerations imply
		\begin{align}
        \label{eq:z1z2diffProof}
			\|&z_1-z_2\|_{L^p(\Omega;L^\infty(0,T_0;X))}= \big\|\big(L_{T_0}^{(1)}(z_1)-L_{T_0}^{(2)}(z_2)\big)\1_{\Gamma_n \times [0,\rho_n)}\big\|_{L^p(\Omega;L^\infty(0,T_0;X))}\nonumber\\
            &= \big\|\big(L_{T_0,n}^{(1)}(z_1)-L_{T_0,n}^{(2)}(z_2)\big)\1_{\Gamma_n \times [0,\rho_n)}\big\|_{L^p(\Omega;L^\infty(0,T_0;X))}\nonumber\\
            &\le \Big\|\1_{\Gamma_n\times[0,\rho_n)} \int_0^\cdot S(\cdot-s)\big[F_n(z_1(s))-F_n(z_2(s))\big]\ds \Big\|_{L^p(\Omega;L^\infty(0,T_0;X))}\nonumber\\
		      &\phantom{\le }+ \Big\|\1_{\Gamma_n\times[0,\rho_n)} \int_0^\cdot S(\cdot-s)\big[G_n(z_1(s))-G_n(z_2(s))\big]\dWHs \Big\|_{L^p(\Omega;L^\infty(0,T_0;X))},
		\end{align}
        where $L_{T_0}^{(j)}$, $j=1,2$, denotes the fixed point functional with initial value $u_0^j$. By construction, $u_j^{\rho_n}\in B_n$ on $\Gamma_n \times [0,\rho_n)$, which allows us to replace $L_{T_0}^{(j)}$ by $L_{T_0,n}^{(j)}$ on $\Gamma_n \times [0,\rho_n)$ with $L_{T_0,n}^{(j)}$ defined as $L_{T_0}^{(j)}$ with the globally Lipschitz $F_n,G_n$ in place of $F, G$, respectively. Since $u_0^1=u_0^2$ on $\Gamma_n$, the initial value terms cancel above, so that only the two convolutions remain in the last inequality.  Then, we can trivially estimate the two remaining indicators in the last inequality by $1$. Denote by $L_{T_0,n}^{(0)}$ the fixed point functional with $F_n,G_n$ and initial values $u_0=0$.
        As an inspection of the proof of \cite[Thm.~4.3]{KliobaVeraar24Rate} shows, $L_{T_0,n}^{(0)}$ is a strict contraction for $T_0\le T$ sufficiently small, since there is a constant $C_n(T_0)\ge 0$ with $\lim_{T_0\searrow 0} C_n(T_0)=0$ such that for all progressively measurable $u,v\in L^p(\Omega;L^\infty(0,T_0;X))$
		\begin{equation*}
		    \|L_{T_0,n}^{(0)}(u)-L_{T_0,n}^{(0)}(v)\|_{L^p(\Omega;L^\infty(0,T_0;X))} \le C_n(T_0) \|u-v\|_{L^p(\Omega;L^\infty(0,T_0;X))}.
		\end{equation*}
	   More precisely, the proof gives that this estimate also applies to the sum of the norms of the two convolutions in \eqref{eq:z1z2diffProof} after removing the indicators. Thus,
		\begin{align*}
			&\|z_1-z_2\|_{L^p(\Omega;L^\infty(0,T_0;X))}
            \le C_n(T_0)\|z_1-z_2\|_{L^p(\Omega;L^\infty(0,T_0;X))}.
		\end{align*}
		In conclusion, for $T_0\le T$ sufficiently small such that $C_n(T_0)<1$, we have $u_1^{\rho_n}\1_{\Gamma_n \times [0,\rho_n)}=z_1=z_2= u_2^{\rho_n}\1_{\Gamma_n \times [0,\rho_n)}$ in $L^p(\Omega;L^\infty(0,T_0;X))$. 
        
        This argument can be repeated on successive intervals of length at most $T_0$ until $T$ is reached using the semigroup property and the path continuity of the local solutions before $\rho_n$ to restart at each interval endpoint. It yields
        \begin{equation*}
            u_1^{\rho_n}\1_{\Gamma_n \times [0,\rho_n)}= u_2^{\rho_n}\1_{\Gamma_n \times [0,\rho_n)}\text{ in }L^p(\Omega;L^\infty(0,T;X)).
        \end{equation*}
        As local solutions, $(u_1,\sigma^1)$ and $(u_2,\sigma^2)$ almost surely have continuous $Y$-, and thus $X$-valued paths. Hence, the above equality implies that a.s.\ on $\Gamma_n$, $u_1(t)=u_2(t)$ in $X$ for every $t\in [0,\rho_n)$. By injectivity of the embedding $Y \hra X$, this equality also holds with values in $Y$.  Finally, letting $n \to \infty$, we conclude $u_1=u_2$ almost surely on $\Gamma\times [0,\sigma)$. 
	\end{proof}
	
	\subsection{Blow-up criterion and local well-posedness}
    \label{subsec:blowupLocWP}

    The final step towards local well-posedness consists of a suitable blow-up criterion related to the supremum in time of the $Y$-norm of the solution.
	\begin{definition}
		Let $(u,\sigma)$ be a local solution of \eqref{eq:SEEsemilinear}. We call $\sigma$ an \emph{explosion time} for $u$ if for almost every $\omega \in \Omega$ with $\sigma(\omega)<T$, we have
		\begin{equation}
        \label{eq:explosionTimeBlowUp}
            \limsup_{t \nearrow \sigma(\omega)} \|u(t,\omega)\|_Y = \infty.
		\end{equation}
	\end{definition}
	
	\begin{lemma}[Blow-up criterion]
		\label{lem:blowUpExplosion}
		Suppose that Assumption \ref{ass:localWP} holds and let $p\in [2,\infty)$. Let $(u_1,\sigma^1)$ and $(u_2,\sigma^2)$ be local solutions of \eqref{eq:SEEsemilinear} with initial values $u_0^1,u_0^2 \in L_{\F_0}^p(\Omega;Y)$, respectively. Let $\sigma^1$ be an explosion time for $u_1$.
		
		Then $\sigma^1 \ge \sigma^2$ almost surely on $\Gamma\ce \{u_0^1=u_0^2\} \seq \Omega$. If, moreover, $\sigma^2$ is an explosion time for $u_2$, then $\sigma^1=\sigma^2$ and $u_1 = u_2$ almost surely on $\Gamma$ and $\Gamma\times[0,\sigma^1)$, respectively.
	\end{lemma}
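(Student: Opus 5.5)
The plan is to argue by contradiction on the event where the explosion time comes first. We use local uniqueness, Lemma \ref{lem:localUnique}, to identify $u_1$ and $u_2$ on $[0,\sigma^1\land\sigma^2)$, and path continuity of $u_2$ up to (strictly before) $\sigma^2$ to rule out blow-up of $u_1$ there. Set $\sigma\ce\sigma^1\land\sigma^2$. By Lemma \ref{lem:localUnique}, almost surely on $\Gamma$ we have $u_1=u_2$ on $[0,\sigma)$.

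Consider the event $E\ce\Gamma\cap\{\sigma^1<\sigma^2\}$. On $E$ we have $\sigma^1<\sigma^2\le T$, so $\sigma^1<T$. Since $\sigma^1$ is an explosion time, $\limsup_{t\nearrow\sigma^1}\|u_1(t)\|_Y=\infty$ almost surely on $E$.

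On the other hand, on $E$ the point $\sigma^1(\omega)$ lies strictly inside $[0,\sigma^2(\omega))$. The path $t\mapsto u_2(\omega,t)$ is continuous in $Y$ on $[0,\sigma^2(\omega))$ for almost every $\omega$, by the definition of a local solution. Hence it is bounded on the compact interval $[0,\sigma^1(\omega)]$. Because $u_1=u_2$ on $[0,\sigma^1(\omega))$ on $E$, this gives
\[
\limsup_{t\nearrow\sigma^1}\|u_1(t)\|_Y\le\sup_{t\le\sigma^1}\|u_2(t)\|_Y<\infty .
\]
This contradicts the explosion property, so $\P(E)=0$, that is, $\sigma^1\ge\sigma^2$ almost surely on $\Gamma$. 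The null sets from Lemma \ref{lem:localUnique}, from path continuity and from the explosion property have to be combined into a single exceptional set; since there are only countably many of them, this is harmless.

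For the second assertion, assume that $\sigma^2$ is also an explosion time. Exchanging the roles of $u_1$ and $u_2$ in the first part gives $\sigma^2\ge\sigma^1$ almost surely on $\Gamma$, hence $\sigma^1=\sigma^2$ on $\Gamma$. Lemma \ref{lem:localUnique} then yields $u_1=u_2$ on $\Gamma\times[0,\sigma^1)$.

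The only delicate point is the interplay of the limit $t\nearrow\sigma^1$ with the fact that $u_2$ is only defined and continuous on the half-open interval $[0,\sigma^2)$. This is exactly why the strict inequality $\sigma^1<\sigma^2$ on $E$ is needed: it places $\sigma^1$ in the interior of the domain of continuity of $u_2$. Beyond this, the argument is pathwise and requires no moment estimates.
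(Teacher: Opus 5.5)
Your proposal is correct and follows the paper's proof. Both argue by contradiction on $\Gamma\cap\{\sigma^1<\sigma^2\}$: bound $u_2$ on $[0,\sigma^1]$ via path continuity, transfer the bound to $u_1$ through Lemma~\ref{lem:localUnique}, contradict the explosion property, and obtain the second assertion by symmetry. The only difference is presentation: the paper routes the contradiction through the hitting times $\nu_n^j$ and a positive-probability set $\Gamma_{0,n}$ on which $\nu_{n+1}^1<\sigma^1<\nu_n^2$, whereas your direct pathwise comparison is equally valid and slightly shorter.
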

	\begin{proof}
		We follow the proof by contradiction of \cite[Lemma~8.2]{NVW3} based on \cite[Lemma~5.3]{Seidler93ExplosionTime}. Adopt the notation from the proof of Lemma \ref{lem:localUnique} and for $n\in\N$ let
        \begin{equation*}
            \nu_n^j \ce \inf\{t\in [0,\sigma^j): \|u_j(t)\|_Y\ge n\},\quad \inf \emptyset \ce \sigma^j,\,j=1,2.
        \end{equation*}
		Suppose that $\Gamma_0 \ce \Gamma \cap \{\sigma^1<\sigma^2\}$ has positive probability. Then as a local solution, the path $u_2(\omega,\cdot)$ is continuous and thus bounded on $[0,\sigma^1(\omega)]\seq [0,\sigma^2(\omega))$ for a.e.\ $\omega \in \Gamma_0$.       
        Thus, by definition of $\nu_n^2$, almost surely $ \Gamma_0=\bigcup_{n\in\N} (\Gamma_0 \cap \{\sigma^1 <\nu_n^2\} \cap \{\|u_0^1\|_Y\le n\})$. Hence, we can find an $n \in \N$ such
		that 
        \begin{equation*}
            \P(\Gamma_{0,n})>0,\quad \Gamma_{0,n}\ce \Gamma_0 \cap \{\sigma^1 <\nu_n^2\} \cap \{\|u_0^1\|_Y\le n\}.
        \end{equation*}
        On $\Gamma_{0,n}$, since $\sigma^1$ is an explosion time for $u_1$, we conclude recalling the definition of $\nu_{n+1}^1$ that
        \begin{equation}
        \label{eq:smallerHittingTimes}
            \nu_{n+1}^1(\omega)<\sigma^1(\omega)<\nu_n^2(\omega)\qquad (\text{a.e.}\,\omega \in \Gamma_{0,n}).
        \end{equation}
        On the one hand, \eqref{eq:smallerHittingTimes} implies $\|u_2(\omega,\nu_{n+1}^1(\omega))\|_Y<n$ for a.e.\ $\omega \in \Gamma_{0,n}$ by definition of $\nu_n^2$. On the other hand, local uniqueness from Lemma \ref{lem:localUnique} gives $u_1(\omega,\nu_{n+1}^1(\omega))=u_2(\omega,\nu_{n+1}^1(\omega))$ for a.e.\ $\omega \in \Gamma_{0,n}$ by the first inequality in \eqref{eq:smallerHittingTimes}. Because $\sigma^1$ is an explosion time and $u_1$ almost surely has continuous paths, we have $\|u_1(\omega,\nu_{n+1}^1(\omega))\|_Y=n+1$ for a.e.\ $\omega \in \Gamma_{0,n}$. In summary,
        \begin{align*}
            n+1 = \|u_1(\omega,\nu_{n+1}^1(\omega))\|_Y = \|u_2(\omega,\nu_{n+1}^1(\omega))\|_Y <n,
        \end{align*}
        which is a contradiction. Hence $\sigma^1 \ge \sigma^2$ a.s.\ on $\Gamma$.
        By symmetry, $\sigma^1=\sigma^2$ a.s.\ on $\Gamma$ if $\sigma^2$ is also an explosion time. The second assertion is now immediate from Lemma \ref{lem:localUnique}.
	\end{proof}
	From the previous  subsections and the last lemma, we can conclude local well-posedness in the sense of existence and uniqueness of a maximal local solution to \eqref{eq:SEEsemilinear}. 
	
	\begin{theorem}
		\label{thm:localWellPosed}
		Suppose that Assumption \ref{ass:localWP} is satisfied. Let $p \in [2,\infty)$ and $u_0 \in L_{\F_0}^p(\Omega;Y)$. 
		Then there exists a unique maximal local solution $(u,\sigma)$ of \eqref{eq:SEEsemilinear} and $\sigma$  is an explosion time for $u$.
	\end{theorem}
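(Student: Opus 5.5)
We construct the maximal solution by gluing the truncated solutions from Lemma \ref{lem:locSolun}. For each $n\in\N$, Lemma \ref{lem:locSolun} provides $u_n\in L^p(\Omega;C([0,T];Y))$ solving \eqref{eq:SEEsemilinear} on $\Gamma_n\times[0,\sigma_n]$, with $\sigma_n$ as in \eqref{eq:defSigmaN} and $\Gamma_n=\{\|u_0\|_Y\le n/2\}$. On $\Gamma_n$ the pair $(u_n,\sigma_n)$ is a stopped solution. Applying the argument of Lemma \ref{lem:localUnique} to $u_n$ and $u_m$ for $m\ge n$ on $\Gamma_n\subseteq\Gamma_m$ should give $u_n=u_m$ on $\Gamma_n\times[0,\sigma_n\wedge\sigma_m]$. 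Here the localising stopping times $\rho_n$ are replaced by hitting times of level $n$, and the proof goes through verbatim because only membership in $B_n$ before the stopping time is used. Since $\|u_m\|_Y<n\le m$ before $\sigma_n$ and the paths are continuous, this yields $\sigma_n\le\sigma_m$ on $\Gamma_n$. Consequently the definitions
\begin{equation*}
    \sigma\ce\lim_{n\to\infty}\sigma_n\1_{\Gamma_n},\qquad u(t)\ce u_n(t)\ \text{ on }\Gamma_n\times[0,\sigma_n)
\end{equation*}
are consistent. They are well defined a.s.\ because $\Gamma_n\nearrow\Omega$ and $u_0\in L^p(\Omega;Y)$ is a.s.\ finite. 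Adaptedness and continuity of $u$ on $[0,\sigma)$ are inherited from the $u_n$.

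Next we check that $(u,\sigma)$ is a local solution with a localising sequence. We would take $\tilde\sigma_n\ce\sigma_n\1_{\Gamma_n}$, possibly modified to $\inf\{t:\|u(t)\|_Y\ge n-1\}$ so that $\tilde\sigma_n<\sigma$ on $\{\sigma<T\}$. Strictness holds because $\sigma_n<\sigma_{n+1}$ whenever $\sigma_n<T$, by path continuity: the path must still climb from level $n$ to level $n+1$. Positivity $\sigma>0$ a.s.\ follows since on $\Gamma_n$ we have $\|u_n(0)\|_Y\le n/2<n$, so continuity gives $\sigma_n>0$. Conditions (i)--(ii) of Definition \ref{def:localSolution} hold because $F(u)$ and $G(u)$ are bounded by $L_n$ before $\tilde\sigma_n$, and (iii) is the mild identity from Lemma \ref{lem:locSolun}. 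Care is needed with the stochastic integral on the random sets $\Gamma_n$; since $\Gamma_n\in\F_0$, the indicator can be moved inside the integral, as in the proof of Lemma \ref{lem:localUnique}.

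The explosion property is the main point. On $\{\sigma<T\}$, for every $n$ either $\omega\notin\Gamma_n$, which fails for all large $n$, or $\sigma_n<T$, in which case $\|u(\sigma_n)\|_Y=n$ by continuity. Since $\sigma_n<\sigma$, this gives $\limsup_{t\nearrow\sigma}\|u(t)\|_Y=\infty$. Maximality then follows from Lemma \ref{lem:blowUpExplosion} with $u_1=u$ and $u_2=v$ any other local solution with the same initial value, so $\Gamma=\Omega$: it yields $\nu\le\sigma$, and Lemma \ref{lem:localUnique} yields $v=u|_{[0,\nu)}$. Uniqueness of the maximal solution is immediate from maximality. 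The step we expect to be most delicate is the consistency of $u_n$ and $u_m$ and the monotonicity $\sigma_n\le\sigma_m$. Lemma \ref{lem:localUnique} is formulated for genuine local solutions, whereas $u_n$ is only a solution on $\Gamma_n\times[0,\sigma_n]$. We would therefore rerun its contraction argument directly with the stopping time $\sigma_n\wedge\sigma_m$ and the projected nonlinearities $F_m,G_m$, which agree with $F,G$ on $B_n$.
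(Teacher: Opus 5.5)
Your proposal is correct and follows essentially the same route as the paper: gluing the $u_n$ via the compatibility $u_n=u_m$ and $\sigma_n\le\sigma_m$ on the smaller $\Gamma$-set (obtained by rerunning the argument of Lemma \ref{lem:localUnique}), explosion via the identity $\|u_n(\sigma_n)\|_Y=n$ on $\{\sigma_n<T\}$, and maximality via Lemma \ref{lem:blowUpExplosion} combined with Lemma \ref{lem:localUnique}. The paper argues the explosion property by contradiction, whereas you argue directly along $\sigma_n\nearrow\sigma$ with $\sigma_n<\sigma$; this is only a cosmetic difference, and your checks of the localising sequence (strictness $\sigma_n<\sigma_{n+1}$) and of $\sigma>0$ fill in details the paper leaves implicit.
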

	
	\begin{proof}
		We follow the proof of \cite[Theorem~8.1(1)]{NVW3}. By Lemma \ref{lem:locSolun}, for every $n\in\N$ there exists $u_n\in L^p(\Omega;C([0,T];Y))$ such that $u_n$ solves the original equation \eqref{eq:SEEsemilinear} on $\Gamma_n\times[0,\sigma_n]$ with $\Gamma_n, \sigma_n$ as in Lemma \ref{lem:locSolun}. These solutions satisfy the following compatibility condition: For $m \le n$, we have $u_m=u_n$ on $\Gamma_m \times [0,\sigma_m]$ a.s.\ and $\sigma_m \le \sigma_n$ a.s.\ on $\Gamma_m$. Indeed, the first claim on $\Gamma_m \times [0,\sigma_m \land \sigma_n]$ follows by local uniqueness arguing as in Lemma \ref{lem:localUnique}. It follows on $\Gamma_m \times [0,\sigma_m]$ via the second claim. To show the latter, suppose that $\sigma_n<\sigma_m$ on a subset of $\Gamma_m$ of positive probability. Then $\sigma_n<T$ and thus $\|u_n(\sigma_n)\|_Y=n$ a.s.\ on this set. However, since $u_m(\sigma_n)=u_n(\sigma_n)$ and $\sigma_n<\sigma_m$ a.s.\ on this set, 
        \begin{equation*}
            n=\|u_n(\sigma_n)\|_Y=\|u_m(\sigma_n)\|_Y<m \le n,
        \end{equation*}
        which is a contradiction. 
        
        As a consequence of $\Gamma_m\seq\Gamma_n$ and $\sigma_m\le\sigma_n$ on $\Gamma_m$ a.s., the sequence $(\sigma_n\1_{\Gamma_n})_{n\in\N}$ is increasing.
        Due to the compatibility condition, $\Gamma_n \nearrow \Omega$ as $n \to \infty$, and the increasing sequence,  setting
        \begin{equation*}
            u(\omega,t) \ce u_n(\omega,t) \text{ for }\omega\in \Gamma_n,\,t<\sigma_n(\omega),\quad \sigma \ce \lim_{n \to \infty} \sigma_n\1_{\Gamma_n}
        \end{equation*}
        produces a well-defined local solution $(u,\sigma)$ to \eqref{eq:SEEsemilinear}. The adaptedness, a.s.\ continuous paths in $Y$, and the local mild formulation are inherited from $u_n$, cf.\ Lemma \ref{lem:locSolun}.

        Next, we show that $\sigma$ is an explosion time for $u$. Suppose that $\sigma<T$ and $\limsup_{t\nearrow \sigma} \|u(t)\|_Y<\infty$ on a set $\Omega_0$ of positive probability. By a similar construction as in the proof of Lemma \ref{lem:blowUpExplosion}, there are $n\in \N$ and a set $\Omega_{0,n}\subseteq \Omega_0\cap\Gamma_n$ of positive probability such that $\sup_{t<\sigma(\omega)} \|u(\omega,t)\|_Y<n$ for a.e.\ $\omega \in \Omega_{0,n}$. By construction of the set, $\sigma_n\ge \sigma$ a.s.\ on $\Omega_{0,n}$ and thus $\sigma_n=\sigma<T$ a.s.\ on $\Omega_{0,n}$. But by definition of $\sigma_n$ and continuity of paths, we have $\|u_n(\omega,\sigma_n(\omega))\|_Y=n$ for a.e.\ $\omega \in \Omega_{0,n}$. This is a contradiction to $\|u_n(\omega,\sigma_n(\omega))\|_Y \le \sup_{t<\sigma(\omega)} \|u(\omega,t)\|_Y<n$, which follows from $u_n(\omega,\sigma_n(\omega))=\lim_{t \nearrow \sigma(\omega)} u_n(\omega,t)=\lim_{t \nearrow \sigma(\omega)} u(\omega,t)$.

        Finally, let $(v,\tau)$ also be a local solution of \eqref{eq:SEEsemilinear} with the same initial value $u_0$. Because $\sigma$ is an explosion time for $u$, Lemma \ref{lem:blowUpExplosion} implies $\sigma \ge \tau$ a.s., so that by local uniqueness (cf.\ Lemma \ref{lem:localUnique}), $u=v$ on $\Omega \times [0,\tau)$ almost surely. In conclusion, $(u,\sigma)$ is the unique maximal solution.
	\end{proof}

\section{Global well-posedness}
\label{sec:globalWP}	

Global well-posedness is established under an additional coercivity assumption via a stochastic Gronwall argument in Theorem~\ref{thm:aprioriY}. Moreover, pathwise uniform a priori estimates are obtained in $Y$. We use the following notion of global solutions.
	\begin{definition}
		A local solution $(u(t))_{t\in[0,\sigma)}$ of \eqref{eq:SEEsemilinear} is called a \emph{global (mild) solution} of \eqref{eq:SEEsemilinear} if $\sigma=T$ almost surely and $u$ admits an extension $\hat{u}: \Omega \times [0, T] \to Y$ that almost surely has continuous $Y$-valued paths and satisfies a.s.\ for all $t\in[0,T]$
        \begin{equation}
        \label{eq:mildSolutionFormula}
            u(t)= S(t)u_0+\int_0^t S(t-s)F(u(s))\ds+\int_0^t S(t-s)G(u(s))\dWHs.
        \end{equation}
	\end{definition}

    Not only to prove global well-posedness but also for stability and convergence rates, an Itô formula for the $p$-th power of the norm of $u(t)$ is needed. However, as a mild solution, $u$ is not necessarily a semimartingale, prohibiting a straightforward application of Itô's formula. Instead, we prove an Itô-type inequality for mild solutions of hyperbolic SPDEs in Lemma \ref{lem:ItoFormulaApprox}. A similar inequality has, under different integrability assumptions, been shown in \cite[Theorem~6]{SalavatiZangeneh16}. We prove an $L^0$-version below based on Yosida approximations. Contractivity of the underlying semigroup is used to obtain an upper bound independent of its generator. First, we show a pointwise a priori estimate via a standard localisation procedure.  
	
	\begin{lemma}
		\label{lem:supuBddLocalization}
		Let $Z$ be a separable Hilbert space, $T>0$, $u_0\in L_{\F_0}^0(\Omega;Z)$, and let 
        \begin{equation*}
            f \in L^0(\Omega;L^1(0,T;Z)),\quad g \in L^0(\Omega;L^2(0,T;\LHZ))
        \end{equation*}
        be progressively measurable. Suppose that $-A$ generates a $C_0$-contraction semigroup $(S(t))_{t \ge 0}$ on $Z$ and suppose that a.s.\
		\begin{equation*}
		    u(t) = S(t)u_0 + \int_0^t S(t-s)f(s)\ds + \int_0^t S(t-s)g(s)\dWHs,\quad t \in [0,T].
		\end{equation*}
		Then $u$ has an almost surely continuous $Z$-valued modification. In particular, almost surely
		\begin{align}
			\label{eq:supuBddAs}
			\sup_{t\in [0,T]} \|u(t)\|_Z <\infty. 
		\end{align} 
	\end{lemma}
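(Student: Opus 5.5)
We localise in $\omega$ so that the maximal inequality of Theorem \ref{thm:maximal-inequality} applies, and treat the three terms of the mild formula separately. The initial value term $t\mapsto S(t)u_0$ is continuous for every $\omega$ by strong continuity of the semigroup, with $\sup_t\|S(t)u_0\|_Z\le\|u_0\|_Z<\infty$ by contractivity. The deterministic convolution $t\mapsto\int_0^t S(t-s)f(s)\ds$ is continuous for a.e.\ $\omega$, since $f(\omega)\in L^1(0,T;Z)$. To see this, approximate $f(\omega)$ in $L^1(0,T;Z)$ by step functions; for these the convolution is continuous by strong continuity of $S$, and contractivity gives the uniform bound $\sup_t\|\int_0^t S(t-s)h(s)\ds\|_Z\le\|h\|_{L^1(0,T;Z)}$, so uniform limits remain continuous. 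The same bound gives finiteness of the supremum.

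The stochastic convolution is the main obstacle, because $g$ is only in $L^0(\Omega;L^2(0,T;\LHZ))$ and the estimates are stated for $L^p$-integrands. We localise with the stopping times
\begin{equation*}
\tau_n\ce\inf\Big\{t\in[0,T]:\int_0^t\|g(s)\|_\LHZ^2\ds\ge n\Big\},\quad\inf\emptyset\ce T,
\end{equation*}
and set $g_n\ce g\1_{[0,\tau_n]}$, which is progressively measurable and satisfies $\|g_n\|_{L^2(0,T;\LHZ)}\le\sqrt n$ pointwise in $\omega$. By Theorem \ref{thm:maximal-inequality} with $\lambda=0$ and $p=2$, the stochastic convolutions $I g_n$ are bounded in $L^2(\Omega;L^\infty(0,T;Z))$.

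Continuity of $I g_n$ would also follow from \cite{HausSei}: stochastic convolutions with contraction semigroups on Hilbert spaces admit continuous versions. If one wants to derive it from the maximal inequality alone, one can argue as follows. Approximate $g_n$ in $L^2(\Omega;L^2(0,T;\LHZ))$ by elementary progressive processes $g_{n,m}$ with values in finite-rank operators into $D(A)$. For these, $Ig_{n,m}$ has continuous paths, which can be checked directly or via the factorisation $S(t-s)=S(t)S(-s)$ on a unitary dilation. The maximal inequality then gives $Ig_{n,m}\to Ig_n$ in $L^2(\Omega;L^\infty(0,T;Z))$, so $Ig_n$ has a continuous version.

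Finally, on $\{\tau_n=T\}$ we have $Ig_n=Ig$ almost surely. This follows from the localisation identity for stopped stochastic convolutions cited in the proof of Lemma \ref{lem:localUnique} (\cite[Lemma~A.1]{Brzezniak05}), since $g_n=g$ on $[0,\tau_n]$. Because $\P(\tau_n=T)\to1$ as $n\to\infty$, the events $\{\tau_n=T\}$ exhaust $\Omega$ up to a null set. We therefore define the continuous modification of $Ig$ by the continuous version of $Ig_n$ on $\{\tau_n=T\}\setminus\{\tau_{n-1}=T\}$; these pieces are consistent, since on $\{\tau_m=T\}$ with $m\le n$ both $Ig_m$ and $Ig_n$ agree with $Ig$ almost surely. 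Adding the three continuous terms gives a continuous $Z$-valued modification of $u$. Since a continuous function on the compact interval $[0,T]$ is bounded, \eqref{eq:supuBddAs} follows.
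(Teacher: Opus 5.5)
Your proof is correct, and its core is the same as the paper's. Both localise by stopping times so that Theorem \ref{thm:maximal-inequality} applies, use that the localised stochastic convolution has a continuous version, and paste these versions together on increasing events of the form $\{\tau_n=T\}$, which exhaust $\Omega$ up to a null set.

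The difference is in the decomposition. The paper localises all data at once: it uses $\Gamma_n=\{\|u_0\|_Z\le \frac n2\}\in\F_0$ and a single stopping time for $\|f\|_{L^1(0,t;Z)}+\|g\|_{L^2(0,t;\LHZ)}$. It then applies the maximal inequality to the whole localised mild formula, obtaining $\|\sup_{t}\|u_n(t)\|_Z\|_{L^2(\Omega)}\le 3Cn$, and pastes the $u_n$ together via $n_0(\omega)$.

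You instead treat $S(\cdot)u_0$ and the deterministic convolution pathwise, using strong continuity, contractivity and density of step functions in $L^1(0,T;Z)$. This shows that localising $u_0$ and $f$ is unnecessary. It also isolates the only genuinely stochastic input: continuity of $Ig_n$ for integrands in $L^2(\Omega;L^2(0,T;\LHZ))$.

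You are also more explicit about that input than the paper. The paper asserts that $u_n$ has a continuous version ``in particular'' right after the $L^2$-bound, but a moment bound on the supremum does not by itself give continuity. That continuity comes from \cite{HausSei}, as you note. The paper's route is shorter and yields a quantitative bound for the whole localised process in one step. Yours is more elementary for the deterministic terms and makes the logical dependencies clearer.

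One small precision applies if you use your approximation argument instead of citing \cite{HausSei}. Phrase the limit step so that it does not presuppose a measurable version of $\sup_t\|Ig_n(t)\|_Z$. Apply the maximal inequality to the differences $I(g_{n,m}-g_{n,m'})$, which are continuous, to get a Cauchy sequence in $L^2(\Omega;C([0,T];Z))$. Then identify its limit with $Ig_n(t)$ for each fixed $t$ via Itô's isometry.
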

	\begin{proof}
		We show the claim via a standard localization procedure. For $n\in \N$, define $\Gamma_n \ce \{\omega \in \Omega:\|u_0(\omega)\|_Z \le \frac{n}{2}\}$ and the stopping time
		\begin{equation*}
		    \tau_n \ce \inf\{t \in [0,T]: \|f\|_{L^1(0,t;Z)}+\|g\|_{L^2(0,t;\LHZ)}\ge n\}.
		\end{equation*}
        Let $u_{0,n} \ce \1_{\Gamma_n}u_0$, $f_n\ce  \1_{\Gamma_n \times [0,\tau_n]}f$, $g_n \ce \1_{\Gamma_n \times [0,\tau_n]}g$, and 
        \begin{equation*}
            u_n(t) \ce S(t)u_{0,n} + \int_0^t S(t-s)f_n(s)\ds + \int_0^t S(t-s)g_n(s)\dWHs.
        \end{equation*}
		We have $u_{0,n} \in L^2(\Omega;Z)$, $f_n \in L^2(\Omega; L^1(0,T;Z))$, and $g_n \in L^2(\Omega; L^2(0,T;\LHZ))$ with the respective norms bounded by $n$ due to the localization. Hence, the maximal inequality of Theorem \ref{thm:maximal-inequality} yields
		\begin{align*}
			\bigg\|\sup_{t\in [0,T]}\|u_n(t)\|_Z\bigg\|_{L^2(\Omega)} &\le C\big(\|u_{0,n}\|_{L^2(\Omega;Z)} + \|f_n\|_{L^2(\Omega;L^1(0,T;Z))}\\
            &\phantom{\le C \big( }+ \|g_n\|_{L^2(\Omega;L^2(0,T;\LHZ))}\big)\le 3C n.
		\end{align*}
        In particular, $u_n$ admits an a.s.\ continuous $Z$-valued version.
        
        For all $\omega \in \bigcup_{n\in\N} (\Gamma_n\cap \{\tau_n=T\})$, define
        $n_0(\omega)\ce\inf\{n\in\N: \omega \in \Gamma_n \land \tau_n(\omega)=T\}$  and for $t\in [0,T]$ set $\tilde{u}(\omega,t)\ce u_{n_0(\omega)}(\omega,t)$ and $\tilde{u}(\omega,t)=0$ for $\omega$ in the remaining null set. By construction, $u_{0,n},f_n,g_n$ agree with the original data on $(\Gamma_n \cap \{\tau_n=T\})\times[0,T]$. Hence, $\tilde{u}(t)=u(t)$ for all $t \in [0,T]$ a.s., so that $\tilde{u}$ is a modification of $u$ with a.s.\ continuous paths in $Z$. Moreover, for a.e.\ $\omega \in \Omega$, we have
        \begin{align*}
            \sup_{t\in[0,T]}\|\tilde{u}(\omega,t)\|_Z &= \sup_{t\in[0,\tau_{n_0(\omega)}(\omega)]} \|u_{n_0(\omega)}(\omega,t)\|_Z 
            = \sup_{t\in[0,T]}\|u_{n_0(\omega)}(\omega,t)\|_Z
            <\infty,
        \end{align*}
        where we have used that the $L^2(\Omega)$-estimate above implies almost sure finiteness. The claim follows after replacing $u$ by its modification $\tilde{u}$. 
	\end{proof} 
	
	\begin{lemma}
		\label{lem:ItoFormulaApprox}
		Let $Z$ be a real or complex separable Hilbert space, $H$ a real separable Hilbert space, $T>0$, 
        \begin{equation*}
            f \in L^0(\Omega;L^1(0,T;Z))\text{ and }g \in L^0(\Omega;L^2(0,T;\LHZ))
        \end{equation*} 
        be progressively measurable, and $u_0\in L_{\F_0}^0(\Omega;Z)$. Suppose that $-A$ generates a $C_0$-contraction semigroup $(S(t))_{t \ge 0}$ on $Z$ and let a.s.\
		\begin{equation*}
		u(t) = S(t)u_0 + \int_0^t S(t-s)f(s)\ds + \int_0^t S(t-s)g(s)\dWHs,\quad t \in [0,T].
		\end{equation*}
		Then for all $p\in[2,\infty)$ almost surely, for all $t \in [0,T]$ 
		\begin{align}
			\label{eq:lemmaItoApprox}
			\|u(t)\|_Z^p &\le \|u_0\|_Z^p + p \int_0^t \|u(s)\|_Z^{p-2} \Re\la u(s),f(s)\ra_Z \ds+ p \int_0^t \|u(s)\|_Z^{p-2} g^*(s)u(s)\dWHs \nonumber\\
			&\phantom{\le }+ \frac{p(p-2)}{2} \int_0^t \1_{\{u(s)\neq 0\}}\|u(s)\|_Z^{p-4} \|g^*(s)u(s)\|_H^2\ds + \frac{p}{2} \int_0^t \|u(s)\|_Z^{p-2} \|g(s)\|_\LHZ^2\ds. 
		\end{align}
		In particular, for all $p\in[2,\infty)$ almost surely, for all $t \in [0,T]$ 
		\begin{align}
			\label{eq:lemmaItoApproxSimplified}
			\|u(t)\|_Z^p &\le \|u_0\|_Z^p + p \int_0^t \|u(s)\|_Z^{p-2} \Re \la u(s),f(s)\ra_Z \ds+ p \int_0^t \|u(s)\|_Z^{p-2} g^*(s)u(s)\dWHs \nonumber\\
			&\phantom{\le }+ \frac{p(p-1)}{2}  \int_0^t \|u(s)\|_Z^{p-2} \|g(s)\|_\LHZ^2\ds. 
		\end{align}
        Here, $g(s)^*z$ for $z\in Z$ denotes the real adjoint given by $\la g(s)^*z,h\ra_H= \Re \la z,g(s)h\ra_Z$ for all $h\in H$.
	\end{lemma}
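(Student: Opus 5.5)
We approximate the generator by Yosida approximations. Let $A_n \ce nA(n+A)^{-1}$ denote the Yosida approximations, which are bounded operators on $Z$. Since $-A$ generates a contraction semigroup, $-A_n$ generates a uniformly continuous contraction semigroup $S_n(t)=e^{-tA_n}$ with $S_n(t)z\to S(t)z$ uniformly on $[0,T]$ for every $z\in Z$. Define
\begin{equation*}
    u_n(t) \ce S_n(t)u_0+\int_0^t S_n(t-s)f(s)\ds+\int_0^t S_n(t-s)g(s)\dWHs .
\end{equation*}
Since $A_n$ is bounded, $u_n$ is a strong solution of $\rmd u_n = (-A_nu_n+f)\dt+g\dWH$, and in particular a genuine $Z$-valued semimartingale.

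The first step is to apply the classical Itô formula to $\phi(z)=\|z\|_Z^p$. This function is $C^2$ for $p\ge 2$, with the usual care at $z=0$ when $p<4$; this is handled by the indicator $\1_{\{u\neq0\}}$ or by using $(\|z\|^2+\varepsilon)^{p/2}$ and letting $\varepsilon\to0$. The Itô formula yields exactly the right-hand side of \eqref{eq:lemmaItoApprox} with $u_n$ in place of $u$, plus the additional drift term
\begin{equation*}
    -p\int_0^t\|u_n\|^{p-2}\Re\la u_n,A_nu_n\ra_Z\ds .
\end{equation*}
Contractivity of $S_n$ means that $A_n$ is accretive, i.e.\ $\Re\la z,A_nz\ra\ge0$ (Lumer--Phillips). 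Hence this term is nonpositive and can be dropped, giving the inequality for $u_n$. The second-order terms come out as stated: the Hessian of $\|z\|^p$ applied to $g e_k$ and summed over $k$ gives $p\|u\|^{p-2}\|g\|^2_{\calL_2}+p(p-2)\|u\|^{p-4}\|g^*u\|_H^2$, each multiplied by $\frac12$. The complex case is covered by viewing $Z$ as a real Hilbert space with inner product $\Re\la\cdot,\cdot\ra$, which is why the real adjoint $g^*$ appears.

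The main obstacle is passing to the limit $n\to\infty$ with only $L^0$-integrability. We would localise as in the proof of Lemma~\ref{lem:supuBddLocalization}, with $\Gamma_m=\{\|u_0\|\le m\}$ and stopping times $\tau_m$ at which $\|f\|_{L^1(0,t)}+\|g\|_{L^2(0,t)}$ or $\sup_{s\le t}\|u(s)\|$ reaches $m$. Lemma~\ref{lem:supuBddLocalization} guarantees that $\tau_m\nearrow T$ almost surely. On $\Gamma_m\times[0,\tau_m]$ the data are bounded, so the maximal inequality (Theorem~\ref{thm:maximal-inequality}, applied uniformly in $n$ since each $S_n$ is contractive) together with dominated convergence gives $\sup_{t}\|u_n-u\|_Z\to0$ in $L^2(\Omega)$ along the localised data. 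For the stochastic convolution we would use the standard factorisation or strong-convergence argument to obtain this uniform-in-time convergence. Passing to a subsequence, the convergence holds almost surely uniformly on $[0,T]$.

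We then pass to the limit in each term of the localised inequality. In the Lebesgue integrals we use dominated convergence, since $\|u_n\|$ is bounded pathwise along the subsequence. The stochastic integral is handled by the localised BDG inequality (Proposition~\ref{prop:localizedBDG}): we have $\|u_n\|^{p-2}g^*u_n\to\|u\|^{p-2}g^*u$ in $L^0(\Omega;L^2(0,T;H))$, so the stochastic integrals converge in $L^0(\Omega;C([0,T]))$. Taking a further subsequence, all terms converge almost surely uniformly in $t$, and the inequality holds for $u$ on $\Gamma_m\times[0,\tau_m]$. Letting $m\to\infty$ gives \eqref{eq:lemmaItoApprox}.

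Finally, \eqref{eq:lemmaItoApproxSimplified} follows from $\|g^*u\|_H\le\|g\|_{\LHZ}\|u\|_Z$, which gives
\begin{equation*}
    \frac{p(p-2)}{2}+\frac p2=\frac{p(p-1)}{2}.
\end{equation*}
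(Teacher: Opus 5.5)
Your argument is correct, but it takes a different route from the paper. You approximate the \emph{generator}: $u_n$ is the mild solution for $S_n(t)=e^{-tA_n}$ with the original data $u_0,f,g$, and it is a semimartingale because $A_n$ is bounded. The paper instead regularises the \emph{solution}. It takes $u_n=T_nu$ with $T_n=nR(n,-A)$, which is the mild solution for the original $S$ with data $T_nu_0,T_nf,T_ng$ valued in $D(A)$. Hence $u_n$ is a strong solution for $A$ itself, and the paper drops $\Re\la -Au_n,u_n\ra_Z\le 0$. This makes the limit passage essentially free:
\begin{itemize}
\item $u_n(t)\to u(t)$ pointwise;
\item $\|u_n(s)\|_Z\le\|u(s)\|_Z$ gives the dominating function directly from Lemma~\ref{lem:supuBddLocalization};
\item $u(s)=0$ forces $u_n(s)=0$;
\item uniform convergence on $[0,T]$ follows from compactness of $u([0,T])$ and $\|T_n\|\le 1$.
\end{itemize}
No further localisation and no convergence theorem for stochastic convolutions is needed. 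The modest cost is checking $g_n^*u_n=g^*T_n^*T_nu\to g^*u$ via $T_n^*=nR(n,-A^*)\to I$ strongly. Your route avoids the $n$-dependent data, but it shifts the work into the Trotter--Kato-type claim $\sup_{t}\|u_n(t)-u(t)\|_Z\to 0$, whose stochastic part you only call ``standard''.

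Two things to fill in. First, factorisation will not give that convergence here. It needs $g\in L^{p_0}(0,T;\LHZ)$ for some $p_0>2$, and localisation cannot provide this, since $g$ is only square integrable in time. What works is a density argument. By Theorem~\ref{thm:maximal-inequality}, applied uniformly in $n$ because $S_n$ and $S$ are contractive, the map
\[
\phi\mapsto\sup_t\Big\|\int_0^t[S_n(t-s)-S(t-s)]\phi(s)\dWHs\Big\|_Z
\]
is bounded from $L^2(\Omega;L^2(0,T;\LHZ))$ to $L^2(\Omega)$ with norm at most $2C_2$. So it suffices to treat elementary integrands with finite-rank values ranging in $D(A)$. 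For those, integrate by parts in time and use $S_n(t)x\to S(t)x$ uniformly in $t\in[0,T]$ together with $A_nx\to Ax$ for $x\in D(A)$.

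Second, in the indicator term you cannot use $u(s)=0\Rightarrow u_n(s)=0$, since your $u_n$ need not vanish there. Instead, for $p>2$ the integrand is at most $\|u_n(s)\|_Z^{p-2}\|g(s)\|_\LHZ^2$, which tends to $0$ on $\{u(s)=0\}$, so the pointwise limit is still correct.

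With these points added, your proof is complete.
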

	\begin{proof}
		For $n \in \N$, consider $T_n:Z \to Z, T_n \ce nR(n,-A)$. Then $\|T_n\|_{\calL(Z)}\le 1$ by contractivity of $(S(t))_{t\ge 0}$ and $T_n \to I$ in the strong operator topology. Define $u_{0,n},f_n,g_n$ as $T_n$ applied to $u_0,f,g$, respectively, and $u_n:\Omega \times [0,T] \to Z$ by
		\begin{equation*}
		      u_n(t) \ce T_nu(t) = S(t)u_{0,n} + \int_0^t S(t-s)f_n(s)\ds + \int_0^t S(t-s)g_n(s)\dWHs,\quad t \in [0,T].
		\end{equation*}
		\textit{Step 1: Strong solution $u_n$.} We show that $u_n$ is a strong solution of 
        \begin{equation*}
            \rmd u_n = (-Au_n+f_n)\dt + g_n \dWH,\quad u_n(0)= u_{0,n}.
        \end{equation*} Fix $t\in[0,T]$. Then
        \begin{equation}
        \label{eq:strongSolproof}
            u_n(t) = u_{0,n} + \int_0^t (-Au_n(s) + f_n(s)) \ds + \int_0^t g_n(s) \dWHs.
        \end{equation} 
        Indeed, by definition, $u_{0,n},f_n(t)\in D(A)$, the range of $g_n(t)$ lies in $D(A)$, and $Ag_n(t)\in \LHZ$ a.s.\ for a.e.\ $t \in [0,T]$. Hence, inserting the definition of $u_n$ and using the deterministic and the stochastic Fubini theorem as well as $-\int_a^b AS(s)x\ds = S(b)x-S(a)x$, we obtain
		\begin{align*}
			-\int_0^t &Au_n(s) \ds 
			\\
			&= -\int_0^t AS(s)u_{0,n} \ds - \int_0^t \int_r^t AS(s-r)f_n(r)\ds\,\rmd r - \int_0^t \int_r^t AS(s-r)g_n(r)\ds\,\rmd W_H(r)\\
			&= S(t)u_{0,n}-u_{0,n} + \int_0^t [S(t-r)-I]f_n(r)\,\rmd r + \int_0^t [S(t-r)-I]g_n(r)\,\rmd W_H(r).
		\end{align*}
        Rearranging, we deduce \eqref{eq:strongSolproof}.\\
		\textit{Step 2: Itô's formula for $u_n$.} As a strong solution, $(u_n(t))_{t \ge 0}$ is an Itô process, so that we can apply Itô's formula \cite[Theorem~4.32]{DaPratoZabczyk14}, which gives a.s.\ 
		\begin{align}
			\label{eq:unNormItoLemma}
			\|u_n(&t)\|_Z^p = \|u_{0,n}\|_Z^p + p \int_0^t \|u_n(s)\|_Z^{p-2} \Re \la u_n(s),-Au_n(s)\ra_Z \ds\nonumber\\
            &+ p \int_0^t \|u_n(s)\|_Z^{p-2} \Re \la u_n(s),f_n(s)\ra_Z \ds+ p \int_0^t \|u_n(s)\|_Z^{p-2} g_n^*(s)u_n(s)\dWHs\nonumber\\
			& + \frac{p(p-2)}{2} \int_0^t \1_{\{u_n(s)\neq 0\}}\|u_n(s)\|_Z^{p-4} \|g_n^*(s)u_n(s)\|_H^2\ds+ \frac{p}{2} \int_0^t \|u_n(s)\|_Z^{p-2} \|g_n(s)\|_\LHZ^2\ds\nonumber\\
            & \ec \sum_{j=1}^6 I_j \le I_1 + \sum_{j=3}^6 I_j.
		\end{align}
        In the last inequality, we have used that contractivity of $(S(t))_{t \ge 0}$ implies dissipativity of $-A$, i.e.\ $\Re \langle -Au_n(s),u_n(s)\ra\le 0$ for all $s \in [0,T]$. Hence, $I_2\le 0$ for all $t\in [0,T]$ almost surely.\\
		\textit{Step 3: Taking limits $n \to \infty$.} For the convergence analysis of $I_j$, $j=1,4,5,6$, fix $\omega \in \Omega$. Since $T_n\to I$ strongly, $u_{0,n}(\omega) = T_nu_0(\omega) \to u_0(\omega)$ in $Z$ and thus $\|u_{0,n}(\omega)\|_Z^p \to \|u_0(\omega)\|_Z^p$, i.e. $I_1 \to \|u_0\|_Z^p$ a.s.\ as $n \to \infty$.
		
		To determine the limit of $I_3$, we split
		\begin{align*}
			&\Big|I_3-p \int_0^t \|u(s)\|_Z^{p-2} \Re \la u(s),f(s)\ra_Z \ds\Big| \le p \int_0^t \big|\|u_n(s)\|_Z^{p-2}-\|u(s)\|_Z^{p-2}\big| |\Re\la u_n(s),f_n(s)\ra_Z| \ds\\
			&+ p \int_0^t \|u(s)\|_Z^{p-2} |\Re\la u_n(s)-u(s),f_n(s)\ra_Z| \ds + p \int_0^t \|u(s)\|_Z^{p-2} |\la u(s),f_n(s)-f(s)\ra_Z| \ds.
		\end{align*}
		Since $T_n\to I$ strongly, the integrands converge pointwise to $0$. Since $\|T_n\|_{\calL(Z)}\le 1$, we further have $\sup_{n \in \N}\|u_n(s)\|_Z \le \|u(s)\|_Z$ and $\sup_{n \in \N}\|f_n(s)\|_Z \le \|f(s)\|_Z$. An analogous bound holds for $g_n$. Thus, via the Cauchy--Schwarz and the triangle inequality, all integrands can be bounded by $2p (\sup_{r\in [0,t]}\|u(r)\|_Z^{p-1})\|f(s)\|_Z$, which is integrable, since a.s.\ by Lemma \ref{lem:supuBddLocalization}
		\begin{align*}
			&\int_0^t 2p \Big(\sup_{r\in [0,t]}\|u(r)\|_Z^{p-1}\Big)\|f(s)\|_Z \ds = 2p\|f\|_{L^1(0,t;Z)} \Big(\sup_{r\in [0,t]}\|u(r)\|_Z^{p-1}\Big)<\infty.
		\end{align*}
		 Hence, dominated convergence on $[0,t]$ gives $I_3\to p \int_0^t \|u(s)\|_Z^{p-2} \Re\la u(s),f(s)\ra_Z \ds$ a.s.\ as $n \to \infty$. 

		Note that $I_5$ vanishes for $p=2$. Let $p>2$. Further, observe that $u(s)=0$ implies $u_n(s)=0$ by linearity of $T_n$. If $u(s)\neq 0$, also $u_n(s)\neq 0$ for sufficiently large $n$ due to $u_n(s)\to u(s)$. By reflexivity of $Z$, $-A^*$ generates a $C_0$-semigroup, too, and thus $T_n^*=nR(n,-A^*)\to I$ strongly on $Z$. Together with $T_n\to I$ strongly and uniform boundedness of $\|T_n^*\|_{\calL(Z)}$, this implies
        $g_n^*(s)u_n(s)=g^*(s)T_n^*T_nu(s) \to g^*(s)u(s)$ in $H$ as $n\to \infty$ for a.e.\ $s\in [0,t]$ a.s. Consequently,
        \begin{equation*}
            \1_{\{u_n(s)\neq 0\}}\|u_n(s)\|_Z^{p-4}\|g_n^*(s)u_n(s)\|_H^2 \to 
            \1_{\{u(s)\neq 0\}}\|u(s)\|_Z^{p-4}\|g^*(s)u(s)\|_H^2
        \end{equation*}
        for a.e.\ $s\in [0,t]$ a.s. The upper bound
        \begin{align*}
            \1_{\{u_n(s)\neq 0\}}\|u_n(s)\|_Z^{p-4}\|g_n^*(s)u_n(s)\|_H^2 
            \le \|u_n(s)\|_Z^{p-2}\|g_n(s)\|_\LHZ^2
            \le \|u(s)\|_Z^{p-2}\|g(s)\|_\LHZ^2
        \end{align*}
        is integrable on $[0,t]$ a.s.\ by Lemma \ref{lem:supuBddLocalization} and the assumption on $g$. Dominated convergence then yields $I_5 \to \frac{1}{2}p(p-2)\int_0^t\1_{\{u(s)\neq 0\}}\|u(s)\|_Z^{p-4}\|g^*(s)u(s)\|_H^2\ds$ a.s.\ as $n \to \infty$.
		
		For the sixth term, we split
		\begin{align*}
			&\Big|\int_0^t \|u_n\|_Z^{p-2} \|g_n\|_\LHZ^2\ds-\int_0^t \|u\|_Z^{p-2} \|g\|_\LHZ^2\ds\Big|\\
			&\le \Big|\int_0^t \big(\|u_n\|_Z^{p-2}-\|u\|_Z^{p-2}\big) \|g_n\|_\LHZ^2\ds\Big|+\Big|\int_0^t \|u\|_Z^{p-2} \big(\|g_n\|_\LHZ^2-\|g\|_\LHZ^2\big)\ds\Big|,
		\end{align*}
        where we have omitted the integration variable. Note that $T_n\to I$ strongly and uniform boundedness of $\|T_n\|_{\calL(Z)}$ implies $g_n(s)\to g(s)$ in $\LHZ$ as $n\to \infty$ for a.e. $s\in [0,t]$ a.s. Repeating earlier arguments results in     $I_6 \to \frac{p}{2}\int_0^t \|u(s)\|_Z^{p-2} \|g(s)\|_\LHZ^2\ds$ a.s.\ as $n \to \infty$.
		
		Lastly, we show convergence of the stochastic integral term $I_4$. After splitting
		\begin{align*}
			\|u_n(s)\|_Z^{p-2}& g_n^*(s)u_n(s) - \|u(s)\|_Z^{p-2} g^*(s)u(s)= \|u_n(s)\|_Z^{p-2} \big(g_n^*(s)u_n(s)-g^*(s)u(s)\big)\\
            &+ \big(\|u_n(s)\|_Z^{p-2}-\|u(s)\|_Z^{p-2}\big) g^*(s)u(s),
		\end{align*}
        previous arguments yield convergence of each term to $0$  in $H$ for a.e.\ $s\in [0,t]$ a.s. Moreover, 
        \begin{equation*}
            \big\|\|u_n(s)\|_Z^{p-2} g_n^*(s)u_n(s) - \|u(s)\|_Z^{p-2} g^*(s)u(s)\big\|_H^2 \le C_p \bigg(\sup_{r\in[0,t]} \|u(r)\|_Z^{2(p-1)}\bigg)\|g(s)\|_{\LHZ}^2,
        \end{equation*}
        which is an integrable upper bound on $[0,t]$ a.s.\ by Lemma \ref{lem:supuBddLocalization} and the assumption on $g$. Therefore, dominated convergence yields $\|u_n\|_Z^{p-2} g_n^*u_n \to \|u\|_Z^{p-2} g^*u$ in $L^2(0,t;H)$ a.s. The localized Burkholder--Davis--Gundy inequality of Proposition \ref{prop:localizedBDG} implies convergence of the corresponding stochastic integrals in $L^0(\Omega;C([0,t]))$. 

        It remains to prove that the inequality obtained in the limit holds almost surely simultaneously for all $t\in[0,T]$ and not just for a fixed $t\in [0,T]$ almost surely. By the previous dominated convergence arguments for $t=T$ and an application of the triangle inequality, we see that the deterministic integrals $I_3$, $I_5$, and $I_6$ converge uniformly on $[0,T]$. For the stochastic integral $I_4$, setting $t=T$ gives convergence of the stochastic integrals in $L^0(\Omega;C([0,T]))$ as $n\to \infty$. Passing to a subsequence $(n_\ell)_{\ell\in\N}$, this convergence holds almost surely in $C([0,T])$. Since $u$ has continuous paths almost surely, $u([0,T])\seq Z$ is compact, so  $\sup_{t\in [0,T]}\|u_n(t)-u(t)\|_Z \to 0$. Taking the limit $n \to \infty$ along the subsequence in \eqref{eq:unNormItoLemma} uniformly in $t$ thus finishes the proof.
	\end{proof}
	
With the Itô-type inequality at hand, global well-posedness and pathwise uniform a priori estimates can be established under the following parameter-dependent coercivity assumption. 
    \begin{assumption}[$p$) ($p$-Coercivity in $Y$]
\label{ass:coYp}
    For a given parameter $p\in [2,\infty)$, there is a constant $C_p\ge 0$ such that for all $x\in Y$,
    \begin{align*}
        \Re \la x,F(x)\ra_Y + \frac{p-1}{2}\|G(x)\|_\LHY^2 \le C_p(1+\|x\|_Y^2).
    \end{align*}
\end{assumption}

	\begin{theorem}[Global existence and a priori estimates in $Y$]
		\label{thm:aprioriY}
		Let $q \in[2,\infty)$ and $u_0\in L_{\F_0}^q(\Omega;Y)$. Suppose that Assumption \ref{ass:localWP} holds as well as $\frac{q}{\lambda}$-coercivity as in Assumption \ref{ass:coYp}($\frac{q}{\lambda}$) for some $\lambda \in (0,1)$. Let $(u,\sigma)$ be the unique local maximal solution to \eqref{eq:SEEsemilinear}. 
        
        Then the unique maximal local solution $(u,\sigma)$ to \eqref{eq:SEEsemilinear} is a global solution. That is, $\sigma=T$ almost surely and $u\in L^q(\Omega;C([0,T];Y))$. Moreover, there is a constant $C_{T,q,\lambda}\ge 0$ such that
		\begin{align*}
			\bigg\|\sup_{t\in [0,\sigma \land T)}\|u(t)\|_Y\bigg\|_{L^q(\Omega)} &\le C_{T,q,\lambda}
			\big(1+\|u_0\|_{L^q(\Omega;Y)}\big).
		\end{align*}
	\end{theorem}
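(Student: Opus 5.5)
The plan is to localise with the hitting times of the maximal solution, apply the Itô-type inequality of Lemma~\ref{lem:ItoFormulaApprox} in $Y$ with exponent $2$, use the coercivity assumption to reach a stochastic Gronwall inequality, and then let the localisation go to infinity.

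\textbf{Localisation.} By Theorem~\ref{thm:localWellPosed} let $(u,\sigma)$ be the maximal solution with explosion time $\sigma$. For $n\in\N$ set
\begin{equation*}
\tau_n \ce \inf\{t\in[0,\sigma):\|u(t)\|_Y\ge n\}\land\sigma,
\end{equation*}
and let $\Gamma_n\ce\{\|u_0\|_Y\le n\}$. On $\Gamma_n\times[0,\tau_n]$ the process $u$ is bounded by $n$ in $Y$. Since the semigroup is contractive on $Y$, we can apply Lemma~\ref{lem:ItoFormulaApprox} on $Z=Y$ to the stopped mild solution. Its data are $f=\1_{[0,\tau_n]}F(u)$ and $g=\1_{[0,\tau_n]}G(u)$, which are bounded by $L_n$ thanks to Assumption~\ref{ass:localWP}(b). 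To represent the stopped process as a mild solution on $[0,T]$ I use the stopping identity $(I\Phi)^\tau\1_{[0,\tau)}=I(\1_{[0,\tau)}\Phi^\tau)\1_{[0,\tau)}$ from the proof of Lemma~\ref{lem:localUnique}. I take the exponent $p=2$ in \eqref{eq:lemmaItoApproxSimplified}.

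\textbf{Gronwall structure.} Taking $p=2$ directly would only give $2$-coercivity, whereas the assumption is $\frac q\lambda$-coercivity. So the better choice is $\varphi_t=\|u(t\wedge\tau_n)\|_Y^{q}$ applied with the exponent $q$ in \eqref{eq:lemmaItoApproxSimplified}. This gives
\begin{equation*}
q\|u\|_Y^{q-2}\Big(\Re\la u,F(u)\ra_Y+\tfrac{q-1}{2}\|G(u)\|_\LHY^2\Big)\le qC(1+\|u\|_Y^2)\|u\|_Y^{q-2},
\end{equation*}
using that $\frac q\lambda$-coercivity implies $q$-coercivity because the $G$-term is nonnegative. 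Hence
\begin{equation*}
\varphi_t\le \|u_0\|_Y^q+ C\int_0^{t}\varphi_s\ds + Ct + M_t
\end{equation*}
with the local martingale $M_t=q\int_0^{t\wedge\tau_n}\|u\|_Y^{q-2}G(u)^*u\dWH$. Lemma~\ref{lem:stochasticGronwall} with $A_t=Ct$ and $H_t=\|u_0\|_Y^q+Ct$ gives a bound on $\|\sup_t\varphi_t\|_{L^\lambda}$, that is, on $\|\sup_{t}\|u(t\wedge\tau_n)\|_Y\|_{L^{q\lambda}}$. This is the wrong moment: it loses the factor $\lambda$. To fix this I rerun the same argument with $q$ replaced by $\frac{q}{\lambda}$, using exactly the assumed $\frac q\lambda$-coercivity and $\varphi_t=\|u(t\wedge\tau_n)\|_Y^{q/\lambda}$. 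Then the $L^\lambda$ bound of the Gronwall lemma yields
\begin{equation*}
\Big\|\sup_t\|u(t\wedge\tau_n)\|_Y\Big\|_{L^q}^{q/\lambda}\le C\big(1+\|u_0\|_{L^q(\Omega;Y)}^{q/\lambda}\big),
\end{equation*}
which gives the claimed estimate uniformly in $n$. Here only $\|H_T\|_{L^\lambda}=\|\,\|u_0\|^{q/\lambda}+CT\|_{L^\lambda}\lesssim 1+\|u_0\|_{L^q}^{q/\lambda}$ is needed, which is why $u_0\in L^q$ suffices. One technical point is that $u_0$ is only in $L^q$. I handle this either by working on $\Gamma_n$, or simply because the lemma is pathwise and the Gronwall lemma requires no integrability.

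\textbf{Passing $n\to\infty$.} By monotone convergence, $\E\sup_{t<\sigma}\|u(t)\|_Y^q<\infty$. Hence a.s.\ $\sup_{t<\sigma}\|u\|_Y<\infty$. Since $\sigma$ is an explosion time, this forces $\sigma=T$ a.s. Then $u$ extends continuously to $[0,T]$: the mild formula \eqref{eq:mildSolutionFormula} holds with integrable data, so Lemma~\ref{lem:supuBddLocalization} gives a continuous version. Therefore $u\in L^q(\Omega;C([0,T];Y))$.

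The main obstacle is the justification of the Itô inequality for the stopped process, namely writing $u(\cdot\wedge\tau_n)$ as a genuine mild solution on $[0,T]$ with progressive bounded data. The stopped process is not literally of that form, because the semigroup keeps acting after $\tau_n$. I would instead apply Lemma~\ref{lem:ItoFormulaApprox} to $v_n(t)=S(t)u_0+\int_0^tS(t-s)\1_{[0,\tau_n]}F(u)\ds+\int_0^tS(t-s)\1_{[0,\tau_n]}G(u)\dWHs$, which coincides with $u$ on $[0,\tau_n]$. I would then use the resulting inequality only up to $t\wedge\tau_n$, evaluating at stopped times, which the pathwise-for-all-$t$ statement of the lemma permits.
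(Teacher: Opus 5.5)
Your proposal is correct and matches the paper's proof. Like the paper, you apply Lemma~\ref{lem:ItoFormulaApprox} with exponent $q/\lambda$ to the auxiliary mild process $v_n$ whose data are cut off at $\tau_n$, use $\frac{q}{\lambda}$-coercivity and Young to reach the stochastic Gronwall structure, and take the $L^\lambda$-bound of Lemma~\ref{lem:stochasticGronwall} to recover $L^q$-moments; you then conclude via $n\to\infty$, the explosion-time property and Lemma~\ref{lem:supuBddLocalization}.

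One cosmetic point: you cap $\tau_n$ at $\sigma$ rather than at a localizing time $\sigma_n$, so $u(\tau_n)$ is undefined on $\{\tau_n=\sigma=T\}$. Either work with $\varphi_t=\|v_n(t\wedge\tau_n)\|_Y^{q/\lambda}$, or, as the paper does, with the unstopped $\|v_n(t)\|_Y^{q/\lambda}$, whose drift and noise vanish after $\tau_n$.
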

Note that the a priori estimate extends to all $\bar{q}\in [q,\frac{q}{\lambda})$ provided that $u_0\in L_{\F_0}^{\bar{q}}(\Omega;Y)$, as an inspection of the following proof shows.
    
\begin{proof}
	\textit{Step 1: A priori estimate.} Let $(\sigma_n)_{n\in \N}$ be a localizing sequence for the unique maximal local solution $(u,\sigma)$, which exists by Theorem \ref{thm:localWellPosed}. For $n\in \N$, define
		\begin{equation*}
			\tau_n \ce \inf\{t \in [0,\sigma_n]: \|u(t)\|_Y \ge n\},\quad \inf \emptyset \ce \sigma_n,\qquad \Gamma_n \ce \Big\{\omega\in \Omega:\|u_0(\omega)\|_Y\le \frac{n}{2}\Big\}.
		\end{equation*}
        Then $(\tau_n)_{n\in\N}$ is also a localizing sequence for $(u,\sigma)$ and by local well-posedness, cf.\ Theorem \ref{thm:localWellPosed}, it holds that for all $t\in[0,T]$, a.s.\ $\1_{\Gamma_n\times[0,\tau_n]}S(t-\cdot)F(u(\cdot))\in L^1(0,t;Y)$ and $\1_{\Gamma_n\times[0,\tau_n]}S(t-\cdot)G(u(\cdot))\in L^2(0,t;\LHY)$. Hence,  
		\begin{align*}
			v_n &\ce S(\cdot)\1_{\Gamma_n}u_0+\int_0^\cdot \1_{\Gamma_n\times[0,\tau_n]}(s)S(\cdot-s)F(u(s))\ds+\int_0^\cdot \1_{\Gamma_n\times[0,\tau_n]}(s)S(\cdot-s)G(u(s))\dWHs
		\end{align*}
        is well-defined.

     Furthermore, $v_n=u$ on $\Gamma_n\times [0,\tau_n]$. Indeed, by Definition \ref{def:localSolution}, a.s.\
		\begin{align*}
			u(t)\1_{\Gamma_n\times[0,\tau_n]}(t) &= \1_{\Gamma_n\times[0,\tau_n]}(t)\Big(S(t)u_0 + \int_0^t S(t-s)F(u(s))\ds+ \int_0^t S(t-s)G(u(s))\dWHs\Big)\\
			&= \1_{\Gamma_n\times[0,\tau_n]}(t)\Big(S(t)u_0 + \int_0^t S(t-s)F(u(s))\1_{\Gamma_n\times[0,\tau_n]}(s)\ds\\
			&\phantom{= }+ \int_0^t S(t-s)G(u(s))\1_{\Gamma_n\times[0,\tau_n]}(s)\dWHs\Big)\\
			&= \1_{\Gamma_n\times[0,\tau_n]}(t)v_n(t)
		\end{align*}
        for all $t\in [0,T]$.
        Moreover, by the definition of $\tau_n$ and $\Gamma_n\in \F_0$, local $Y$-boundedness of $F$ and $G$ imply that a.s.\
        \begin{equation*}
            \int_0^T \|\1_{\Gamma_n\times[0,\tau_n]}(s)F(u(s))\|_Y\ds+\int_0^T \|\1_{\Gamma_n\times[0,\tau_n]}(s)G(u(s))\|_\LHY^2\ds \le L_n T + L_n^2 T<\infty,
        \end{equation*}
        i.e.\ $\1_{\Gamma_n\times[0,\tau_n]}F(u(\cdot))\in L^1(0,t;Y)$ and $\1_{\Gamma_n\times[0,\tau_n]}G(u(\cdot))\in L^2(0,T;\LHY)$. Hence, the Itô-type formula \eqref{eq:lemmaItoApproxSimplified} from Lemma \ref{lem:ItoFormulaApprox} is applicable to $v_n$ on $Z=Y$ with $p=\tilde{q}\ce \frac{q}{\lambda}$. It implies that for all $t\in [0,T]$ a.s.\
		\begin{align*}
			\|&v_n(t)\|_Y^{\tilde{q}} \le \|\1_{\Gamma_n}u_0\|_Y^{\tilde{q}} + \tilde{q} \int_0^t \|v_n\|_Y^{\tilde{q}-2} \Re\la v_n,\1_{\Gamma_n\times[0,\tau_n]}F(u)\ra_Y \ds\\
			&\phantom{\le }+ M_{n,t} + \frac{\tilde{q}(\tilde{q}-1)}{2}  \int_0^t \|v_n\|_Y^{\tilde{q}-2} \|\1_{\Gamma_n\times[0,\tau_n]}G(u)\|_\LHY^2\ds\\
			&=\|\1_{\Gamma_n}u_0\|_Y^{\tilde{q}} + \tilde{q} \int_0^t \1_{\Gamma_n\times[0,\tau_n]} \|v_n\|_Y^{\tilde{q}-2}\Big(\Re\la v_n,F(v_n)\ra_Y + \frac{\tilde{q}-1}{2}\|G(v_n)\|_\LHY^2\Big)\ds+M_{n,t}\\
			&\le \|u_0\|_Y^{\tilde{q}} + \tilde{q} C_{\tilde{q}} \int_0^t \1_{\Gamma_n\times[0,\tau_n]} \|v_n\|_Y^{\tilde{q}-2} (1+\|v_n\|_Y^2)\ds+M_{n,t}
		\end{align*}
        with
        \begin{equation}
        \label{eq:defMtglobalWPproof}
             M_{n,t}\ce \tilde{q} \int_0^t \|v_n\|_Y^{\tilde{q}-2} \1_{\Gamma_n\times[0,\tau_n]}G(v_n)^*v_n\dWH.
        \end{equation}
		Here, we have omitted the dependence on the integration variable and used that $v_n=u$ on $\Gamma_n\times[0,\tau_n]$ as well as the $\tilde{q}$-coercivity Assumption \ref{ass:coYp}($\tilde{q})$.

        Young's inequality with parameters $\frac{\tilde{q}}{\tilde{q}-2}$ and $\frac{\tilde{q}}{2}$ for the integrand of the second term yields
		\begin{align*}
			\tilde{q}(\|v_n(s)\|_Y^{\tilde{q}-2} +\|v_n(s)\|_Y^{\tilde{q}}) &\le  \tilde{q}  \Big(\frac{\tilde{q}-2}{\tilde{q}}\|v_n(s)\|_Y^{\tilde{q}} + \frac{2}{\tilde{q}} +\|v_n(s)\|_Y^{\tilde{q}}\Big) 
			= 2(\tilde{q}-1)\|v_n(s)\|_Y^{\tilde{q}} + 2.
		\end{align*}
		Altogether, for all $t \in [0,T]$ a.s.\ with generic constants
        \begin{align*}
			\|v_n(t)\|_Y^{\tilde{q}}
			&\le \|u_0\|_Y^{\tilde{q}} + C_{\tilde{q}} t + C_{\tilde{q}} \int_0^t \|v_n(s)\|_Y^{\tilde{q}} \ds+M_{n,t}.
		\end{align*}
        Because $G$ is locally bounded on $B_n$ and by choice of $\tau_n$, \eqref{eq:defMtglobalWPproof} defines a square-integrable martingale due to
         \begin{align*}
            \langle M_n\rangle_t 
            &\le \tilde{q}^2  \bigg(\sup_{s \in [0,\tau_n]}\|v_n(s)\|_Y^{2\tilde{q}-2}\bigg)\int_0^t\1_{\Gamma_n\times[0,\tau_n]} \|G(v_n)\|_\LHY^2\ds \le \tilde{q}^2  n^{2\tilde{q}-2} L_n^2t<\infty.
        \end{align*}
		Hence, we can apply the stochastic Gronwall inequality from Lemma \ref{lem:stochasticGronwall} with $\varphi_t= \|v_n(t)\|_Y^{\tilde{q}}$, $A_t= C_{\tilde{q}} t$, and $H_t=\|u_0\|_Y^{\tilde{q}}+C_{\tilde{q}} t$. This results in
		\begin{align*}
        \bigg\|e^{-C_{\tilde{q}}T}\sup_{t\in [0,T]}\|v_n(t)\|_Y^{\tilde{q}}\bigg\|_{L^\lambda(\Omega)}
        &\le C_\lambda \big\|\|u_0\|_Y^{\tilde{q}}+C_{\tilde{q}}T\big\|_{L^\lambda(\Omega)},
        \end{align*}
        which after elementary calculations yields
        \begin{align*}
		\bigg\|\sup_{t\in [0,T]}\|v_n(t)\|_Y\bigg\|_{L^q(\Omega)} &\le C_{q,\lambda}e^{C_{\tilde{q}}T}	\big(\|u_0\|_{L^q(\Omega;Y)} + C_{\tilde{q}} T^{1/\tilde{q}}\big).
		\end{align*}
        Since $\1_{\Gamma_n}\sup_{t\in [0,\tau_n]}\|u(t)\|_Y^q\le \sup_{t\in [0,T]}\|v_n(t)\|_Y^q$ a.s.\ and $\1_{\Gamma_n}\to 1$ and $\tau_n \to \sigma$, Fatou's lemma implies that 
        \begin{equation}
        \label{eq:aPrioriYproof}
            \bigg\|\sup_{t\in [0,\sigma)\cap [0,T]} \|u(t)\|_Y\bigg\|_{L^q(\Omega)}\le C_{T,q,\lambda}\big(1+\|u_0\|_{L^q(\Omega;Y)}\big).
        \end{equation}
		
		\textit{Step 2: Global well-posedness.} Step $1$ in particular implies that $\sup_{t\in [0,\sigma)}\|u(t)\|_Y<\infty$ almost surely. By definition of an explosion time, cf.\ \eqref{eq:explosionTimeBlowUp}, this contradicts $\sigma$ being an explosion time unless $\sigma \ge T$ a.s. Since $\sigma$ is indeed an explosion time by Theorem \ref{thm:localWellPosed} and $\sigma \le T$, we conclude $\sigma=T$ almost surely. This also yields the a priori estimate on $[0,T)$ by \eqref{eq:aPrioriYproof}. 

        We can define a continuous extension $\hat{u}$ of $u$ to $[0,T]$ by defining $\hat{u}(T)$ via the mild solution formula \eqref{eq:mildSolutionFormula}. Indeed, the almost sure boundedness of paths of $u$ on $[0,T)$ implies the required integrability of $F$ and $G$ pathwise so that Lemma \ref{lem:supuBddLocalization} on $Z=Y$ implies a.s.\ continuity of paths of $u$ in $Y$. After replacing $u$ by this extension, $u \in L^q(\Omega;C([0,T];Y))$ and by continuity, \eqref{eq:aPrioriYproof} holds with the supremum taken over the closed interval $[0,T]$. Uniqueness of the global solution follows from local uniqueness (see Lemma \ref{lem:localUnique}) and continuity at $T$.
	\end{proof}
	
\section{The nonlinearities-stopped exponential Euler scheme}
\label{sec:defScheme}

Up to the final time $T>0$, our aim is to approximate the mild solution
\begin{equation}
\label{eq:mildSolDefScheme}
    U(t)=S(t)u_0+\int_0^t S(t-s)F(U(s))\ds+\int_0^t S(t-s)G(U(s))\dWHs,\quad t \in [0,T],
\end{equation}
constructed in the previous sections. 
For temporal discretisation, we take a uniform grid $t_j=jk$, $0 \le j \le \Nk $, $\Nk \in \N$, where $k \ce \frac{T}{\Nk }>0$ is the time step and for $t\in [0,T]$, we define 
\begin{equation*}
    \lfloor t \rfloor \ce \max\{t_j:\ t_j \le t,\ 0 \le j \le \Nk \}    
\end{equation*}
as the closest grid point not exceeding $t$. Note that this convention implies $t-\lfloor t \rfloor \in [0,k)$ for $t\in [0,T]$ and $\lfloor t_j \rfloor = t_j$ for $0 \le j \le \Nk$. Clearly, the definition of $\floort$ depends on $k$ and shall not be confused with the floor function rounding non-integers to the closest smaller integer.

It is well-known that explicit schemes like the exponential Euler scheme diverge in the case of superlinear nonlinearities for several classes of SPDEs, such as parabolic reaction-diffusion type SPDEs \cite[Thm.~1.2]{Beccari2019strongweakdivergenceexponential} or stochastic nonlinear Schrödinger equations \cite[Lemma~3.1]{Cui25JDE}. More precisely, stability of the scheme cannot be obtained since the $p$-th moment of the scheme, $p\in [1,\infty)$, diverges as $k\to 0$. Hence, for a numerical scheme, the nonlinearities need to be modified by stopping or taming them. Various approaches exist to achieve this for parabolic SPDEs (see \cite{JentzenPusnik19,GyongySabanisSiska16, Wang20taming, Brehier22tamedExpEuler}). Here, we consider an exponential Euler scheme which stops the nonlinearities if their growth exceeds a certain order. It is an adaptation of the scheme used in \cite{JentzenPusnik19} to the hyperbolic setting. Generalisations to different tamings are discussed in Section \ref{sec:generalTamings}.

\begin{definition}[Scheme]
\label{def:Scheme}
    The \emph{nonlinearities-stopped exponential Euler scheme} $(U^j)_{j=0,\ldots,\Nk}$ with $U^j:\Omega\to Y$ is defined by
    \begin{equation}
    \label{eq:defSchemeUj}
        U^{j+1} \ce S(k) U^j + kS(k) F_\theta(U^j) + S(k)G_\theta(U^j)\Delta W_{j+1},~0\le j \le \Nk -1,\quad U^0 \ce u_0,
    \end{equation}
    where $\Delta W_{j+1} \ce W_H(t_{j+1})-W_H(t_j)$, the last term is understood in the sense of \eqref{eq:convradonW}, $F_\theta(U^j)\ce \1_{\theta,j} F(U^j)$, and $G_\theta(U^j)\ce \1_{\theta,j} G(U^j)$ with 
    \begin{equation*}
         \1_{\theta,j}:\Omega \to \{0,1\},\quad  \1_{\theta,j}(\omega) \ce \1_{\{\|F(U^j(\omega))\|_Y +\|G(U^j(\omega))\|_{\calL_2(H,Y)} \le k^{-\theta}\}},\quad 0 \le j \le \Nk
    \end{equation*}
    for some parameter $\theta \in (0,\frac{1}{4}]$.
\end{definition} 
We omit the dependence on $\omega$ in the following and simply write $\1_{\theta,j}$ and $U^j$. Note that $\1_{\theta,N}$ is not required to define the scheme, just to ensure consistency of the following definitions at the end point. The parameter $\theta$ controls at which growth order the nonlinearities are stopped. Its range $\theta\in(0,\frac{1}{4}]$ ensures stability of the scheme, cf.\ Proposition \ref{prop:stabY}. 

Further, we introduce some auxiliary processes useful for the error analysis. 
These include two extensions $U^k$ and $\hat{U}$ of the scheme to $[0,T]$. The first of these is piecewise constant as opposed to the second one, whose definition is in analogy to \cite[(34)]{JentzenPusnik19}.

\begin{definition}[Piecewise constant extension of the scheme]
\label{def:UkExtension}
    Let
    \begin{equation*}
        U^k(s)\ce U^j\text{ for }s\in[t_j,t_{j+1}),~ 0 \le j \le \Nk -1,\quad U^k(T)\ce U^\Nk 
    \end{equation*}
    denote the right-continuous piecewise constant extension $U^k:\Omega \times[0,T]\to Y$ of the scheme $(U^j)_{j=0}^N$ to $[0,T]$. Further, define the piecewise constant processes 
    \begin{equation*}
        F^k\ce F\circ U^k:\Omega \times [0,T]\to Y\text{ and }G^k\ce G\circ U^k:\Omega \times [0,T]\to \LHY.
    \end{equation*} 
\end{definition}

\begin{definition}[Continuous extension of the scheme]
\label{def:UhatExtension}
    For $t\in [t_j,t_{j+1})$, $0\le j \le \Nk-1$, define the \emph{continuous extension $\hat{U}$} of the scheme $(U^j)_{j=0}^N$ to $[0,T]$ by
    \begin{align}
	\label{eq:defUhattj}
    	\hat{U}(t) &\ce S(t-t_j)\Big[U^j + (t-t_j) \1_{\theta,j} F(U^j) +  \int_{t_j}^t \1_{\theta,j} G(U^j) \dWHs\Big],\quad \hat{U}(T)\ce U^N.
    \end{align}
\end{definition}

Indeed, both $U^k$ and $\hat{U}$ are extensions of the scheme $(U^j)_j$ in the sense of $U^k(t_j)=\hat{U}(t_j)=U^j$. Moreover, $\hat{U}$ is continuous on $[0,T]$, since by strong continuity of $(S(t))_{t\ge 0}$ and Definition \ref{def:Scheme} of the scheme, for $0\le j \le \Nk-1$,
\begin{equation}
\label{eq:UhatCont}
    \lim_{t\nearrow t_{j+1}} \hat{U}(t)= S(k)\Big[U^j + k F_\theta(U^j) +  \int_{t_j}^{t_{j+1}} G_\theta(U^j) \dWHs\Big] = U^{j+1}=\hat{U}(t_{j+1}).
\end{equation}
However, in general, $U^k(t) \neq \hat{U}(t)$ for $t \in (t_j,t_{j+1})$ between the grid points.

\begin{definition}[Generalised stopping functions]
\label{def:generalisedStop}
    For $x\in L^0(\Omega;Y)$, define the stopping functions
    \begin{equation*}
        \1_\theta(x):\Omega\to\{0,1\},\quad \1_\theta(x)(\omega)\ce \1_{\{\|F(x(\omega))\|_Y+\|G(x(\omega))\|_\LHY\le k^{-\theta}\}}
    \end{equation*}
    using the convention that $\|z\|_Z=+\infty$ if $z \notin Z$ for Banach spaces $Z$.
    Abbreviate by $F_\theta(x) \ce (\1_\theta F)(x)$ and $G_\theta(x)\ce (\1_\theta G)(x)$ the corresponding maps from $\Omega$ to $Y$ and $\LHY$, respectively. Define  $F_\theta^k(s)\ce F_\theta(U^k(s))$ and $G_\theta^k(s)\ce G_\theta(U^k(s))$ for $s \in [0,T]$.
\end{definition}

In particular, Definition \ref{def:generalisedStop} implies that $\1_{\theta,j} = \1_\theta(U^j)$ for $0 \le j \le \Nk$. Repeatedly using \eqref{eq:UhatCont}, we can rewrite $\hat{U}(t)$ as
\begin{align}
	\label{eq:UhatfromUk}
	\hat{U}(t) &= S(t-\floort)\hat{U}(\floort) + \int_{\floort}^t S(t-\floors)F_\theta^k(s) \ds+ \int_{\floort}^t S(t-\floors) G_\theta^k(s) \dWHs\\
    \label{eq:UhatMildFormula}
	&=S(t)u_0 + \int_0^tS(t-\floors) F_\theta^k(s) \ds + \int_0^tS(t-\floors)G_\theta^k(s) \dWHs,\quad t\in [0,T].
\end{align}

The values of $F_\theta^k(T)$ and $G_\theta^k(T)$ are irrelevant for the value of the integrals above, so that $\1_{\theta,N}$ is not required to compute $\hat{U}(T)$.

\begin{remark}
\label{rem:coercivityFGtheta}
    We observe that coercivity as in Assumption \ref{ass:coYp}($p$) for some $p\in [2,\infty)$ remains true with $F$ and $G$ replaced by $F_\theta$ and $G_\theta$ for some $\theta\in (0,\frac{1}{4}]$, respectively. Indeed, either both the indicators and thus the left-hand side vanish or $F=F_\theta$ and $G=G_\theta$. Hence, under Assumption \ref{ass:coYp}($p$) also
    \begin{align*}
        \Re\la x,F_\theta(x)\ra_Y + \frac{p-1}{2}\|G_\theta(x)\|_\LHY^2 \le C_p(1+\|x\|_Y^2),\qquad x\in Y.
    \end{align*}
\end{remark}

Lastly, we introduce the semilinear integrated counterparts of the scheme inspired by \cite[(35)]{JentzenPusnik19}. The difference to the mild solution is that inside the convolutions, the nonlinearities are evaluated at the scheme at the last grid point rather than at the solution at the current time.

\begin{definition}[Semilinear integrated counterparts]
\label{def:UbarSemilinearIntegratedCounterparts}
    The \emph{semilinear integrated counterparts $\bar{U}$} of the scheme $(U^j)_{j=0}^N$ are given by
    \begin{equation*}
    	\bar{U}(t) \ce S(t)u_0 +\int_0^t S(t-s)F^k(s)\ds+\int_0^t S(t-s)G^k(s)\dWHs,\quad t\in[0,T].
    \end{equation*}
\end{definition}

\section{Stability of the scheme}
\label{sec:stability}

In order to establish both pointwise and pathwise uniform stability, we leverage the stopping of the nonlinearities in the scheme. Roughly speaking, the stochastic integrals over one time interval arising from Itô's formula decay like $k^{1/2}$, meaning that the integrand may not grow faster than $k^{-1/2}$. Restricting the stopping parameter to $\theta\le \frac14$ ensures the latter and thus pointwise stability of the scheme, as illustrated in Subsection \ref{subsec:pointwiseStabScheme}. Under a polynomial growth assumption on the nonlinearities, the same stability result is obtained for the semilinear integrated counterparts in Subsection \ref{subsec:stabilityUbar}.

Pathwise uniform stability, that is, uniform moment bounds of the supremum over the time grid of the scheme, is analytically more challenging. Due to the supremum inside the expectation, straightforward BDG-type arguments require a linear growth assumption on $G$. However, using a marginally stronger coercivity assumption as in Section \ref{sec:globalWP} allows us to establish pathwise uniform stability also for superlinearly growing noise via a two-step BDG argument in Subsection \ref{subsec:pathwiseStabBDG}.

\subsection{Pointwise stability of the scheme}
\label{subsec:pointwiseStabScheme}

Our main stability result is as follows. 
\begin{proposition}[Pointwise stability of scheme in $Y$]
\label{prop:stabY}
    Suppose that $-A$ generates a $C_0$-contraction semigroup $(S(t))_{t\ge 0}$ on $Y$ and that $q$-coercivity as in Assumption \ref{ass:coYp}($q$) holds for some $q\in [2,\infty)$. Let $\theta\in (0,\frac{1}{4}]$, $u_0\in L_{\F_0}^q(\Omega;Y)$, and assume that $F:Y\to Y$ as well as $G:Y \to \LHY$ are Borel-measurable. Then the scheme is pointwise stable in $Y$. That is, for all $T>0$ there exists a constant $C_{T,q}\ge 0$ independent of $k$, $N$, and $\theta$ such that 
    \begin{equation*}
        \sup_{t\in [0,T]} \|\hat{U}(t)\|_{L^q(\Omega;Y)}\le  C_{T,q} \big(1+\|u_0\|_{L^q(\Omega;Y)} 
        \big)<\infty.
    \end{equation*}
    In particular, the same bound holds for $\max_{j=0,\ldots,\Nk } \|U^j\|_{L^q(\Omega;Y)}$ and $\sup_{t\in [0,T]} \|U^k(t)\|_{L^q(\Omega;Y)}$.
\end{proposition}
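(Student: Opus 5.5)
The plan is a one-step estimate in $Y$ for the auxiliary process without the semigroup, which is then iterated using contractivity. For $t\in[t_j,t_{j+1})$ set
\begin{equation*}
    V_j(t) \ce U^j + (t-t_j)F_\theta(U^j) + \int_{t_j}^t G_\theta(U^j)\dWHs,
\end{equation*}
so that $\hat U(t)=S(t-t_j)V_j(t)$ and, by contractivity on $Y$, $\|\hat U(t)\|_Y\le \|V_j(t)\|_Y$. Since $V_j$ is a genuine Itô process (constant-in-time integrands on $[t_j,t]$), Itô's formula for $\|\cdot\|_Y^q$ applies directly, or equivalently Lemma \ref{lem:ItoFormulaApprox} with $A=0$. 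It gives
\begin{align*}
    \E\|V_j(t)\|_Y^q \le \E\|U^j\|_Y^q + q\,\E\int_{t_j}^t \|V_j(s)\|_Y^{q-2}\Big(\Re\la V_j(s),F_\theta(U^j)\ra_Y + \tfrac{q-1}{2}\|G_\theta(U^j)\|_\LHY^2\Big)\ds,
\end{align*}
where the martingale term has zero expectation. Before taking expectations we localise if needed; this is harmless because on $\{\1_{\theta,j}=1\}$ the integrands are bounded by $k^{-\theta}$, and $U^j\in L^q$ is known inductively.

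The key step is to replace $V_j(s)$ by $U^j$ inside the bracket so that Remark \ref{rem:coercivityFGtheta} (coercivity of $F_\theta,G_\theta$) can be applied. Write $\Re\la V_j(s),F_\theta(U^j)\ra_Y=\Re\la U^j,F_\theta(U^j)\ra_Y+\Re\la V_j(s)-U^j,F_\theta(U^j)\ra_Y$. The error term is
\begin{equation*}
    (s-t_j)\|F_\theta(U^j)\|_Y^2+\Big\la \int_{t_j}^s G_\theta(U^j)\dWH, F_\theta(U^j)\Big\ra_Y .
\end{equation*}
Its first part is at most $k\cdot k^{-2\theta}\le k^{1/2}$ because $\theta\le\frac14$. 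The second part is a stochastic integral of size $\sim k^{1/2}k^{-2\theta}\le 1$ in $L^m$. Combined with the weight $\|V_j(s)\|_Y^{q-2}$, Hölder/Young inequalities bound the whole contribution by $C\int_{t_j}^t(1+\E\|V_j(s)\|_Y^q+\E\|U^j\|_Y^q)\ds$. Here we use $\|V_j(s)-U^j\|_{L^q(Y)}\lesssim k\,k^{-\theta}+k^{1/2}k^{-\theta}\lesssim k^{1/4}$ conditionally on $\F_{t_j}$. The weight mismatch $\|V_j(s)\|^{q-2}$ versus $\|U^j\|^{q-2}$ multiplying the coercive bracket at $U^j$ is handled the same way. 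Write $\|V_j\|^{q-2}=\|U^j\|^{q-2}+(\|V_j\|^{q-2}-\|U^j\|^{q-2})$ and use the mean value theorem. Since the bracket is bounded by $k^{-\theta}\|U^j\|_Y+k^{-2\theta}$, the difference contributes at most $C(1+\|U^j\|^q+\|V_j\|^q)$ after using $\E[\|V_j-U^j\|^m\mid\F_{t_j}]\lesssim k^{m/4}$ and $k^{1/4}\cdot k^{-2\theta}\cdot k^{\theta}$ bookkeeping. Getting these powers of $k$ to balance to $O(1)$ uniformly is where $\theta\le\frac14$ enters. \textbf{This balancing is the main obstacle}, and I would do it carefully with conditional expectations given $\F_{t_j}$, using that $\int_{t_j}^s G_\theta(U^j)\dWH$ is conditionally Gaussian with covariance controlled by $(s-t_j)k^{-2\theta}$.

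This yields, with $\phi_j(t)=\E\|V_j(t)\|_Y^q$,
\begin{equation*}
    \phi_j(t)\le \E\|U^j\|_Y^q + C\int_{t_j}^t (1+\phi_j(s)+\E\|U^j\|_Y^q)\ds .
\end{equation*}
By Gronwall on $[t_j,t_{j+1}]$ we get $\phi_j(t)\le e^{Ck}(\E\|U^j\|_Y^q + Ck)$, with $C$ independent of $k,\theta,j$. Using $\E\|U^{j+1}\|_Y^q\le \lim_{t\nearrow t_{j+1}}\phi_j(t)$ (by \eqref{eq:UhatCont} and contractivity) and iterating over $j\le N$ gives $\E\|U^j\|_Y^q\le e^{CT}(\E\|u_0\|_Y^q+CT)$. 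Feeding this back into the bound for $\phi_j$ controls $\sup_t\E\|\hat U(t)\|_Y^q$. The bounds for $U^j$ and $U^k$ follow immediately since they are values of $\hat U$ at grid points.
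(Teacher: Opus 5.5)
Apart from one step, your proposal is the paper's argument: Lemma \ref{lem:stabOneTimeStep} followed by iteration. The common steps are:
\begin{itemize}
\item Itô's formula for the semigroup-free one-step process $V_j$.
\item Splitting $V_j(s)=U^j+(V_j(s)-U^j)$ only inside the scalar product.
\item Bounding the cross term via $\|F_\theta(U^j)\|_Y,\nn G_\theta(U^j)\nn_Y\le k^{-\theta}$ and BDG, so that its contribution to the integrand is at most $C_q(k^{1-2\theta}+k^{1/2-2\theta})^{q/2}\le C_{T,q}$.
\item Gronwall on one step, then contractivity on $Y$ and iteration.
\end{itemize}
The exception is the ``weight mismatch'' step that you call the main obstacle. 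It is not needed, and for $q>2$ the way you sketch it does not close.

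It is not needed because coercivity is a one-sided bound and the weight is nonnegative. With $B(x)\ce\Re\la x,F_\theta(x)\ra_Y+\frac{q-1}{2}\nn G_\theta(x)\nn_Y^2$, Remark \ref{rem:coercivityFGtheta} and Young's inequality with exponents $\frac{q}{q-2}$ and $\frac q2$ give directly
\[
\|V_j(s)\|_Y^{q-2}B(U^j)\le C_q\|V_j(s)\|_Y^{q-2}\big(1+\|U^j\|_Y^2\big)\le C_q\big(1+\|V_j(s)\|_Y^q+\|U^j\|_Y^q\big).
\]
This is exactly what the paper does, with the stopped weight $\phi_{n,q-2}$. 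There is no reason to move the weight to $U^j$.

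The mismatch estimate you sketch fails as follows. The absolute value $|B(U^j)|$ contains $\frac{q-1}{2}\nn G_\theta(U^j)\nn_Y^2$, which you can only bound by $Ck^{-2\theta}$. Multiplying by the mean-value factor $\|V_j(s)-U^j\|_Y$, whose conditional moments are of size $k^{1/2-\theta}$, leaves a factor $k^{1/2-3\theta}$. This is unbounded as $k\to0$ for every $\theta>\frac16$, in particular for $\theta=\frac14$. The resulting integrand is therefore not bounded by $C(1+\|V_j\|_Y^q+\|U^j\|_Y^q)$ uniformly in $k$, and the extra factor $k^{\theta}$ in your bookkeeping has no source. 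It could only be rescued by a second-order expansion exploiting that the Gaussian increment has conditional mean zero, which is pointless given the direct bound. So drop that step. The condition $\theta\le\frac14$ then enters only through $k^{1/2-2\theta}\le\max\{1,T\}$ in the cross term, as you already noted correctly.

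Finally, the localisation is required, not optional. The martingale integrand involves $\|V_j\|_Y^{q-1}\nn G_\theta(U^j)\nn_Y$, and its square-integrability would need $2q-2$ moments of $U^j$, which you do not have. As in the paper:
\begin{itemize}
\item stop at $\tau_n=\inf\{t:\|V_j(t)\|_Y\ge n\}$;
\item run Gronwall for $\E\|V_j(t\wedge\tau_n)\|_Y^q$ with constants independent of $n$;
\item remove the stopping by Fatou's lemma.
\end{itemize}
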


 Pointwise stability is obtained by iterating the one-step stability estimate below. We first illustrate how to iterate the one-step estimate before presenting its proof. Note that stability merely requires the stopped nonlinearities in the scheme and contractivity of the semigroup on $Y$. Local Lipschitz continuity or local $Y$-boundedness are not required in the stability proof. 

\begin{lemma}
\label{lem:stabOneTimeStep}
    Suppose that $q$-coercivity as in Assumption \ref{ass:coYp}($q$) holds for some $q\in [2,\infty)$ and that $F:Y\to Y$ and $G:Y \to \LHY$ are Borel-measurable. Let $T>0$, $k\in (0,T]$, $\theta\in (0,\frac{1}{4}]$, and $x\in L^q(\Omega;Y)$ be $\F_{t_\ell}$-measurable. For $r\in [0,k]$ and $0 \le \ell \le \Nk -1$, define
    \begin{equation}
    \label{eq:defVlxr}
        \Vlxr\ce x + r F_\theta(x) + \int_{t_\ell}^{t_\ell+r} G_\theta(x) \dWHs.
    \end{equation}
    Then there exists $C_{T,q}\ge 0$ independent of $k$ and $\theta$ such that for all $x,r,\ell$ as above, 
    \begin{align*}
       \E\|\Vlxr\|_Y^q
        &
        \le e^{C_{T,q} r}\big(\E\|x\|_Y^q+C_{T,q} r\big).
    \end{align*}
\end{lemma}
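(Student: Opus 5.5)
The plan is to exploit the fact that $r\mapsto \Vlxr$ is a genuine Itô process on $Y$ without any semigroup. Its drift $F_\theta(x)$ and diffusion $G_\theta(x)$ are constant in time and $\F_{t_\ell}$-measurable. Moreover, because of the stopping, $\|F_\theta(x)\|_Y+\|G_\theta(x)\|_\LHY\le k^{-\theta}$ pointwise on $\Omega$.

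\emph{Step 1: Itô's formula.} Apply Itô's formula to $\|\Vlxr\|_Y^q$, for instance via \eqref{eq:lemmaItoApproxSimplified} of Lemma \ref{lem:ItoFormulaApprox} with $Z=Y$ and $A=0$, on the interval $[t_\ell,t_\ell+r]$. Since $x\in L^q(\Omega;Y)$ and the coefficients are bounded, $\Vlxr$ has finite $q$-th moments uniformly in $r\in[0,k]$. The stochastic integral term therefore has square-integrable integrand $\|V\|_Y^{q-2}G_\theta(x)^*V$, so it is a true martingale with vanishing expectation. If needed, one can make this rigorous by localising with $\inf\{r:\|\Vlxr\|_Y\ge n\}$ and then applying Fatou's lemma. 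Taking expectations gives, with $V(s)\ce V^x_{t_\ell}(s)$,
\begin{equation*}
\E\|V(r)\|_Y^q\le \E\|x\|_Y^q+q\,\E\int_0^r\|V(s)\|_Y^{q-2}\Big(\Re\la V(s),F_\theta(x)\ra_Y+\frac{q-1}{2}\|G_\theta(x)\|_\LHY^2\Big)\ds .
\end{equation*}

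\emph{Step 2: use coercivity at the frozen point.} Coercivity only controls $\la x,F_\theta(x)\ra_Y$, so split $V(s)=x+(V(s)-x)$. By Remark \ref{rem:coercivityFGtheta}, the bracket is at most $C_q(1+\|x\|_Y^2)+\|V(s)-x\|_Y\|F_\theta(x)\|_Y$. Young's inequality turns $\|V\|_Y^{q-2}(1+\|x\|_Y^2)$ into $C(1+\|V\|_Y^q+\|x\|_Y^q)$.

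\emph{Step 3: the main obstacle, the remainder term.} The critical term is $\E\big[\|V(s)\|_Y^{q-2}\|V(s)-x\|_Y\,k^{-\theta}\big]$, and this is exactly where $\theta\le\frac14$ enters. Write $V(s)-x=sF_\theta(x)+\int G_\theta(x)\dWH$. Itô's isometry (or BDG) together with the pointwise bound on $G_\theta$ gives $\|V(s)-x\|_{L^{q/2}(\Omega;Y)}\lesssim_q s k^{-\theta}+s^{1/2}k^{-\theta}$. Hölder's inequality with exponents $\frac{q}{q-2}$ and $\frac q2$ then bounds the remainder term by
\begin{equation*}
(\E\|V(s)\|_Y^q)^{\frac{q-2}{q}}\,C_q\big(sk^{-2\theta}+s^{1/2}k^{-2\theta}\big)\le C_{T,q}\big(1+\E\|V(s)\|_Y^q\big).
\end{equation*}
The last inequality uses $s\le r\le k\le T$ and $2\theta\le\frac12$, so that $s^{1/2}k^{-1/2}\le 1$ and $sk^{-1/2}\le T^{1/2}$, followed by one more application of Young's inequality.

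\emph{Step 4: Gronwall.} Collecting the bounds yields, for $f(r)\ce\E\|V(r)\|_Y^q$,
\begin{equation*}
f(r)\le \E\|x\|_Y^q+C_{T,q}\int_0^r\big(1+\E\|x\|_Y^q+f(s)\big)\ds .
\end{equation*}
Here $f$ is finite and measurable in $r$ by Step 1. Gronwall's lemma then gives
\begin{equation*}
f(r)\le e^{C_{T,q}r}\big((1+C_{T,q}r)\E\|x\|_Y^q+C_{T,q}r\big)\le e^{2C_{T,q}r}\big(\E\|x\|_Y^q+C_{T,q}r\big).
\end{equation*}
The last step uses $1+y\le e^y$. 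Renaming the constant gives the claim, with a constant independent of $k$, $\theta$, $\ell$ and $x$.
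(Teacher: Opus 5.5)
Your proposal is correct and follows essentially the same route as the paper: Itô's formula for $\|\Vlxr\|_Y^q$, the split $V(s)=x+(V(s)-x)$ so that coercivity (via Remark \ref{rem:coercivityFGtheta}) is used at the frozen point $x$, a BDG bound on the remainder combined with the stopping bound $k^{-\theta}$ and $\theta\le\frac14$, and then Gronwall. The paper applies Young before the $L^{q/2}$ estimate and works with conditional expectations, which makes no real difference. Two small corrections. First, $x\in L^q(\Omega;Y)$ alone does not make $\|V\|_Y^{q-2}G_\theta(x)^*V$ square-integrable, since that needs $2q-2$ moments, so the localisation by $\tau_n$ followed by Fatou (which the paper uses from the start), or an $L^1$-BDG argument, is genuinely needed. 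Second, $k^{-2\theta}\le k^{-1/2}$ only holds for $k\le1$, so bound $k^{1-2\theta}+k^{1/2-2\theta}\le C_T$ directly instead.
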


\begin{proof}[Proof of Proposition \ref{prop:stabY}]
    We show that
    \begin{equation*}
        \E\|\hat{U}(t)\|_Y^q \le e^{C_{T,q}t}\big(\E\|u_0\|_Y^q+C_{T,q}t\big),\quad t\in [0,T],
    \end{equation*}
    from which the claim follows after taking the supremum over $t$ and the $q$-th root.
    Clearly, the claim holds for $t=0$. Consider the case where $t=t_j$ for some $1\le j\le\Nk$. By definition of $\hat{U}$, the scheme, and $\Vlxr$, we can rewrite
    \begin{equation}
\label{eq:UhatRecursivelyGridtj} 
        \hat{U}(t_j) = U^j=S(k)V_{t_{j-1}}^{U^{j-1}}(k).
    \end{equation}
    By induction, $U^j\in L^q(\Omega;Y)$ and it is $\F_{t_j}$-measurable for every $0\le j \le N$ by Lemma \ref{lem:stabOneTimeStep}.
    Contractivity of the semigroup and
    Lemma \ref{lem:stabOneTimeStep} with $r=k$, $\ell=j-1$, and $x=U^{j-1}$ thus imply
    \begin{align*}
        \E\|\hat{U}(t_j)\|_Y^q=\E\|U^j\|_Y^q \le \E\big\|V_{t_{j-1}}^{U^{j-1}}(k)\big\|_Y^q\le e^{C_{T,q} k}\big(\E\|U^{j-1}\|_Y^q+C_{T,q} k\big).
    \end{align*}
    Iterating this estimate (in case $j\ge 2$), we deduce
    \begin{align*}
        \E\|\hat{U}(t_j)\|_Y^q &
        \le e^{C_{T,q} k}\big(e^{C_{T,q} k}\big(\E\|U^{j-2}\|_Y^q+C_{T,q} k\big)+C_{T,q}k\big) \le e^{2C_{T,q} k}\big(\E\|U^{j-2}\|_Y^q+2C_{T,q} k\big)\\
        &\le \ldots \le e^{C_{T,q} t_j}\big(\E\|u_0\|_Y^q+C_{T,q} t_j\big).
    \end{align*}
    Now let $t\in (t_j,t_{j+1})$ for some $0\le j \le \Nk-1$. Then by \eqref{eq:UhatfromUk}, 
    $\hat{U}(t) = S(t-t_j)V_{t_j}^{U^j}(t-t_j)$.
    Lemma \ref{lem:stabOneTimeStep} with $r=t-t_j$, $\ell=j$, and $x=U^j$ combined with the previous estimate thus imply
    \begin{align*}
        \E\|\hat{U}(t)\|_Y^q &\le e^{C_{T,q} (t-t_j)}\big(\E\|\hat{U}(t_j)\|_Y^q+C_{T,q} (t-t_j)\big)\\
        &\le  e^{C_{T,q} (t-t_j)}\big(e^{C_{T,q} t_j}\big(\E\|u_0\|_Y^q + C_{T,q} t_j\big)+C_{T,q} (t-t_j)\big)\le e^{C_{T,q} t}\big(\E\|u_0\|_Y^q +C_{T,q} t\big). \qedhere
    \end{align*}
\end{proof}

\begin{proof}[Proof of Lemma \ref{lem:stabOneTimeStep}]
    The estimate trivially holds for $r=0$. We present the proof for $q>2$. The case $q=2$ follows by the same argument but omitting the use of Hölder's and Young's inequalities with exponents $\frac{q}{q-2}$ and $\frac{q}{2}$ in the following. Let $r\in (0,k]$, $q\in (2,\infty)$, and define the stopping times
    \begin{equation*}
        \tau_n \ce \inf \{t \in [0,k]:\|\Vlxt\|_Y\ge n\},\quad \inf \emptyset \ce k
    \end{equation*}
    for the shifted filtration $(\F_{t_\ell+t})_{t\in [0,k]}$.
    Itô's formula yields that
    \begin{align}
    \label{eq:stabProofIto}
        \|\Vlxrtaun\|_Y^q &\le \|x\|_Y^q+q\int_{t_\ell}^{t_\ell+(r\land \tau_n)} \|\Vlxs\|_Y^{q-2}\Re\langle \Vlxs,F_\theta(x)\rangle_Y\ds \nonumber\\
        &\phantom{\le }+q \int_{t_\ell}^{t_\ell+(r\land \tau_n)} \|\Vlxs\|_Y^{q-2} G_\theta(x)^*\Vlxs\dWHs\nonumber\\
        &\phantom{\le }+ \frac{q(q-1)}{2} \int_{t_\ell}^{t_\ell+(r\land \tau_n)} \|\Vlxs\|_Y^{q-2} \|G_\theta(x)\|_\LHY^2\ds
    \end{align}
    almost surely. Since 
    \begin{align*}
        \E\int_{t_\ell}^{t_\ell+(r\land \tau_n)}q^2\|\Vlxs\|_Y^{2q-4}\|G_\theta(x)^*\Vlxs\|_H^2 \ds 
        \le r q^2n^{2q-2} k^{-2\theta}<\infty,
    \end{align*}
    the stochastic integral term is a square-integrable martingale evaluated at $r$, so its conditional expectation w.r.t.\ $\F_{t_\ell}$ vanishes. 
    We abbreviate $\phi_{n,p}(s-t_\ell)\ce \1_{[0,\tau_n]}(s-t_\ell)\|\Vlxs\|_Y^{p}$ for $p\in (0,\infty)$. After taking the conditional expectation of \eqref{eq:stabProofIto}, Fubini's theorem, the definition of $\Vlxr$, and $\F_{t_\ell}$-measurability of $x$, $F_\theta(x)$, and $G_\theta(x)$ thus imply a.s.\
    \begin{align}
    \label{eq:condExpStabProof}
        \E\big[&\|\Vlxrtaun\|_Y^q\big|\F_{t_\ell}\big] \le \E\big[\|x\|_Y^q|\F_{t_\ell}\big]+q\int_{t_\ell}^{t_\ell+r} \E\big[\phi_{n,q-2}(s-t_\ell)\Re\langle \Vlxs,F_\theta(x)\rangle_Y\big|\F_{t_\ell}\big]\ds \nonumber\\
        &\qquad\qquad\qquad\quad\qquad\phantom{\le }+ \frac{q(q-1)}{2} \int_{t_\ell}^{t_\ell+r} \E\big[\phi_{n,q-2}(s-t_\ell) \|G_\theta(x)\|_\LHY^2\big|\F_{t_\ell}\big]\ds\nonumber\\
        &= \E\big[\|x\|_Y^q\big|\F_{t_\ell}\big]+q\int_{t_\ell}^{t_\ell+r} \E\big[\phi_{n,q-2}(s-t_\ell)\big|\F_{t_\ell}\big] \Big(\Re\langle x ,F_\theta(x)\rangle_Y + \frac{q-1}{2}\|G_\theta(x)\|_\LHY^2\Big)\ds \nonumber\\
        &\phantom{\le }+q\int_{t_\ell}^{t_\ell+r} \E\Big[\phi_{n,q-2}(s-t_\ell)\Re \Big\langle (s-t_\ell)F_\theta(x)+\int_{t_\ell}^s G_\theta(x)\,\rmd W_H(\tau),F_\theta(x)\Big\rangle_Y\Big|\F_{t_\ell}\Big]\ds.
    \end{align}
    By Remark \ref{rem:coercivityFGtheta}, $F_\theta$ and $G_\theta$ satisfy the coercivity assumption. Together with Hölder's inequality for conditional expectations and  Young's inequality both with parameters $\frac{q}{q-2}$ and $\frac{q}{2}$ as well as linearity of the conditional expectation, we can thus use Assumption \ref{ass:coYp}($q$) to bound the second term in \eqref{eq:condExpStabProof} a.s.\ by 
    \begin{align*}
        &C_q \int_{t_\ell}^{t_\ell+r} \E\big[\phi_{n,q-2}(s-t_\ell)\big(1+\|x\|_Y^2\big)\big|\F_{t_\ell}\big]\ds\\
        \le &C_q \int_{t_\ell}^{t_\ell+r} \E\big[\phi_{n,q}(s-t_\ell)\big|\F_{t_\ell}\big]\ds +C_q  r\big(1+ \E\big[\|x\|_Y^q\big|\F_{t_\ell}\big]\big).
    \end{align*}
    Proceeding likewise for the last term in \eqref{eq:condExpStabProof} results in
    \begin{align*}
        \E\big[\|\Vlxrtaun\|_Y^q&\big|\F_{t_\ell}\big] 
        \le  C_q  r+(C_q  r+1)\E\big[\|x\|_Y^q\big|\F_{t_\ell}\big]+ C_q\int_{t_\ell}^{t_\ell+r} \E\big[\phi_{n,q}(s-t_\ell)\big|\F_{t_\ell}\big]\ds  \\
        &+ 2\int_{t_\ell}^{t_\ell+r}
        \E\Big[\Big|\Re\Big\langle (s-t_\ell)F_\theta(x)+\int_{t_\ell}^s G_\theta(x)\,\rmd W_H(\tau),F_\theta(x)\Big\rangle_Y\Big|^{q/2}\Big|\F_{t_\ell}\Big]
        \ds.
    \end{align*}
    After taking the expectation and making use of Fubini's theorem as well as the tower property, we get
    \begin{align}
    \label{eq:stabProofExp}
        &\E\|\Vlxrtaun\|_Y^q \le   C_q  r+(C_q  r+1)\E\|x\|_Y^q+ C_q\int_{t_\ell}^{t_\ell+r} \E\|\Vlxstaun\|_Y^{q}\ds  \nonumber\\
        &\phantom{\le }+ 2\int_{t_\ell}^{t_\ell+r}
        \Big\|\Re\Big\langle (s-t_\ell)F_\theta(x)+\int_{t_\ell}^s G_\theta(x)\,\rmd W_H(\tau),F_\theta(x)\Big\rangle_Y\Big\|_{L^{q/2}(\Omega)}^{q/2}
        \ds.
    \end{align}
    We estimate the $q/2$-th root of the last integrand. The Cauchy--Schwarz inequality, Hölder's inequality, the triangle inequality, and the Burkholder--Davis--Gundy inequalities imply
    \begin{align}
    \label{eq:stabProofTaming}
        \Big\|&\Big\langle (s-t_\ell)F_\theta(x)+\int_{t_\ell}^s G_\theta(x)\,\rmd W_H(\tau),F_\theta(x)\Big\rangle_Y\Big\|_{L^{q/2}(\Omega)}\nonumber\\
        &\le  \Big\|\Big\|(s-t_\ell)F_\theta(x)+\int_{t_\ell}^s G_\theta(x)\,\rmd W_H(\tau)\Big\|_Y\|F_\theta(x)\|_Y\Big\|_{L^{q/2}(\Omega)}\nonumber\\
        &\le  (s-t_\ell)\|F_\theta(x)\|_{L^q(\Omega;Y)}^2+\|F_\theta(x)\|_{L^q(\Omega;Y)}\Big\|\int_{t_\ell}^s G_\theta(x)\,\rmd W_H(\tau)\Big\|_{L^q(\Omega;Y)}\nonumber\\
        &\le (s-t_\ell)\|F_\theta(x)\|_{L^q(\Omega;Y)}^2+C_q\sqrt{s-t_\ell}\|F_\theta(x)\|_{L^q(\Omega;Y)}\|G_\theta(x)\|_{L^q(\Omega;\LHY)}.
    \end{align}
    To estimate this further, we leverage the stopped nonlinearities in the scheme. By definition of $F_\theta(x)$, we have $\|F_\theta(x)\|_Y\le k^{-\theta}$ a.s.\ and thus $\|F_\theta(x)\|_{L^q(\Omega;Y)}\le k^{-\theta}$, and likewise for $G_\theta(x)$. Further recall that $r\le k \le T$ and note that $\theta\in (0,\frac{1}{4}]$ implies $\min\{1-2\theta,\frac{1}{2}-2\theta\}\ge 0$. Hence, we can bound the last integral in \eqref{eq:stabProofExp} by
    \begin{align*}
        2&\int_{t_\ell}^{t_\ell+r}\Big( (s-t_\ell)\|F_\theta(x)\|_{L^q(\Omega;Y)}^2+C_q\sqrt{s-t_\ell}\|F_\theta(x)\|_{L^q(\Omega;Y)}\|G_\theta(x)\|_{L^q(\Omega;\LHY)}\Big)^{q/2}\ds\\
        &\le C_q\int_{t_\ell}^{t_\ell+r}\Big((s-t_\ell)k^{-2\theta}+\sqrt{s-t_\ell}k^{-2\theta}\Big)^{q/2}\ds \le C_q r(k^{1-2\theta}+k^{\frac{1}{2}-2\theta})^{q/2} \le C_{T,q}r,
    \end{align*}
    where we have used the non-negativity of the exponents to estimate $k^{\frac{1}{2}-2\theta} \le T^{\frac{1}{2}-2\theta}\le \max\{1,T\}$ and likewise for $k^{1-2\theta}$ in the last step.
    Inserting this in \eqref{eq:stabProofExp} and a change of variables give
    \begin{align*}
        \E\|\Vlxrtaun\|_Y^q &\le  C_{T,q} r+(C_q  r+1)\E\|x\|_Y^q  + C_q\int_0^r \E\|V_{t_\ell}^x(s\land \tau_n)\|_Y^{q}\ds
    \end{align*}
    for all $r\in [0,k]$ and with constants independent of $n$. Gronwall's inequality therefore gives \begin{align*}
        \E\|\Vlxrtaun\|_Y^q &\le  e^{C_qr}\big(C_{T,q} r+(C_q  r+1)\E\|x\|_Y^q   \big) \le e^{C_{T,q}r}\big(\E\|x\|_Y^q +C_{T,q} r  \big).
    \end{align*}
    By continuity of paths of $V_{t_\ell}^x$, we have $\tau_n \nearrow k$ and, for every $r\in [0,k]$, $V_{t_\ell}^x(r\land\tau_n)\to V_{t_\ell}^x(r)$ a.s. The assertion thus follows by Fatou's lemma.
\end{proof}

\subsection{Pointwise stability of the semilinear integrated counterparts}
	\label{subsec:stabilityUbar}
	
	Stability of the semilinear integrated counterparts $\bar{U}$ from Definition \ref{def:UbarSemilinearIntegratedCounterparts} can be obtained from stability of the scheme under the following quantified growth assumption on $Y$. 
	
\begin{assumption}[$\rho$) (Polynomial growth of nonlinearities on $Y$]
    \label{ass:polGrRho}
    For given $\rho \ge 1$, there is a constant $C_\rho\ge 0$ such that for all $x\in Y$
    \begin{equation*}
        \|F(x)\|_Y +\|G(x)\|_\LHY \le C_\rho(1+\|x\|_Y^\rho).
    \end{equation*}
\end{assumption}
	In particular, this stronger assumption implies the local $Y$-boundedness from Assumption \ref{ass:localWP}.
	
\begin{proposition}[Pointwise stability of $\bar{U}$]
\label{prop:stabUbar}
    Suppose that $-A$ generates a $C_0$-contraction semigroup on $Y$ and Assumption \ref{ass:polGrRho}($\rho$) holds for some $\rho\ge 1$. Further, suppose that Assumption \ref{ass:coYp}($\rho q$) holds for some $q\in [2,\infty)$. Let $\theta\in (0,\frac{1}{4}]$, $u_0 \in L_{\F_0}^{\rho q}(\Omega;Y)$, and assume that $F:Y\to Y$ as well as $G:Y \to \LHY$ are Borel-measurable. Then for all $T>0$ there is a constant $C_{T,q,\rho}\ge 0$ independent of $k$ and $\theta$ such that 
    \begin{align*}
        \sup_{t \in [0,T]} \|\bar{U}(t)\|_{L^q(\Omega;Y)} \le C_{T,q,\rho}\big(1+\|u_0\|_{L^{\rho q}(\Omega;Y)}^\rho\big).
    \end{align*} 
\end{proposition}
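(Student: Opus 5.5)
Since $\bar{U}$ is given explicitly by a mild formula driven by the piecewise constant processes $F^k = F\circ U^k$ and $G^k = G\circ U^k$, I plan to estimate each of its three terms directly in $L^q(\Omega;Y)$. No Itô argument is needed here. The polynomial growth Assumption \ref{ass:polGrRho}($\rho$) will convert these bounds into moments of order $\rho q$ of the scheme, which Proposition \ref{prop:stabY} controls when applied with $q \curve \rho q$. Note that $\rho q\ge 2$ and Assumption \ref{ass:coYp}($\rho q$) is exactly what Proposition \ref{prop:stabY} requires at that moment level.

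Fix $t\in[0,T]$. By contractivity of $(S(t))_{t\ge0}$ on $Y$, the initial term satisfies $\|S(t)u_0\|_{L^q(\Omega;Y)}\le \|u_0\|_{L^q(\Omega;Y)}\le \|u_0\|_{L^{\rho q}(\Omega;Y)}$. For the deterministic convolution, Minkowski's integral inequality and contractivity give
\begin{equation*}
\Big\|\int_0^t S(t-s)F^k(s)\ds\Big\|_{L^q(\Omega;Y)}\le \int_0^t \|F(U^k(s))\|_{L^q(\Omega;Y)}\ds \le C_\rho \int_0^t \big(1+\|U^k(s)\|_{L^{\rho q}(\Omega;Y)}^\rho\big)\ds ,
\end{equation*}
where I use $\|\|x\|_Y^\rho\|_{L^q(\Omega)}=\|x\|_{L^{\rho q}(\Omega;Y)}^\rho$. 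For the stochastic convolution at a fixed time, I first write it as the value at time $t$ of the martingale $r\mapsto\int_0^r S(t-s)G^k(s)\dWHs$, $r\in[0,t]$, whose integrand $s\mapsto S(t-s)G^k(s)$ is progressively measurable. The BDG inequality (Theorem \ref{thm:maximal-inequality} with the identity semigroup, i.e.\ Itô's isomorphism) together with the Minkowski step in that theorem then gives
\begin{equation*}
\Big\|\int_0^t S(t-s)G^k(s)\dWHs\Big\|_{L^q(\Omega;Y)}\le C_q\Big(\int_0^t \|G(U^k(s))\|_{L^q(\Omega;\LHY)}^2\ds\Big)^{1/2}\le C_q C_\rho\sqrt{T}\sup_{s\in[0,T]}\big(1+\|U^k(s)\|_{L^{\rho q}(\Omega;Y)}^\rho\big).
\end{equation*}

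Finally, I will insert the bound $\sup_{s}\|U^k(s)\|_{L^{\rho q}(\Omega;Y)}\le C_{T,\rho q}(1+\|u_0\|_{L^{\rho q}(\Omega;Y)})$ from Proposition \ref{prop:stabY}, whose constant is independent of $k$ and $\theta$. Then $(1+a)^\rho\le 2^{\rho-1}(1+a^\rho)$ yields the claimed bound, again uniformly in $t$, $k$ and $\theta$. I do not expect a real obstacle. The only point needing care is the stochastic term: because the integrand depends on $t$, it must be treated at fixed $t$ via Itô's isomorphism rather than through a maximal estimate. It is also worth noting that measurability of $U^j$ and the integrability of $G^k$ follow from Proposition \ref{prop:stabY} together with the growth assumption.
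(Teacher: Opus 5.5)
Your argument is correct, but it takes a different route from the paper's. The paper proceeds as follows:
\begin{itemize}
\item it applies the It\^o-type inequality of Lemma \ref{lem:ItoFormulaApprox} to $\bar{U}$ in $Y$ and localises with hitting times $\tau_n$ of $\bar U$;
\item it bounds $\|\bar U\|_Y^{q-1}\|F^k\|_Y$ and $\|\bar U\|_Y^{q-2}\|G^k\|_{\LHY}^2$ by Young's inequality in terms of $\|\bar U\|_Y^q$ and $\|F^k\|_Y^q+\|G^k\|_{\LHY}^q$;
\item it converts the latter into $\|U^k\|_Y^{\rho q}$ via Assumption \ref{ass:polGrRho}($\rho$) and Proposition \ref{prop:stabY} with $q\curve\rho q$;
\item it closes with Gronwall and Fatou, treating $q=2$ separately.
\end{itemize}
Because $F^k$ and $G^k$ are evaluated at $U^k$ and not at $\bar U$, that It\^o step exploits no cancellation. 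Your term-by-term estimate of the mild formula therefore uses exactly the same inputs. It avoids the localisation, the martingale term, Gronwall and Fatou, and it handles $q=2$ uniformly. The paper's route mainly mirrors the structure of its stability proofs for the scheme.

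One correction to your closing remark: the stochastic term need not be treated at fixed $t$. Theorem \ref{thm:maximal-inequality}, applied on $Y$ with $g=G^k$, is exactly a maximal estimate for stochastic convolutions with a contractive semigroup. Combine it with $\sup_{t}\|\int_0^t S(t-s)F^k(s)\ds\|_Y\le\int_0^T\|F^k(s)\|_Y\ds$ and Minkowski's inequality. Your argument then gives directly
\[
\Big\|\sup_{t\in[0,T]}\|\bar U(t)\|_Y\Big\|_{L^q(\Omega)}\le C_{T,q,\rho}\big(1+\|u_0\|_{L^{\rho q}(\Omega;Y)}^\rho\big)
\]
under the same hypotheses. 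This is the pathwise uniform stability of $\bar U$ that the paper only mentions as obtainable by adapting its proof and then omits.
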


\begin{proof}   
    Again, we show the claim for $q>2$; in the proof for $q=2$ the inequalities for the terms with exponent $q-2$ are omitted. Due to the $(\rho q)$-coercivity and higher integrability of the initial values, Proposition \ref{prop:stabY} is applicable with parameter $\rho q$. It yields that
    \begin{equation}
    \label{eq:stabSchemeRhoqProof}
        \sup_{t\in [0,T]} \E\|U^k(t)\|_Y^{\rho q} = \max_{0\le j \le N} \E\|U^j\|_Y^{\rho q} \le C_{T,q,\rho}\big(1+\E\|u_0\|_Y^{\rho q}\big)
    \end{equation}
    after taking $\rho q$-th powers. In particular, $U^j \in L^{\rho q}(\Omega;Y)$ for every $0\le j \le N$ and thus $\|U^j\|_Y^{\rho q}<\infty$ almost surely. Hence, polynomial growth and $2\le q$ imply 
     \begin{align*}
        \int_0^T \|G^k(s)\|_\LHY^2 \ds \le 2C_\rho^2\int_0^T\big(1+\|U^k(s)\|_Y^{2\rho}\big)\ds = 2C_\rho^2 k\sum_{j=0}^{N-1} \big(1+\|U^j\|_Y^{2\rho}\big)<\infty.
    \end{align*}
    Similarly, we show that $F^k\in L^1(0,T;Y)$ almost surely. Hence, the Itô-type inequality from Lemma \ref{lem:ItoFormulaApprox} is applicable to $\bar{U}$. For $n\in \N$, define the stopping times
    \begin{equation*}
        \tau_n \ce \inf \{t \in [0,T]:\|\bar{U}(t)\|_Y\ge n\},\quad \inf \emptyset \ce T.
    \end{equation*}
    The Itô-type inequality \eqref{eq:lemmaItoApproxSimplified} evaluated at $t\land \tau_n$ yields
    \begin{align*}
        \|\bar{U}(t\land\tau_n)\|_Y^q &\le \|u_0\|_Y^q +  \int_0^{t\land\tau_n} \|\bar{U}\|_Y^{q-2} 
        \big(q\Re\la \bar{U},F^k\ra_Y
        + C_q  \|G^k\|_\LHY^2\big) \ds+M_{n,t},
    \end{align*}
    where $M_{n,t} \ce q \int_0^{t\land\tau_n} \|\bar{U}(s)\|_Y^{q-2} G^k(s)^*\bar{U}(s)\dWHs$ and we omitted the integration variables. The Cauchy--Schwarz inequality, Young's inequality with parameters $\frac{q}{q-1}$ and $q$ as well as $\frac{q}{q-2}$ and $\frac{q}{2}$, and polynomial growth then give 
    \begin{align*}
        \|\bar{U}(t\land\tau_n)\|_Y^q 
        &\le \|u_0\|_Y^q + C_q\Big( \int_0^{t\land\tau_n} \|\bar{U}\|_Y^q\ds +\int_0^{t\land\tau_n} \big(\|F^k\|_Y^q 
        +  \|G^k\|_\LHY^q\big)\ds\Big)+M_{n,t}\\
        &\le \|u_0\|_Y^q + C_q \int_0^{t\land\tau_n} \|\bar{U}(s)\|_Y^q\ds +C_{q,\rho}\Big(t+ \int_0^{t\land\tau_n} \|U^k(s)\|_Y^{\rho q} \ds\Big)+M_{n,t}.
    \end{align*}
    Moreover, $M_n \ce (M_{n,t})_{t\in[0,T]}$ is a square-integrable martingale, since by \eqref{eq:stabSchemeRhoqProof} and $q\ge 2$,
    \begin{align*}
        \E[M_n]_t \le q^2\E\int_0^{t\land\tau_n}\|\bar{U}(s)\|_Y^{2(q-1)}\|G^k(s)\|_\LHY^2 \ds \le C_{q,\rho} n^{2(q-1)}t \bigg(1+\sup_{s\in [0,t]}\E\|U^k(s)\|_Y^{2\rho}\bigg)<\infty.
    \end{align*}
    Hence, the expectation of $M_{n,t}$ vanishes. Taking the expectation thus results in the estimate
    \begin{align*}
        \E\|\bar{U}(t\land\tau_n)\|_Y^q 
        &\le \E\|u_0\|_Y^q + C_q \int_0^{t} \E\big[\1_{[0,\tau_n]}(s)\|\bar{U}(s)\|_Y^q\big]\ds +C_{q,\rho}\Big(t+ \E\int_0^{t\land\tau_n} \|U^k(s)\|_Y^{\rho q} \ds\Big)\\
        &\le \E\|u_0\|_Y^q + C_q \E\int_0^{t} \|\bar{U}(s\land\tau_n)\|_Y^q\ds +C_{q,\rho,T}t\big(1+\E\|u_0\|_Y^{\rho q}\big),
    \end{align*}
    where we have used \eqref{eq:stabSchemeRhoqProof} in the last inequality. Gronwall's inequality yields
    \begin{equation*}
        \E\|\bar{U}(t\land\tau_n)\|_Y^q \le e^{C_qt}\big(\E\|u_0\|_Y^q  +C_{q,\rho,T}t\big(1+\E\|u_0\|_Y^{\rho q}\big)\big)\le C_{q,\rho,T} \big(1+\E\|u_0\|_Y^q+\E\|u_0\|_Y^{\rho q}\big)
    \end{equation*}
    for all $t\in [0,T]$ with the constant $C_{q,\rho,T}$ independent of $n$, $k$, and $\theta$. Continuity of paths of $\bar{U}$ on $[0,T]$ implies $\tau_n \nearrow T$ and $\bar{U}(t\land\tau_n) \to \bar{U}(t)$ for all $t\in [0,T]$ a.s. Thus, by Fatou's lemma
    \begin{equation*}
        \E\|\bar{U}(t)\|_Y^q \le \liminf_{n\to \infty}\E\|\bar{U}(t\land\tau_n)\|_Y^q \le C_{q,\rho,T} \big(1+\E\|u_0\|_Y^q+\E\|u_0\|_Y^{\rho q}\big).
    \end{equation*}
    Noting that the right-hand side is independent of $t$ and $\rho\ge 1$, the claim follows after taking the supremum in time and the $q$-th root.
\end{proof}

\subsection{Pathwise uniform stability}
    \label{subsec:pathwiseStabBDG}

    Under a slightly stronger coercivity assumption, pointwise stability with the supremum outside of the expectation can be upgraded to pathwise uniform stability with the supremum inside. This is done in a two-step procedure inspired by \cite[Prop.~4.3]{AgrestiVeraar24CriticalVariationalSetting}. First, the additional coercivity is used to derive a bound for the expectation of one of the critical terms. Second, the pathwise supremum is estimated via a BDG-type maximal inequality and the first step.
    
	\begin{proposition}[Pathwise uniform stability of the scheme in $Y$]
		\label{prop:pathwiseUniformStabY}
		Suppose that $-A$ generates a $C_0$-contraction semigroup $(S(t))_{t\ge 0}$ on $Y$ and that $(q+\varepsilon)$-coercivity as in Assumption \ref{ass:coYp}($q+\varepsilon$) holds for some $q\in [2,\infty)$ and $\varepsilon>0$. Let $\theta\in (0,\frac{1}{4}]$, $u_0\in L_{\F_0}^q(\Omega;Y)$, and assume that $F:Y\to Y$ as well as $G:Y \to \LHY$ are Borel-measurable. 
		Then the scheme is pathwise uniformly stable in $Y$. That is, for every $T>0$, there exists a constant
		$C_{T,q,\varepsilon}\ge0$, independent of $k$, $N$, and $\theta$ such that
		\begin{align*}
			\bigg\|
			\sup_{t\in[0,T]}\|\hat U(t)\|_Y
			\bigg\|_{L^q(\Omega)}
			\le
			C_{T,q,\varepsilon}
			\big(
			1+\|u_0\|_{L^q(\Omega;Y)}
			\big).
		\end{align*}
		In particular, the same bound holds for $\|\max_{j=0,\ldots,N}\|U^j\|_Y\|_{L^q(\Omega)}$ and $\|\sup_{t\in [0,T]}\|U^k(t)\|_Y\|_{L^q(\Omega)}$.
	\end{proposition}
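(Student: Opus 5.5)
The plan is to reduce the pathwise bound for $\hat U$ to a bound for a piecewise Itô process without semigroup. For $s\in[t_j,t_{j+1})$ set
\begin{equation*}
Z(s)\ce V_{t_j}^{U^j}(s-t_j)=U^k(s)+(s-\floors)F_\theta^k(s)+\int_{\floors}^s G_\theta^k(\tau)\,\rmd W_H(\tau),
\end{equation*}
with $V$ as in \eqref{eq:defVlxr}.

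By \eqref{eq:UhatfromUk} and contractivity of $S$ on $Y$, we have $\|\hat U(t)\|_Y=\|S(t-\floort)Z(t)\|_Y\le \|Z(t)\|_Y$. Moreover, $\|U^{j+1}\|_Y\le \|Z(t_{j+1}-)\|_Y$, so the norm can only decrease across grid points. Applying Itô's formula to $V_{t_j}^{U^j}$ on each interval, as in \eqref{eq:stabProofIto}, and summing these telescoping inequalities gives a.s.\ for all $t\in[0,T]$
\begin{align*}
\|Z(t)\|_Y^q\le \|u_0\|_Y^q&+q\int_0^t\|Z\|_Y^{q-2}\Big(\Re\la Z,F_\theta^k\ra_Y+\frac{q-1}{2}\|G_\theta^k\|_\LHY^2\Big)\ds+M_t,
\end{align*}
where $M_t\ce q\int_0^t\|Z\|_Y^{q-2}(G_\theta^k)^*Z\dWHs$. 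All of this is localised by stopping times $\tau_n$ at which $\|Z\|_Y$ reaches $n$, so that $M$ is a true martingale. I then split $\la Z,F_\theta^k\ra_Y=\la U^k,F_\theta^k\ra_Y+\la Z-U^k,F_\theta^k\ra_Y$.

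\textbf{Step 1 (extra dissipation).} Coercivity Assumption \ref{ass:coYp}($q+\varepsilon$), which by Remark \ref{rem:coercivityFGtheta} holds for $F_\theta,G_\theta$, gives
\begin{equation*}
\Re\la U^k,F_\theta^k\ra_Y+\frac{q-1}{2}\|G_\theta^k\|_\LHY^2\le C(1+\|U^k\|_Y^2)-\frac{\varepsilon}{2}\|G_\theta^k\|_\LHY^2 .
\end{equation*}
Take expectations at $t\wedge\tau_n$, so that the martingale term vanishes, and move the negative term to the left. The remaining terms are handled as follows.
\begin{itemize}
\item The term $\|Z\|_Y^{q-2}(1+\|U^k\|_Y^2)$ is bounded via Young's inequality and the pointwise stability of Proposition \ref{prop:stabY}. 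This proposition applies to $Z$ as well, by Lemma \ref{lem:stabOneTimeStep} with $x=U^j$.
\item The cross term $\|Z\|_Y^{q-2}\,|\la Z-U^k,F_\theta^k\ra_Y|$ is bounded exactly as in \eqref{eq:stabProofTaming}, using the stopping bounds $\|F_\theta^k\|_Y,\|G_\theta^k\|_\LHY\le k^{-\theta}$ and $\theta\le\frac14$. Its contribution is at most $C_{T,q}\big(1+\E\int_0^T\|Z\|_Y^q\,\ds\big)$.
\end{itemize}
Letting $n\to\infty$ by Fatou's lemma yields
\begin{equation*}
\frac{\varepsilon q}{2}\,\E\int_0^T\|Z(s)\|_Y^{q-2}\|G_\theta^k(s)\|_\LHY^2\ds\le C_{T,q}\big(1+\E\|u_0\|_Y^q\big).
\end{equation*}

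\textbf{Step 2 (supremum).} Now take the supremum over $t\le T\wedge\tau_n$ before taking expectations. The drift terms are bounded by their absolute values, exactly as in Step 1, using the pointwise bounds. The martingale is handled by the BDG inequality followed by Young's inequality:
\begin{align*}
\E\sup_{t\le T\wedge\tau_n}|M_t|&\le C_q\,\E\Big(\int_0^{T\wedge\tau_n}\|Z\|_Y^{2q-2}\|G_\theta^k\|_\LHY^2\ds\Big)^{1/2}\\
&\le \frac12\E\sup_{t\le T\wedge\tau_n}\|Z(t)\|_Y^q+C_q\,\E\int_0^T\|Z\|_Y^{q-2}\|G_\theta^k\|_\LHY^2\ds .
\end{align*}
The first term is finite by the localisation and is absorbed into the left-hand side. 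The second term is controlled by Step 1. Letting $n\to\infty$ by monotone convergence gives
\begin{equation*}
\E\sup_{t\le T}\|Z(t)\|_Y^q\le C_{T,q,\varepsilon}\big(1+\E\|u_0\|_Y^q\big).
\end{equation*}
This dominates $\sup_t\|\hat U(t)\|_Y$, and therefore also $\max_j\|U^j\|_Y$ and $\sup_t\|U^k(t)\|_Y$.

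The main obstacle is Step 1. Without the slack $\varepsilon$, the BDG term cannot be absorbed when $G$ grows superlinearly. A secondary difficulty is that the coercivity is evaluated at $U^k$ while the weight is $\|Z\|_Y^{q-2}$. This mismatch must be handled through the stopping threshold $k^{-\theta}$ together with a bound on $\E\|G_\theta^k\|^2\|Z-U^k\|^{\dots}$. I would first try to handle the mismatch by mimicking the conditional-expectation trick of Lemma \ref{lem:stabOneTimeStep}, which replaces $\|Z\|_Y^{q-2}$ by $\E[\|Z\|_Y^{q-2}\mid\F_{\floors}]$ inside each interval. If that fails, I would use Young's inequality to trade the mismatch for an $O(k^{1/2-2\theta})$ error.
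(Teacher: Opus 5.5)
Your proposal is correct and follows the paper's proof essentially step by step. It uses the same piecewise It\^o process (your $Z$ is the paper's $V_\theta^k$) and the same split $\la Z,F_\theta^k\ra_Y=\la U^k,F_\theta^k\ra_Y+\la Z-U^k,F_\theta^k\ra_Y$. The $(q+\varepsilon)$-coercivity produces the dissipative term $\frac{\varepsilon q}{2}\int_0^t\|Z\|_Y^{q-2}\|G_\theta^k\|_\LHY^2\ds$, whose expectation is bounded first, and then BDG plus Young absorb the martingale. The only difference is cosmetic: you bound $\E\int_0^T\|Z\|_Y^q\ds$ directly via Lemma \ref{lem:stabOneTimeStep} and Proposition \ref{prop:stabY}, where the paper runs Gronwall arguments.

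The ``secondary difficulty'' in your last paragraph does not arise. Coercivity is a pointwise inequality at $x=U^j$, so you can multiply it by $\|Z\|_Y^{q-2}\ge 0$ and apply Young's inequality to $\|Z\|_Y^{q-2}\|U^k\|_Y^2$. That is exactly your first bullet (and the paper's treatment of $O_j$), so neither a conditional-expectation trick nor an $O(k^{1/2-2\theta})$ trade-off is needed.
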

	
	\begin{proof}
    \textit{Step 1: Iterated Itô's formula via $(q+\varepsilon)$-coercivity.} 
    For $0\le j\le\Nk-1$ and $t\in[t_j,t_{j+1})$, define
		\begin{align}
			\label{eq:defVthetak}
			V_\theta^k(t) \ce U^j +(t-t_j)F_\theta(U^j) +\int_{t_j}^tG_\theta(U^j)\dWHs
		\end{align}
		and let $V_\theta^k(T)\ce U^N$. Then $V_\theta^k(t)=V_{t_j}^{U^j}(t-t_j)$ in the notation \eqref{eq:defVlxr} of Lemma \ref{lem:stabOneTimeStep}. Since  
        $V_\theta^k(t_j)=U^j= S(k)V_\theta^k(t_j-)$ for $1\le j\le\Nk$, contractivity of the semigroup on $Y$ implies
		\begin{align}
			\label{eq:VthetakJumpNorm}
			\|U^j\|_Y=\|V_\theta^k(t_j)\|_Y \le \|V_\theta^k(t_j-)\|_Y.
		\end{align}
        For $n\in\N$, define the stopping times
    \begin{equation*}
        \tau_n \ce \inf \{t \in [0,T]:\|V_\theta^k(t)\|_Y\ge n\},\quad \inf \emptyset \ce T.
    \end{equation*}
    Since $V_\theta^k$ is an Itô process on $[t_j,t_{j+1})$, we can apply Itô's formula on every interval
	$[t_j,t_{j+1})$, $0\le j \le N-1$, as in \eqref{eq:stabProofIto} with $x \curve U^j$, $t_\ell \curve t_j$, and $r\curve t-t_j$. After rearranging, this gives a.s.\ on the event $\{t_j\le\tau_n\}$
    \begin{align*}
			&\|V_\theta^k(t\land\tau_n)\|_Y^q 
            \le \|U^j\|_Y^q + q\int_{t_j}^{t\land\tau_n} \|V_\theta^k(s)\|_Y^{q-2} \Big(\Re\big\langle U^j,F_\theta(U^j)\big\rangle_Y+\frac{(q-1)}{2}\|G_\theta(U^j)\|_\LHY^2\Big)\ds\\
			&+ q\int_{t_j}^{t\land\tau_n} \|V_\theta^k(s)\|_Y^{q-2} \Re \big\langle V_\theta^k(s)-U^j,F_\theta(U^j)\big\rangle_Y\ds  
			+q\int_{t_j}^{t\land\tau_n}\|V_\theta^k(s)\|_Y^{q-2} (G_\theta^k(s))^*V_\theta^k(s)\dWHs\\
			&\ec \|U^j\|_Y^q+O_j(t\land\tau_n)+P_j(t\land\tau_n)+M_j(t\land\tau_n),\qquad\qquad\qquad\qquad\qquad\qquad\qquad t\in[t_j,t_{j+1}).
		\end{align*}
        By Remark \ref{rem:coercivityFGtheta}, we can estimate $O_j(t\land\tau_n)$ using $(q+\varepsilon)$-coercivity from Assumption \ref{ass:coYp}($q+\varepsilon$). Since the coercivity parameter is strictly larger than $q$, we obtain a bound not only for the norm of $V_\theta^k(t\land\tau_n)$ but also for a mixed integral containing the noise term. More precisely, a.s.\
		\begin{align*}
			O_j&(t\land\tau_n)\le q\int_{t_j}^{t\land\tau_n} \|V_\theta^k(s)\|_Y^{q-2} \Big(C_{q,\varepsilon} \big(1+\|U^j\|_Y^2\big)-\frac{\varepsilon}{2}\|G_\theta(U^j)\|_\LHY^2\Big)\ds\\
			&\le C_{q,\varepsilon} \int_{t_j}^{t\land\tau_n} \big(1+ \|V_\theta^k(s)\|_Y^q  + \|U^k(s)\|_Y^q\big) \ds - \varepsilon\frac{q}{2} \int_{t_j}^{t\land\tau_n} \|V_\theta^k(s)\|_Y^{q-2} \|G_\theta(U^j)\|_\LHY^2\ds,
		\end{align*}
		where we have applied coercivity in the first inequality and Young's inequality with exponents $\frac{q}{q-2}$ and $\frac{q}{2}$ in the second inequality in case $q>2$. For $q=2$, the inequality is immediate.
        Young's inequality with the same parameters as well as the Cauchy--Schwarz inequality yield an upper bound for $P_j(t\land \tau_n)$ such that after rearranging a.s.\ on $\{t_j\le \tau_n\}$
\begin{align*}
			\|&V_\theta^k(t\land \tau_n)\|_Y^q + \varepsilon\frac{q}{2} \int_{t_j}^{t\land \tau_n} \|V_\theta^k(s)\|_Y^{q-2} \|G_\theta^k(s)\|_\LHY^2\ds
			\le \|U^j\|_Y^q +  C_{q,\varepsilon}\int_{t_j}^{t\land \tau_n} \|V_\theta^k(s)\|_Y^q\ds\\
			&\quad + C_{q,\varepsilon} \int_{t_j}^{t\land \tau_n} \big(1+ \|U^k(s)\|_Y^q\big) \ds
			+ 2\int_{t_j}^{t\land\tau_n}  \big(\|V_\theta^k(s)-U^k(s)\|_Y\|F_\theta^k(s)\|_Y\big)^{q/2}\ds + M_j(t\land \tau_n).
		\end{align*}
        Due to \eqref{eq:VthetakJumpNorm}, we can estimate the first term on the right-hand side by $\|V_\theta^k(t_j-)\|_Y^q$ a.s.\, allowing us to insert the analogue of the above estimate on $[t_{j-1},t_j)$ at $t=t_j-$. Iterating this procedure results in a.s.\
		\begin{align}
			\label{eq:iteratedVthetaEstimate}
			\|V_\theta^k(t\land\tau_n)\|_Y^q + \varepsilon D(t\land\tau_n)
			&\le \|u_0\|_Y^q + C_{q,\varepsilon} \int_0^{t\land\tau_n} \|V_\theta^k(s)\|_Y^q\ds\nonumber\\
            &\phantom{\le }+ C_{q,\varepsilon} \int_0^{t\land\tau_n} \big(1+ \|U^k(s)\|_Y^q\big) \ds
			+ R(t\land\tau_n) + M(t\land\tau_n)
		\end{align}
		with
		\begin{align}
			\label{eq:defDPathwiseStab}
			D(t)&\ce \frac{q}{2} \int_{0}^t \|V_\theta^k(s-)\|_Y^{q-2} \|G_\theta^k(s-)\|_\LHY^2\ds,\\
			\label{eq:defRPathwiseStab}
			R(t)&\ce 2\int_0^t  \big(\|V_\theta^k(s)-U^k(s)\|_Y\|F_\theta^k(s)\|_Y\big)^{q/2}\ds,\\
\label{eq:defMPathwiseStab}
			M(t)&\ce q\int_0^t\|V_\theta^k(s-)\|_Y^{q-2} (G_\theta^k(s-))^*V_\theta^k(s-)\dWHs.
		\end{align}
        Since $V_\theta^k(s)=V_\theta^k(s-)$ outside finitely many grid points, replacing $V_\theta^k(s)$ by $V_\theta^k(s-)$ in \eqref{eq:defDPathwiseStab} and \eqref{eq:defMPathwiseStab}, and likewise for $G_\theta^k(s-)$, did not affect the value of the integrals but ensures predictability. For fixed $k$, the stopped process $(M^{\tau_n}(t))_{t\in [0,T]}$ is a continuous square-integrable martingale with $M(0)=0$ due to
        \begin{align*}
            \E\la M\ra_{T\land \tau_n} \le q^2\E \int_0^{T\land\tau_n} \|V_\theta^k(s-)\|_Y^{2q-2} \|G_\theta^k(s-)\|_\LHY^2\ds \le q^2Tn^{2q-2}k^{-2\theta}<\infty
        \end{align*}
        Hence, $\E(M(t\land\tau_n))=0$.

    \textit{Step 2: Estimate for the expectation of \eqref{eq:iteratedVthetaEstimate}.}  
    Now, we take the expectation of \eqref{eq:iteratedVthetaEstimate}, so that the martingale term  vanishes. Repeating the arguments from the proof of Lemma \ref{lem:stabOneTimeStep} (see \eqref{eq:stabProofTaming} and below) yields
    \begin{align}
        \label{eq:estExpRt}
			\E R(t\land \tau_n) &\le \E R(t)\le C_{T,q}.
		\end{align}
    Moreover, $(q+\varepsilon)$-coercivity implies $q$-coercivity. Hence, the pointwise stability estimate from Proposition \ref{prop:stabY} implies
    \begin{align*}
        \int_0^t
		\E\big[
		\one_{\{s\le\tau_n\}}
		\|U^k(s)\|_Y^q
		\big]\ds 
        \le t 
	\bigg(
		\sup_{s\in [0,T]}\E
		\|U^k(s)\|_Y^q
	\bigg) 
    \le C_{T,q} \big(1+\E\|u_0\|_Y^q\big).
    \end{align*}
    Lastly, $\1_{\{s\le\tau_n\}}
	\|V_\theta^k(s)\|_Y^q
	\le
	\|V_\theta^k(s\wedge\tau_n)\|_Y^q$. Altogether, from \eqref{eq:iteratedVthetaEstimate} we deduce
		\begin{align*}
			\E\|&V_\theta^k(t\land\tau_n)\|_Y^q + \varepsilon \E D(t\land \tau_n)
			\le C_{T,q,\varepsilon}\big(1+\E\|u_0\|_Y^q\big) + C_{q,\varepsilon} \int_0^t \E\|V_\theta^k(s\land \tau_n)\|_Y^q\ds.
		\end{align*}
    Since $\varepsilon\E D(s\land \tau_n) \ge 0$ for all $0\le s \le t$, it can be added to the integrand on the right-hand side. A deterministic Gronwall argument thus implies uniformly in $n$
        \begin{align}
\label{eq:uniformStoppedEnergyEstimate}
	\sup_{n\in\N} \big(\E\|V_\theta^k(t\land\tau_n)\|_Y^q + \varepsilon \E D(t\land \tau_n)\big)
	&\le e^{C_{q,\varepsilon}t} C_{T,q,\varepsilon}\big(1 + \E\|u_0\|_Y^q \big).
\end{align}
    By definition of the stopping time and because $\tau_n\nearrow T$ a.s.\, also $V_\theta^k(t\wedge\tau_n)\to V_\theta^k(t)$ a.s.\ for $t\in [0,T]$ as $n\to\infty$. Since $D$ is nondecreasing and continuous, $D(t\wedge\tau_n)\nearrow D(t)$. By Fatou's lemma, we conclude that for all $t\in [0,T]$,
        \begin{align}
        \label{eq:EDtfinite}
			\varepsilon \E D(t)&\le \E\|V_\theta^k(t)\|_Y^q + \varepsilon \E D(t) \le \liminf_{n \to \infty}\big(\E\|V_\theta^k(t\land\tau_n)\|_Y^q + \varepsilon \E D(t\land\tau_n)\big)\nonumber\\
            &\le e^{C_{q,\varepsilon}t}C_{T,q,\varepsilon}\big(1+\E\|u_0\|_Y^q\big)<\infty.
		\end{align}

    \textit{Step 3: Estimate of the pathwise supremum.} Again starting from the almost sure estimate \eqref{eq:iteratedVthetaEstimate}, we drop the nonnegative term $\varepsilon D(t\land\tau_n)$ and take the expectation of the supremum over $t\in [0,T_0]$, where $T_0\in (0,T]$. This results in
        \begin{align}
        \label{eq:stoppedSupremumPreBDG}
        \E\sup_{t\in[0,T_0]}\|V_\theta^k(t\land \tau_n)\|_Y^q 
			&\le \E\|u_0\|_Y^q +  C_{q,\varepsilon} \int_0^{T_0}  \big(1+\E\|V_\theta^k(s\land \tau_n)\|_Y^q + \E\|\1_{\{s\le \tau_n\}}U^k(s)\|_Y^q\big)\ds\nonumber\\
            &\phantom{\le } + \E R(T_0\land\tau_n) + \E\sup_{t\in[0,T_0]} |M(t\land \tau_n)|.
        \end{align}
        We estimate the remainder $R$ via \eqref{eq:estExpRt} and note that by \eqref{eq:VthetakJumpNorm}, for the integrand, we can estimate
        \begin{align*}
            \|\1_{\{s\le \tau_n\}}U^k(s)\|_Y^q
            \le \sup_{r\in[0,s]} \|V_\theta^k(r\wedge\tau_n)\|_Y^q.
        \end{align*}
        a.s.\ for $s\in[t_j,t_{j+1})$ because for $s\le\tau_n$, we have $\|U^k(s)\|_Y^q =\|U^j\|_Y^q	
            =\|V_\theta^k(t_j)\|_Y^q$.
        For the stopped martingale term, the definition \eqref{eq:defMPathwiseStab}, the BDG inequality, and Young's inequality imply
        \begin{align*}
            &\E\sup_{t\in[0,T_0]} |M(t\land \tau_n)| \le C_q\E\Big(\int_0^{T_0\land \tau_n}\|V_\theta^k(s-)\|_Y^{2q-2}\|G_\theta^k(s)\|_\LHY^2 \ds\Big)^{1/2}\\
            &~\le C_q\E\bigg[\Big(\sup_{t\in[0,T_0]}\|V_\theta^k(t\land \tau_n)\|_Y^q\Big)^{1/2} D(T_0\land \tau_n)^{1/2}\bigg]\le \frac{1}{2}\E\sup_{t\in[0,T_0]}\|V_\theta^k(t\land \tau_n)\|_Y^q+ C_q \E D(T_0\land \tau_n)\\
            &~\le \frac{1}{2}\E\sup_{t\in[0,T_0]}\|V_\theta^k(t\land \tau_n)\|_Y^q+ C_{T,q,\varepsilon}\big(1+\E\|u_0\|_Y^q\big). 
        \end{align*}
        In the last inequality, the estimate \eqref{eq:uniformStoppedEnergyEstimate} for $\E D(T_0\land\tau_n)$ uniformly in $n$ was essential to control the martingale. Absorbing the first term into the left-hand side of \eqref{eq:stoppedSupremumPreBDG}, we conclude for all $T_0\in(0,T]$ that
        \begin{align*}
            \E\sup_{t\in[0,T_0]}&\|V_\theta^k(t\land \tau_n)\|_Y^q 
			\le 2\E\|u_0\|_Y^q  +C_{q,\varepsilon}\int_0^{T_0}  \E\sup_{r\in [0,s]}\|V_\theta^k(r\land \tau_n)\|_Y^q \ds +  C_{T,q,\varepsilon}\big(1+\E\|u_0\|_Y^q\big).
        \end{align*}
        Gronwall's inequality therefore gives, uniformly in $n\in\N$, 
        \begin{align*}
            \sup_{n\in\N}\bigg(\E\sup_{t\in[0,T]}\|V_\theta^k(t\land\tau_n)\|_Y^q \bigg)
			&\le e^{C_{q,\varepsilon}T} C_{T,q,\varepsilon}\big(1+\E\|u_0\|_Y^q\big).
        \end{align*}
        As $n\to\infty$, monotone convergence yields
        \begin{align*}
            \E\sup_{t\in[0,T]}\|V_\theta^k(t)\|_Y^q =\lim_{n\to\infty}\E\sup_{t\in[0,T]}\|V_\theta^k(t\land\tau_n)\|_Y^q \le C_{T,q,\varepsilon}\big(1+\E\|u_0\|_Y^q\big).
        \end{align*}
        By definition of $\hat U$, $\|\hat U(t)\|_Y =\|S(t-\lfloor t\rfloor)V_\theta^k(t)\|_Y \le \|V_\theta^k(t)\|_Y$ a.s.\ for all $t\in[0,T]$,
        from which the statement follows after taking the $q$-th root. 
\end{proof}

Analogously adapting the proof of Proposition \ref{prop:stabUbar}, pathwise uniform stability of the semilinear integrated counterparts $(\bar{U}(t))_{t\in[0,T]}$ can be shown. Since this result is not used below, we omit the details.

\section{Pathwise uniform convergence rates}
\label{sec:convergenceRate}

Our first main result, Theorem \ref{thm:introMain} establishes pathwise uniform convergence at rates up to $\frac12$ in time for the nonlinearities-stopped exponential Euler scheme. It is established under a monotonicity assumption in $X$ as well as other assumptions summarised in Subsection \ref{subsec:convAssumptions}. In the proof contained in Subsection \ref{subsec:convProof}, the error is split into the difference of the scheme with the semilinear integrated counterparts $\bar{U}$ and their difference from the solution. For the first, semigroup interpolation and, for the difference of the stopped with the original nonlinearities, Markov's inequality yield decay. The remaining difference is estimated via the Itô-type inequality, monotonicity, and local Lipschitz continuity. A stochastic Gronwall argument yields a bound in terms of the first error and increments over small intervals. Higher moment estimates for the scheme and the solution finally close the estimate. In Subsection \ref{subsec:convInterpolationFullTimeInterval}, we show that the convergence rate remains true for a piecewise constant interpolation of the scheme on the full time interval up to a square-root logarithmic correction factor, cf.\ Theorem \ref{thm:convergenceExtension}.

\subsection{Assumptions}
\label{subsec:convAssumptions}

The following assumptions depend on the indexed parameters and will subsequently be used for different sets of parameters. First, we summarise the assumptions on the underlying spaces and quantify the local Lipschitz continuity and polynomial growth assumptions, where the latter is as in Assumption \ref{ass:polGrRho}. 
\begin{assumption}[$\alpha,\nu,\rho$]
\label{ass:locLipPolGro}
    Let $H$ be a real separable Hilbert space and $X,Y$ be real or complex separable Hilbert spaces such that $Y \hra X$ continuously and, for some $\alpha \in (0,\frac{1}{2}]$, let $Y\hra D_A(\alpha,\infty)$. Suppose that $-A$ generates a $C_0$-contraction semigroup $(S(t))_{t\ge 0}$ on both $X$ and $Y$. Let $F:Y \to Y$ and $G:Y \to \LHY$ be Borel-measurable and satisfy the following.
    \begin{enumerate}[label=(\alph*)]
        \item\label{assItem:locLip} \emph{Local Lipschitz continuity:} For some $\nu\ge 1$, there is $C_\nu\ge 0$ such that for all $x,y \in Y$
    	\begin{align*}
    	    \|F(x)-F(y)\|_X+\|G(x)-G(y)\|_\LHX &\le C_\nu\|x-y\|_X(1+\|x\|_Y^\nu+\|y\|_Y^\nu).
    	\end{align*}
        \item\label{assItem:polGro} \emph{Polynomial growth on $Y$:}  For some $\rho \ge 1$, there is $C_\rho\ge 0$ such that for all $x\in Y$
        \begin{align*}
            \|F(x)\|_Y +\|G(x)\|_\LHY \le C_\rho(1+\|x\|_Y^\rho).
        \end{align*}
    \end{enumerate}
\end{assumption}
The restriction to $\nu,\rho\ge 1$ is for notational convenience: Assumption \ref{ass:locLipPolGro}($\alpha,\nu,\rho$) for some $\nu,\rho\in (0,1)$ directly implies it for $\nu=\rho=1$ after potentially increasing the constant. In particular, Assumption \ref{ass:locLipPolGro} implies the local well-posedness Assumption \ref{ass:localWP} with $C_n=C_\nu(1+2n^\nu)$ and $L_n=C_\rho(1+n^\rho)$. Thus, local well-posedness for suitable initial values follows from Theorem \ref{thm:localWellPosed}.

As in \cite[Ass.~6.1]{KliobaVeraar24Rate}, the convergence analysis can be extended to random, time-dependent nonlinearities $F:\Omega \times [0,T]\times Y \to Y$ and likewise for $G$ under progressive measurability and temporal Hölder continuity assumptions if the conditions above hold uniformly in $(\omega,t)$. Moreover, the results below extend to quasi-contractive semigroups (see Definition \ref{def:quasiContractive}) with parameter $\lambda \ge 0$. Indeed, then $(e^{-\lambda t}S(t))_{t\ge 0}$ is contractive with generator $-A-\lambda I$ and the additional linear term $\lambda I$ can be absorbed into $F$ without changing the equation.

For the sake of completeness, we repeat Assumption \ref{ass:coYp} here, which was used to show global well-posedness of the equation and stability of the scheme.

\begin{assumption}[$q$) ($q$-Coercivity in $Y$]
\label{ass:coYq}
    For some $q\in [2,\infty)$, there is a constant $C_q\ge 0$ such that for all $x\in Y$,
    \begin{align*}
        \Re \la x,F(x)\ra_Y + \frac{q-1}{2}\|G(x)\|_\LHY^2 \le C_q(1+\|x\|_Y^2).
    \end{align*}
\end{assumption}

Assumptions \ref{ass:locLipPolGro}($\alpha,\nu,\rho$) and Assumption \ref{ass:coYq}($q$)
 for some $q\in [2,\infty)$ together imply pointwise stability of the scheme and an a priori estimate for its $q$-th moments by Proposition \ref{prop:stabY}. If, in addition, Assumption \ref{ass:coYq}($\tilde{q}$) holds for some $\tilde{q}>q$, then Theorem \ref{thm:aprioriY} ensures global well-posedness of the stochastic evolution equation and a pathwise uniform a priori estimate for the solution. Moreover, Proposition \ref{prop:pathwiseUniformStabY} then gives pathwise uniform stability of the scheme.
 
 To show convergence rates, we further impose the following monotonicity condition, which is also referred to as a \textit{one-sided Lipschitz assumption}.
\begin{assumption}[$q,\eta$) (Monotonicity condition]
\label{ass:monqEta}
    For some $q \in [2,\infty)$ and $\eta>0$, there is a constant $C_{q,\eta}\ge 0$ such that for all $x,y \in Y$,
	\begin{equation*}
	     \Re \la x-y,F(x)-F(y)\ra_X + (1+\eta) \frac{q-1}{2} \|G(x)-G(y)\|_\LHX^2 \le C_{q,\eta} \|x-y\|_X^2.
	\end{equation*}
\end{assumption}

\begin{remark}
    Note that the coercivity and monotonicity assumptions become stronger as the parameter $q$ increases: Assumptions \ref{ass:coYq}($q$) and \ref{ass:monqEta}($q,\eta$) for some $q\ge 2$ and $\eta>0$ imply Assumptions \ref{ass:coYq}($\tilde{q}$) and \ref{ass:monqEta}($\tilde{q},\eta$) for all $\tilde{q}\in [2,q]$, respectively. 
\end{remark}

Recall the definitions of the piecewise constant and continuous extensions $U^k$ and $\hat{U}$ as well as the definition of the semilinear integrated counterparts $\bar{U}$ from Definitions \ref{def:UkExtension}, \ref{def:UhatExtension}, and \ref{def:UbarSemilinearIntegratedCounterparts} respectively. Abbreviate
\begin{equation*}
    \|u\|_{q,Z}\ce \|u\|_{L^q(\Omega;Z)},\quad \nn \phi \nn_Z\ce \|\phi\|_{\calL_2(H,Z)},\quad \nn v\nn_{q,Z}\ce \|v\|_{L^q(\Omega;\calL_2(H,Z))},\quad \|\zeta\|_q \ce \|\zeta\|_{L^q(\Omega)}
\end{equation*}
for $q \in [2,\infty)$, Hilbert spaces $Z$, and $u,\phi,v,\zeta$ in the respective spaces.

\subsection{Main error estimate}
\label{subsec:convProof}
\subsubsection{Error between the scheme and the semilinear integrated counterparts}

To prove an error estimate for the difference between the scheme $(U^j)_j$ and the semilinear integrated counterparts $(\bar{U}(t))_{t\in [0,T]}$, we actually show a pathwise uniform error estimate for the difference $\hat{U}(t)-\bar{U}(t)$ with the continuous extension $\hat{U}$ of the scheme. At the grid points, this yields the error estimate for the scheme. Since part of the error occurs due to comparing the stopped with the unstopped nonlinearities, the following consequence of Markov's inequality proves useful.

\begin{lemma}
	\label{lem:Markov2}
	Let $\Phi:\Omega \to Y$ be measurable, $\theta \in (0,\frac{1}{4}]$, and $q\in [2,\infty)$. Suppose that $F:Y \to Y$ and $G:Y \to \LHY$ are Borel-measurable. Then for all $\kappa \in [\theta/q,\infty)$ and $k>0$,
	\begin{align*}
		\big\|1-\1&_{\{\|F(\Phi)\|_Y+\|G(\Phi)\|_{\calL_2(H,Y)} \le k^{-\theta}\}}\big\|_{L^q(\Omega)}\le k^\kappa \big(\|F(\Phi)\|_{L^{\kappa q/\theta}(\Omega;Y)}+\|G(\Phi)\|_{L^{\kappa q/\theta}(\Omega;\LHY)}\big)^{\kappa/\theta}.
	\end{align*}
\end{lemma}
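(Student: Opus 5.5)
The indicator $1-\1_{\{\ldots\le k^{-\theta}\}}$ equals $\1_{E}$ with $E\ce\{\|F(\Phi)\|_Y+\|G(\Phi)\|_{\calL_2(H,Y)}>k^{-\theta}\}$. Since $\1_E$ takes only the values $0$ and $1$, its $L^q(\Omega)$-norm is $\P(E)^{1/q}$. The plan is to bound $\P(E)$ by Markov's inequality with a suitable power and then use the triangle inequality.

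Set $Z\ce\|F(\Phi)\|_Y+\|G(\Phi)\|_{\calL_2(H,Y)}$ and $s\ce\kappa q/\theta$. The assumption $\kappa\ge\theta/q$ gives $s\ge 1$, so $\|\cdot\|_{L^s(\Omega)}$ is a norm. Markov's inequality applied to $Z^s$ yields
\begin{equation*}
\P(E)=\P(Z^s>k^{-\theta s})\le k^{\theta s}\,\E Z^s = k^{\kappa q}\|Z\|_{L^s(\Omega)}^{s}.
\end{equation*}
Taking the $q$-th root gives
\begin{equation*}
\|\1_E\|_{L^q(\Omega)}\le k^{\kappa}\|Z\|_{L^s(\Omega)}^{s/q}=k^\kappa\|Z\|_{L^{s}(\Omega)}^{\kappa/\theta}.
\end{equation*}
Minkowski's inequality in $L^s(\Omega)$, valid because $s\ge1$, gives $\|Z\|_{L^s(\Omega)}\le\|F(\Phi)\|_{L^{s}(\Omega;Y)}+\|G(\Phi)\|_{L^{s}(\Omega;\LHY)}$. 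Since the exponent $\kappa/\theta$ is positive, raising this inequality to the power $\kappa/\theta$ preserves it, and the claim follows. If either norm on the right is infinite, the bound holds trivially.

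The argument has no genuine obstacle. The two points needing care are that $s\ge1$, so the triangle inequality applies, and that Borel measurability of $F$ and $G$ together with measurability of $\Phi$ makes $Z$ a random variable, so $E$ is an event.
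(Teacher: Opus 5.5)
Your proof is correct and follows the paper's argument exactly: Markov's inequality applied to the $\kappa q/\theta$-th power of $\|F(\Phi)\|_Y+\|G(\Phi)\|_{\calL_2(H,Y)}$, then the $q$-th root, then the triangle inequality in $L^{\kappa q/\theta}(\Omega)$. You also make explicit what the paper leaves implicit, namely that the hypothesis $\kappa\ge\theta/q$ is exactly what gives $\kappa q/\theta\ge 1$, so that Minkowski's inequality applies.
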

\begin{proof}
	An application of Markov's inequality and the triangle inequality yields
	\begin{align*}
		\big\|1-\1_{\{\|F(\Phi)\|_Y+\nn G(\Phi)\nn_Y \le k^{-\theta}\}}\big\|_{q} 
        &= \big|\P\big(\|F(\Phi)\|_Y+\nn G(\Phi)\nn_Y > k^{-\theta}\big)\big|^{1/q}\\
		&= \big|\P\big((\|F(\Phi)\|_Y+\nn G(\Phi)\nn_Y)^{\kappa q/\theta} > k^{-\kappa q}\big)\big|^{1/q}\\
		&\le \Big|k^{\kappa q} \E\big(\|F(\Phi)\|_Y+\nn G(\Phi)\nn_Y\big)^{\kappa q/\theta} \Big|^{1/q}\\
		&\le k^\kappa \big(\|F(\Phi)\|_{\kappa q/\theta,Y}+\nn G(\Phi)\nn_{\kappa q/\theta,Y}\big)^{\kappa/\theta}. \qedhere
	\end{align*}
\end{proof}
 
\begin{proposition}
	\label{prop:UhatUbarError}
	Let $q \in [2,\infty)$ and $\theta \in (0,\frac{1}{4}]$, and let Assumption \ref{ass:locLipPolGro}($\alpha,\nu,\rho$) hold for some $\alpha \in (0,\frac{1}{2}]$ and $\nu,\rho\ge 1$. 
    Suppose that $(1+\frac{\alpha}{\theta})\rho q$-coercivity as in Assumption \ref{ass:coYq}($(1+\frac{\alpha}{\theta})\rho q$) holds and that $u_0\in L_{\F_0}^{(1+\frac{\alpha}{\theta})\rho q}(\Omega;Y)$. Then there is a constant $C_{\alpha,T,q,\theta,\rho}\ge 0$ not depending on $k$, $N$, or $u_0$ such that
    \begin{align*}
		\bigg\|&\sup_{t\in [0,T]} \|\hat{U}(t)-\bar{U}(t)\|_X\bigg\|_{L^q(\Omega)} 
        \le  C_{\alpha,T,q,\theta,\rho}  \Big( 1+\|u_0\|_{(1+\frac{\alpha}{\theta})\rho q,Y}^{\rho(1+\frac{\alpha}{\theta})}\Big) k^\alpha.
	\end{align*}
	In particular, the upper bound holds for $\|\max_{0 \le j \le \Nk } \|U^j-\bar{U}(t_j)\|_X\|_{L^q(\Omega)} $ and the pathwise uniform error of the scheme $(U^j)_j$ and the semilinear integrated counterparts $\bar{U}$  converges at rate $\alpha$.
\end{proposition}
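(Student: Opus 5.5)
Subtracting \eqref{eq:UhatMildFormula} from Definition \ref{def:UbarSemilinearIntegratedCounterparts}, I write
\begin{align*}
\hat U(t)-\bar U(t) &= \int_0^t \big(S(t-\floors)-S(t-s)\big)F_\theta^k(s)\ds + \int_0^t S(t-s)\big(F_\theta^k(s)-F^k(s)\big)\ds\\
&\phantom{=}+\int_0^t \big(S(t-\floors)-S(t-s)\big)G_\theta^k(s)\dWHs + \int_0^t S(t-s)\big(G_\theta^k(s)-G^k(s)\big)\dWHs \ec \sum_{i=1}^4 E_i(t)
\end{align*}
and estimate each term in $\|\sup_{t}\|\cdot\|_X\|_{L^q(\Omega)}$ separately. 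All terms will be reduced to moments of $\|U^j\|_Y$ of order at most $(1+\frac{\alpha}{\theta})\rho q$. These moments are bounded uniformly in $k$ by Proposition \ref{prop:stabY} applied with parameter $(1+\frac{\alpha}{\theta})\rho q$, which is exactly what the assumed coercivity and integrability of $u_0$ provide. By polynomial growth they then control $\|F(U^j)\|_{Y}$ and $\nn G(U^j)\nn_{Y}$ in $L^{(1+\alpha/\theta)q}(\Omega)$.

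For the deterministic terms, Lemma \ref{lem:sgInterpolation} gives $\|S(t-\floors)-S(t-s)\|_{\calL(Y,X)}\le 2\CY k^\alpha$. Hence $\sup_t\|E_1(t)\|_X\le 2\CY k^\alpha\int_0^T\|F^k(s)\|_Y\ds$, and Minkowski's inequality together with polynomial growth bounds its $L^q$-norm by $Ck^\alpha(1+\max_j\|U^j\|_{\rho q,Y}^\rho)$. For $E_2$, contractivity and $Y\hra X$ give $\sup_t\|E_2(t)\|_X\le C\int_0^T(1-\1_\theta(U^k(s)))\|F^k(s)\|_Y\ds$. I then apply Hölder in $\Omega$ with exponents $1+\frac{\theta}{\alpha}$ and $1+\frac{\alpha}{\theta}$, that is, $\|(1-\1_\theta)\|F\|_Y\|_q\le \|1-\1_\theta\|_{q(1+\theta/\alpha)}\|F\|_{q(1+\alpha/\theta)}$. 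Lemma \ref{lem:Markov2} with $q\curve q(1+\frac{\theta}{\alpha})$ and $\kappa=\alpha$ bounds the first factor by $k^\alpha(\|F(U^j)\|_{(\alpha+\theta)q/\theta,Y}+\nn G(U^j)\nn_{(\alpha+\theta)q/\theta,Y})^{\alpha/\theta}$; the admissibility $\kappa\ge\theta/(q(1+\theta/\alpha))$ holds trivially. Polynomial growth turns this into $k^\alpha(1+\|U^j\|^{\rho}_{(1+\alpha/\theta)\rho q,Y})^{\alpha/\theta}$, and multiplying by the second factor gives total power $\rho(1+\frac{\alpha}{\theta})$ of $\|u_0\|$, as claimed.

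The stochastic terms are the main obstacle, since $E_3$ is not a stochastic convolution of the form $\int_0^tS(t-s)g(s)\dWHs$ and Theorem \ref{thm:maximal-inequality} does not apply directly. I would treat $E_3$ first by splitting at grid points. For $t\in[t_j,t_{j+1})$, the integral over $[0,t_j]$ equals $S(t-t_j)\int_0^{t_j}(S(t_j-\floors)-S(t_j-s))G_\theta^k\dWHs$, because $t-\floors=(t-t_j)+(t_j-\floors)$. By contractivity its $X$-norm is at most that of a discrete martingale $\Sigma_j\ce\int_0^{t_j}\ldots$. I would control $\max_j\|\Sigma_j\|_X$ via Theorem \ref{thm:maximal-inequality} with trivial semigroup (BDG) applied to the process $\tau\mapsto\int_0^{\tau}S(-\cdot)$, using the group-free rewriting $\Sigma_j=\sum_{i<j}S(t_j-t_{i+1})\int_{t_i}^{t_{i+1}}(S(t_{i+1}-t_i)-S(t_{i+1}-s))G(U^i)\dWHs$. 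This is a discrete stochastic convolution with a contraction semigroup, so a discrete analogue of the maximal inequality (or the proof technique of \cite{KliobaVeraar24Rate}) gives the bound $C\|(\sum_i\int_{t_i}^{t_{i+1}}\|(S(k)-S(t_{i+1}-s))G(U^i)\|^2_{\LHX}\ds)^{1/2}\|_q\le Ck^\alpha(1+\max_j\|U^j\|^\rho_{\rho q,Y})$, by Lemma \ref{lem:sgInterpolation} in the Hilbert--Schmidt norm. The remaining piece over $[t_j,t]$ is a local increment within one step. Its supremum over $t\in[t_j,t_{j+1})$ and maximum over $j$ I would bound using the contraction maximal inequality on each interval and a union (moment) bound over $N$ intervals. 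Because this costs a factor $N^{1/\tilde q}$, I would work in a higher moment $\tilde q$, or alternatively write $S(t-t_j)\int_{t_j}^t(I-S(s-t_j)\ldots)$; this is the delicate point, and if it fails I would instead bound $\sup_t\|\hat U-\bar U\|$ by the grid maximum plus increments on subintervals. Finally, $E_4$ is a genuine stochastic convolution with a contraction semigroup on $X$. Theorem \ref{thm:maximal-inequality} bounds it by $C\|(1-\1_\theta(U^k))\nn G^k\nn_Y\|_{L^2(0,T;L^q(\Omega))}$, which is handled exactly like $E_2$ via Hölder and Lemma \ref{lem:Markov2}. Restricting to grid points gives the stated estimate for $\max_j\|U^j-\bar U(t_j)\|_X$.
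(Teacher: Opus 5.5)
Your treatment of $E_1$, $E_2$ and $E_4$ is the same as the paper's, and your exponent bookkeeping is correct. For $E_1$ that means the triangle inequality plus Lemma \ref{lem:sgInterpolation}. For $E_2$ and $E_4$ it means Theorem \ref{thm:maximal-inequality} where needed, H\"older with exponents $1+\frac{\theta}{\alpha}$ and $1+\frac{\alpha}{\theta}$, and Lemma \ref{lem:Markov2} with $\kappa=\alpha$.

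The gap is in $E_3$, and it comes from a false premise: $E_3$ \emph{is} a stochastic convolution. For $s\le t$ we have $t-\floors=(t-s)+(s-\floors)$ with both summands nonnegative, so the semigroup property gives
\[
S(t-\floors)-S(t-s)=S(t-s)\big[S(s-\floors)-I\big].
\]
Hence $E_3(t)=\int_0^t S(t-s)g(s)\dWHs$ with the progressively measurable integrand $g(s)=[S(s-\floors)-I]G_\theta^k(s)$. On each $[t_j,t_{j+1})$ this is a deterministic, strongly continuous operator applied to an $\F_{t_j}$-measurable $\LHY$-valued random variable. Theorem \ref{thm:maximal-inequality} therefore applies directly. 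Lemma \ref{lem:sgInterpolation} in Hilbert--Schmidt norm gives $\nn g(s)\nn_{X}\le 2\CY (s-\floors)^\alpha\nn G(U^k(s))\nn_{Y}$, and hence
\[
\Big\|\sup_{t\in[0,T]}\|E_3(t)\|_X\Big\|_{L^q(\Omega)}\le C_{\alpha,q}\sqrt{T}\,k^\alpha\Big(1+\max_{0\le\ell\le N-1}\|U^\ell\|_{\rho q,Y}^\rho\Big).
\]
This is exactly the paper's argument.

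As written, your substitute for this step is not a proof. The grid part rests on a discrete maximal inequality that you neither prove nor find among the paper's tools. It is also unnecessary: $\Sigma_j=E_3(t_j)$ is just the continuous stochastic convolution above evaluated at $t_j$. The local part you leave open with a fallback, and it cannot be closed without the same factorization. If you write the local piece as $S(t-t_j)\int_{t_j}^tG_\theta(U^j)\dWHs-\int_{t_j}^tS(t-s)G_\theta(U^j)\dWHs$, each term is only $O(k^{1/2})$ per interval. The union bound over $N$ intervals then gives $k^{1/2-1/q}$, which is worse than $k^\alpha$ (e.g.\ for $q=2$). With the factor $S(t-s)[S(s-t_j)-I]$, each local supremum is $O(k^{\alpha+1/2})$ in $L^q$, and the union bound gives $N^{1/q}k^{\alpha+1/2}\le T^{1/2}k^\alpha$. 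So your route would close, but only after using the identity that makes the whole detour redundant.
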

Local Lipschitz continuity as in Assumption \ref{ass:locLipPolGro}\ref{assItem:locLip} is not required in the following proof.
\begin{proof}
	By \ref{eq:UhatMildFormula} and Definition \ref{def:UbarSemilinearIntegratedCounterparts}, we can split the error into
	\begin{align*}
		\hat{U}(t)-\bar{U}(t) &= \int_0^t [S(t-\floors)-S(t-s)] F_\theta^k(s)\ds +\int_0^t S(t-s) [F_\theta^k(s) -F^k(s)]\ds\\
		&\phantom{= }+\int_0^t [S(t-\floors)-S(t-s)] G_\theta^k(s)\dWHs+\int_0^t S(t-s) [G_\theta^k(s) -G^k(s)]\dWHs\\
		&\ec E_{F,1}(t)+E_{F,2}(t)+E_{G,1}(t)+E_{G,2}(t).
	\end{align*}
	We start by estimating $E_{G,1}(t)$ via the BDG-type inequality for stochastic convolutions from Theorem \ref{thm:maximal-inequality} and obtain decay from the semigroup difference estimate of Lemma \ref{lem:sgInterpolation}. Polynomial growth of $G$ and $\1_{\theta,\ell}\le 1$ a.s.\ then give
	\begin{align}
    \label{eq:estimateProofMinusTheta}
		\bigg\|\sup_{t\in [0,T]} \|E_{G,1}(t)\|_X\bigg\|_q
        &\le C_q \Big(\int_0^T \bignn[S(s-\floors)-I] G_\theta^k(s)\bignn_{q,X}^2\ds\Big)^{1/2}\nonumber\\
		&\le C_{\alpha,q} \Big(\sum_{\ell=0}^{\Nk -1}\int_{t_\ell}^{t_{\ell+1}} (s-t_\ell)^{2\alpha} \nn\1_{\theta,\ell} G(U^\ell)\nn_{q,Y}^2 \ds \Big)^{1/2} \nonumber\\
		&\le C_{\alpha,q} \sqrt{T} k^\alpha \Big(\max_{0 \le \ell \le \Nk -1} \nn\1_{\theta,\ell} G(U^\ell)\nn_{q,Y}\Big)\\
		&\le C_{\alpha,q,\rho} \sqrt{T} k^\alpha \Big( 1+\max_{0 \le \ell \le \Nk -1}\|U^\ell\|_{\rho q,Y}^\rho\Big). \nonumber
	\end{align}
    Because $(1+\frac{\alpha}{\theta})\rho q\ge \rho q$ and Assumption \ref{ass:coYq}($(1+\frac{\alpha}{\theta})\rho q$) holds as well as $u_0\in L^{(1+\frac{\alpha}{\theta})\rho q}(\Omega;Y)$, Proposition \ref{prop:stabY} is applicable with $q \curve \rho q$ and provides pointwise stability of the $\rho q$-th moments of the scheme in $Y$. Hence,
    \begin{align*}
		\bigg\|\sup_{t\in [0,T]} \|E_{G,1}(t)\|_X\bigg\|_q
        &\le C_{\alpha,T,q,\rho} \big(1+\|u_0\|_{\rho q,Y}^\rho\big)  k^\alpha.
	\end{align*}
    Analogously, applying the triangle rather than the BDG-type inequality, the same upper bound is obtained for $E_{F,1}$, potentially after adjusting the constant.
    
	To estimate $E_{G,2}$, the BDG-type inequality and Hölder's inequality in $\Omega$ with parameters $(1+\frac{\theta}{\alpha})q$ and $(1+\frac{\alpha}{\theta})q$ result in 
	\begin{align*}
		\bigg\|\sup_{t\in [0,T]} \|E_{G,2}(t)\|_X\bigg\|_q
		&\le C_q \Big(\int_0^T \bignn[\1_\theta(U^k(s)) -1]G^k(s)\bignn_{q,X}^2\ds\Big)^{1/2}\\
		&\le C_q \sqrt{T} \Big(\max_{0 \le \ell \le \Nk -1}\nn G(U^\ell)\nn_{(1+\frac{\alpha}{\theta})q,X}\Big) \Big(\max_{0 \le \ell \le \Nk -1}\|\1_{\theta,\ell}-1\|_{(1+\frac{\theta}{\alpha})q,\R}\Big).
	\end{align*}
    We estimate both factors separately. First, the continuous embedding $Y\hra X$, polynomial growth of $G$, and pointwise stability of the $(1+\frac{\alpha}{\theta})\rho q$-th moments of the scheme in $Y$ from Proposition \ref{prop:stabY} with $q \curve (1+\frac{\alpha}{\theta})\rho q$ imply 
    \begin{align*}
        \max_{0 \le \ell \le \Nk -1}\nn G(U^\ell)\nn_{(1+\frac{\alpha}{\theta})q,X} 
        &\le C_\rho\Big(1+\max_{0 \le \ell \le \Nk -1}\|U^\ell\|_{(1+\frac{\alpha}{\theta})\rho q,Y}^\rho\Big)
        \le C_{T,q,\theta,\rho}\Big(1+\|u_0\|_{(1+\frac{\alpha}{\theta})\rho q,Y}^\rho\Big).
    \end{align*}
    Second, Lemma \ref{lem:Markov2} is applicable with $\Phi \curve U^\ell$, $q \curve (1+\frac{\theta}{\alpha})q$, and $\kappa=\alpha$ because of $\alpha>\frac{\alpha}{q} >\frac{\theta}{(\alpha+\theta)q}\alpha= \frac{\theta}{(1+\frac{\theta}{\alpha})q}$. It yields
    \begin{align*}
        \max_{0 \le \ell \le \Nk -1}\|\1_{\theta,\ell}-1\|_{(1+\frac{\theta}{\alpha})q,\R} &\le k^\alpha \max_{0 \le \ell \le \Nk -1}\big(\|F(U^\ell)\|_{(1+\frac{\alpha}{\theta})q,Y}+\nn G(U^\ell)\nn_{(1+\frac{\alpha}{\theta})q,Y}\big)^{\alpha/\theta}\\
        &\le C_{\alpha,T,q,\theta,\rho}  \Big(1+\|u_0\|_{(1+\frac{\alpha}{\theta})\rho q,Y}^{\rho \alpha/\theta}\Big)k^\alpha,
	\end{align*}
    where in the last inequality, we have used polynomial growth and Proposition \ref{prop:stabY} with $q \curve (1+\frac{\alpha}{\theta})\rho q$ as for the first term.
    In conclusion,
    \begin{align*}
		\bigg\|\sup_{t\in [0,T]} \|E_{G,2}(t)\|_X\bigg\|_q
		&\le C_{\alpha, T,q,\theta,\rho}\Big(1+ \|u_0\|_{(1+\frac{\alpha}{\theta})\rho q,Y}^{\rho(1+\frac{\alpha}{\theta})}\Big)k^\alpha.
	\end{align*}
	The same upper bound can be obtained for $E_{F,2}$ after changing the constant if necessary.
	Combining all four estimates, the claimed estimate follows.
\end{proof}

\begin{remark}
    In \eqref{eq:estimateProofMinusTheta}, it would also be possible to estimate by $C k^{\alpha-\theta}$ leveraging the stopping of the nonlinearities. However, this deteriorates the convergence rate unnecessarily. Indeed, polynomial growth, the coercivity assumption with sufficiently large parameter, and sufficiently high moments of the initial values suffice, as demonstrated in the proof.
\end{remark}

\subsubsection{Error between the mild solution and the semilinear integrated counterparts}

We now pass to the analysis of the error arising from $\bar{U}-U$.

\begin{lemma}
\label{lem:ItoFormulaBarU-U}
    Let $q\in (2,\infty)$, $\theta\in (0,\frac{1}{4}]$, and $u_0\in L_{\F_0}^q(\Omega;Y)$. Suppose that Assumption \ref{ass:locLipPolGro}($\alpha,\nu,\rho$) and Assumption \ref{ass:coYq}($q$) hold for some $\alpha \in (0,\frac12]$ and $\nu,\rho\ge 1$. Then for all $t\in[0,T]$ almost surely
    \begin{align}
        \label{eq:ItoFormulaBarU-U}
	   \|\bar{U}(t)-U(t)\|_X^q &\le q \int_0^t \|\bar{U}(s)-U(s)\|_X^{q-2} \Re\la\bar{U}(s)-U(s),F^k(s)-F(U(s))\ra_X \ds\nonumber\\
	   &\phantom{\le }+ \frac{q(q-1)}{2}\int_0^t \|\bar{U}(s)-U(s)\|_X^{q-2} \| G^k(s)-G(U(s))\|_\LHX^2\ds + M_t 
    \end{align}
    with the continuous local martingale
    \begin{equation*}
        M_t \ce q \int_0^t \|\bar{U}(s)-U(s)\|_X^{q-2} [G^k(s)-G(U(s))]^*[\bar{U}(s)-U(s)]\dWHs.
    \end{equation*}
\end{lemma}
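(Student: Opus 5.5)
The plan is to apply the Itô-type inequality \eqref{eq:lemmaItoApproxSimplified} of Lemma \ref{lem:ItoFormulaApprox} on $Z=X$ with $p=q$ to the difference $\bar{U}-U$. By Definition \ref{def:UbarSemilinearIntegratedCounterparts} and the mild formulation \eqref{eq:mildSolDefScheme}, the initial values cancel and almost surely, for all $t\in[0,T]$,
\begin{equation*}
    \bar{U}(t)-U(t) = \int_0^t S(t-s)\big[F^k(s)-F(U(s))\big]\ds + \int_0^t S(t-s)\big[G^k(s)-G(U(s))\big]\dWHs.
\end{equation*}
So $\bar{U}-U$ has the form required in Lemma \ref{lem:ItoFormulaApprox} with $u_0=0$, $f=F^k-F(U)$ and $g=G^k-G(U)$. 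Since $-A$ generates a contraction semigroup on $X$ by Assumption \ref{ass:locLipPolGro}, the lemma then gives exactly \eqref{eq:ItoFormulaBarU-U}, with the stochastic integral term being the stated $M_t$. That $M$ is a continuous local martingale follows because its integrand lies a.s.\ in $L^2(0,T;H)$, which is shown below.

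The hypotheses to check are progressive measurability and the integrability conditions $f\in L^0(\Omega;L^1(0,T;X))$ and $g\in L^0(\Omega;L^2(0,T;\LHX))$.

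\textbf{Measurability.} $F^k$ and $G^k$ are piecewise constant in time, with values $F(U^j)$ and $G(U^j)$ that are $\F_{t_j}$-measurable by Borel measurability of $F,G$ and adaptedness of the scheme. They are therefore progressively measurable. $F(U)$ and $G(U)$ are progressive because $U$ is continuous and adapted in $Y$.

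\textbf{Integrability of the scheme part.} Assumption \ref{ass:coYq}($q$) and $u_0\in L^q(\Omega;Y)$ make Proposition \ref{prop:stabY} applicable, so $U^j\in L^q(\Omega;Y)$ and in particular $\|U^j\|_Y<\infty$ a.s. Polynomial growth (Assumption \ref{ass:locLipPolGro}\ref{assItem:polGro}) together with $Y\hra X$ then gives
\begin{equation*}
    \int_0^T\|F^k\|_X\ds+\int_0^T\nn G^k\nn_X^2\ds\le C\, k\sum_{j}\big(1+\|U^j\|_Y^{2\rho}\big)<\infty \quad\text{a.s.},
\end{equation*}
as in the proof of Proposition \ref{prop:stabUbar}.

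\textbf{Integrability of the solution part.} Here one needs a global continuous solution $U$ with $\sup_{t\in[0,T]}\|U(t)\|_Y<\infty$ a.s.; this is the main point needing care. Theorem \ref{thm:aprioriY} requires $\tfrac{q'}{\lambda}$-coercivity for some $q'\ge 2$. Since $q>2$, choose $\lambda=2/q\in(0,1)$ and $q'=2$. Then Assumption \ref{ass:coYq}($q$) and $u_0\in L^2(\Omega;Y)$ yield global well-posedness with a.s.\ continuous $Y$-valued paths. This choice is presumably why the lemma asks for $q>2$ strictly. Consequently $\sup_t\|U(t)\|_Y<\infty$ a.s., and polynomial growth gives $F(U)\in L^1(0,T;X)$ and $G(U)\in L^2(0,T;\LHX)$ a.s.

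With all hypotheses verified, Lemma \ref{lem:ItoFormulaApprox} applies and yields \eqref{eq:ItoFormulaBarU-U} almost surely simultaneously for all $t$. The expected sticking point is only the bookkeeping showing that the a.s.\ path bounds on both $U$ and the scheme suffice; no moments beyond those are needed, since Lemma \ref{lem:ItoFormulaApprox} is an $L^0$ statement.
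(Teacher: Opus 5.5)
Your proposal is correct and follows the paper's proof essentially step by step: Lemma~\ref{lem:ItoFormulaApprox} on $Z=X$ with $u_0=0$, $f=F^k-F(U)$, $g=G^k-G(U)$; integrability via polynomial growth; Proposition~\ref{prop:stabY} for the grid values; and Theorem~\ref{thm:aprioriY} with $\lambda=2/q$, which is exactly where $q>2$ enters. The only difference concerns the local martingale property of $M$: the paper localises explicitly with $\tau_n=\inf\{t:\|\bar{U}(t)-U(t)\|_X\ge n\}$, whereas you appeal to the integrand lying a.s.\ in $L^2(0,T;H)$, but your ``shown below'' never actually checks this; it holds since the integrand's $H$-norm is at most $\sup_{s}\|\bar{U}(s)-U(s)\|_X^{q-1}\,\|G^k(s)-G(U(s))\|_{\LHX}$, with the supremum finite a.s.\ by path continuity (Lemma~\ref{lem:supuBddLocalization}), and you should state this explicitly.
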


\begin{proof}
    Our aim is to apply the Itô-type inequality \eqref{eq:lemmaItoApproxSimplified} to $\bar{U}-U$ on $Z=X$ with $p \curve q$. Since
    \begin{align*}
	    \bar{U}(t)-U(t) = \int_0^t S(t-s)[F^k(s)-F(U(s))]\ds+\int_0^t S(t-s)[G^k(s)-G(U(s))]\dWHs,
    \end{align*}
    we verify that it is applicable with $u_0=0$, $f=F^k-F\circ U$ and $g=G^k-G\circ U$, meaning that we have to show $F^k-F\circ U\in L^1(0,T;X)$ and $G^k-G\circ U \in L^2(0,T;\LHX)$ a.s. Progressive measurability follows from Borel measurability of $F$ and $G$, since $U$ and $U^k$ are progressively measurable as adapted and continuous or piecewise constant processes.
    By polynomial growth of $F$ on $Y$ and $Y\hra X$,
    \begin{align*}
    	\int_0^T &\|F(U^k(s))-F(U(s))\|_X\ds \le C \int_0^T \big(\|F(U^k(s))\|_Y+\|F(U(s))\|_Y\big)\ds\\
        &\le C_\rho \int_0^T \big(1+\|U^k(s)\|_Y^\rho+\|U(s)\|_Y^\rho\big)\ds = C_\rho \Big(T + \|U\|_{L^\rho(0,T;Y)}^\rho + k\sum_{j=0}^{\Nk -1} \|U^j\|_Y^\rho\Big).
    \end{align*}
    Likewise, for $G$
    \begin{align*}
    	\int_0^T \nn G(U^k(s))-G(U(s))\nn_X^2\ds 
         \le C_\rho \Big(T + \|U\|_{L^{2\rho}(0,T;Y)}^{2\rho} + k\sum_{j=0}^{\Nk -1} \|U^j\|_Y^{2\rho}\Big).
    \end{align*}
    Set $\lambda_0\ce \frac{2}{q}$. Then Assumption \ref{ass:coYq}($q$) is precisely the coercivity assumption required to apply Theorem \ref{thm:aprioriY} with $q \curve \lambda_0q=2$ and $\lambda \curve \lambda_0$. The required integrability of the initial values follows from $u_0\in L_{\F_0}^q(\Omega;Y)\seq L_{\F_0}^2(\Omega;Y)$. Therefore, Theorem \ref{thm:aprioriY} yields that $U\in C([0,T];Y)$ a.s.\ and thus, in particular, $U \in L^{2\rho}(0,T;Y)$ a.s. Furthermore, Proposition \ref{prop:stabY} implies that, in particular, $\|U^j\|_Y<\infty$ a.s.\ for every  $0 \le j \le \Nk $. Since there are only finitely many grid points, the sum is finite a.s.\ In conclusion, Lemma \ref{lem:ItoFormulaApprox} is applicable and yields the desired estimate.
    
    It remains to show that $(M_t)_{t\in [0,T]}$ is a continuous local martingale. Define the stopping times
    \begin{equation*}
        \tau_n \ce \inf\{t\in [0,T]:\|\bar{U}(t)-U(t)\|_X \ge n\},\quad \inf \emptyset \ce T
    \end{equation*}
    for $n\in \N$. Then almost surely the quadratic variation satisfies
    \begin{align*}
        [M]_{T\land \tau_n} &\le q^2 \int_0^{T\land \tau_n} \|\bar{U}(s)-U(s)\|_X^{2q-2} \nn G^k(s)-G(U(s))\nn_X^2 \ds\\
        &\le q^2n^{2q-2} C_\rho \int_0^{T\land \tau_n} \big(1+\|U^k(s)\|_Y^{2\rho}+\|U(s)\|_Y^{2\rho}\big)\ds <\infty,
    \end{align*}
    where the integral is finite almost surely by the reasoning above, regardless of the stopping time. This finishes the proof, since continuity of paths of $U-\bar{U}$ gives $\tau_n \nearrow T$ almost surely.
\end{proof}

The following lemma contains a central step of the error estimate, which is a suitable application of the stochastic Gronwall inequality \cite[Lemma~A.7]{surveyMarkAntonio25}.

\begin{lemma}
\label{lem:reduceUbarUtoUkUbar}
    Let $p\in [2,\infty)$, $\lambda \in (0,1)$,  $q=\frac{p}{\lambda}$, and $\theta \in (0,\frac{1}{4}]$. Suppose that Assumption \ref{ass:locLipPolGro}($\alpha,\nu,\rho$) holds for some $\alpha\in (0,\frac{1}{2}]$ and $\nu,\rho\ge 1$. Let $r_1=\rho q(1+\nu+\frac{\alpha}{\theta})$, $r_2=q(1+\frac{\theta\nu}{\alpha+\theta})$, and $u_0\in L_{\F_0}^{r_1}(\Omega;Y)$. Further suppose that Assumptions \ref{ass:coYq}($r_1$) and \ref{ass:monqEta}($q,\eta$) hold for some $\eta>0$. Then there is $C_{\lambda,p,\nu,\rho,T,\eta,\theta}\ge 0$ such that
    \begin{align*}
	   \bigg\|\sup_{t\in[0,T]} \|\bar{U}(t)-U(t)\|_X\bigg\|_{L^p(\Omega)}&\le C_{\alpha,T,p,\lambda,\theta,\nu,\rho,\eta} \big(1+\|u_0\|_{r_1,Y}^{\rho\nu }\big)\bigg(\sup_{t\in[0,T]} \|U^k(t)-\bar{U}(t)\|_{L^{r_2}(\Omega;X)}\bigg).
    \end{align*}
\end{lemma}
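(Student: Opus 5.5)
Starting from the Itô-type inequality of Lemma \ref{lem:ItoFormulaBarU-U} (for $q>2$; for $q=2$ use \eqref{eq:lemmaItoApproxSimplified} directly), the plan is to set $e\ce\bar U-U$ and split the nonlinearity differences as
\begin{equation*}
F^k-F(U)=\big[F(U^k)-F(\bar U)\big]+\big[F(\bar U)-F(U)\big],
\end{equation*}
and likewise for $G$. For the second bracket we use the monotonicity Assumption \ref{ass:monqEta}($q,\eta$). For the $G$-part we first apply $\|a+b\|^2\le(1+\eta)\|b\|^2+(1+\eta^{-1})\|a\|^2$, so that the $G(\bar U)-G(U)$ contribution carries exactly the factor $(1+\eta)\frac{q-1}{2}$ appearing in the monotonicity assumption. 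Together with the drift term $\Re\la e,F(\bar U)-F(U)\ra_X$, this contribution is then bounded by $C_{q,\eta}\|e\|_X^q$.

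The remaining first-bracket terms are controlled by Assumption \ref{ass:locLipPolGro}\ref{assItem:locLip}:
\begin{equation*}
\|F(U^k)-F(\bar U)\|_X+\nn G(U^k)-G(\bar U)\nn_X\le C\,\|U^k-\bar U\|_X\,\big(1+\|U^k\|_Y^\nu+\|\bar U\|_Y^\nu\big)\ec C\,\delta.
\end{equation*}
Young's inequality with exponents $\frac{q}{q-1},q$ and $\frac{q}{q-2},\frac q2$ gives $\|e\|_X^{q-1}\delta+\|e\|_X^{q-2}\delta^2\lesssim \|e\|_X^q+\delta^q$. This yields, almost surely for all $t$,
\begin{equation*}
\|e(t)\|_X^q\le C\int_0^t\|e(s)\|_X^q\ds+M_t+C\int_0^t\delta(s)^q\ds .
\end{equation*}

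Next we apply the stochastic Gronwall Lemma \ref{lem:stochasticGronwall} with $\varphi=\|e\|_X^q$, $A_t=Ct$ and $H_t=C\int_0^t\delta^q\ds$, using $L^\lambda(\Omega)$ with $q\lambda=p$. Since $A$ is deterministic, this gives
\begin{equation*}
\Big\|\sup_t\|e\|_X\Big\|_p\le C\,\Big(\E\Big[\Big(\int_0^T\delta^q\ds\Big)^{\lambda}\Big]\Big)^{1/p}\le C\sup_{s}\|\delta(s)\|_{L^q(\Omega)},
\end{equation*}
where the last step uses Jensen's inequality ($\lambda<1$) and then Fubini.

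It remains to bound $\|\delta(s)\|_q$, and this is the main obstacle: the exponents have to be chosen so that the product of the difference with $Y$-moments fits. Apply Hölder with $\frac1q=\frac1{r_2}+\frac1{s}$, where $s=q\frac{\alpha+\theta+\theta\nu}{\theta\nu}$ (this matches $r_2=q(1+\frac{\theta\nu}{\alpha+\theta})$). Then $\|\delta\|_q\le\|U^k-\bar U\|_{r_2,X}\,(1+\|U^k\|_{\nu s,Y}^\nu+\|\bar U\|_{\nu s,Y}^\nu)$. Here $\nu s=q(\alpha+\theta+\theta\nu)/\theta=q(1+\nu+\frac\alpha\theta)$. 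The scheme moment is bounded by Proposition \ref{prop:stabY} with $q\curve \nu s\le r_1$, using Assumption \ref{ass:coYq}($r_1$). The moment of $\bar U$ is bounded by Proposition \ref{prop:stabUbar} with $q\curve\nu s$, which requires coercivity and initial data of order $\rho\nu s=r_1$ and gives a bound by $1+\|u_0\|_{r_1,Y}^\rho$. Raised to the power $\nu$, this produces the stated factor $1+\|u_0\|_{r_1,Y}^{\rho\nu}$, after absorbing lower powers using $\rho\ge1$.

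A technical point: Lemma \ref{lem:stochasticGronwall} requires continuous $\varphi$ and a continuous local martingale. These hold by continuity of $e$ and by Lemma \ref{lem:ItoFormulaBarU-U}. The bound on $H_T$ is finite by the moment estimates above, so no extra localization should be needed.
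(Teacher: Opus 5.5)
Your proposal is correct and follows essentially the same route as the paper. You use the Itô-type inequality of Lemma \ref{lem:ItoFormulaBarU-U}, split at $F(\bar U),G(\bar U)$ with the $(1+\eta)$-Young step so that monotonicity applies, and treat the remainder by local Lipschitz continuity plus Young's inequality. You then apply the stochastic Gronwall lemma with Jensen, and finish with Hölder's inequality using exactly the paper's exponents $s=\beta_1 q$, $r_2=\beta_2 q$, followed by Propositions \ref{prop:stabY} and \ref{prop:stabUbar}. The only superfluous point is the $q=2$ case you mention, which never arises because $q=p/\lambda>2$.
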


\begin{proof}
    Due to $q=\frac{p}{\lambda}>2$ and $r_1\ge q$, the Itô-type inequality for $\|\bar{U}(t)-U(t)\|_X^q$ from Lemma \ref{lem:ItoFormulaBarU-U} holds. After adding and subtracting $F(\bar{U}(s))$ and $G(\bar{U}(s))$ in \eqref{eq:ItoFormulaBarU-U}, an application of the Cauchy--Schwarz inequality and Young's inequality with parameter $\eta$ as in Assumption \ref{ass:monqEta} and $C_\eta = 1 + \eta^{-1}$ give
\begin{align*}
	\|&\bar{U}(t)-U(t)\|_X^q \le q \int_0^t \|\bar{U}-U\|_X^{q-2} \Re\big(\la\bar{U}-U,F(\bar{U})-F(U)\ra_X+\la\bar{U}-U,F^k- F(\bar{U})\ra_X\big) \ds\nonumber\\
	   &\phantom{\le }+ \frac{q(q-1)}{2}\int_0^t \|\bar{U}-U\|_X^{q-2} \big(\nn G^k- G(\bar{U})\nn_X+\nn G(\bar{U})-G(U)\nn_X\big)^2\ds + M_t \\
    &\le q \int_0^t \|\bar{U}-U\|_X^{q-2} \Big(\Re \big\la\bar{U}-U,F(\bar{U})-F(U)\big\ra_X+ (1+\eta)\frac{q-1}{2} \nn G(\bar{U})-G(U)\nn_X^2\Big) \ds\\
    &+q \int_0^t \|\bar{U}-U\|_X^{q-1} \|F^k-F(\bar{U})\|_X \ds+ \frac{q(q-1)}{2}C_\eta\int_0^t \|\bar{U}-U\|_X^{q-2} \nn G^k-G(\bar{U})\nn_X^2\ds+ M_t\\ 
    &\ec I_1+I_2+I_3+M_t 
\end{align*}
for all $t\in [0,T]$ almost surely. Here, $M_t$ is the continuous local martingale from Lemma \ref{lem:ItoFormulaBarU-U} and the integration variable $s$ is omitted.
For the first term, the monotonicity condition from Assumption \ref{ass:monqEta}($q,\eta$) yields
for all $t\in [0,T]$ a.s.\
\begin{equation*}
    I_1 \le C_{q,\eta}  \int_0^t\|\bar{U}(s)-U(s)\|_X^q\ds.
\end{equation*}
Young's inequality with parameters $\frac{q}{q-1}$ and $q$ as well as local Lipschitz continuity of $F$ as in Assumption \ref{ass:locLipPolGro}\ref{assItem:locLip} imply
\begin{align*}
    I_2 &\le C_{q}\int_0^t \|\bar{U}-U\|_X^q \ds 
    + \int_0^t \|F^k-F(\bar{U})\|_X^q \ds\\
	&\le C_q\int_0^t \|\bar{U}-U\|_X^q \ds 
    + C_{q,\nu} \int_0^t \|U^k-\bar{U}\|_X^q (1+\|U^k\|_Y^\nu+\|\bar{U}\|_Y^\nu)^q \ds.
\end{align*}
To estimate $I_3$, we repeat the argument with parameters $\frac{q}{q-2}$ and $\frac{q}{2}$ for Young's inequality and local Lipschitz continuity of $G$ to obtain 
\begin{align*}
    I_3 \le C_{q,\eta}\int_0^t \|\bar{U}-U\|_X^q \ds 
    + C_{q,\nu,\eta} \int_0^t \|U^k-\bar{U}\|_X^q \big(1+\|U^k\|_Y^\nu+\|\bar{U}\|_Y^\nu\big)^q \ds.
\end{align*}
Having estimated $I_1$ to $I_3$, we conclude that for all $t \in [0,T]$ a.s.
\begin{align*}
	\varphi(t)\ce\|\bar{U}(t)-U(t)\|_X^q \le C_{q,\eta}\int_0^t \varphi(s) \ds +H_t+M_t
\end{align*}
with the continuous local martingale $(M_t)_{t\ge 0}$ satisfying $M_0=0$ and the adapted, non-negative, continuous, and increasing process $(H_t)_{t\ge 0}$ defined by
\begin{align*}
	H_t\ce C_{q,\nu,\eta} \int_0^t \|U^k(s)-\bar{U}(s)\|_X^q \big(1+\|U^k(s)\|_Y^\nu+\|\bar{U}(s)\|_Y^\nu\big)^q \ds.
\end{align*}
Thus, the stochastic Gronwall lemma \cite[Corollary~5.4]{Geiss24} is applicable with $A_s= C_{q,\eta}s$ and gives for all $\lambda\in(0,1)$ and $T>0$,
\begin{equation*}
\|e^{-A_T} \varphi_T^*\|_{L^\lambda(\Omega)} \le (1-\lambda)^{-1/\lambda}\lambda^{-1}\|H_T\|_{L^\lambda(\Omega)}.
\end{equation*}
Taking $\lambda$-th powers and using that $A_T$ is deterministic, the estimate simplifies to
\begin{equation*}
    \|\varphi_T^*\|_{L^\lambda(\Omega)}^\lambda 
    \le e^{\lambda A_T}  (1-\lambda)^{-1}\lambda^{-\lambda}\|H_T\|_{L^\lambda(\Omega)}^\lambda 
    = C_{T,q,\lambda,\eta}\E H_T^\lambda 
    \le C_{T,q,\lambda,\eta} (\E H_T)^\lambda,
\end{equation*}
where Jensen's inequality was used in the last step to pull the expectation inside the $\lambda$-th power. Since $\lambda q= p$, we deduce
\begin{align*}
	\E\sup_{t\in[0,T]} \|\bar{U}(t)-U(t)\|_X^p&=\E\bigg(\sup_{t\in[0,T]} \|\bar{U}(t)-U(t)\|_X^q\bigg)^\lambda=\|\varphi_T^*\|_{L^\lambda(\Omega)}^\lambda \\
	&\le C_{T,q,\lambda,\nu,\eta} \Big(\E \int_0^T \|U^k(s)-\bar{U}(s)\|_X^q\big(1+\|U^k(s)\|_Y^\nu+\|\bar{U}(s)\|_Y^\nu\big)^q\ds\Big)^\lambda. 
\end{align*}
 Let $\beta_2=1+\frac{\theta\nu}{\alpha+\theta}\in(1,\infty)$ and $\beta_1=1+\frac{\alpha+\theta}{\theta\nu}\in(1,\infty)$, which satisfy $\frac{1}{\beta_1}+\frac{1}{\beta_2}=1$. Fubini's theorem and Hölder's inequality in $\Omega$ with parameters $\beta_1 q$ and $\beta_2 q$ then imply
    \begin{align}
    \label{eq:differentBetaLater}
	   \E&\sup_{t\in[0,T]} \|\bar{U}(t)-U(t)\|_X^{p}
	   \le C  \Big(\int_0^T \E\big[\|U^k(s)-\bar{U}(s)\|_X^q(1+\|U^k(s)\|_Y^\nu+\|\bar{U}(s)\|_Y^\nu)^q\big]\ds\Big)^\lambda\nonumber\\
       &\le C \Big(\int_0^T \|U^k(s)-\bar{U}(s)\|_{\beta_2q,X}^q\big\|1+\|U^k(s)\|_Y^\nu+\|\bar{U}(s)\|_Y^\nu\big\|_{\beta_1q}^q\ds\Big)^\lambda\nonumber\\
       &\le C T^\lambda \bigg(\sup_{t\in[0,T]} \|U^k(t)-\bar{U}(t)\|_{\beta_2q,X}^{\lambda q}\bigg) \bigg(1+\sup_{t\in[0,T]}\|U^k(t)\|_{\beta_1\nu q,Y}^{\nu\lambda q}+\sup_{t\in[0,T]}\|\bar{U}(t)\|_{\beta_1\nu q,Y}^{\nu\lambda q}\bigg),
    \end{align}
    where $C$ depends on $ (\alpha,T,q,\lambda,\nu,\eta)$.
    Pointwise stability of the scheme and the semilinear integrated counterparts yield the statement of the lemma. Indeed, Proposition \ref{prop:stabY} is applicable with $q\curve \beta_1\nu q=\frac{r_1}{\rho}$ due to Assumption \ref{ass:coYq}($\frac{r_1}{\rho}$), which is satisfied since $\frac{r_1}{\rho}\le r_1$, and $u_0\in L_{\F_0}^{r_1/\rho}(\Omega;Y)$. It gives
    \begin{equation*}
        \sup_{t\in[0,T]}\|U^k(t)\|_{\beta_1\nu q,Y}\le \sup_{t\in [0,T]} \|\hat{U}(t)\|_{\beta_1\nu q,Y} 
        \le C_{\alpha,T,q,\theta,\nu} (1+\|u_0\|_{\beta_1\nu q,Y})=C_{\alpha,T,q,\theta,\nu} (1+\|u_0\|_{r_1/\rho,Y}).
    \end{equation*}
    Likewise, an application of Proposition \ref{prop:stabUbar} with $q\curve \beta_1\nu q=\frac{r_1}{\rho}$ requires Assumption \ref{ass:coYq}($\rho \beta_1\nu q$) and $u_0\in L_{\F_0}^{\rho \beta_1\nu q}(\Omega;Y)$, which are satisfied due to $\rho \beta_1\nu q=r_1$, and results in
\begin{align*}
    \sup_{t \in [0,T]} \|\bar{U}(t)\|_{\beta_1\nu q,Y} \le C_{\alpha,T,q,\theta,\nu,\rho}(1+\|u_0\|_{\rho \beta_1\nu q,Y}^\rho)=C_{\alpha,T,q,\theta,\nu,\rho}(1+\|u_0\|_{r_1,Y}^\rho). 
\end{align*}
    Via $\beta_2q=r_2$ and $\|u_0\|_{r_1/\rho,Y}\le 1+\|u_0\|_{r_1,Y}^\rho$, which follows from Hölder's inequality and $\rho\ge 1$, we deduce by inserting the two stability estimates that
    \begin{align*}
	   \E\sup_{t\in[0,T]} \|\bar{U}(t)-U(t)\|_X^p&\le C_{\alpha,T,p,\lambda,\theta,\nu,\rho,\eta} \big(1+\|u_0\|_{L^{r_1}(\Omega;Y)}^{\rho\nu p}\big)\bigg(\sup_{t\in[0,T]} \|U^k(t)-\bar{U}(t)\|_{L^{r_2}(\Omega;X)}^p\bigg).
    \end{align*}
    Taking the $p$-th root finishes the proof.
\end{proof}

Lastly, an estimate for the moments of $U^k-\bar{U}$ arising in Lemma \ref{lem:reduceUbarUtoUkUbar} has to be established. The difference between the piecewise constant and the continuous extensions can be decomposed into the difference $\hat{U}-\bar{U}$ between the continuous extension and $\bar{U}$, which we already dealt with in Proposition \ref{prop:UhatUbarError}, and the difference $U^k-\hat{U}$ between the two different extensions. For the latter, we can leverage that their values at all grid points agree, meaning that it suffices to consider the differences over small time intervals. 

\begin{proposition}
\label{prop:diffUkUbar}
   Let $p\in [2,\infty)$, $\lambda \in (0,1)$,  $q=\frac{p}{\lambda}$, and $\theta \in (0,\frac{1}{4}]$. Suppose that Assumption \ref{ass:locLipPolGro}($\alpha,\nu,\rho$) holds for some $\alpha \in (0,\frac{1}{2}]$ and $\nu,\rho\ge 1$. Let $r_1=\rho q(1+\nu+\frac{\alpha}{\theta})$, $r_2=q(1+\frac{\theta\nu}{\alpha+\theta})$, and $u_0\in L_{\F_0}^{r_1}(\Omega;Y)$. Further suppose that Assumption \ref{ass:coYq}($r_1$) holds. Then there is $C_{\alpha,T,p,\theta,\lambda,\nu,\rho}\ge 0$ such that
    \begin{align}
    \label{eq:diffUkUbarProp}
        \sup_{t\in[0,T]} \|U^k(t)-\bar{U}(t)\|_{L^{r_2}(\Omega;X)} 
        &\le C_{\alpha,T,p,\theta,\lambda,\nu,\rho}\Big( 1+\|u_0\|_{r_1,Y}^{\rho(1+\frac{\alpha}{\theta})}\Big)k^\alpha.
    \end{align}
\end{proposition}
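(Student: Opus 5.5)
We split $U^k(t)-\bar U(t)=(U^k(t)-\hat U(t))+(\hat U(t)-\bar U(t))$ and estimate both terms in $L^{r_2}(\Omega;X)$, pointwise in $t$.

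\emph{Second term.} For $\hat U-\bar U$ we apply Proposition \ref{prop:UhatUbarError} with $q\curve r_2$. This requires Assumption \ref{ass:coYq}($(1+\frac{\alpha}{\theta})\rho r_2$) and $u_0\in L^{(1+\frac{\alpha}{\theta})\rho r_2}(\Omega;Y)$. Since
\[
(1+\tfrac{\alpha}{\theta})\bigl(1+\tfrac{\theta\nu}{\alpha+\theta}\bigr)=1+\tfrac{\alpha}{\theta}+\nu,
\]
we get $(1+\frac{\alpha}{\theta})\rho r_2=\rho q(1+\nu+\frac{\alpha}{\theta})=r_1$, so both requirements are exactly our hypotheses. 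The pathwise uniform bound dominates the pointwise one, which gives
\[
C\bigl(1+\|u_0\|_{r_1,Y}^{\rho(1+\frac{\alpha}{\theta})}\bigr)k^\alpha .
\]

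\emph{First term.} Fix $t\in[t_j,t_{j+1})$. By \eqref{eq:UhatfromUk},
\[
U^j-\hat U(t)=[I-S(t-t_j)]U^j-(t-t_j)S(t-t_j)F_\theta(U^j)-\int_{t_j}^t S(t-\floors)G_\theta(U^j)\dWHs .
\]
We bound the three pieces separately.
\begin{itemize}
\item The semigroup piece: Lemma \ref{lem:sgInterpolation} gives $\|[I-S(t-t_j)]U^j\|_X\le 2C_Y k^\alpha\|U^j\|_Y$. Taking $L^{r_2}$ norms, Proposition \ref{prop:stabY} with $q\curve r_2$ (note $r_2\le r_1$, so the coercivity and moment assumptions hold) yields $C k^\alpha(1+\|u_0\|_{r_2,Y})$.
\item The drift piece: contractivity on $X$, $Y\hra X$ and polynomial growth give $k\,C(1+\|U^j\|^\rho_{\rho r_2,Y})$. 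Proposition \ref{prop:stabY} with $q\curve\rho r_2\le r_1$ controls this, and $k\le T^{1-\alpha}k^\alpha$.
\item The stochastic piece: by the BDG-type inequality of Theorem \ref{thm:maximal-inequality} (or simply Itô's isomorphism, since $S(t-\floors)=S(t-t_j)$ is deterministic on this interval), it is bounded by $C_{r_2}\sqrt{t-t_j}\,\nn G(U^j)\nn_{r_2,X}\le Ck^{1/2}(1+\|U^j\|_{\rho r_2,Y}^\rho)$. Since $\alpha\le\frac12$, this is $\le Ck^\alpha(\cdots)$.
\end{itemize}
At $t=T$ the difference vanishes because $U^k(T)=U^N=\hat U(T)$.

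\emph{Combining.} All moment norms appearing are $\le\|u_0\|_{r_1,Y}$ by Hölder, and $\|u_0\|_{r_1,Y}^\rho\le 1+\|u_0\|_{r_1,Y}^{\rho(1+\alpha/\theta)}$. So the constants merge into \eqref{eq:diffUkUbarProp}, with dependence on $p,\lambda$ entering through $q=p/\lambda$.

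\emph{Main obstacle.} The only delicate point is the parameter bookkeeping that matches $(1+\frac{\alpha}{\theta})\rho r_2$ with $r_1$. The rest is routine once one notes that the increment term does not need the stopping threshold and that the $k^{1/2}$ from the noise dominates $k^\alpha$.
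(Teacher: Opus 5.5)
Your proposal is correct and follows the paper's proof essentially step by step. It uses the same triangle split through $\hat U$, applies Proposition \ref{prop:UhatUbarError} with $q\curve r_2$ via the identity $(1+\frac{\alpha}{\theta})\rho r_2=r_1$, and bounds $\hat U-U^k$ by the same three pieces. The only deviation is in the drift piece: you use polynomial growth and stability in $L^{\rho r_2}$ to get order $k$, whereas the paper uses the stopping bound $\|F_\theta(U^j)\|_Y\le k^{-\theta}$ to get order $k^{1-\theta}$; both suffice, as the paper itself notes in Remark \ref{rem:useStoppedNLforF}.
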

\begin{proof}
    Before passing to the estimate, note that
    \begin{equation}
    \label{eq:r1r2coefficientsProof}
        r_2 \le \rho r_2 \le \Big(1+\frac{\alpha}{\theta}\Big)\rho r_2 = \Big(1+\frac{\alpha}{\theta}\Big)\rho q\Big(1+\frac{\theta\nu}{\alpha+\theta}\Big) = \rho q\Big(1+\nu+\frac{\alpha}{\theta}\Big) = r_1.
    \end{equation}
    An application of the triangle inequality and pulling the supremum inside the first norm result in
    \begin{align}
    \label{eq:splitUhatUbarProof}
        \sup_{t\in[0,T]} \|U^k(t)-\bar{U}(t)\|_{r_2,X} 
        &\le  \bigg\|\sup_{t\in[0,T]}\|\hat{U}(t)-\bar{U}(t)\|_X\bigg\|_{r_2}+\sup_{t\in[0,T]}\|\hat{U}(t)-U^k(t)\|_{r_2,X}.
    \end{align}
    To estimate the first term, we apply Proposition \ref{prop:UhatUbarError} with $q \curve r_2$, which requires Assumption \ref{ass:coYq}($(1+\frac{\alpha}{\theta})\rho r_2$) and the respective moments of $u_0$. This is satisfied due to \eqref{eq:r1r2coefficientsProof},
    so that
    \begin{align*}
		\bigg\|\sup_{t\in [0,T]} \|\hat{U}(t)-\bar{U}(t)\|_X\bigg\|_{r_2} 
        &\le  C_{\alpha,T,q,\theta,\rho} \Big( 1+\|u_0\|_{r_1,Y}^{\rho(1+\frac{\alpha}{\theta})}\Big) k^\alpha.
	\end{align*}
    We pass to the second term in \eqref{eq:splitUhatUbarProof}, the difference of the piecewise constant extension $U^k$ and the continuous extension $\hat{U}$ of the scheme. 
    Recalling \eqref{eq:UhatfromUk}, we have $\hat{U}(t)-U^k(t)=0$ if $t=t_j$ for some $0\le j\le N$ and else
	\begin{align*}
		\hat{U}(t)-U^k(t) &= [S(t-\floort)-I]U^k(t) + \int_\floort^t S(t-\floort)  F_\theta^k(\floort)\ds +  \int_{\floort}^t S(t-\floort) G_\theta^k(\floort) \dWHs
	\end{align*}
    because $\floors=\floort$ on $(\floort,t)$. We estimate the supremum in time of the $r_2$-th moments of these terms individually. For the first term, decay is obtained via the semigroup difference estimate from Lemma \ref{lem:sgInterpolation}. Pointwise  stability from Proposition \ref{prop:stabY} with $q \curve r_2$, which is applicable due to \eqref{eq:r1r2coefficientsProof}, then implies
    \begin{align*}
        \sup_{t\in[0,T]} \big\|\big[S(t-\floort)-I\big]U^k(t)\big\|_{r_2,X} 
        &\le C \sup_{t\in[0,T]}\Big((t-\floort)^\alpha \|U^k(t)\|_{r_2,Y} \Big)
        \le C_{T,r_2} \big(1+\|u_0\|_{r_2,Y}\big) k^\alpha.
    \end{align*}
    Via contractivity of $(S(t))_{t\ge 0}$, the embedding $Y\hra X$, the stopped nonlinearities, $\theta\le \frac{1}{4}$, and $k\le T$, the $F$-term can be estimated by
    \begin{align}
    \label{eq:FtermLeverageStoppedNLproof}
        \sup_{t\in[0,T]} \Big\|\int_\floort^t S(t-\floort) F_\theta^k(\floort)\ds\Big\|_{r_2,X} 
        &\le \sup_{t\in[0,T]} \Big((t-\floort) \|F_\theta^k(\floort)\|_{r_2,X}\Big)\nonumber\\
        &\le C k\max_{0\le j \le \Nk -1} \|\1_{\theta,j} F(U^j)\|_{r_2,Y}
        \le C k^{1-\theta} \le C T^{1-\theta-\alpha}k^\alpha.
    \end{align}
     In contrast, for the $G$-term, we do not make use of the stopped nonlinearities but leverage polynomial growth and \eqref{eq:r1r2coefficientsProof} to obtain pointwise stability from Proposition \ref{prop:stabY} with $q \curve \rho r_2$. This results in
    \begin{align}
    \label{eq:GtermLeverageStoppedNLproof}
        \sup_{t\in[0,T]} &\Big\|\int_\floort^t S(t-\floort) G_\theta^k(\floort)\dWHs\Big\|_{ r_2,X}\le C_{r_2}\sup_{t\in[0,T]} \Big(\int_\floort^t \bignn S(t-\floort) G_\theta^k(\floort)\bignn_{ r_2,X}^2\ds\Big)^{1/2}.\nonumber\\
        &\le C_{r_2} k^{1/2}\max_{0\le j\le \Nk -1} \nn G(U^j)\nn_{ r_2,Y} 
        \le C_{r_2,\rho} k^{1/2} \Big(1+\max_{0\le j\le \Nk -1}\|U^j\|_{\rho r_2,Y}^\rho\Big)\nonumber\\
        &\le C_{T,r_2,\rho} \big(1+\|u_0\|_{\rho r_2,Y}^\rho\big) k^\alpha,
    \end{align}
    where we have used that $k^{1/2}\le T^{1/2-\alpha}k^\alpha$.
    Altogether, for the second term in \eqref{eq:splitUhatUbarProof}, 
    \begin{align*}
        \sup_{t\in[0,T]} \|\hat{U}(t)-U^k(t)\|_{r_2,X}
        &\le C_{\alpha,T,r_2,\theta,\rho} \big(1+\|u_0\|_{\rho r_2,Y}^\rho\big) k^\alpha.
    \end{align*}
    Tracing back the constant dependencies finishes the proof.
\end{proof}

\begin{remark}
    \label{rem:useStoppedNLforF}
    We would like to point out that instead of the stopped nonlinearities, we could use polynomial growth and stability of $u_0$ to achieve decay at rate $1$ in \eqref{eq:FtermLeverageStoppedNLproof}. Since the argument above yields a bound independent of the initial values with decay of order $1-\theta$, which is larger than the optimal rate $\alpha \le \frac{1}{2}$ for the full error, this is not necessary. In contrast, for the $G$-term, leveraging the stopped nonlinearities in \eqref{eq:GtermLeverageStoppedNLproof} would result in suboptimal convergence at rate $\frac{1}{2}-\theta$ instead of $\alpha$.
\end{remark}

\subsubsection{Error between the scheme and the solution}

\begin{theorem}[Convergence rate]
\label{thm:convergenceMain}
    Let $p\in [2,\infty)$, $\lambda \in (0,1)$, $q=\frac{p}{\lambda}$, and $\theta \in (0,\frac{1}{4}]$. Suppose that Assumption \ref{ass:locLipPolGro}($\alpha,\nu,\rho$) holds for some $\alpha\in (0,\frac{1}{2}]$ and $\nu,\rho\ge 1$. Let $r=\rho q(1+\nu+\frac{\alpha}{\theta})$ and $u_0\in L_{\F_0}^{r}(\Omega;Y)$. Further suppose that Assumptions \ref{ass:coYq}($r$) and \ref{ass:monqEta}($q,\eta$) hold for some $\eta>0$.
    Then the nonlinearities-stopped exponential Euler scheme $(U^j)_{0\le j\le N}$ converges at rate $\alpha\in (0,\frac{1}{2}]$ to the solution $(U(t))_{t\in [0,T]}$ of \eqref{eq:SEEsemilinear} in a pathwise uniform sense. More precisely, there is a constant $C\ge 0$ depending on $(\alpha,T,p,\lambda,\theta,\nu,\rho,\eta)$ but independent of $k$, $N$, and $u_0$ such that the extension $(\hat{U}(t))_{t\in [0,T]}$ of the scheme from Definition \ref{def:UhatExtension} satisfies
     \begin{align*}
        \bigg\|\sup_{t\in[0,T]} \|\hat{U}(t)-U(t)\|_X\bigg\|_{L^p(\Omega)}
        &\le C\big(1+\|u_0\|_{L^r(\Omega;Y)}^{r/q} \big)k^\alpha.
    \end{align*}
    In particular, 
    \begin{align*}
        \Big\|\max_{0\le j \le N} \|U^j-U(t_j)\|_X\Big\|_{L^p(\Omega)}
        &\le C\big(1+\|u_0\|_{L^r(\Omega;Y)}^{r/q} \big)k^\alpha.
    \end{align*}
\end{theorem}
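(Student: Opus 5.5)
We combine the three preceding results through the triangle inequality
\begin{equation*}
\sup_{t\in[0,T]}\|\hat U(t)-U(t)\|_X\le \sup_{t\in[0,T]}\|\hat U(t)-\bar U(t)\|_X+\sup_{t\in[0,T]}\|\bar U(t)-U(t)\|_X,
\end{equation*}
and take $L^p(\Omega)$-norms.

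For the first term we apply Proposition \ref{prop:UhatUbarError} with $q\curve p$. This requires Assumption \ref{ass:coYq}($(1+\frac{\alpha}{\theta})\rho p$) and $u_0\in L^{(1+\frac{\alpha}{\theta})\rho p}(\Omega;Y)$. Since $p\le q$ and $(1+\frac{\alpha}{\theta})\rho p\le r$, both follow from Assumption \ref{ass:coYq}($r$) (coercivity is monotone in its parameter) and $L^r\subseteq L^{(1+\frac{\alpha}{\theta})\rho p}$. The result is a bound $C(1+\|u_0\|_{r,Y}^{\rho(1+\frac{\alpha}{\theta})})k^\alpha$. (Alternatively, one may use $q$ instead of $p$ and Hölder in $\Omega$.)

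For the second term, Lemma \ref{lem:reduceUbarUtoUkUbar} applies with $r_1=r$, because Assumptions \ref{ass:coYq}($r$) and \ref{ass:monqEta}($q,\eta$) hold. It gives
\begin{equation*}
\Big\|\sup_t\|\bar U(t)-U(t)\|_X\Big\|_p\le C\big(1+\|u_0\|_{r,Y}^{\rho\nu}\big)\sup_t\|U^k(t)-\bar U(t)\|_{r_2,X}.
\end{equation*}
We then insert Proposition \ref{prop:diffUkUbar}, whose hypotheses are identical. This yields
\begin{equation*}
C\big(1+\|u_0\|_{r,Y}^{\rho\nu}\big)\big(1+\|u_0\|_{r,Y}^{\rho(1+\frac{\alpha}{\theta})}\big)k^\alpha .
\end{equation*}

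The only remaining work is bookkeeping of the powers of $\|u_0\|_{r,Y}$. Expanding the product, the largest exponent is $\rho\nu+\rho(1+\frac{\alpha}{\theta})=\rho(1+\nu+\frac{\alpha}{\theta})=r/q$. All smaller exponents, including those from the first term, are bounded by $1+\|u_0\|_{r,Y}^{r/q}$ via $x^a\le 1+x^b$ for $0\le a\le b$. This gives the claimed factor $1+\|u_0\|_{L^r(\Omega;Y)}^{r/q}$.

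The grid estimate follows because $\hat U(t_j)=U^j$, so the maximum over grid points is dominated by the supremum over $[0,T]$. The constants depend only on $(\alpha,T,p,\lambda,\theta,\nu,\rho,\eta)$, as the cited results provide.

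I expect no real obstacle beyond checking that every moment and coercivity parameter used is at most $r$. The substantive difficulty already lies in Lemma \ref{lem:reduceUbarUtoUkUbar} through the stochastic Gronwall inequality.
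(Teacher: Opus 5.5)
Your proposal is correct and follows the paper's proof essentially step for step. It uses the same triangle-inequality split, applies Proposition \ref{prop:UhatUbarError} with $q\curve p$ to $\hat U-\bar U$, combines Lemma \ref{lem:reduceUbarUtoUkUbar} with Proposition \ref{prop:diffUkUbar} for $\bar U-U$, and then absorbs the lower powers and lower moments of $u_0$ into $1+\|u_0\|_{r,Y}^{r/q}$, with the same parameter checks (notably $(1+\frac{\alpha}{\theta})\rho p\le r$) that the paper relies on.
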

\begin{proof}
    Via the triangle inequality, we split the error according to
    \begin{align}
    \label{eq:firstTriangle}
	   \bigg\|\sup_{t\in[0,T]} \|\hat{U}(t)-U(t)\|_X\bigg\|_p&\le \bigg\|\sup_{t\in[0,T]} \|\hat{U}(t)-\bar{U}(t)\|_X\bigg\|_p+\bigg\|\sup_{t\in[0,T]} \|\bar{U}(t)-U(t)\|_X\bigg\|_p.
    \end{align}
    By Proposition \ref{prop:UhatUbarError} with $q \curve p$, the first term is bounded by
      \begin{align*}
		\bigg\|&\sup_{t\in [0,T]} \|\hat{U}(t)-\bar{U}(t)\|_X\bigg\|_p 
        \le  C_{\alpha,T,p,\theta,\rho}  \Big( 1+\|u_0\|_{(1+\frac{\alpha}{\theta})\rho p,Y}^{\rho(1+\frac{\alpha}{\theta})}\Big) k^{\alpha},
	\end{align*}
    where we have used that $r$-coercivity implies $(1+\frac{\alpha}{\theta})\rho p$-coercivity due to $(1+\frac{\alpha}{\theta})\rho p \le r$ and $r$ equals $r_1$ from Proposition \ref{prop:UhatUbarError}. In order to obtain an estimate for the second error term, we use the inequality \eqref{eq:diffUkUbarProp} from Proposition \ref{prop:diffUkUbar} to estimate the last factor in the inequality from Lemma \ref{lem:reduceUbarUtoUkUbar}, resulting in
    \begin{align*}
	   \bigg\|\sup_{t\in [0,T]} \|\bar{U}(t)-U(t)\|_X\bigg\|_p &\le C_{\alpha,T,p,\lambda,\theta,\nu,\rho,\eta} \big(1+\|u_0\|_{r,Y}^{\rho\nu} \big)\Big(1+\|u_0\|_{r,Y}^{\rho(1+\frac{\alpha}{\theta})} \Big)k^\alpha\\
       &\le C_{\alpha,T,p,\lambda,\theta,\nu,\rho,\eta} \Big(1+\|u_0\|_{r,Y}^{\rho(1+\nu+\frac{\alpha}{\theta})} \Big)k^\alpha.
    \end{align*}
    Here, we have used in particular that Assumption \ref{ass:coYq}($r$) holds and that $u_0\in L^r(\Omega;Y)$. Inserting these two error estimates into \eqref{eq:firstTriangle} gives the final error estimate for $\hat{U}(t)-U(t)$ after absorbing lower powers and, via Hölder's inequality, lower moments of the initial values. Since $\hat{U}(t_j)=U^j$ for $0\le j \le N$, the error estimate for $U^j-U(t_j)$ is obtained by taking the supremum only over the grid points.
\end{proof}

\begin{remark}[Parameters]
\begin{enumerate}[label=(\alph*)]
    \item The parameter $r=\rho q(1+\nu+\frac{\alpha}{\theta})$ for the moments of the initial values and the coercivity assumption is contained in $(4p,\infty)$ in the optimal rate case $\alpha=\frac{1}{2}$. Choosing $\theta\in (0,\frac{1}{4}]$ larger results in lower $r$, allowing for larger noise coefficients and initial values with less moments. The higher the Lipschitz and growth parameters $\nu$ and $\rho$ are, the more restrictive Assumption \ref{ass:coYq}($r$). However, it is only affecting the coefficient in front of the noise, not the admissible order of growth, as illustrated for the stochastic transport equation with quadratic noise in Subsection \ref{subsec:transportExample}.
    \item The Hölder parameters $\beta_1$ and $\beta_2$ are chosen optimally in the sense that other parameters would result in higher $r$ and thus a stronger coercivity condition and higher integrability requirements on the initial values in the main theorem.
\end{enumerate}
\end{remark}

\begin{remark}[Extension to $2$-smooth Banach spaces]
    It is natural to ask whether the well-posedness and convergence results can be generalised from Hilbert to $2$-smooth Banach spaces, a class of Banach spaces including $L^p$ for $p\in [2,\infty)$, as done in 
    \cite[Sec.~3]{KliobaVeraar24Irregular} for globally Lipschitz nonlinearities. Regarding the pointwise stability as well as the convergence results, such an extension would require replacing Hilbert--Schmidt operators by $\gamma$-radonifying operators, which seems feasible, and suitably formulating coercivity and monotonicity conditions with dual pairings instead of scalar products. The main challenge already arises earlier in the local well-posedness proof. The reduction to the globally Lipschitz case relied on the uniformly $X$-Lipschitz projections $P_n:X\to B_n$ onto balls in $Y$. This construction is not available in general Banach spaces. Furthermore, the radial projections onto balls in $Y$ are $Y$-Lipschitz but need not be $X$-Lipschitz in general. Hence, a different approach to local and global well-posedness would be required for an extension beyond Hilbert spaces.
\end{remark}

\subsection{Error estimate for an interpolation to the full time interval}
\label{subsec:convInterpolationFullTimeInterval}

While Theorem \ref{thm:convergenceMain} gives a pathwise uniform convergence rate for $\hat{U}-U$ on the full time interval $[0,T]$, computing the continuous extension $\hat{U}$ requires knowledge of the underlying Brownian motion and the semigroup at all times $t\in [0,T]$ rather than merely at the grid points. In practice, it is easier to compute actual interpolations of the scheme from the grid to $[0,T]$. We show that, up to a square-root logarithmic correction factor, the piecewise constant right-continuous interpolation $(U^k(t))_{t\in[0,T]}$ of the scheme $(U^j)_{0\le j \le N}$ achieves the same pathwise uniform convergence rate on $[0,T]$ as the scheme does at the grid points. The proof relies on a logarithmic square function estimate for stochastic integrals.

\begin{theorem}[Uniform error on the full interval]
\label{thm:convergenceExtension}
    Let $p\in [2,\infty)$, $\lambda \in (0,1)$, $q=\frac{p}{\lambda}$, and $\theta \in (0,\frac{1}{4}]$. Suppose that Assumption \ref{ass:locLipPolGro}($\alpha,\nu,\rho$) holds for some $\alpha\in (0,\frac{1}{2}]$ and $\nu,\rho\ge 1$. Let $r=\rho q(1+\nu+\frac{\alpha}{\theta})$ and $u_0\in L_{\F_0}^{r}(\Omega;Y)$. Further suppose that Assumptions \ref{ass:coYq}($r$) and \ref{ass:monqEta}($q,\eta$) hold for some $\eta>0$.
    Then the right-continuous piecewise constant extension $(U^k(t))_{t\in [0,T]}$ of the nonlinearities-stopped exponential Euler scheme from Definition \ref{def:UkExtension} converges at rate $\alpha\in (0,\frac{1}{2}]$, with a logarithmic correction if $\alpha=\frac12$, to the solution $(U(t))_{t\in [0,T]}$ of \eqref{eq:SEEsemilinear} in a pathwise uniform sense. More precisely, there is a constant $C\ge 0$ depending on $(\alpha,T,p,\lambda,\theta,\nu,\rho,\eta)$ as well as $\|u_0\|_{L^r(\Omega;Y)}$ but independent of $k$ and $N$ such that
     \begin{align*}
        \bigg\|\sup_{t\in[0,T]} \|U^k(t)-U(t)\|_X\bigg\|_{L^p(\Omega)}
        &\le \begin{cases}
            C\sqrt{\max\{\log(T/k),p\}}k^{1/2},& \alpha=\frac12,\\
            Ck^\alpha,&\alpha\in (0,\frac12).
        \end{cases}
    \end{align*}
\end{theorem}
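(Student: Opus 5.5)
Starting from the grid estimate of Theorem \ref{thm:convergenceMain}, we reduce everything to path regularity of $U$ between grid points. For $t\in[t_j,t_{j+1})$ we have $U^k(t)=U^j=\hat U(t_j)$, so
\begin{equation*}
\sup_{t\in[0,T]}\|U^k(t)-U(t)\|_X \le \max_{0\le j\le N}\|U^j-U(t_j)\|_X + \sup_{t\in[0,T]}\|U(t)-U(\floort)\|_X .
\end{equation*}
The first term is bounded in $L^p(\Omega)$ by $Ck^\alpha$ via Theorem \ref{thm:convergenceMain}. The work is therefore to bound $\|\sup_{t}\|U(t)-U(\floort)\|_X\|_{L^p(\Omega)}$ by $Ck^\alpha$ for $\alpha<\frac12$ and by $C\sqrt{\max\{\log(T/k),p\}}\,k^{1/2}$ for $\alpha=\frac12$.

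Using the mild formulation and the semigroup property, for $t\in[t_j,t_{j+1})$ we write
\begin{equation*}
U(t)-U(t_j)=[S(t-t_j)-I]U(t_j)+\int_{t_j}^t S(t-s)F(U(s))\ds+\int_{t_j}^t S(t-s)G(U(s))\dWHs .
\end{equation*}
\emph{Semigroup term.} By Lemma \ref{lem:sgInterpolation} it is bounded by $2\CY k^\alpha\sup_{s}\|U(s)\|_Y$. Since $r$-coercivity implies $\frac{p}{\lambda}$-coercivity, Theorem \ref{thm:aprioriY} controls this in $L^p(\Omega)$.

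\emph{Drift term.} It is at most $k\sup_s\|F(U(s))\|_Y\lesssim k(1+\sup_s\|U(s)\|_Y^\rho)$. Theorem \ref{thm:aprioriY} with $q\curve\rho p$ applies, because $\rho p/\lambda\le r$ and $u_0\in L^r$.

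\emph{Stochastic term.} We control $\max_j\sup_{t\in[t_j,t_{j+1}]}\|\int_{t_j}^t S(t-s)G(U(s))\dWHs\|_X$. For the moment of a maximum of $N$ variables we use $\|\max_j Z_j\|_p\le N^{1/m}\max_j\|Z_j\|_m$ with $m\ge p$. Theorem \ref{thm:maximal-inequality} on each interval, with constant $C_m\lesssim\sqrt m$, gives $\|Z_j\|_m\lesssim\sqrt m\,k^{1/2}(1+\sup_s\|U(s)\|_{L^{\rho m}(\Omega;Y)}^\rho)$. For $\alpha<\frac12$ we take $m$ fixed and large, which gives $N^{1/m}k^{1/2}\lesssim k^{1/2-1/m}\le k^\alpha$. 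For $\alpha=\frac12$ we take $m=\max\{\log(T/k),p\}$, so that $N^{1/m}\le e$ and the factor is $\sqrt m$.

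The step I expect to be the main obstacle is this last choice of $m$: it needs moments of order $\rho m$ of $\sup_s\|U(s)\|_Y$, which grow with $k$, while only $r$-th moments and $r$-coercivity are available. I would instead use a logarithmic square function (sub-Gaussian) estimate. Write $G(U(s))=\1_{\{\sup\|U\|_Y\le R\}}G(U(s))+\text{rest}$, or more precisely localise with the stopping time $\tau_R=\inf\{t:\|U(t)\|_Y\ge R\}$. On $[0,\tau_R]$ the integrand is bounded by $C(1+R^\rho)$, and a Gaussian tail or exponential martingale bound for each piece of the stochastic convolution gives tails $\exp(-c x^2/(kR^{2\rho}))$. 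A union bound over the $N$ intervals then yields the factor $\sqrt{\log(T/k)}$. The event $\{\tau_R<T\}$ is handled by Markov's inequality with the $r$-th moment bounds of Theorem \ref{thm:aprioriY}. Optimising over $R$, or simply absorbing the dependence into the constant, which the statement permits since $C$ may depend on $\|u_0\|_{L^r(\Omega;Y)}$, gives the claimed rate. The semigroup inside the stochastic convolution is dealt with by the dilation argument behind Theorem \ref{thm:maximal-inequality}, which reduces it to a martingale in a larger Hilbert space, so that these scalar tail bounds apply.
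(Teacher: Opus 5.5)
Your reduction to $\sup_{t}\|U(t)-U(\floort)\|_X$, the restart of the mild formula at $t_j$, and the bounds for the semigroup and drift terms are correct. This route is slightly leaner than the paper's, which bounds the whole $C^\Psi$- resp.\ $C^\alpha$-norm of $U$ via \cite[Lemma~6.9]{KliobaVeraar24Rate}. The gap is in the stochastic term, which is the only nontrivial part.

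\emph{Fixed $m$ does not cover all $\alpha<\frac12$.} It needs moments of $U$ in $Y$ of order $\rho m$ with $m\ge(\frac12-\alpha)^{-1}$. The hypotheses only provide moments of order below $r=\rho q(1+\nu+\frac{\alpha}{\theta})$, and $r$ stays bounded as $\alpha\uparrow\frac12$. For instance, $p=2$, $\lambda=0.9$, $\nu=1$, $\theta=\frac14$, $\alpha=0.45$ requires $m\ge 20$, but $r/\rho\approx 8.4$.

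\emph{The localisation does not close as you describe.} With a single level $R$ you have $\|Z\|_p\le\|Z_R\|_p+\|Z\1_{\{\tau_R<T\}}\|_p$. The first term is indeed $\lesssim(1+R^\rho)\sqrt{k\max\{\log N,p\}}$, by dilation, sub-Gaussian tails and a union bound. The second term is only bounded by $\|Z\|_{p_1}\P(\tau_R<T)^{1/p-1/p_1}$ with $p_1>p$. For fixed $R$ this does not decay at the required rate, because bounding $\|Z\|_{p_1}$ sharply is the original problem again. So it cannot be absorbed into a $k$-independent constant, whatever its dependence on $\|u_0\|_{L^r(\Omega;Y)}$. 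Making it $O(k^{1/2})$ via Markov forces $R$ to be a negative power of $k$. The factor $R^\rho$ then costs a power of $k$ in the first term, so optimising a single $R$ yields only $k^{1/2-\varepsilon}$.

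The missing idea is to couple the truncation level to the tail level, which is equivalent to a dyadic decomposition of the integrand along stopping times $\tau_{2^n}$. Write $x=y\sqrt{k\log(2N)}$ in $\E Z^p=\int_0^\infty px^{p-1}\P(Z>x)\,\rmd x$, and for large $y$ choose $R=R(y)$ with $1+R^\rho\eqsim y^{1-\delta}$. Then $\P(Z>x)\le(2N)^{1-cy^{2\delta}}+Cy^{-(1-\delta)r'/\rho}$, where $r'$ is the order of an available moment of $\sup_t\|U(t)\|_Y$. This is integrable against $py^{p-1}\,\rmd y$ uniformly in $N$ as soon as $(1-\delta)r'/\rho>p$. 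That holds for small $\delta$ with $r'=\rho q$, since $q>p$, and Theorem \ref{thm:aprioriY} provides this moment because $\rho q<r$.

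This is precisely the logarithmic square function estimate \cite[Lemma~6.10]{KliobaVeraar24Rate}, which the paper applies with $p_0=q$. There only $\nn G(U)\nn_{q,\infty,X}$, i.e.\ $\rho q$-th moments of $\sup_t\|U(t)\|_Y$, enter. Once you have it, the case $\alpha<\frac12$ also follows directly from $\sqrt{k\log(T/k)}\lesssim k^\alpha$, with no fixed-$m$ argument needed.
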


\begin{proof}
    First, we prove the claim for $\alpha=\frac12$. Define the increasing function $\Psi:[0,T]\to[0,\infty)$ by $\Psi(0)\ce 0$ and $\Psi(h)\ce h^{1/2}\sqrt{1+\log(T/h)}$ for $h\in (0,T]$. 
We say that $u\in C^\Psi([0,T];X)$ if $u:[0,T]\to X$ is continuous and
\begin{equation*}
    [u]_{C^\Psi([0,T];X)} = \sup_{0\leq s<t\leq T} \frac{\|u(t)-u(s)\|}{\Psi(t-s)}<\infty.
\end{equation*}
Moreover, we set $\|u\|_{C^\Psi([0,T];X)} \ce \|u\|_{\infty}+ [u]_{C^\Psi([0,T];X)}$. 
    By  \cite[Lemma~6.9]{KliobaVeraar24Rate}, using this generalised Hölder space,
    \begin{equation*}
        \sup_{t\in [0,T]} \|U^k(t)-U(t)\|_X \le \Psi(k) \|U\|_{C^\Psi([0,T];X)} + \max_{0 \le j \le N} \|U^j-U(t_j)\|_X\quad \text{a.s.}
    \end{equation*}
    After taking $p$-th moments, the error estimate in the grid points of Theorem \ref{thm:convergenceMain} bounds the second term by $Ck^{1/2}$ and thus also by $C \Psi(k)$ as required.    
    We abbreviate the norm in $L^p(\Omega;C^\Psi([0,T];X))$ by $\|\cdot\|_{L^pC^\Psi X}$. To show the necessary path regularity of the mild solution, we analyse the initial value term and the deterministic and stochastic convolutions separately. Contractivity of the semigroup and Lemma \ref{lem:sgInterpolation} give
    \begin{align*}
        \|S(\cdot)u_0\|_{L^pC^\Psi X}&\le\bigg\|\sup_{0\le s<t\le T}\frac{\|[S(t)-S(s)]u_0\|_X}{\Psi(t-s)}\bigg\|_p + \bigg\|\sup_{0\le t\le T}\|S(t)u_0\|_X\bigg\|_p\\
        &\le C \bigg(\sup_{h\in (0,T]}\frac{1}{\sqrt{1+\log(T/h)}}\bigg) \|u_0\|_{p,Y} + \|u_0\|_{p,X} \le C_T\|u_0\|_{p,Y}.
    \end{align*}
    The difference of the deterministic convolutions at times $s<t$ can be split according to
    \begin{align*}
        \int_0^s [S(t-r)-S(s-r)]F(U(r))\,\rmd r + \int_s^t S(t-r)F(U(r))\,\rmd r.
    \end{align*}
    Using Lemma \ref{lem:sgInterpolation} and contractivity of the semigroup as before, this yields
    \begin{align*}
        \Big\|&\int_0^\cdot S(\cdot-r)F(U(r))\,\rmd r\Big\|_{L^pC^\Psi X}\le C_T\bigg\|\sup_{0\le s\le T}\int_0^s\|F(U(r))\|_Y\,\rmd r\bigg\|_p\\
        &\phantom{\le }+ \bigg\|\sup_{0\le s<t\le T}\frac{t-s}{\Psi(t-s)}\sup_{r\in [0,T]}\|F(U(r))\|_X\bigg\|_p + T \bigg\|\sup_{r\in [0,T]}\|F(U(r))\|_X\bigg\|_p\\
        &\le C_{T,p,q,\rho} \big(1+\|u_0\|_{\rho p,Y}^\rho\big),
    \end{align*}
    where we have used $Y\hra X$, polynomial growth of $F$, and the a priori estimate from Theorem \ref{thm:aprioriY} in the last step. For the path regularity of the stochastic convolution, we employ \cite[Lemma~6.10]{KliobaVeraar24Rate} with $p_0=q$, resulting in
    \begin{align*}
        \|J_{G\circ U}\|_{L^pC^\Psi X} &\le C_{T,p,q}\big(\nn G(U) \nn_{p,2,Y} + \nn G(U)\nn_{q,\infty,X} \big)\le C_{T,p,q,\rho}\big(1+\| U \|_{\rho p,\infty,Y}^\rho + \| U\|_{\rho q,\infty,Y}^\rho \big)\\
        &\le C_{T,p,q,\rho}\big(1+\|u_0\|_{\rho q,Y}^\rho\big).
    \end{align*}
    In the last two inequalities, we have used polynomial growth of $G$ and the a priori estimate from Theorem \ref{thm:aprioriY}, which is applicable because Assumption \ref{ass:coYq}($r$) holds and $r>\rho q$. In summary,
    \begin{equation*}
        \Psi(k)\|U\|_{L^pC^\Psi X} \le C_{T,p,q,\rho}\big(1+\|u_0\|_{\rho q,Y}^\rho\big)\sqrt{1+\log(T/k)} k^{1/2}.
    \end{equation*}
    
    Now consider the case $\alpha<\frac{1}{2}$. Define $\Phi_\alpha(h)\ce h^\alpha$ for $h\in [0,T]$ such that $C^{\Phi_\alpha}([0,T];X)=C^\alpha([0,T];X)$ is the usual $\alpha$-Hölder space. The splitting of the error, the estimate on the grid as well as the path regularity estimates for all terms except the stochastic convolution carry through analogously, since the logarithmic factor was not used in the estimates. Split the difference of the stochastic convolutions at times $0\le s<t\le T$ into 
    \begin{equation*}
        J_{G\circ U}(t)-J_{G\circ U}(s) = [S(t-s)-I]J_{G\circ U}(s) + \int_s^t S(t-r)G(U(r))\dWH(r) \ec T_1(t,s) + T_2(t,s).
    \end{equation*}
    Since $\Phi_\alpha \ge C_{\alpha,T}\Psi$ on $(0,T]$, we can thus estimate
    \begin{align*}
        \|&J_{G\circ U}\|_{L^pC^{\Phi_\alpha}X} \le \bigg\|\sup_{0\le s<t\le T} \frac{\|T_1(t,s)\|_X}{(t-s)^\alpha}\bigg\|_p + \bigg\|\sup_{0\le s<t\le T} \frac{\|T_2(t,s)\|_X}{C_{\alpha,T}\Psi(t-s)}\bigg\|_p+\|J_{G\circ U}\|_{p,\infty,X}.
    \end{align*}
    The last two terms are bounded by $C_{T,p,q,\rho}$ as above for $\alpha=\frac{1}{2}$ also for $\alpha <\frac{1}{2}$, as an inspection of the proof of \cite[Lemma~6.10]{KliobaVeraar24Rate} shows, noting that $r>\rho q$ holds for all $\alpha\in (0,\frac12]$. Via Lemma \ref{lem:sgInterpolation} and the maximal inequality from Theorem \ref{thm:maximal-inequality}, we bound the first term by
    \begin{align*}
        C_\alpha \bigg\|\sup_{0\le s<t\le T} \frac{(t-s)^\alpha\|J_{G\circ U}(s)\|_Y}{(t-s)^\alpha}\bigg\|_p \le C_{\alpha,p}\nn G(U)\nn_{p,2,Y} \le C_{\alpha,T,p,q,\rho}\big(1+\|u_0\|_{\rho q,Y}^\rho\big).
    \end{align*}
    The error estimate then follows by combining the estimate for $\|U\|_{L^pC^{\Phi_\alpha}X}$ with Theorem \ref{thm:convergenceMain}.
\end{proof}

\section{Tamed exponential Euler schemes}
\label{sec:generalTamings}

For the nonlinearities-stopped exponential Euler scheme, we have made use of the stopped nonlinearities to obtain stability, which would not have been possible for the standard exponential Euler scheme. In the convergence proof, this stopping led to an additional error term in Proposition \ref{prop:UhatUbarError}, which was dealt with via Markov's inequality. Completely stopping the update of the nonlinearities in the scheme when they exceed a prescribed threshold, however, is just one option to address the stability problem. Various different tamings have been studied for parabolic
SPDEs with nonglobally Lipschitz nonlinearities, for example in
\cite{GyongySabanisSiska16,Wang20taming, Brehier22tamedExpEuler}. In this section, we revisit the stability and convergence arguments from Sections \ref{sec:stability} and \ref{sec:convergenceRate} in order to generalise these results to a broader class of tamings. The resulting convergence statement can be found in Theorem \ref{thm:convergenceMainTamed}, our second main result. 

\begin{definition}[Scheme]
\label{def:TamedScheme}
    The \emph{tamed exponential Euler scheme} $(U^j)_{j=0,\ldots,\Nk}$ with $U^j:\Omega\to Y$ is defined by
    \begin{equation}
    \label{eq:defTamedSchemeUj}
        U^{j+1} \ce S(k) U^j + kS(k) F_k(U^j) + S(k)G_k(U^j)\Delta W_{j+1},~0\le j \le \Nk -1,\quad U^0 \ce u_0,
    \end{equation}
    where $\Delta W_{j+1} \ce W_H(t_{j+1})-W_H(t_j)$, the last term is understood in the sense of \eqref{eq:convradonW} and $F_k:Y\to Y$ and $G_k:Y\to\LHY$ are Borel-measurable.
\end{definition}

By a slight abuse of notation, in this section we also denote the piecewise constant and the continuous extensions of the tamed exponential Euler scheme as well as its semilinear integrated counterparts by $(U^k(t))_{t\in [0,T]}$, $(\hat{U}(t))_{t\in [0,T]}$, and $(\bar{U}(t))_{t\in [0,T]}$ as in Definitions \ref{def:UkExtension}, \ref{def:UhatExtension}, and \ref{def:UbarSemilinearIntegratedCounterparts}, respectively. Borel measurability of $F_k,G_k$ ensures adaptedness of the scheme. Note the difference between the tamed nonlinearities $F_k,G_k$ introduced here and the functions $F^k,G^k$ evaluating the nonlinearities at the last grid point via $F^k(s)\ce F(U^k(s))$ from Definition \ref{def:UkExtension}. 

A common situation is given by scalar tamings
\begin{equation}
\label{eq:scalarTaming}
         F_k:Y\to Y,~F_k(x)\ce a_k(x)F(x),\qquad G_k:Y \to \LHY,~~ G_k(x)\ce b_k(x)G(x)
\end{equation}
with Borel-measurable $a_k,b_k: Y \to [0,1]$. Under Assumption \ref{ass:locLipPolGro}, $F_k$ and $G_k$ are then Borel measurable because $F,G,a_k,b_k$ are. The nonlinearities-stopped exponential Euler scheme from Definition \ref{def:Scheme} is recovered via
\begin{align}
\label{eq:akbkNLstoppedExpEuler}
    a_k(x)=b_k(x)\ce
    \1_{\{\|F(x)\|_Y+\|G(x)\|_{\LHY}\le k^{-\theta}\}}.
\end{align}

In order to ensure stability and convergence of the tamed exponential Euler method, we impose the following three parameter-dependent assumptions. In contrast to previous sections, we now allow for different growth parameters of the tamed nonlinearities $F_k$ and $G_k$ and include $0$ in the admissible parameter range. This provides more flexibility e.g.\ in the case of noise of linear growth, cf.\ Corollary \ref{cor:optimalSeparateTaming}. Stability is a straightforward consequence of the first two assumptions and the stability proof from Proposition \ref{prop:stabY}.

\begin{assumption}[$q$) (Tamed coercivity]
\label{ass:tamedCoercivity}
    For given $q\in [2,\infty)$, there is a constant $C_q\ge 0$ such that for all $k\in (0,T]$ and $x\in Y$,
    \begin{equation*}
        \Re \la x,F_k(x) \ra_Y+\frac{q-1}{2}\|G_k(x)\|_\LHY^2 \le C_q(1+\|x\|_Y^2).
    \end{equation*}
\end{assumption}

\begin{assumption}[$\theta_F,\theta_G$) (Generalized growth bound for the tamed nonlinearities]
\label{ass:growthBoundTaming}
    For given $\theta_F,\theta_G\in [0,\frac12]$, there is a constant $C_{T,\theta}\ge 0$ such that for all $k\in (0,T]$ and $x\in Y$,
    \begin{equation}
     \label{eq:tamedGrowthBounds}
        \|F_k(x)\|_Y \le C_{T,\theta}k^{-\theta_F}(1+\|x\|_Y),\quad \|G_k(x)\|_\LHY\le C_{T,\theta}k^{-\theta_G}(1+\|x\|_Y)
    \end{equation}
    and $\theta_F+\theta_G\le \frac{1}{2}$ as well as
    \begin{equation}
    \label{eq:tamedNLsmaller}
        \|F_k(x)\|_Y \le \|F(x)\|_Y\quad\text{and}\quad \|G_k(x)\|_\LHY \le \|G(x)\|_\LHY.
    \end{equation}
\end{assumption}

We denote dependence of constants on both $\theta_F,\theta_G$ by the index $\theta$.

\begin{assumption}[$\alpha,\beta,q$) (Consistency order of the taming]
\label{ass:consistencyTaming}
    For given $\alpha\in(0,\frac12]$, $\beta\ge 1$, and $q\in [2,\infty)$, there is a constant $C_{\alpha,\beta,q}\ge 0$ such that for all $k\in (0,T]$ and $\Phi\in L^{\beta q}(\Omega;Y)$,
    \begin{equation}
    \label{eq:tamingConsistency}
        \|F_k(\Phi)-F(\Phi)\|_{L^q(\Omega;Y)}+\|G_k(\Phi)-G(\Phi)\|_{L^q(\Omega;\LHY)} \le C_{\alpha,\beta,q}k^{\alpha}\big(1+\|\Phi\|_{L^{\beta q}(\Omega;Y)}^\beta\big).
    \end{equation}
\end{assumption}

\begin{proposition}[Stability of tamed exponential Euler]
\label{prop:stabYtamed}
    Suppose that $-A$ generates a $C_0$-contraction semigroup $(S(t))_{t\ge 0}$ on $Y$ and that tamed $q$-coercivity as in Assumption \ref{ass:tamedCoercivity}($q$) holds for some $q\in [2,\infty)$. Let $u_0\in L_{\F_0}^q(\Omega;Y)$, let Assumption \ref{ass:growthBoundTaming}($\theta_F,\theta_G$) hold for some $\theta_F,\theta_G\in [0,\frac{1}{2}]$. Then the tamed exponential Euler scheme is pointwise stable in $Y$. That is, for all $T>0$ there exists a constant $C_{T,q,\theta}\ge 0$ independent of $k$ such that 
    \begin{equation*}
        \max_{0 \le j \le N} \|U^j\|_{L^q(\Omega;Y)}= \sup_{t\in [0,T]} \|U^k(t)\|_{L^q(\Omega;Y)}\le \sup_{t\in [0,T]} \|\hat{U}(t)\|_{L^q(\Omega;Y)}\le  C_{T,q,\theta} \big(1+\|u_0\|_{L^q(\Omega;Y)} 
        \big)<\infty.
    \end{equation*}
\end{proposition}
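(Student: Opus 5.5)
The proof will repeat the argument for Proposition \ref{prop:stabY} essentially verbatim, with $F_\theta,G_\theta$ replaced by $F_k,G_k$. The approach has two parts: a one-step estimate and an iteration.

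\textbf{One-step estimate.} First I would establish the analogue of Lemma \ref{lem:stabOneTimeStep}. For $\F_{t_\ell}$-measurable $x\in L^q(\Omega;Y)$ and $r\in[0,k]$, set
\begin{equation*}
V_{t_\ell}^x(r)\ce x+rF_k(x)+\int_{t_\ell}^{t_\ell+r}G_k(x)\dWHs,
\end{equation*}
and show that
\begin{equation*}
\E\|V_{t_\ell}^x(r)\|_Y^q\le e^{C_{T,q,\theta}r}\big(\E\|x\|_Y^q+C_{T,q,\theta}r\big).
\end{equation*}
The proof follows the same steps as before. Apply It\^o's formula up to the stopping times $\tau_n$, take conditional expectations, and split $\Re\la V,F_k(x)\ra_Y$ into $\Re\la x,F_k(x)\ra_Y$ plus a remainder. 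The main term is controlled by tamed coercivity, Assumption \ref{ass:tamedCoercivity}($q$), which replaces Remark \ref{rem:coercivityFGtheta}.

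The martingale term has vanishing expectation after stopping. Its quadratic variation is bounded by $n^{2q-2}k^{-2\theta_G}(1+\|x\|_Y)^2$, which is integrable since $x\in L^q\subseteq L^2$. This replaces the deterministic bound $k^{-2\theta}$ from the stopped scheme.

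\textbf{Main obstacle.} The only genuinely new point is the remainder
\begin{equation*}
\Big\|\Big\la (s-t_\ell)F_k(x)+\int_{t_\ell}^sG_k(x)\,\rmd W_H,F_k(x)\Big\ra_Y\Big\|_{q/2}.
\end{equation*}
In the stopped scheme this was bounded by constants using $\|F_\theta\|_Y\le k^{-\theta}$. Here the growth bounds \eqref{eq:tamedGrowthBounds} are not uniform but linear in $\|x\|_Y$. By H\"older and the BDG inequality as in \eqref{eq:stabProofTaming}, the remainder is bounded by
\begin{equation*}
C_{q}\Big((s-t_\ell)k^{-2\theta_F}+\sqrt{s-t_\ell}\,k^{-\theta_F-\theta_G}\Big)\big(1+\|x\|_{q,Y}\big)^2.
\end{equation*}
Since $s-t_\ell\le k$, $2\theta_F\le 1$ and $\theta_F+\theta_G\le\frac12$, the prefactor is at most a constant $C_T$. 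Raising to the power $q/2$ and integrating over $[t_\ell,t_\ell+r]$ gives a contribution $C_{T,q,\theta}\,r\,(1+\E\|x\|_Y^q)$. This contributes a factor $(1+Cr)$ in front of $\E\|x\|_Y^q$, which is already of the admissible form. The $L^q$ norms of $F_k(x),G_k(x)$ are finite by \eqref{eq:tamedGrowthBounds}, so all quantities make sense.

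\textbf{Conclusion.} The remaining terms are handled with Young's inequality as before, with $\phi_{n,q}$ inside the time integral. Gronwall's inequality and Fatou's lemma then give the one-step estimate.

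\textbf{Iteration.} With the one-step estimate in hand, I would iterate exactly as in the proof of Proposition \ref{prop:stabY}. By contractivity on $Y$ and the representations $U^j=S(k)V_{t_{j-1}}^{U^{j-1}}(k)$ and $\hat U(t)=S(t-t_j)V_{t_j}^{U^j}(t-t_j)$, this yields $\E\|\hat U(t)\|_Y^q\le e^{Ct}(\E\|u_0\|_Y^q+Ct)$. Measurability and integrability of $U^j$ follow along the way by induction.

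The equality $\max_j\|U^j\|_{q,Y}=\sup_t\|U^k(t)\|_{q,Y}$ holds by definition of $U^k$. The middle inequality holds because $U^k(t)=\hat U(\lfloor t\rfloor)$. Note that \eqref{eq:tamedNLsmaller} is not needed for this argument.
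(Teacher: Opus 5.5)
Your proposal is correct and follows the paper's proof essentially step for step. You substitute $F_k,G_k$ for $F_\theta,G_\theta$ in Lemma \ref{lem:stabOneTimeStep} and use tamed coercivity for the main term. You then bound the remainder via \eqref{eq:stabProofTaming} and \eqref{eq:tamedGrowthBounds} by $C_{T,q,\theta}\,r\,(1+\E\|x\|_Y^q)$, using $k^{1-2\theta_F}+k^{\frac12-\theta_F-\theta_G}\le C_T$, and iterate as in Proposition \ref{prop:stabY}. Your extra remarks are accurate and only fill in details the paper leaves implicit: that integrability of the stopped martingale now rests on $x\in L^2(\Omega;Y)$ instead of a deterministic bound, and that \eqref{eq:tamedNLsmaller} is not needed.
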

\begin{proof}
    We only indicate changes needed in the proofs of Proposition \ref{prop:stabY} and Lemma \ref{lem:stabOneTimeStep}. In the definition \eqref{eq:defVlxr} of $V_{t_\ell}^x$, replace $F_\theta$ and $G_\theta$ by $F_k$ and $G_k$. 
    As discussed in Remark \ref{rem:coercivityFGtheta}, the coercivity argument already used that $F_\theta$ and $G_\theta$ satisfy Assumption \ref{ass:tamedCoercivity}($q$) and thus remains valid for the tamed scheme with $F_k$ and $G_k$. The stopped nonlinearities were also leveraged to estimate the last term in \eqref{eq:stabProofExp}. Repeating the arguments in \eqref{eq:stabProofTaming}, we can then use Assumption \ref{ass:growthBoundTaming}, $\theta_F\le \frac12$, and $\theta_F+\theta_G\le \frac{1}{2}$ to deduce
    \begin{align*}
        \int_{t_\ell}^{t_\ell+r}&\big((s-t_\ell)\|F_k(x)\|_{q,Y}^2+C_q\sqrt{s-t_\ell}\|F_k(x)\|_{q,Y}\nn G_k(x)\nn_{q,Y}\big)^{q/2}\ds\\
        &\le C_{T,q,\theta}r\big(k^{1-2\theta_F}+k^{\frac12-\theta_F-\theta_G}\big)^{q/2}\big(1+\|x\|_{q,Y}\big)^q \le C_{T,q,\theta}r\big(1+\E\|x\|_Y^q\big).
    \end{align*}
    The remaining proof of Lemma \ref{lem:stabOneTimeStep} carries through analogously and the one-step stability estimate is extended to $[0,T]$ as in Proposition \ref{prop:stabY}.
\end{proof}

Via the arguments from the proof of Proposition \ref{prop:stabUbar} using Proposition \ref{prop:stabYtamed} rather than \ref{prop:stabY}, stability of $(\bar{U}(t))_{t\in [0,T]}$ follows.

\begin{proposition}
\label{prop:stabUbarTamed}
    Suppose that $-A$ generates a $C_0$-contraction semigroup on $Y$ and Assumption \ref{ass:polGrRho}($\rho$) holds for some $\rho\ge1$. Further, suppose that Assumptions \ref{ass:tamedCoercivity}($\rho q$) and \ref{ass:growthBoundTaming}($\theta_F,\theta_G$) hold for some $q\in[2,\infty)$ and $\theta_F,\theta_G\in [0,\frac{1}{2}]$. Let $u_0\in L_{\F_0}^{\rho q}(\Omega;Y)$. Then, for all $T>0$, there is a constant $C_{T,q,\theta,\rho} \ge 0$, independent of $k$, such that 
    \begin{align*}
        \sup_{t\in[0,T]}\|\bar U(t)\|_{L^q(\Omega;Y)}
        \le C_{T,q,\theta,\rho}
        \big(1+\|u_0\|_{L^{\rho q}(\Omega;Y)}^\rho\big).
    \end{align*}
\end{proposition}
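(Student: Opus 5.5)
The proof follows that of Proposition \ref{prop:stabUbar} verbatim, with Proposition \ref{prop:stabYtamed} replacing Proposition \ref{prop:stabY}. The definition of $\bar U$ involves only the untamed nonlinearities $F^k=F\circ U^k$ and $G^k=G\circ U^k$. The taming therefore enters solely through the moment bounds of the scheme $U^j$.

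\emph{Step 1.} By Assumptions \ref{ass:tamedCoercivity}($\rho q$) and \ref{ass:growthBoundTaming}($\theta_F,\theta_G$) and $u_0\in L^{\rho q}_{\F_0}(\Omega;Y)$, Proposition \ref{prop:stabYtamed} applied with $q\curve\rho q$ gives
\begin{equation*}
\max_{0\le j\le N}\E\|U^j\|_Y^{\rho q}\le C_{T,q,\theta,\rho}\big(1+\E\|u_0\|_Y^{\rho q}\big).
\end{equation*}
In particular each $U^j$ is a.s.\ finite in $Y$. Assumption \ref{ass:polGrRho}($\rho$) then yields $F^k\in L^1(0,T;Y)$ and $G^k\in L^2(0,T;\LHY)$ a.s., since these are finite sums over the grid. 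Hence the Itô-type inequality \eqref{eq:lemmaItoApproxSimplified} from Lemma \ref{lem:ItoFormulaApprox} applies to $\bar U$ on $Z=Y$ with $p\curve q$.

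\emph{Step 2.} Localise with $\tau_n\ce\inf\{t:\|\bar U(t)\|_Y\ge n\}$. Bound $q\|\bar U\|_Y^{q-2}\Re\la\bar U,F^k\ra_Y+C_q\|\bar U\|_Y^{q-2}\nn G^k\nn_Y^2$ using Cauchy--Schwarz and Young's inequality with exponents $(\frac{q}{q-1},q)$ and $(\frac{q}{q-2},\frac q2)$. This gives the bound $C_q\|\bar U\|_Y^q+C_{q,\rho}(1+\|U^k\|_Y^{\rho q})$ by polynomial growth. Coercivity is not needed here because $\bar U$ is a linear convolution in $F^k,G^k$. The stopped stochastic integral is a square-integrable martingale: its quadratic variation is bounded by $n^{2q-2}\int_0^T(1+\|U^k\|_Y^{2\rho})\,\rmd s$, which is integrable by Step 1 since $2\rho\le\rho q$. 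Its expectation therefore vanishes.

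\emph{Step 3.} Taking expectations, inserting Step 1, applying Gronwall's inequality to $\E\|\bar U(t\land\tau_n)\|_Y^q$, and letting $n\to\infty$ via Fatou's lemma (paths of $\bar U$ are continuous, so $\tau_n\nearrow T$) gives
\begin{equation*}
\E\|\bar U(t)\|_Y^q\le C\big(1+\E\|u_0\|_Y^q+\E\|u_0\|_Y^{\rho q}\big).
\end{equation*}
Taking $q$-th roots and using Hölder's inequality with $\rho\ge1$ absorbs the lower moment, which yields the claim.

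The only point needing care is Step 1. One must check that Proposition \ref{prop:stabYtamed} really delivers the $\rho q$-th moments under the stated hypotheses. It does, because tamed coercivity is assumed with parameter $\rho q$, and the growth bound is independent of $q$. Everything else is routine, and the constants are independent of $k$.
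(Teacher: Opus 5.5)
Your proposal is correct and is the paper's argument: the paper proves this statement by rerunning the proof of Proposition \ref{prop:stabUbar} with Proposition \ref{prop:stabYtamed} (applied with $q\curve\rho q$) in place of Proposition \ref{prop:stabY}, which is exactly your Steps 1--3. You also correctly note that the taming enters only through the $\rho q$-th moment bounds of the scheme, because $\bar U$ involves only the untamed $F^k$ and $G^k$.
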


Regarding the convergence proof, mainly Proposition \ref{prop:UhatUbarError} needs to be adapted to the new tamings. Then convergence follows via the same strategy as for the stopped nonlinearities, which yields the second main result in Theorem \ref{thm:convergenceMainTamed}: pathwise uniform convergence rates for a class of tamed exponential Euler schemes.

\begin{proposition}
	\label{prop:UhatUbarErrorTamed}
	Let $q \in [2,\infty)$ and let Assumption \ref{ass:locLipPolGro}($\alpha,\nu,\rho$) hold for some $\alpha \in (0,\frac{1}{2}]$ and $\nu,\rho\ge 1$. 
    Suppose that Assumptions \ref{ass:growthBoundTaming}($\theta_F,\theta_G$) and  \ref{ass:consistencyTaming}($\alpha,\beta,q$) hold for some $\theta_F,\theta_G\in [0,\frac12]$ and $\beta\ge \rho$. Further, let $u_0\in L_{\F_0}^{\beta q}(\Omega;Y)$ and suppose that Assumption \ref{ass:tamedCoercivity}($\beta q$) holds. Then there is a constant $C_{\alpha,\beta,T,q,\theta,\rho}\ge 0$ not depending on $k$, $N$, or $u_0$ such that
    \begin{align*}
		\Big\|\max_{0 \le j \le \Nk } \|U^j-\bar{U}(t_j)\|_X\Big\|_{L^q(\Omega)}
        \le \bigg\|&\sup_{t\in [0,T]} \|\hat{U}(t)-\bar{U}(t)\|_X\bigg\|_{L^q(\Omega)} 
        \le  C_{\alpha,\beta,T,q,\theta,\rho}  \big( 1+\|u_0\|_{\beta q,Y}^{\beta}\big) k^\alpha.
	\end{align*}
\end{proposition}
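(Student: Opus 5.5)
The plan is to mirror the proof of Proposition \ref{prop:UhatUbarError}, replacing the Markov-inequality argument for the stopping error by the consistency Assumption \ref{ass:consistencyTaming}. The first inequality is immediate, since $\hat{U}(t_j)=U^j$. For the second, the mild representation \eqref{eq:UhatMildFormula} of $\hat{U}$ holds with $F_\theta^k,G_\theta^k$ replaced by $F_k(U^k(\cdot)),G_k(U^k(\cdot))$, and Definition \ref{def:UbarSemilinearIntegratedCounterparts} gives $\bar{U}$. We therefore split
\begin{align*}
\hat{U}(t)-\bar{U}(t) &= \int_0^t [S(t-\floors)-S(t-s)] F_k(U^k(s))\ds +\int_0^t S(t-s) [F_k(U^k(s)) -F^k(s)]\ds\\
&\phantom{= }+\int_0^t [S(t-\floors)-S(t-s)] G_k(U^k(s))\dWHs+\int_0^t S(t-s) [G_k(U^k(s)) -G^k(s)]\dWHs,
\end{align*}
and we call these four terms $E_{F,1},E_{F,2},E_{G,1},E_{G,2}$ as before.

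For $E_{G,1}$ we apply the maximal inequality of Theorem \ref{thm:maximal-inequality} and then Lemma \ref{lem:sgInterpolation}, which gives $S(t-\floors)-S(t-s)=S(t-s)[S(s-\floors)-I]$, contractivity, and $\|S(s-\floors)-I\|_{\calL(Y,X)}\lesssim k^\alpha$. This reduces $E_{G,1}$ to $k^\alpha\max_\ell \nn G_k(U^\ell)\nn_{q,Y}$. By \eqref{eq:tamedNLsmaller} we have $\nn G_k(U^\ell)\nn_Y\le \nn G(U^\ell)\nn_Y$, and polynomial growth bounds this by $C(1+\|U^\ell\|_{\rho q,Y}^\rho)$. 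The tamed stability result, Proposition \ref{prop:stabYtamed}, is then applied with $q\curve\rho q$. This is admissible because $\beta q\ge\rho q$, so Assumption \ref{ass:tamedCoercivity}($\beta q$) implies tamed $\rho q$-coercivity and $u_0\in L^{\rho q}$. The resulting bound is $C(1+\|u_0\|_{\rho q,Y}^\rho)k^\alpha$. The term $E_{F,1}$ is handled in the same way, using the triangle inequality instead of the BDG inequality.

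For $E_{G,2}$ the maximal inequality gives
\[
C_q\sqrt{T}\max_\ell \nn G_k(U^\ell)-G(U^\ell)\nn_{q,X},
\]
and we bound the $X$-norm by the $Y$-norm through the embedding. Assumption \ref{ass:consistencyTaming}($\alpha,\beta,q$) with $\Phi=U^\ell$ yields $Ck^\alpha(1+\|U^\ell\|_{\beta q,Y}^\beta)$. Proposition \ref{prop:stabYtamed} with $q\curve\beta q$, which uses Assumption \ref{ass:tamedCoercivity}($\beta q$), Assumption \ref{ass:growthBoundTaming} and $u_0\in L^{\beta q}$, then bounds $\max_\ell\|U^\ell\|_{\beta q,Y}$ uniformly in $k$ by $C(1+\|u_0\|_{\beta q,Y})$. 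The term $E_{F,2}$ is treated identically with the triangle inequality. Note that the consistency assumption requires $\Phi\in L^{\beta q}(\Omega;Y)$, which the stability estimate provides.

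Finally, we combine the four bounds and absorb the lower powers $\|u_0\|_{\rho q,Y}^\rho\le 1+\|u_0\|_{\beta q,Y}^\beta$ using Hölder's inequality and $\beta\ge\rho$. The only point needing care is checking that every stability application is covered by the single assumption Assumption \ref{ass:tamedCoercivity}($\beta q$), together with the monotonicity of the coercivity assumption in its parameter. This replaces the delicate Hölder/Markov exponent bookkeeping of Proposition \ref{prop:UhatUbarError}, so we expect no serious obstacle.
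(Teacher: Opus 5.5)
Your proposal is correct and follows essentially the same route as the paper: you use the same four-term splitting, replace $\1_{\theta,\ell}\le 1$ by \eqref{eq:tamedNLsmaller} and apply Proposition \ref{prop:stabYtamed} with $q\curve\rho q$ for $E_{F,1}$ and $E_{G,1}$. For $E_{F,2}$ and $E_{G,2}$ you replace the Markov argument by the consistency Assumption \ref{ass:consistencyTaming}($\alpha,\beta,q$) combined with $Y\hra X$ and Proposition \ref{prop:stabYtamed} with $q\curve\beta q$, where all parameter checks follow from $\beta\ge\rho$.
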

\begin{proof}
	We comment on the differences to the proof of Proposition \ref{prop:UhatUbarError}. Using the same splitting
	\begin{align*}
		\hat{U}(t)-\bar{U}(t) &=  E_{F,1}(t)+E_{F,2}(t)+E_{G,1}(t)+E_{G,2}(t)
	\end{align*}
	and simply replacing $\1_{\theta,\ell}\le 1$ by \eqref{eq:tamedNLsmaller} from Assumption \ref{ass:growthBoundTaming} yields  
	\begin{align}
    \label{eq:estimateProofMinusTheta2}
		\bigg\|\sup_{t\in [0,T]} \|E_{G,1}(t)\|_X\bigg\|_q
        &\le C_{\alpha,q,\rho} \sqrt{T} k^\alpha \Big( 1+\max_{0 \le \ell \le \Nk -1}\|U^\ell\|_{\rho q,Y}^\rho\Big). 
	\end{align}
    Since $\beta q \ge \rho q$, Assumption \ref{ass:tamedCoercivity}($\rho q$) holds, so that Proposition \ref{prop:stabYtamed} with $q \curve \rho q$ implies
    \begin{align*}
		\bigg\|\sup_{t\in [0,T]} \|E_{G,1}(t)\|_X\bigg\|_q
        &\le C_{\alpha,T,q,\theta,\rho} \big(1+\|u_0\|_{\rho q,Y}^\rho\big)  k^\alpha
	\end{align*} 
    and an analogous estimate for $E_{F,1}(t)$.
    
	To estimate $E_{G,2}(t)$, the Markov inequality argument from Lemma \ref{lem:Markov2} is no longer required. Instead, consistency of the taming as in Assumption \ref{ass:consistencyTaming}($\alpha,\beta,q$) provides the bound
	\begin{align*}
		\bigg\|\sup_{t\in [0,T]} \|E_{G,2}(t)\|_X\bigg\|_q
		&\le C_q \Big(\int_0^T \bignn G_k(U^k(s)) - G(U^k(s))\bignn_{q,X}^2\ds\Big)^{1/2}\\
        &\le C_{\alpha,\beta,q}k^\alpha \Big(\int_0^T \big(1+\|U^k(s)\|_{\beta q,Y}^{2\beta}\big)\ds\Big)^{1/2}\le C_{\alpha,\beta,T,q,\theta} k^\alpha \big(1+\|u_0\|_{\beta q,Y}^{\beta}\big),
	\end{align*}
    where we have also used $Y\hra X$ and Proposition \ref{prop:stabYtamed} with $q \curve \beta q$, which is applicable by Assumption \ref{ass:tamedCoercivity}($\beta q$). Proceeding likewise for $E_{F,2}(t)$ finishes the proof.
\end{proof}

\begin{theorem}[Convergence rate for tamed exponential Euler schemes]
\label{thm:convergenceMainTamed}
    Let $p\in [2,\infty)$, $\lambda \in (0,1)$ and $q=\frac{p}{\lambda}$. Suppose that Assumption \ref{ass:locLipPolGro}($\alpha,\nu,\rho$) holds for some $\alpha\in (0,\frac{1}{2}]$ and $\nu,\rho\ge 1$. Moreover, let the tamed nonlinearities satisfy Assumptions \ref{ass:growthBoundTaming}($\theta_F,\theta_G$) and \ref{ass:consistencyTaming}($\alpha,\beta,q(1+\frac{\rho \nu}{\beta})$) for some $\theta_F,\theta_G\in [0,\frac{1}{2}]$ and $\beta \ge \rho$. 
    Let $r=q(\beta+\rho\nu)$ and $u_0\in L_{\F_0}^{r}(\Omega;Y)$. Further, suppose that Assumptions \ref{ass:coYq}($r$), \ref{ass:tamedCoercivity}($r$), and \ref{ass:monqEta}($q,\eta$) hold for some $\eta>0$. 
    
    Then the tamed exponential Euler scheme $(U^j)_{0\le j\le N}$ from Definition \ref{def:TamedScheme} converges at rate $\alpha$ to the solution $(U(t))_{t\in [0,T]}$ of \eqref{eq:SEEsemilinear} in a pathwise uniform sense. More precisely, there is a constant $C\ge 0$ depending on $(\alpha,\beta,T,p,\lambda,\theta_F,\theta_G,\nu,\rho,\eta)$ but independent of $k$, $N$, and $u_0$ such that 
     \begin{align*}
        \Big\|\max_{0\le j \le N} \|U^j-U(t_j)\|_X\Big\|_{L^p(\Omega)} \le \bigg\|\sup_{t\in[0,T]} \|\hat{U}(t)-U(t)\|_X\bigg\|_{L^p(\Omega)}
        &\le C\big(1+\|u_0\|_{L^r(\Omega;Y)}^{r/q} \big)k^\alpha.
    \end{align*}
\end{theorem}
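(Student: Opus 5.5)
The plan is to follow the proof of Theorem \ref{thm:convergenceMain} verbatim, replacing each ingredient specific to the stopped scheme by its tamed counterpart. The first inequality is immediate, since $\hat{U}(t_j)=U^j$. For the second, split via the triangle inequality as in \eqref{eq:firstTriangle} into $\hat{U}-\bar{U}$ and $\bar{U}-U$.

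\textbf{First term.} Apply Proposition \ref{prop:UhatUbarErrorTamed} with $q\curve p$. Since $p\le q\le q(1+\frac{\rho\nu}{\beta})$, the consistency Assumption \ref{ass:consistencyTaming}($\alpha,\beta,q(1+\frac{\rho\nu}{\beta})$) implies consistency at level $p$ by monotonicity of $L^s$-norms. Moreover, $\beta p\le r$, so tamed coercivity and the moments of $u_0$ suffice. This gives a bound $C(1+\|u_0\|_{\beta p,Y}^\beta)k^\alpha$.

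\textbf{Second term.} Reprove Lemma \ref{lem:reduceUbarUtoUkUbar} for the tamed scheme. The Itô-type inequality (Lemma \ref{lem:ItoFormulaBarU-U}) needs global well-posedness, which holds by Theorem \ref{thm:aprioriY} under Assumption \ref{ass:coYq}($r$). Monotonicity, local Lipschitz continuity and the stochastic Gronwall argument are unchanged, since $\bar{U}$ involves the untamed $F^k,G^k$.

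The only change is the choice of Hölder exponents in \eqref{eq:differentBetaLater}. Take $\beta_2=1+\frac{\rho\nu}{\beta}$ and $\beta_1=1+\frac{\beta}{\rho\nu}$. Then the moment factor requires $\|U^k\|_{\beta_1\nu q,Y}$ and $\|\bar{U}\|_{\beta_1\nu q,Y}$, with
\begin{equation*}
\rho\beta_1\nu q=q(\rho\nu+\beta)=r.
\end{equation*}
These are controlled by Propositions \ref{prop:stabYtamed} and \ref{prop:stabUbarTamed} using tamed coercivity at level $r$. This reduces the second term to $\sup_t\|U^k(t)-\bar{U}(t)\|_{L^{r_2}(\Omega;X)}$ with $r_2=\beta_2 q=q(1+\frac{\rho\nu}{\beta})$.

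\textbf{Estimating the $r_2$-error.} Adapt Proposition \ref{prop:diffUkUbar}. Split into $\hat{U}-\bar{U}$ and $\hat{U}-U^k$.

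The part $\hat{U}-\bar{U}$ uses Proposition \ref{prop:UhatUbarErrorTamed} with $q\curve r_2$. This is exactly why consistency is assumed at level $q(1+\frac{\rho\nu}{\beta})$, and it needs $\beta r_2=r$ moments and tamed coercivity, both available.

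For $\hat{U}-U^k$:
\begin{itemize}
\item The semigroup term is bounded via Lemma \ref{lem:sgInterpolation} and stability.
\item For the $G$-term, use \eqref{eq:tamedNLsmaller}, polynomial growth and stability at level $\rho r_2\le r$ to get $k^{1/2}\le Ck^\alpha$.
\item The $F$-term is the one needing care. Here I would not use the growth bound $k^{1-\theta_F}$, which is weaker. Instead use \eqref{eq:tamedNLsmaller} plus polynomial growth and stability at level $\rho r_2$, which yields order $k$.
\end{itemize}

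\textbf{Main obstacle.} The main obstacle is bookkeeping that every invoked moment and coercivity level stays at most $r$. Since $\beta\ge\rho$, we have $\rho r_2\le\beta r_2=r$. Combining, the powers of $\|u_0\|$ are $\rho\nu+\beta=r/q$ after absorbing lower powers via Hölder, giving the claimed factor $1+\|u_0\|_{L^r(\Omega;Y)}^{r/q}$.
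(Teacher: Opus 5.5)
Your proposal is correct and follows the paper's proof essentially step for step: the same Hölder exponents $\beta_1=1+\frac{\beta}{\rho\nu}$ and $\beta_2=1+\frac{\rho\nu}{\beta}$, the same $r_2$, the same use of Propositions \ref{prop:stabYtamed}, \ref{prop:stabUbarTamed} and \ref{prop:UhatUbarErrorTamed} at level $r_2$, and the same bookkeeping $\beta r_2=\rho\beta_1\nu q=r$. Your treatment of the $F$-term via polynomial growth also works, as would the tamed growth bound, since $1-\theta_F\ge\alpha$.

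The only point to fix is the justification in the first term. Monotonicity of $L^s$-norms alone does not turn consistency at level $r_2$ into consistency at level $p$, because the right-hand side at level $r_2$ involves $\|\Phi\|_{L^{\beta r_2}(\Omega;Y)}$ rather than $\|\Phi\|_{L^{\beta p}(\Omega;Y)}$. Your claim is nevertheless true: testing the assumption with deterministic $\Phi\equiv x$ gives the pointwise bound $\|F_k(x)-F(x)\|_Y+\|G_k(x)-G(x)\|_{\LHY}\lesssim k^\alpha(1+\|x\|_Y^\beta)$, and integrating this yields consistency at every level. Alternatively, do as the paper does: bound the $L^p(\Omega)$-norm by the $L^{r_2}(\Omega)$-norm and reuse the level-$r_2$ estimate from Proposition \ref{prop:UhatUbarErrorTamed} that you already need in your third step.
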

\begin{proof}
    Following the same proof strategy as in Theorem \ref{thm:convergenceMain}, we comment on the changes required. We replace the definition of $r_1,r_2,\beta_1,\beta_2$ by
    \begin{equation*}
        r_1 \ce r=q(\beta+\rho\nu),\quad r_2\ce q\Big(1+\frac{\rho\nu}{\beta}\Big),\quad\beta_1\ce 1+\frac{\beta}{\rho\nu},\quad \beta_2 \ce 1+\frac{\rho\nu}{\beta}.
    \end{equation*}
    Note that $\beta_1$ and $\beta_2$ are Hölder conjugates. Lemma \ref{lem:reduceUbarUtoUkUbar} with the new Hölder parameters $\beta_1$ and $\beta_2$ in \eqref{eq:differentBetaLater} then results in estimating the moments of order $\beta_1\nu q= r_1/\rho$ and $\beta_2q=r_2$. Applying Propositions \ref{prop:stabYtamed} and \ref{prop:stabUbarTamed} with $q \curve r_1/\rho$ rather than Propositions \ref{prop:stabY} and \ref{prop:stabUbar} in the proof of Lemma \ref{lem:reduceUbarUtoUkUbar} gives a bound in terms of $\sup_{t\in [0,T]} \|U^k(t)-\bar{U}(t)\|_{r_2,X}$ with prefactor $(1+\|u_0\|_{r_1,Y}^{\rho \nu})$.
    
    The latter is estimated as in Proposition \ref{prop:diffUkUbar} but using Proposition \ref{prop:UhatUbarErrorTamed} with $q \curve r_2$ in lieu of Proposition \ref{prop:UhatUbarError}. It is applicable because $\beta r_2=r_1$ and Assumptions \ref{ass:consistencyTaming}($\alpha,\beta,r_2$), \ref{ass:growthBoundTaming}($\theta_F,\theta_G$), and \ref{ass:tamedCoercivity}($r_1$) hold, and $u_0\in L_{\F_0}^{r_1}(\Omega;Y)$. Since $r_2\le \rho r_2\le \beta r_2$, Assumption \ref{ass:tamedCoercivity}($\rho r_2$) and $u_0 \in L_{\F_0}^{\rho r_2}(\Omega;Y)$, which follow from the respective statements for $r\ge \rho r_2$, imply that the applications of Proposition \ref{prop:stabY} with $q \curve r_2$ and $q \curve \rho r_2$ can both be replaced by Proposition \ref{prop:stabYtamed} with the same parameters.

    In the proof of Theorem \ref{thm:convergenceMain}, rather than applying Proposition \ref{prop:UhatUbarError} with $q \curve p$, we use the embedding $L^{r_2}(\Omega)\hra L^p(\Omega)$ and then apply \ref{prop:UhatUbarErrorTamed} with $q \curve r_2$, which uses $\beta r_2=r$, Assumptions \ref{ass:consistencyTaming}($\alpha, \beta,r_2$) and \ref{ass:tamedCoercivity}($r$) as well as $u_0\in L_{\F_0}^r(\Omega;Y)$. Finally, Proposition \ref{prop:diffUkUbar} is applied with the changes discussed in the last paragraph. Combining the factors $(1+\|u_0\|_{r_1,Y}^{\rho \nu})$ resulting from Lemma \ref{lem:reduceUbarUtoUkUbar} and $(1+\|u_0\|_{r_1,Y}^{\beta})$ from Proposition \ref{prop:UhatUbarErrorTamed} yields the bound with $1+\|u_0\|_{r,Y}^{r/q}$ due to $r/q=r_1/q=\beta+\rho\nu$.
\end{proof}

\begin{remark}
\label{rem:tamingInTermsOfakbk}
    We comment on scalar tamings, which include the nonlinearities-stopped exponential Euler scheme as a special case. For scalar tamings given by $a_k,b_k:Y\to [0,1]$ as in \eqref{eq:scalarTaming}, Assumptions \ref{ass:tamedCoercivity}($q$), \ref{ass:growthBoundTaming}($\theta_F,\theta_G$), and \ref{ass:consistencyTaming}($\alpha,\beta,q$) simplify to $\theta_F+\theta_G \le \frac{1}{2}$ and
    \begin{align}
    \label{eq:tamedCoercivityakbk}
        a_k(x) \Re \la x,F(x) \ra_Y + \frac{q-1}{2}b_k(x)^2 \|G(x)\|_\LHY^2 &\le C_q(1+\|x\|_Y^2),\\
        a_k(x)\|F(x)\|_Y &\le C_{T,\theta} k^{-\theta_F}(1+\|x\|_Y),\nonumber\\ 
        b_k(x)\|G(x)\|_\LHY &\le C_{T,\theta} k^{-\theta_G}(1+\|x\|_Y),\nonumber\\
        \label{eq:tamedConsistencyakbk}
        \|(1-a_k(\Phi))F(\Phi)\|_{q,Y}+\nn (1-b_k(\Phi))G(\Phi)\nn_{q,Y} &\le C_{\alpha,\beta,q}k^\alpha \big(1+\|\Phi\|_{\beta q,Y}^\beta\big)
    \end{align}
    for all $x\in Y$, $k\in (0,T]$, and $\Phi \in L^{\beta q}(\Omega;Y)$. 
    The growth bound \eqref{eq:tamedNLsmaller} is automatically satisfied due to $a_k(x),b_k(x)\in[0,1]$. A useful sufficient condition for the tamed coercivity \eqref{eq:tamedCoercivityakbk} to hold is
    \begin{equation}
    \label{eq:tamedCoercivityakbk01}
        0\le b_k(x)^2 \le a_k(x)\le 1,\quad x\in Y
    \end{equation}
    together with coercivity of the untamed nonlinearities, Assumption \ref{ass:coYq}($q$). In particular, every common scalar taming with $[0,1]$-valued $a_k=b_k$ satisfies \eqref{eq:tamedCoercivityakbk01} and thus preserves coercivity. This also applies to the nonlinearities-stopped exponential Euler scheme as noted in Remark \ref{rem:coercivityFGtheta}. A sufficient condition for \eqref{eq:tamedConsistencyakbk}, the consistency of the taming, is given by the pointwise estimate
    \begin{equation}
    \label{eq:sufficientConsistencyakbk}
        (1-a_k(x))\|F(x)\|_Y+(1-b_k(x))\nn G(x)\nn_Y \le C_{\alpha,\beta} k^\alpha\big(1+\|x\|_Y^\beta\big),\quad x\in Y.
    \end{equation}
    For the nonlinearities-stopped exponential Euler scheme, cf.\ Definition \ref{def:Scheme}, Theorem \ref{thm:convergenceMainTamed} is applicable due to \eqref{eq:akbkNLstoppedExpEuler} with $\theta_F=\theta_G=\theta\in (0,\frac{1}{4}]$ and $\beta=\rho(1+\frac{\alpha}{\theta})$ and yields precisely the same bound as Theorem \ref{thm:convergenceMain} under the same regularity assumptions.
\end{remark}

With the convergence theorem for general tamings at hand, we can reconsider the nonlinearities-stopped exponential Euler and ask what the optimal way to stop the nonlinearities is for given $F$ and $G$ of different polynomial growth orders $\rho_F,\rho_G\ge 1$. Here, optimality concerns the required integrability of the initial values as well as the coercivity parameter, which e.g.\ influences the required smallness of the quadratic noise for the transport equation considered in Subsection \ref{subsec:transportExample}. One is tempted to stop each nonlinearity separately whenever its growth exceeds the respective growth bound $\theta_F$ or $\theta_G$, i.e.\ in case $\theta_F,\theta_G>0$ taking
\begin{equation*}
    F_k(x) = \1_{\{\|F(x)\|_Y\le k^{-\theta_F}\}}F(x),\quad G_k(x) = \1_{\{\|G(x)\|_\LHY\le k^{-\theta_G}\}}G(x)
\end{equation*}
and untamed $F_k=F$ or $G_k=G$ if the respective parameter vanishes. Subsequently, the choice of $\theta_F,\theta_G\in [0,\frac12]$ could be optimised. However, this taming may violate the tamed coercivity from Assumption \ref{ass:tamedCoercivity}($r$) since $b_k^2\le a_k$ may fail and thus \eqref{eq:tamedCoercivityakbk01} need not be satisfied. Hence, we consider a coercivity-preserving modification of this stopping, where $b_k$ also depends on $\theta_F$, and optimise the stopping parameters for it.

\begin{corollary}[Optimal stopping parameters]
\label{cor:optimalSeparateTaming}
    Let $p\in [2,\infty)$, $\lambda \in (0,1)$ and $q=\frac{p}{\lambda}$. Suppose that Assumption \ref{ass:locLipPolGro}($\alpha,\nu,\rho$) holds for some $\alpha\in (0,\frac{1}{2}]$ and $\nu,\rho\ge 1$ and that 
    \begin{align*}
        \|F(x)\|_Y &\le C_{\rho_F}\big(1+\|x\|_Y^{\rho_F}\big),\qquad
        \|G(x)\|_\LHY \le C_{\rho_G}\big(1+\|x\|_Y^{\rho_G}\big)
    \end{align*}
    for $\rho_F=\rho$ and some $\rho_G\in [1,\rho_F]$ and $C_{\rho_F},C_{\rho_G}\ge 0$ for all $x\in Y$. Let $(U^j)_{0\le j\le N}$ be the tamed exponential Euler scheme from Definition \ref{def:TamedScheme} with tamed nonlinearities
    \begin{align}
\label{eq:FkGkoptimalStopping}
        F_k(x) \ce \1_{\{\|F(x)\|_Y\le k^{-\theta_F}\}}F(x),\quad
        G_k(x)\ce \1_{\{\|F(x)\|_Y\le k^{-\theta_F},\ \|G(x)\|_\LHY\le k^{-\theta_G}\}}G(x)
    \end{align}
    with
    \begin{align*}
        \theta_F &\ce \frac{\alpha\rho_F}{\beta_*-\rho_F},\qquad
        \theta_G \ce \frac{\alpha\rho_G}{\beta_*-\rho_G},\\
        \beta_* &\ce \frac{1}{2}\Big((1+2\alpha)(\rho_F+\rho_G)+\sqrt{(1+2\alpha)^2(\rho_F+\rho_G)^2-4(1+4\alpha)\rho_F\rho_G}\Big).
    \end{align*} 
    Let $r\ce q(\beta_*+\rho\nu)$ and $u_0\in L_{\F_0}^r(\Omega;Y)$. Further, suppose that Assumptions \ref{ass:coYq}($r$) and \ref{ass:monqEta}($q,\eta$) hold for some $\eta>0$. 
    
    Then the tamed exponential Euler scheme $(U^j)_{0\le j\le N}$ converges at rate $\alpha$ to the solution $(U(t))_{t\in [0,T]}$ of \eqref{eq:SEEsemilinear} and there is a constant $C\ge 0$ depending on $(\alpha,T,p,\lambda,\nu,\rho_F,\rho_G,\eta)$ but independent of $k$, $N$, and $u_0$ such that 
     \begin{align*}
        \Big\|\max_{0\le j \le N} \|U^j-U(t_j)\|_X\Big\|_{L^p(\Omega)} \le \bigg\|\sup_{t\in[0,T]} \|\hat{U}(t)-U(t)\|_X\bigg\|_{L^p(\Omega)}
        &\le C\big(1+\|u_0\|_{L^r(\Omega;Y)}^{r/q} \big)k^\alpha.
    \end{align*}
    If $G$ is of linear growth on $Y$, the same claim holds for $G_k=G$, $F_k$ as in \eqref{eq:FkGkoptimalStopping}, $\theta_G=0$, and $\theta_F=\frac{1}{2}$ with $\beta_*=\rho_F(1+2\alpha)$. 
\end{corollary}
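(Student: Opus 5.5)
The plan is to deduce the corollary from Theorem \ref{thm:convergenceMainTamed}, using Remark \ref{rem:tamingInTermsOfakbk} for scalar tamings. Write the taming as $a_k(x)=\1_{\{\|F(x)\|_Y\le k^{-\theta_F}\}}$ and $b_k(x)=a_k(x)\1_{\{\|G(x)\|_\LHY\le k^{-\theta_G}\}}$. Then the four conditions of the theorem, with $\beta=\beta_*$, have to be checked.

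\emph{Tamed coercivity.} Since $b_k\le a_k$ and both take values in $\{0,1\}$, we have $b_k^2\le a_k$. Hence \eqref{eq:tamedCoercivityakbk01} holds, and together with Assumption \ref{ass:coYq}($r$) it gives Assumption \ref{ass:tamedCoercivity}($r$).

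\emph{Growth bound.} We have $a_k\|F\|_Y\le k^{-\theta_F}$ and $b_k\nn G\nn_Y\le k^{-\theta_G}$, which gives \eqref{eq:tamedGrowthBounds}. It remains to verify $\theta_F,\theta_G\in[0,\frac12]$ and $\theta_F+\theta_G\le\frac12$. For this, note that $\beta_*$ is the larger root of
\[
\beta^2-(1+2\alpha)(\rho_F+\rho_G)\beta+(1+4\alpha)\rho_F\rho_G=0 .
\]
A short computation shows that this quadratic is exactly the equation
\[
\frac{\alpha\rho_F}{\beta-\rho_F}+\frac{\alpha\rho_G}{\beta-\rho_G}=\frac12
\]
after clearing denominators. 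One checks that the larger root satisfies $\beta_*>\rho_F\ge\rho_G$ (evaluate the quadratic at $\beta=\rho_F$). Both $\theta$'s are therefore positive and sum to $\frac12$.

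\emph{Consistency.} I would use the pointwise criterion \eqref{eq:sufficientConsistencyakbk} instead of Markov's inequality. If $a_k(x)=0$, then $k^{-\theta_F}<\|F(x)\|_Y\le C(1+\|x\|_Y^{\rho_F})$, so $1\le k^{\alpha}C(1+\|x\|_Y^{\rho_F})^{\alpha/\theta_F}$. Multiplying by $\|F\|_Y+\nn G\nn_Y\le C(1+\|x\|_Y^{\rho_F})$ gives a bound of order $k^\alpha(1+\|x\|_Y^{\rho_F(1+\alpha/\theta_F)})$, and $\rho_F(1+\alpha/\theta_F)=\beta_*$ by the choice of $\theta_F$. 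If instead $a_k=1$ but $b_k=0$, then $\nn G\nn_Y>k^{-\theta_G}$, and multiplying $\nn G\nn_Y$ by $k^\alpha C(1+\|x\|^{\rho_G})^{\alpha/\theta_G}$ gives exponent $\rho_G(1+\alpha/\theta_G)=\beta_*$. Thus both cases give exponent $\beta_*$, which is precisely why $\beta_*$ was defined as a common value. Since $\beta_*\ge\rho$, Theorem \ref{thm:convergenceMainTamed} applies with $r=q(\beta_*+\rho\nu)$ and yields the claim.

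\emph{Linear growth of $G$.} For $G_k=G$ we take $b_k\equiv1$, so $b_k^2\le a_k$ fails. Instead I would check \eqref{eq:tamedCoercivityakbk} directly. Where $a_k=1$, it is the untamed coercivity. Where $a_k=0$, it reduces to $\frac{r-1}{2}\nn G(x)\nn_Y^2\le C(1+\|x\|_Y^2)$, which holds by linear growth. For the growth bound, $\theta_G=0$ and $\theta_F=\frac12$ give $\theta_F+\theta_G=\frac12$. For consistency, only the $F$ term contributes, with exponent $\rho_F(1+2\alpha)=\beta_*$.

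The main obstacle is the algebra confirming that $\beta_*$ simultaneously makes both consistency exponents equal and $\theta_F+\theta_G=\frac12$ (with $\beta_*>\rho_F$). Everything else is bookkeeping with the earlier results.
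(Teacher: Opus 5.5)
Your proposal is correct and follows the paper's proof essentially step by step: reduction to Theorem~\ref{thm:convergenceMainTamed} via Remark~\ref{rem:tamingInTermsOfakbk}, $b_k\le a_k$ for tamed coercivity, the quadratic giving $\theta_F+\theta_G=\frac12$, and the pointwise consistency bound, where your case $a_k=0$ plays the role of the paper's coupling exponent $\beta_{G,2}=\rho_G+\alpha\rho_F/\theta_F\le\beta_F$. Your explicit check $\beta_*>\rho_F$ is a detail the paper leaves implicit. Note that for $\rho_F=\rho_G$ the quadratic vanishes at $\rho_F$, so evaluating there only gives $\beta_*\ge\rho_F$; strictness follows because the other root is $(1+4\alpha)\rho_F>\rho_F$.
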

\begin{proof}
The claim follows from Theorem \ref{thm:convergenceMainTamed} provided that Assumptions \ref{ass:tamedCoercivity}($r$),  \ref{ass:growthBoundTaming}($\theta_F,\theta_G$) and \ref{ass:consistencyTaming}($\alpha,\beta_*,q(1+\frac{\rho \nu}{\beta_*})$) hold and $\beta_* \ge \rho=\rho_F$. By definition of $F_k$ and $G_k$, the indicators in \eqref{eq:FkGkoptimalStopping} satisfy $0\le b_k(x)\le a_k(x)\le 1$ and thus also \eqref{eq:tamedCoercivityakbk01}, which together with coercivity of $F$ and $G$ implies Assumption \ref{ass:tamedCoercivity} (see Remark \ref{rem:tamingInTermsOfakbk}).

Next, note that $\beta_*$ is the larger root of the quadratic equation
\begin{align}
\label{eq:optimalBetaQuadratic}
    \beta^2 -(1+2\alpha)(\rho_F+\rho_G)\beta +(1+4\alpha)\rho_F\rho_G=0.
\end{align}
Rearranging this directly gives $\theta_F+\theta_G=\frac{1}{2}$. The growth bounds \eqref{eq:tamedGrowthBounds} and \eqref{eq:tamedNLsmaller} are immediate from the definition of $a_k$ and $b_k$, which verifies Assumption \ref{ass:growthBoundTaming}.

Define
\begin{align*}
    \beta_F &\ce \rho_F\Big(1+\frac{\alpha}{\theta_F}\Big),\quad
    \beta_{G,1}\ce \rho_G\Big(1+\frac{\alpha}{\theta_G}\Big),\quad
    \beta_{G,2}\ce \rho_G+\alpha\frac{\rho_F}{\theta_F}.
\end{align*}
We verify Assumption \ref{ass:consistencyTaming}($\alpha,\beta_*,q(1+\frac{\rho\nu}{\beta_*})$) by checking the sufficient pointwise condition in \eqref{eq:sufficientConsistencyakbk}. If $\theta_F=0$, the difference vanishes. Using that $\1_{\{R>k^{-\theta}\}} \le k^\alpha R^{\alpha/\theta}$ for all $R\ge 0$ and $\theta>0$ as well as polynomial growth of $F$ gives 
\begin{align*}
    \|F_k(x)-F(x)\|_Y = \1_{\{\|F(x)\|_Y>k^{-\theta_F}\}}\|F(x)\|_Y \le k^\alpha\|F(x)\|_Y^{1+\alpha/\theta_F} \le C_{\alpha,\theta_F,\rho_F} k^\alpha\big(1+\|x\|_Y^{\beta_F}\big).
\end{align*}
For $G_k$, since $b_k$ is the indicator of the intersection of two events, we can bound $1-b_k$ by the sum of the respective indicators. Repeating the arguments thus results in
\begin{align*}
    \|G_k(x)-G(x)\|_\LHY
    &\le k^\alpha\big(\|G(x)\|_\LHY^{1+\alpha/\theta_G}+\|F(x)\|_Y^{\alpha/\theta_F}\|G(x)\|_\LHY\big)\\
    &\le C_{\alpha,\theta_F,\theta_G,\rho_F,\rho_G} k^\alpha\big(1+\|x\|_Y^{\beta_{G,1}}+\|x\|_Y^{\beta_{G,2}}\big).
\end{align*}
Since $\rho_G\le \rho_F$, also $\beta_{G,2}\le \rho_F+\alpha\frac{\rho_F}{\theta_F}=\beta_F$. Moreover, the choice of $\beta_*$ is such that $\beta_*=\beta_F=\beta_{G,1}$. Hence, pointwise the differences are bounded by $Ck^\alpha(1+\|x\|_Y^{\beta_*})$. 

Linear growth of $G$ means that \eqref{eq:tamedGrowthBounds} for $G_k=G$ is satisfied with $\theta_G=0$ and trivially \eqref{eq:tamedNLsmaller} holds. Moreover, the difference $G_k(\Phi)-G(\Phi)$ vanishes. Consequently, Assumptions \ref{ass:growthBoundTaming}($\frac12,0$) and \ref{ass:consistencyTaming}($\alpha,\beta,q(1+\frac{\rho\nu}{\beta_*})$) are satisfied by the above considerations on $F_k$. Due to linear growth of $G_k$, the tamed coercivity Assumption \ref{ass:tamedCoercivity}($r$) reduces to an assumption on $F_k$, which holds because of $a_k(x)\le 1$ and untamed coercivity.
Theorem \ref{thm:convergenceMainTamed} then yields both claims.
\end{proof}

Optimality of $\beta_*$ stems from $\beta_*=\beta_F=\beta_{G,1}$ resulting in minimal $r$. If $\rho_G>\rho_F$, the coupling term in $b_k$ dominates, i.e.\ $\beta_{G,2}>\beta_F$. The optimal parameters are then given by
\begin{equation*}
    \theta_F=\frac{\rho_F}{2(\rho_F+\rho_G)},\quad \theta_G=\frac{\rho_G}{2(\rho_F+\rho_G)},\quad \beta_*=\beta_{G,2}=\rho_G+2\alpha(\rho_F+\rho_G).
\end{equation*}
As a consequence of the optimal parameters for $\rho_F\ge \rho_G$, $\theta_F=\theta_G=\frac{1}{4}$ is optimal in terms of regularity for $\theta_F=\theta_G$.

Lastly, we analyse a scheme that tames the nonlinearities in a continuous manner rather than stopping discontinuously and comment on the limitation to rate $\frac12$.

\begin{example}[Fractional taming]
    \label{ex:fractionalTaming}
    In the context of weak convergence rates for the stochastic heat equation with polynomial $F$ and additive noise, Bréhier \cite{Brehier22tamedExpEuler} considered the taming corresponding to $a_k(x)\ce (1+k\|F(x)\|_X)^{-1}$. To adapt this to the hyperbolic setting, strong rates, and multiplicative superlinearly growing noise, we consider the \emph{fractionally tamed exponential Euler scheme} given by $F_k= a_k F$ and $G_k=b_kG$ with
    \begin{align}
    \label{eq:fractionalTaming}
        a_k(x)\ce b_k(x)\ce \frac{1}{1+k^\alpha R(x)^{4\alpha}},\quad R(x)\ce\|F(x)\|_Y+\|G(x)\|_\LHY.
    \end{align}
    
    Let $\alpha\in [\frac14,\frac12]$. Under the assumptions of Theorem \ref{thm:convergenceMainTamed} on the untamed nonlinearities, the semigroup, and $X,Y$ with $r=\rho q(1+\nu+4\alpha)$, the scheme converges at rate $\alpha$ in a pathwise uniform sense for all $u_0\in L^r(\Omega;Y)$. If $G$ is globally Lipschitz on $X$ and of linear growth on $Y$, the noise does not need to be tamed. Pathwise uniform convergence at rate $\alpha\in (0,\frac12]$ then also holds for the tamed scheme with $G_k\ce G$ and $a_k(x)\ce (1+k^\alpha\|F(x)\|_Y)^{-1}$ under the same conditions with $r=\rho q(2+\nu)$.
    
    Indeed, $a_k=b_k$ are $[0,1]$-valued, which ensures tamed coercivity as in Assumption \ref{ass:tamedCoercivity}($r$) via Remark \ref{rem:tamingInTermsOfakbk} in the case of \eqref{eq:fractionalTaming} or linear growth of $G$ on $Y$ else, and untamed coercivity. For $G$ and thus $G_k$ of linear growth on $Y$, $\|F_k(x)\|_Y\le k^{-\alpha}$ is immediate from the definition, so Assumption \ref{ass:growthBoundTaming}($\alpha,0$) holds. In the case of \eqref{eq:fractionalTaming}, Assumption \ref{ass:growthBoundTaming}($\frac14,\frac14$) holds due to $4\alpha\ge 1$ and thus
    \begin{align*}
        a_k(x)R(x)=b_k(x)R(x)=k^{-1/4}\frac{k^{1/4} R(x)}{1+(k^{1/4}R(x))^{4\alpha}} \le C_\alpha k^{-1/4}.
    \end{align*}
    Since for \eqref{eq:fractionalTaming},
    \begin{equation*}
        (1-a_k(x))\|F(x)\|_Y\le \frac{k^\alpha R(x)^{4\alpha}}{1+k^\alpha R(x)^{4\alpha}}R(x) \le k^\alpha R(x)^{1+4\alpha}\le C_{\alpha,\rho} k^\alpha \big(1+\|x\|_Y^{\rho(1+4\alpha)}\big)
    \end{equation*}
    and likewise for $(1-b_k(x))\|G(x)\|_\LHY$, 
    Assumption \ref{ass:consistencyTaming}($\alpha,\beta,q(1+\frac{\rho\nu}{\beta})$) holds with $\beta\ce \rho(1+4\alpha)$. This $\beta$ also satisfies $r=q(\beta+\rho\nu)=\rho q(1+\nu+4\alpha)$, so that Theorem \ref{thm:convergenceMainTamed} is applicable. In the global Lipschitz case, $G_k-G\equiv 0$ and
    \begin{align*}
        (1-a_k(x))\|F(x)\|_Y &= \frac{k^\alpha\|F(x)\|_Y}{1+k^\alpha\|F(x)\|_Y}\|F(x)\|_Y
        \le k^\alpha\|F(x)\|_Y^2
        \le C_\rho k^\alpha\big(1+\|x\|_Y^{2\rho}\big),
\end{align*}
    whence Assumption \ref{ass:consistencyTaming} holds with $\beta=2\rho$ and the claim follows from Theorem \ref{thm:convergenceMainTamed}.

    The restriction $\alpha\ge \frac14$ to higher rates in the case of \eqref{eq:fractionalTaming} is necessary to verify the growth bounds for the taming. For lower $\alpha\in (0,\frac14)$, the choice $a_k(x)=b_k(x)=(1+k^\alpha R(x))^{-1}$ results in convergence at rate $\alpha$ for $\theta_F=\theta_G=\alpha$, $\beta=2\rho$, and thus $r=\rho q(2+\nu)$. 
\end{example}

\begin{remark}[Higher-order convergence]
    For multiplicative noise, the rate $\frac12$ is the natural upper limit for time discretisations using only first-order Wiener increments due to Lévy's modulus of continuity (see e.g.\ \cite[Theorem 3]{Muller-Gronbach}). To obtain higher convergence rates, more information from the Brownian motion needs to be incorporated, such as iterated stochastic integrals in the exponential Milstein scheme \cite{JentzenRoeckner15_Milstein}. For hyperbolic SPDEs with globally Lipschitz nonlinearities and multiplicative noise, pathwise uniform convergence at rate $1$ was obtained in \cite{KastnerKlioba26Milstein} by Kastner and the author. It seems plausible that a combination of the additional Milstein terms with the taming mechanisms developed here could yield convergence rates up to $1$ also in the superlinear case under additional assumptions, but goes beyond the scope of this work.
\end{remark}

\section{Applications to hyperbolic SPDEs with polynomial nonlinearities}
\label{sec:applications}

In this section, we apply the convergence results from Sections \ref{sec:convergenceRate} and \ref{sec:generalTamings} to obtain convergence rates for different tamed exponential Euler schemes for four semilinear non-parabolic SPDEs with polynomial nonlinearities. First, the nonlinear stochastic transport equation is considered with an Allen--Cahn nonlinearity in Subsection \ref{subsec:transportExample}. The negative sign of the nonlinearity is leveraged to compensate for sufficiently small quadratic noise. Lower moment requirements on the initial values and lower admissible regularity of the noise covariance are achieved by leveraging fractional kernel estimates in order to take $Y=H^{1/2+\varepsilon}$ for small $\varepsilon>0$. Passing to second-order equations, a nonlinear Schrödinger equation with dissipative damping is analysed in Subsection \ref{subsec:dissipativeNLSExample} and the Klein--Gordon equation with damping in the velocity variable, a wave-type equation, in Subsection \ref{subsec:kleinGordon}. For both, optimal convergence at rate $\frac12$ is shown for nonlinearities-stopped and fractionally tamed exponential Euler schemes, complementing the convergence rate $1$ obtained for nonlinearly damped wave-type equations with additive noise in \cite{CaiCohenWang25,CuiHongSun25KleinGordon}. Lastly, Subsection \ref{subsec:AiryExample} presents the Airy equation as a third-order example to showcase the influence of higher-order polynomial nonlinearities as well as limitations to suboptimal convergence rates for higher-order equations in the current framework.

\subsection{The nonlinear stochastic transport equation}
\label{subsec:transportExample}

Consider the stochastic transport equation with an Allen--Cahn nonlinearity and small quadratic noise on the one-dimensional torus $\T\ce \R/(2\pi\Z)$ 
\begin{align}\tag{NLT}\label{eq:transport}
    \rmd U + c\grad U \dt = (U-U^3)\dt + \sigma |U|U \dWQ\text{ on }[0,T],\quad U(0)=u_0 \in L^2(\T),
\end{align}
where $c\in\R\setminus\{0\}$, $\sigma\in\R$ has sufficiently small modulus to be specified later, and $W_Q$ is a $Q$-Wiener process on $L^2(\T)$ with non-negative, self-adjoint covariance operator $Q$ of trace class. For Lévy noise, globally Lipschitz $F,G$, and a full discretisation via discontinuous Galerkin and a backwards Euler scheme, mean-square convergence at rate $\frac12$ was shown in \cite[Thm.~5.1]{BarthStein25}. Also for globally Lipschitz $F$ and $G$, pathwise uniform convergence at rate $1$ was obtained for the higher-order Milstein scheme in \cite{KastnerKlioba26Milstein}.

Let $\varepsilon\in (0,\frac12]$ and set $s\ce \frac12+\varepsilon$. We verify the conditions of \ref{thm:convergenceMain} for $X=H=L^2(\T)$, 
\begin{equation}
\label{eq:transportChoiceYFG}
    Y=H^s(\T),\quad F(u)=u-u^3,\quad\text{ and }G(u)=\sigma M_{u|u|}Q^{1/2},
\end{equation}
where $M_{u|u|}$ denotes the multiplication operator associated with $u|u|$, as well as $A=c\grad$ on $D(A)=H^1(\T)$. In this subsection only, denote by $L^p$, $p\in [2,\infty]$ and $H^s$ the corresponding Lebesgue and Bessel potential spaces over $\T$, respectively. We equip $Y=H^s$ with the norm
\begin{align}
\label{eq:defDsTransport}
	\|u\|_Y^2 \ce \|u\|_{L^2(\T)}^2+\|D^su\|_{L^2(\T)}^2,\quad D^s\ce (-\Delta)^{s/2}.
\end{align}
Then $-A$ generates the translation (semi-)group, which is unitary and thus contractive on $L^2$ and $H^s$~\cite[I.4.18]{EngelNagel}. Moreover, since $D(A)=H^1$, we have $Y=H^s=D(A^s) \hra D(A^{1/2})\hra D_A(\frac12,\infty)$, so that we can take $\alpha=\frac12$ in Assumption \ref{ass:locLipPolGro}.

Note that $s>\frac12$ implies that $Y=H^s$ is a Banach algebra as a consequence of the fractional Leibniz rule on the torus~\cite[Proposition~1]{BenyiOhZhao25} and $H^s\hra L^\infty$. The Banach algebra property directly gives polynomial growth of $F$ on $H^s$ with $\rho_F=3$. Let $(e_n)_{n\in\N}$ be an orthonormal basis of $L^2(\T)$ and denote by $C_s>0$ the Banach algebra constant. Then
\begin{align*}
	\nn G(u)\nn_Y^2 &= \sigma^2 \sum_{n\in\N}\|u|u|Q^{1/2}e_n\|_{H^s}^2 \le C_s^2 \sigma^2 \sum_{n\in\N}\|u|u|\|_{H^s}^2\|Q^{1/2}e_n\|_{H^s}^2
    \le C_s^4 \sigma^2 \nn Q^{1/2}\nn_{H^s}^2 \|u\|_{H^s}^4,
\end{align*}
that is, polynomial growth of $G$ on $H^s$ with $\rho_G=2$ provided that $Q^{1/2}\in \calL_2(L^2,H^s)$. The factorization $u^3-v^3=(u^2+uv+v^2)(u-v)$, Hölder's inequality, and the embedding $H^s\hra L^\infty$ imply local Lipschitz continuity of $F$ with $\nu=2$ via
\begin{align*}
	\|F(u)-F(v)\|_X\le \big(1+\|u^2+uv+v^2\|_{L^\infty}\big)\|u-v\|_{L^2} \le C_{H^s\hra L^\infty}^2\big(1+\|u\|_{H^s}^2+\|v\|_{H^s}^2\big)\|u-v\|_{L^2}
\end{align*}
for all $u,v\in H^s$.
Likewise, $G$ is locally Lipschitz due to
\begin{align*}
	\nn G(u)-G(v)\nn_X^2&\le \sigma^2 \sum_{n\in\N} \||u|+|v|\|_{L^\infty}^2\|u-v\|_{L^2}^2\|Q^{1/2}e_n\|_{L^\infty}^2\\
	& \le \sigma^2 C_{H^s\hra L^\infty}^2\nn Q^{1/2}\nn_{H^s}^2(\|u\|_{H^s}+\|v\|_{H^s})^2\|u-v\|_{L^2}^2.
\end{align*}
Consequently, Assumption \ref{ass:locLipPolGro}($\frac12,2,3$) is satisfied.

Next, we verify the monotonicity condition from Assumption \ref{ass:monqEta}($q,\eta$) for some $q>2$, $\eta>0$. To this end, we first observe that by the mean value theorem and the Cauchy--Schwarz inequality, one has
\begin{equation}
\label{eq:transport43}
    (a|a|-b|b|)^2=\Big(\int_a^b 2|t|\dt\Big)^2\le 4(b-a)\int_a^b t^2\dt= \frac{4}{3}(a-b)(a^3-b^3),\quad a,b \in \R.
\end{equation}
This allows us to bound
\begin{align*}
    \Re &\la u-v,F(u)-F(v)\ra_{L^2}+(1+\eta)\frac{q-1}{2}\nn G(u)-G(v)\nn_{L^2}^2\\
    &= \int_\T (u-v)^2-(u-v)(u^3-v^3)\dx + \sigma^2 (1+\eta)\frac{q-1}{2}\sum_{n\in\N}\|(u|u-v|v|)Q^{1/2}e_n\|_{L^2}^2\\
    &\le \|u-v\|_{L^2}^2-\Big(1- \frac{4}{3}\sigma^2Q_\infty^2 (1+\eta)\frac{q-1}{2}\Big)\int_\T(u-v)(u^3-v^3)\dx\le \|u-v\|_{L^2}^2
\end{align*}
with $Q_\infty^2\ce \sum_{n\in\N} \|Q^{1/2}e_n\|_{L^\infty}^2 \le C_s^2\nn Q^{1/2}\nn_{H^s}^2$ for all $s>\frac12$ provided that
\begin{equation}
\label{eq:transportSigmaMon}
    |\sigma|\le \sqrt{\frac{3}{2(1+\eta)(q-1)}}\frac{1}{Q_\infty},\quad Q_\infty = \Big(\sum_{n\in\N} \|Q^{1/2}e_n\|_{L^\infty}^2\Big)^{1/2}.
\end{equation}

Showing coercivity as in Assumption \ref{ass:coYq}($r$) for some $r>2$ is significantly more straightforward in $Y=H^1$ than for the fractional Bessel potential spaces $Y=H^s$ with $s\in (\frac12,1)$. However, taking $s$ closer to $\frac12$ allows us both to relax the required regularity of the initial values and the covariance operator. Hence, we include both cases, starting with $s=1$. Here, we can work with the usual Sobolev norm in $H^1$ and explicitly compute the scalar product
\begin{align}
\label{eq:transportFcoH1}
	\Re \la u,F(u)\ra_{H^1} &= \la u,u \ra_{H^1} - \la u,u^3\ra_{L^2} - \la \partial_x u, \partial_x(u^3) \ra_{L^2} = \|u\|_{H^1}^2 - \int u^4 \dx - 3\int (\partial_x u)^2u^2 \dx\nonumber\\
	& = \|u\|_{H^1}^2-\|u\|_{L^4}^4 - 3\|u\partial_x u\|_{L^2}^2.
\end{align}
Clearly, the sign of the nonlinearity is crucial here in order to obtain a bound in terms of $\|u\|_{H^1}^2$. Repeating the arguments for polynomial growth of $G$ and using that $\partial_x(u|u|)=2|u|\partial_xu$, we deduce
\begin{align*}
	\nn G(u)\nn_{H^1}^2 &\le C_1^2 \sigma^2 \nn Q^{1/2}\nn_{H^1}^2 \|u|u|\|_{H^1}^2 = C_1^2 \sigma^2 \nn Q^{1/2}\nn_{H^1}^2\big(4\|u\partial_x u\|_{L^2}^2+\|u\|_{L^4}^4\big).
\end{align*}
Taking this upper bound times $\frac{r-1}{2}$, adding it to \eqref{eq:transportFcoH1} and rearranging, we obtain that $r$-coercivity as in Assumption \ref{ass:coYq}($r$) for $Y=H^1$ is satisfied under the smallness condition
\begin{align*}
    |\sigma|\le \sqrt{\frac{3}{2C_1^2(r-1)}}\nn Q^{1/2}\nn_{H^1}^{-1}.
\end{align*}

Now we pass to verifying coercivity in $Y=H^s$ with $s\in (\frac12,1)$. This relies on pointwise representations of the fractional Laplacian on the torus with a positive kernel from \cite[Thm.~1.5]{RoncalStinga16}. Namely, for $0<\vartheta<2$ there is a positive, periodic, even
kernel $K_{\vartheta/2}$ with $K_{\vartheta/2}(x) \eqsim_\vartheta |x|^{-1-\vartheta}$ near $x=0$ such that
\begin{align}
\label{eq:transportKernelRepresentation}
    D^\vartheta f(x) &=\PV\int_{\T}\big(f(x)-f(y)\big)K_{\vartheta/2}(x-y)\dy
\end{align}
for smooth periodic $f$. With $D^s$ as in \eqref{eq:defDsTransport},
\begin{align*}
    \Re\la u,F(u)\ra_{H^s} = \|u\|_{H^s}^2 - \|u\|_{L^4}^4-\la D^s u, D^s (u^3)\ra_{L^2}.
\end{align*}
For the last term, self-adjointness of the fractional Laplacian, \eqref
{eq:transportKernelRepresentation}, and symmetrising the integral via symmetry of the kernel and its behaviour around $0$ give 
\begin{align*}
    \la D^s u, D^s (u^3)\ra_{L^2} &= \la D^{2s}u,u^3\ra_{L^2} = \int_\T \PV\int_\T (u(x)-u(y))K_s(x-y)\dy \,u^3(x)\dx\\
    &= \frac{1}{2} \int_\T \int_\T (u(x)-u(y)) \big(u^3(x)-u^3(y)\big) K_s(x-y)\dy \dx.
\end{align*}
The noise term is bounded by
\begin{align*}
    \nn G(u)\nn_{H^s}^2&\le C_s^2 \sigma^2 \nn Q^{1/2}\nn_{H^s}^2 \|u|u|\|_{H^s}^2 
    \le C_s^2 \sigma^2 \nn Q^{1/2}\nn_{H^s}^2 \big(\|u\|_{L^4}^4+\|D^s(u|u|)\|_{L^2}^2\big),
\end{align*}
where the kernel representation and \eqref{eq:transport43} for the last term give
\begin{align*}
    \|D^s(u|u|)\|_{L^2}^2&=\la D^{2s}(u|u|),u|u|\ra_{L^2}=\frac12 \int_\T\int_\T \big((u|u|)(x)-(u|u|)(y)\big)^2 K_s(x-y)\dy\dx\\
    &\le \frac{2}{3} \int_\T\int_\T (u(x)-u(y))(u(x)^3-u(y)^3) K_s(x-y)\dy\dx = \frac43 \la D^s u, D^s(u^3)\ra_{L^2}.
\end{align*}
Altogether,
\begin{align*}
    \Re \la u,F(u)\ra_{H^s}+\frac{r-1}{2}\nn G(u)\nn_{H^s}^2 &\le \|u\|_{H^s}^2 -\Big(1-\frac{r-1}{2}C_s^2\sigma^2\nn Q^{1/2}\nn_{H^s}^2\Big)\|u\|_{L^4}^4\\
    &\phantom{\le }-\Big(1-\frac{2(r-1)}{3}C_s^2\sigma^2\nn Q^{1/2}\nn_{H^s}^2\Big)\la D^su,D^s(u^3)\ra_{L^2}\le \|u\|_{H^s}^2
\end{align*}
under the condition $\sigma^2 \le \frac32 (r-1)^{-1}C_s^{-2}\nn Q^{1/2}\nn_{H^s}^{-2}$.

The identities and inequalities extend from smooth functions to all
$u\in H^s$ by density, since $H^s$ is a Banach
algebra and $u\mapsto u^3$ as well as $u\mapsto u|u|$ are continuous on $H^s$. As a consequence of Theorems \ref{thm:aprioriY} and \ref{thm:convergenceMain}, we thus obtain global well-posedness and pathwise uniform convergence of the nonlinearities-stopped exponential Euler scheme at rate $\frac12$. 

\begin{theorem}
\label{thm:transportNLstop}
    Let $\varepsilon\in(0,\frac{1}{2}]$, set $s=\frac{1}{2}+\varepsilon$, and suppose that $Q^{1/2}       \in\calL_2(L^2(\T),H^s(\T))$. 
    Further, let $c\in \R\setminus\{0\}$, $p\in[2,\infty)$, $\lambda\in(0,1)$, and $q=\frac{p}{\lambda}$, and set $r\ce 15q$. If
    \begin{align*}
        |\sigma|<\min\bigg\{ \sqrt{\frac{3}{2C_s^2(r-1)}}\| Q^{1/2}\|_{\calL_2(L^2,H^s)}^{-1}, \sqrt{\frac{3}{2(q-1)}}\frac{1}{Q_\infty}\bigg\}
    \end{align*}
    with $Q_\infty$ as in \eqref{eq:transportSigmaMon}.
    Then for all $u_0\in L_{\F_0}^r(\Omega;H^s(\T))$, \eqref{eq:transport} admits a unique global mild solution $U\in L^r(\Omega;C([0,T];H^s(\T)))$ and the nonlinearities-stopped exponential Euler scheme $(U^j)_{0\le j\le N}$ with $Y,F,G$ as in \eqref{eq:transportChoiceYFG} converges pathwise uniformly at rate $\frac{1}{2}$. Moreover, there is a constant $C\ge 0$ independent of $k$, $N$, and $u_0$ such that
    \begin{align*}
        \Big\|\max_{0\le j\le N} \|U^j-U(t_j)\|_{L^2(\T)}\Big\|_{L^p(\Omega)} \le C\big(1+\|u_0\|_{L^{15q}(\Omega;H^s(\T))}^{15}\big)k^{1/2}.
    \end{align*}
\end{theorem}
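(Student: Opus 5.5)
The plan is to deduce the statement from Theorem \ref{thm:aprioriY} and Theorem \ref{thm:convergenceMain}. We apply them with $X=H=L^2(\T)$, $Y=H^s(\T)$, $A=c\grad$ on $D(A)=H^1$, and $F$, $G$ as in \eqref{eq:transportChoiceYFG}. The noise $W_Q$ is replaced by $W_H$ and $G$ by $\sigma M_{u|u|}Q^{1/2}$, as explained in Section \ref{sec:stochInt}. The parameters are fixed as $\alpha=\frac12$, $\nu=2$, $\rho=3$, and stopping parameter $\theta=\frac14$. Then
\begin{equation*}
r=\rho q\Big(1+\nu+\frac{\alpha}{\theta}\Big)=3q(1+2+2)=15q,
\end{equation*}
which matches the statement. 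The exponent $r/q=15$ also matches the power of $\|u_0\|_{L^{15q}(\Omega;H^s)}$ in the error bound.

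The structural assumptions have already been checked in the discussion preceding the theorem. The translation group is unitary, hence contractive, on both $L^2$ and $H^s$. The embedding $H^s=D(A^s)\hra D(A^{1/2})\hra D_A(\frac12,\infty)$ holds since $s>\frac12$. Assumption \ref{ass:locLipPolGro}($\frac12,2,3$) follows from the Banach algebra property of $H^s$ and the embedding $H^s\hra L^\infty$, using $Q^{1/2}\in\calL_2(L^2,H^s)$. The growth exponent is $\rho=\max\{\rho_F,\rho_G\}=3$, and any constant term is absorbed into $1+\|x\|_Y^3$.

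Next come coercivity and monotonicity. For coercivity, Assumption \ref{ass:coYq}($r$) with $r=15q$ follows from the kernel computation, which gives
\begin{equation*}
\Re\la u,F(u)\ra_{H^s}+\frac{r-1}{2}\nn G(u)\nn_{H^s}^2\le \|u\|_{H^s}^2.
\end{equation*}
This needs $\sigma^2\le \frac32 (r-1)^{-1}C_s^{-2}\nn Q^{1/2}\nn_{H^s}^{-2}$, which is exactly the first bound in the hypothesis on $|\sigma|$. For monotonicity, Assumption \ref{ass:monqEta}($q,\eta$) requires \eqref{eq:transportSigmaMon}. Because the second bound on $|\sigma|$ is strict, we can choose $\eta>0$ small enough that
\begin{equation*}
|\sigma|\le \sqrt{\tfrac{3}{2(1+\eta)(q-1)}}\,Q_\infty^{-1}.
\end{equation*}
This strict inequality is what supplies the $\eta$ margin.

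The one point needing care is the density extension. The kernel identities for $D^{2s}$ were derived for smooth functions, so they must be passed to all $u\in H^s$. This uses continuity of $u\mapsto u^3$ and $u\mapsto u|u|$ on $H^s$, which holds since $H^s$ is a Banach algebra and these maps are locally Lipschitz into $H^s$ (for $u|u|$ this is the delicate part). We also need continuity of $u\mapsto\la D^su,D^s(u^3)\ra$ and lower semicontinuity of the double-integral bound, which can be handled through the identity $\|D^s(u|u|)\|^2=\la D^{2s}(u|u|),u|u|\ra$. If continuity of $u\mapsto u|u|$ in $H^s$ for $s$ near $1$ proves problematic, I would instead use the $H^1$ computation in the case $s=1$ and approximate via mollification, passing to the limit with Fatou's lemma.

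With all hypotheses in place, Theorem \ref{thm:aprioriY} gives global well-posedness. It applies with $q\curve r$ and some $\lambda$, using coercivity at a slightly larger parameter; alternatively, apply it with $q\curve r'<r$ and then upgrade. This yields the solution $U\in L^r(\Omega;C([0,T];H^s))$. If the theorem only gives $U\in L^{r'}(\Omega;C([0,T];H^s))$ for $r'<r$, the slack in the strict $\sigma$ bound lets us assume coercivity at some $\tilde r>r$. Theorem \ref{thm:convergenceMain} then gives the error estimate with the constant and exponent stated.
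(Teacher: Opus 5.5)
Your proposal is correct and follows the paper's route: you verify Assumptions \ref{ass:locLipPolGro}($\frac12,2,3$), \ref{ass:coYq}($15q$) and \ref{ass:monqEta}($q,\eta$) as in the preceding discussion, then invoke Theorems \ref{thm:aprioriY} and \ref{thm:convergenceMain} with $\theta=\frac14$, using the strict bounds on $|\sigma|$ both to supply $\eta>0$ and to get coercivity slightly above $r$, which yields $U\in L^r(\Omega;C([0,T];H^s))$. Two small corrections: the Roncal--Stinga kernel representation of $D^{2s}$ requires $0<2s<2$, so at the endpoint $s=1$ coercivity must come from the explicit $H^1$ computation (this is the main route there, not a fallback); and your worry about continuity of $u\mapsto u|u|$ on $H^s$ is unnecessary for $s\in(\frac12,1]$, because $x\mapsto x|x|$ has a Lipschitz derivative and this makes the map locally Lipschitz on $H^s$.
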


We observe that the parameter controlling the integrability of $u_0$ and the coercivity constant satisfies $r\ge 15q$ for any admissible $\theta\in (0,\frac{1}{4}]$ with $\theta=\frac14$ being optimal. In order to lower this parameter, we use different stopping indicators for $F$ and $G$ that leverage the different growth parameters $\rho_F=3$ and $\rho_G=2$ as in Corollary \ref{cor:optimalSeparateTaming}.

\begin{theorem}
    Suppose that the assumptions of Theorem \ref{thm:transportNLstop} hold for $r\ce (11+\sqrt{7})q$ and let $u_0\in L_{\F_0}^r(\Omega;H^s(\T))$. Consider the tamed exponential Euler scheme $(U^j)_{0\le j\le N}$ with
    \begin{align*}
        F_k(u)\ce \1_{\{\|F(u)\|_{H^s}\le k^{-\theta_F}\}}F(u),\quad G_k(u)\ce \1_{\{\|F(u)\|_{H^s}\le k^{-\theta_F}, \nn G(u)\nn_{H^s}\le k^{-\theta_G}\}}G(u)
    \end{align*}
    for $u\in H^s(\T)$, where $\theta_F\ce \frac12(\sqrt{7}-2)$ and $\theta_G\ce \frac12(3-\sqrt{7})$. Then for some constant $C\ge 0$ independent of $k$, $N$, and $u_0$,
    \begin{align*}
        \Big\|\max_{0\le j\le N}\|U^j-U(t_j)\|_{L^2(\T)}\Big\|_{L^p(\Omega)}
        &\le C\big(1+\|u_0\|_{L^{(11+\sqrt{7})q}(\Omega;H^s(\T))}^{11+\sqrt{7}}\big)k^{1/2}.
    \end{align*}
\end{theorem}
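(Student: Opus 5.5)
This is a direct application of Corollary \ref{cor:optimalSeparateTaming} with $\alpha=\frac12$, $\nu=2$, $\rho=\rho_F=3$ and $\rho_G=2$. The structural hypotheses have already been checked in this subsection for $X=H=L^2(\T)$, $Y=H^s(\T)$ and $F,G$ as in \eqref{eq:transportChoiceYFG}. These are Assumption \ref{ass:locLipPolGro}($\frac12,2,3$), the separate growth bounds with $\rho_F=3$ and $\rho_G=2\le\rho_F$, and the contractivity of the translation group on $L^2$ and $H^s$ together with $Y\hra D_A(\frac12,\infty)$. What remains is to compute $\beta_*$, $\theta_F$, $\theta_G$ and $r$, and then to check coercivity and monotonicity with the parameters the corollary requires.

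\emph{Step 1: the parameters.} With $\alpha=\frac12$ the quadratic equation \eqref{eq:optimalBetaQuadratic} reads $\beta^2-2\cdot 5\,\beta+3\cdot 6=0$, i.e.\ $\beta^2-10\beta+18=0$. Its larger root is $\beta_*=5+\sqrt7$. This gives
\begin{align*}
\theta_F=\frac{\frac12\cdot 3}{2+\sqrt7}=\frac{3(\sqrt7-2)}{2\cdot 3}=\frac{\sqrt7-2}{2},\qquad \theta_G=\frac{\frac12\cdot 2}{3+\sqrt7}=\frac{3-\sqrt7}{2},
\end{align*}
which are exactly the stopping parameters in the statement, and $\theta_F+\theta_G=\frac12$. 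The moment parameter is $r=q(\beta_*+\rho\nu)=q(5+\sqrt7+6)=(11+\sqrt7)q$, and the exponent $r/q=11+\sqrt7$ matches the claimed bound. I should also note that $\beta_*\ge\rho_F=3$ holds.

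\emph{Step 2: coercivity and monotonicity.} Assumption \ref{ass:coYq}($r$) in $Y=H^s$ was shown above under $\sigma^2\le\frac32(r-1)^{-1}C_s^{-2}\nn Q^{1/2}\nn_{H^s}^{-2}$. This is the first entry of the minimum in Theorem \ref{thm:transportNLstop}, now read with $r=(11+\sqrt7)q$. For Assumption \ref{ass:monqEta}($q,\eta$), the hypothesis is a strict inequality $|\sigma|<\sqrt{3/(2(q-1))}\,Q_\infty^{-1}$. I can therefore choose $\eta>0$ small enough that \eqref{eq:transportSigmaMon} holds. This is the only point needing a small argument, and it works because the inequality is strict.

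\emph{Step 3: conclusion.} Corollary \ref{cor:optimalSeparateTaming} then yields the error bound at the grid points with exponent $r/q=11+\sqrt7$. Global well-posedness of \eqref{eq:transport} comes from Theorem \ref{thm:aprioriY}, exactly as in Theorem \ref{thm:transportNLstop}. I expect no real obstacle beyond the arithmetic in Step 1 and the choice of $\eta$ in Step 2.
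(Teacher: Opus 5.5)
Your proposal is correct and follows the paper's own route: the theorem is obtained by applying Corollary~\ref{cor:optimalSeparateTaming} with $\alpha=\frac12$, $\nu=2$, $\rho=\rho_F=3$, $\rho_G=2$. This gives $\beta_*=5+\sqrt7$, the stated $\theta_F,\theta_G$, and $r=(11+\sqrt7)q$, with coercivity and monotonicity supplied by the smallness condition on $\sigma$. Your point that the strict inequality lets you choose $\eta>0$ small enough for \eqref{eq:transportSigmaMon} is exactly the step the paper leaves implicit.
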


Compared with the common stopping parameter $\theta=\theta_F=\theta_G=\frac14$ in Theorem \ref{thm:transportNLstop}, this reduces $r=15q$ to $r=(11+\sqrt{7})q\approx 13.6 q$. Moreover, the smallness condition on the noise originating from the coercivity condition is relaxed, while the one coming from the monotonicity condition remains unchanged. Convergence is also obtained for the fractionally tamed scheme from Example \ref{ex:fractionalTaming}, where the nonlinearities are tamed smoothly rather than stopped discontinuously.

\begin{theorem}
\label{thm:transportFractional}
    Let the assumptions of Theorem \ref{thm:transportNLstop} hold and consider the fractionally tamed exponential Euler scheme $(U^j)_{0\le j\le N}$ with $F_k\ce a_k F$ and $G_k\ce a_k G$ for
    \begin{align*}
        a_k(u)&\ce \frac{1}{1+k^{1/2}R(u)^2},\quad 
        &R(u)&\ce \|F(u)\|_{H^s}+\|G(u)\|_{\calL_2(L^2,H^s)}\quad (u\in H^s(\T)).
    \end{align*}
    Then the fractionally tamed exponential Euler scheme
    converges pathwise uniformly with rate $\frac12$. Moreover, for all $u_0\in L_{\F_0}^{15q}(\Omega;H^s(\T))$ there is a constant $C\ge 0$, independent of $k$, $N$, and
    $u_0$, such that
    \begin{align*}
        \Big\|\max_{0\le j\le N} \|U^j-U(t_j)\|_{L^2(\T)} \Big\|_{L^p(\Omega)}
        &\le C\big(1+\|u_0\|_{L^{15q}(\Omega;H^s(\T))}^{15}\big)k^{1/2}.
    \end{align*}
\end{theorem}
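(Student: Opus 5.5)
This is an application of Example \ref{ex:fractionalTaming} combined with Theorem \ref{thm:convergenceMainTamed}, with $\alpha=\frac12$, $\nu=2$ and $\rho=3$. With these values
\[
a_k(u)=\bigl(1+k^{1/2}R(u)^2\bigr)^{-1}=\bigl(1+k^\alpha R(u)^{4\alpha}\bigr)^{-1},
\]
so the scheme is exactly the fractionally tamed scheme \eqref{eq:fractionalTaming} with $b_k=a_k$. Example \ref{ex:fractionalTaming} then gives $\beta=\rho(1+4\alpha)=9$ and
\[
r=q(\beta+\rho\nu)=\rho q(1+\nu+4\alpha)=3q(1+2+2)=15q,
\]
which matches $r=15q$ in Theorem \ref{thm:transportNLstop}. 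The plan is to check the hypotheses of Theorem \ref{thm:convergenceMainTamed} in the form listed in Remark \ref{rem:tamingInTermsOfakbk}, reusing the verifications already made in this subsection.

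First, the structural hypotheses on the untamed problem. Earlier in this subsection we showed that, for $Y=H^s$ with $s=\frac12+\varepsilon$, Assumption \ref{ass:locLipPolGro}($\frac12,2,3$) holds: the translation group is contractive on $L^2$ and $H^s$, and $H^s\hookrightarrow D_A(\frac12,\infty)$. The monotonicity Assumption \ref{ass:monqEta}($q,\eta$) holds under \eqref{eq:transportSigmaMon}. Since the smallness condition on $\sigma$ in Theorem \ref{thm:transportNLstop} is strict, we can pick $\eta>0$ small enough that \eqref{eq:transportSigmaMon} still holds. The coercivity Assumption \ref{ass:coYq}($r$) with $r=15q$ follows from the kernel-representation computation in $H^s$ under $\sigma^2\le\frac32(r-1)^{-1}C_s^{-2}\nn Q^{1/2}\nn_{H^s}^{-2}$, which is the first entry of the minimum.

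Second, the taming-specific assumptions. Tamed coercivity, Assumption \ref{ass:tamedCoercivity}($r$), follows from \eqref{eq:tamedCoercivityakbk01}: since $b_k=a_k\in[0,1]$, we have $b_k^2\le a_k$, and untamed coercivity does the rest. For the growth bound Assumption \ref{ass:growthBoundTaming}($\frac14,\frac14$), note $4\alpha=2\ge1$, so the computation $a_k(x)R(x)\le C k^{-1/4}$ from Example \ref{ex:fractionalTaming} applies. This gives $\theta_F+\theta_G=\frac12$. The bound \eqref{eq:tamedNLsmaller} holds because $a_k\le1$. For consistency, Assumption \ref{ass:consistencyTaming}($\frac12,9,q(1+\frac{6}{9})$), it suffices to check the pointwise sufficient condition \eqref{eq:sufficientConsistencyakbk}:
\[
(1-a_k)R\le k^{1/2}R^3\le Ck^{1/2}\bigl(1+\|x\|_{H^s}^{9}\bigr),
\]
using polynomial growth with $\rho=3$. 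Finally, $\beta=9\ge\rho=3$.

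Theorem \ref{thm:convergenceMainTamed} then gives the estimate with exponent $r/q=\beta+\rho\nu=15$ on $\|u_0\|_{L^{15q}(\Omega;H^s)}$. Global well-posedness, which is needed to make sense of $U$, comes from Theorem \ref{thm:aprioriY}, because $15q$-coercivity exceeds $q$. The main point to watch is that the growth bound uses only the combined $R(u)$, with $a_k=b_k$, and not separate $\theta$'s. Beyond that the proof is bookkeeping, apart from noting that the strict inequality on $\sigma$ leaves room to choose $\eta>0$.
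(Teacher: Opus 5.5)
Your proposal is correct and follows the paper's route: the theorem is Example \ref{ex:fractionalTaming} with $\alpha=\frac12$, $\rho=3$, $\nu=2$ (so $\beta=9$, $\theta_F=\theta_G=\frac14$, $r=15q$), with the verifications from this subsection fed into Theorem \ref{thm:convergenceMainTamed}. One small imprecision: at the endpoint $s=1$, coercivity comes from the direct $H^1$ computation \eqref{eq:transportFcoH1}, not from the kernel representation \eqref{eq:transportKernelRepresentation}, which requires $2s<2$; both give the same smallness condition on $\sigma$.
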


\subsection{The nonlinear stochastic Schrödinger equation with dissipative damping}
\label{subsec:dissipativeNLSExample}

Consider the dissipatively damped stochastic nonlinear Schrödinger equation
\begin{align}\tag{DNLS}\label{eq:dissipativeNLS}
    \rmd U - i\Delta U \dt = -(\gamma+i\kappa)|U|^2U \dt + i\sigma U \dWQ\text{ on }[0,T],\quad U(0)=u_0 \in L^2(\T;\C)
\end{align}
on the one-dimensional torus $\T= \R/(2\pi\Z)$ for some $\gamma>0$, $\kappa,\sigma\in\R$, and a $Q$-Wiener process $(W_Q(t))_{t\ge 0}$ under further assumptions to be specified later. The dissipative damping is needed to ensure coercivity, which does not hold for the classical nonlinear Schrödinger equation with $F(u)=-i|u|^2u$. Hence, our analysis in this subsection is complimentary to the numerical analysis of the NLS equation in, for instance, \cite{deBouardDebussche06NLSconvOrder,ChenDangHong24,Cui25JDE}. The damped equation \eqref{eq:dissipativeNLS} can be recast in our framework via
\begin{align}
\label{eq:defYFGdissNLS}
    X &\ce L^2(\T;\C),\quad H\ce L^2(\T;\R),\quad Au\ce -i\Delta u,\quad u\in D(A)\ce H^2(\T;\C),\nonumber\\
    Y&\ce H^1(\T;\C),\quad F(v)\ce -(\gamma+i\kappa)|v|^2v,\quad G(v)\ce i\sigma M_v Q^{1/2}\quad (v\in Y).
\end{align}
The complex-valued spaces are regarded as real Hilbert spaces via the usual identification wherever necessary. In this subsection only, we abbreviate $L^p\ce L^p(\T;\C)$, $p\in [2,\infty]$, and $H^s \ce H^s(\T;\C)$, $s>0$. Fix an orthonormal basis $(e_n)_{n\in\N}$ of $L^2(\T;\R)$.

Since $-A$ generates the unitary Schrödinger group on both $X$ and $Y$ and $Y=D(A^{1/2})$, we verify Assumption \ref{ass:locLipPolGro} with $\alpha=\frac12$. To show local Lipschitz continuity, we observe that $G$ is linear and that $|N(w)-N(z)|\le C (|w|^2+|z|^2)|w-z|$ for $w,z\in \C$. Thus, repeating some of the arguments from the previous subsection such as $H^1\hra L^\infty$, we have
\begin{align*}
    \|F(u)-F(v)\|_X &\le C_{\gamma,\kappa}\big(\|u\|_{L^\infty}^2+\|v\|_{L^\infty}^2\big)\|u-v\|_{L^2} \le C_{\gamma,\kappa}\big(\|u\|_{H^1}^2+\|v\|_{H^1}^2\big)\|u-v\|_{L^2}\\
    \nn G(u)-G(v)\nn_X &\le |\sigma| Q_\infty \|u-v\|_{L^2},\qquad Q_\infty \ce \Big(\sum_{n\in\N} \|Q^{1/2}e_n\|_{L^\infty(\T;\R)}^2\Big)^{1/2}.
\end{align*}
The assumption $Q^{1/2}\in\calL_2(L^2(\T;\R),H^1(\T;\R))$ in particular ensures that $Q_\infty<\infty$. Hence, local Lipschitz continuity holds with $\nu=2$ for any $\gamma,\kappa,\sigma\in\R$ and $G$ is even globally Lipschitz continuous. Due to $\partial_x N(u) = 2|u|^2\partial_x u + u^2\overline{\partial_x u}$ and thus $\|\partial_x N(u)\|_{L^2} \le 3\|u\|_{L^\infty}^2\|\partial_x u\|_{L^2}$, $F$ is of cubic growth on $Y$. Moreover, since $H^1$ is a Banach algebra, 
\begin{equation*}
    \nn G(u)\nn_Y = |\sigma| \Big(\sum_{n\in\N} \|uQ^{1/2}e_n\|_{H^1}^2\Big)^{1/2} \le C |\sigma| \nn Q^{1/2}\nn_{H^1(\T;\R)} \|u\|_{H^1}.
\end{equation*}
Hence, Assumption \ref{ass:locLipPolGro}($\frac12,2,3$) is satisfied.

Since $G$ is globally Lipschitz on $X$, it suffices to verify the monotonicity condition for $F$. The real derivative of $N$ is given by $DN(\xi)\chi = 2|\xi|^2\chi+\xi^2\overline{\chi}$ for $\xi,\chi\in\C$. Hence, writing $\xi \overline{\chi}=re^{i\varphi}$ in polar coordinates,
\begin{equation*}
    \overline{\chi}DN(\xi)\chi = 2 |\xi|^2|\chi|^2+\overline{\chi}\xi^2\overline{\chi} = 2|\xi\overline{\chi}|^2 + (\xi\overline{\chi})^2 = 2r^2 + r^2 e^{2i\varphi} = r^2(2+\cos(2\varphi))+ir^2 \sin(2\varphi),
\end{equation*}
from which we conclude via the trigonometric inequality $\sqrt{3}|\sin \vartheta|\le 2+\cos\vartheta$ for all $\vartheta\in\R$ that
\begin{equation}
\label{eq:ImBoundNLS}
    \Re (\overline{\chi} DN(\xi)\chi)\ge 0,\quad |\Im (\overline{\chi} DN(\xi)\chi)| = r^2|\sin(2\varphi)|\le \frac{1}{\sqrt{3}} \Re (\overline{\chi} DN(\xi)\chi).
\end{equation}
Let $w,z\in\C$ and take $\chi=w-z$. The fundamental theorem of calculus then gives
\begin{equation*}
    \Re \big(\overline{w-z}(N(w)-N(z))\big) \ge 0,\quad \big|\Im \big(\overline{w-z}(N(w)-N(z))\big)\big| \le \frac{1}{\sqrt{3}} \Re \big(\overline{w-z}(N(w)-N(z))\big).
\end{equation*}
Hence, under the condition $\gamma\ge\frac{1}{\sqrt{3}}|\kappa|$, monotonicity as in Assumption \ref{ass:monqEta}($q,\eta$) for arbitrary $q\ge 2$, $\eta>0$ holds because of
\begin{equation*}
    \Re \la u-v,F(u)-F(v)\ra_X = \int_\T \big(-\gamma \Re \big(\overline{u-v}(N(u)-N(v))\big) +\kappa \Im \big(\overline{u-v}(N(u)-N(v))\big)\dx \le 0.
\end{equation*}
Since $G$ is of linear growth on $Y$, it suffices to verify the coercivity condition from Assumption \ref{ass:coYq} for $F$. By definition of $F$ and \eqref{eq:ImBoundNLS}, the leading term of the scalar product can be estimated by
\begin{align*}
    \Re \la \partial_x u, \partial_x F(u)\ra_{L^2} &= - \Re \int_\T (\gamma+i\kappa)\overline{\partial_x u} DN(u)\partial_x u \dx\\
    &= -\int_\T \big(\gamma \Re(\overline{\partial_x u} DN(u)\partial_x u) -\kappa \Im(\overline{\partial_x u} DN(u)\partial_x u)\big)\dx\\
    &\le  -\Big(\gamma-\frac{|\kappa|}{\sqrt{3}}\Big)\int_\T \Re\big(\overline{\partial_x u} DN(u)\partial_x u\big) \dx \le 0.
\end{align*}
Combined with
\begin{equation*}
    \Re \la u, F(u)\ra_{L^2} = - \Re \int_\T (\gamma+i\kappa)\overline{u} N(u) \dx
    = -\gamma \int_\T |u|^4\dx = -\gamma\|u\|_{L^4}^4\le 0,
\end{equation*}
this shows Assumption \ref{ass:coYq}($r$) for any $r\ge 2$, $\sigma\in\R$, and $Q^{1/2}\in\calL_2(L^2(\T;\R),H^1(\T;\R))$ under the condition that $\gamma\ge\frac{1}{\sqrt{3}}|\kappa|$.

\begin{theorem}
\label{thm:dissipativeNLSConvergenceStopNL}
    Let $p\in[2,\infty)$, $\lambda\in(0,1)$, $q=p/\lambda$, and $Y,F,G$ as in \eqref{eq:defYFGdissNLS}. Suppose that $\gamma>0$, $\kappa\in\R$ satisfy $\gamma\ge \frac{|\kappa|}{\sqrt{3}}$, $\sigma\in\R$, and $Q^{1/2}\in\calL_2(L^2(\T;\R),H^1(\T;\R))$. Then for all $u_0\in L_{\F_0}^{15q}(\Omega;H^1(\T;\C))$, \eqref{eq:dissipativeNLS} admits a unique global mild solution $U$ and the
    nonlinearities-stopped exponential Euler scheme $(U^j)_{0\le j\le N}$ converges pathwise uniformly at rate $\frac12$. That is, for some constant $C$ independent of $k$, $N$, and $u_0$,
    \begin{align*}
        \Big\|\max_{0\le j\le N} \|U^j-U(t_j)\|_{L^2(\T;\C)}\Big\|_{L^p(\Omega)}
        &\le C\big(1+\|u_0\|_{L^{15q}(\Omega;H^1(\T;\C))}^{15}\big)k^{1/2}.
    \end{align*}
    The same estimate holds if $(U^j)_{0\le j \le N}$ is the fractionally tamed exponential Euler scheme with $F_k\ce a_k F$, $G_k\ce a_kG$, and
    \begin{equation*}
        a_k(u)\ce \big(1+k^{1/2}(\|F(u)\|_{H^1(\T;\C)}+\|G(u)\|_{\calL_2(L^2(\T;\R),H^1(\T;\C))})^2\big)^{-1},\quad u\in H^1(\T;\C).
        \end{equation*}
    Moreover, for every $u_0\in L_{\F_0}^{12q}(\Omega;H^1(\T;\C))$, the tamed exponential Euler scheme $(V^j)_{0\le j \le N}$ given by \eqref{eq:defTamedSchemeUj} with $G_k\ce G$ and either $F_k(u)\ce \1_{\{\|F(u)\|_{H^1}\le k^{-1/2}\}}F(u)$ or $F_k(u)\ce (1+k^{1/2}\|F(u)\|_{H^1})^{-1}F(u)$ for $u\in H^1(\T;\C)$ converges pathwise uniformly at rate $\frac12$ and satisfy
      \begin{align*}
        \Big\|\max_{0\le j\le N} \|V^j-U(t_j)\|_{L^2(\T;\C)}\Big\|_{L^p(\Omega)}
        &\le C\big(1+\|u_0\|_{L^{12q}(\Omega;H^1(\T;\C))}^{12}\big)k^{1/2}.
    \end{align*}
\end{theorem}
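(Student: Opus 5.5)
The plan is to apply the general results with the data verified just before the statement. The preceding computations already give Assumption \ref{ass:locLipPolGro}($\frac12,2,3$), monotonicity Assumption \ref{ass:monqEta}($q,\eta$) for every $q\ge2$ and $\eta>0$ (using $\gamma\ge|\kappa|/\sqrt3$ and global Lipschitz continuity of $G$), and coercivity Assumption \ref{ass:coYq}($r$) for every $r\ge2$. The last point needs a small supplement: we only showed $\Re\la u,F(u)\ra_{H^1}\le0$. Since $\nn G(u)\nn_{H^1}^2\le C\sigma^2\nn Q^{1/2}\nn_{H^1}^2\|u\|_{H^1}^2$, the term $\frac{r-1}{2}\nn G(u)\nn_Y^2$ is bounded by $C_r(1+\|u\|_Y^2)$. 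Hence no smallness of $\sigma$ is required.

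\textbf{Global well-posedness.} This follows from Theorem \ref{thm:aprioriY} with any $\lambda$, because coercivity holds for all parameters. Everything must be read in the real Hilbert space setting of complex $L^2$, which the paper allows.

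\textbf{Nonlinearities-stopped scheme.} Apply Theorem \ref{thm:convergenceMain} with $\theta=\frac14$, $\alpha=\frac12$, $\nu=2$, $\rho=3$. This gives $r=\rho q(1+\nu+\alpha/\theta)=3q(1+2+2)=15q$ and the prefactor exponent $r/q=15$.

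\textbf{Fractional taming of both nonlinearities.} Use Example \ref{ex:fractionalTaming} with $\alpha=\frac12$, so that $k^\alpha R^{4\alpha}=k^{1/2}R^2$. It gives $r=\rho q(1+\nu+4\alpha)=15q$, and Theorem \ref{thm:convergenceMainTamed} yields the same bound.

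\textbf{Taming only $F$.} Here $G$ is globally Lipschitz on $X$ and of linear growth on $Y$, so the drift-only results apply.
\begin{itemize}
\item For the stopped version, use the last part of Corollary \ref{cor:optimalSeparateTaming} with $\theta_F=\frac12$, $\theta_G=0$ and $\beta_*=\rho_F(1+2\alpha)=6$. This gives $r=q(\beta_*+\rho\nu)=q(6+6)=12q$. One should check that the corollary still applies with $\rho_G=1\le\rho_F$.
\item For the fractional version $a_k=(1+k^{1/2}\|F\|_{H^1})^{-1}$, use the second claim of Example \ref{ex:fractionalTaming}. It gives $r=\rho q(2+\nu)=3q\cdot4=12q$.
\end{itemize}
In both cases the prefactor exponent is $r/q=12$.

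\textbf{Main obstacle.} No new analysis is needed; the work is bookkeeping. The delicate points are two. First, tamed coercivity Assumption \ref{ass:tamedCoercivity}($r$) must hold when only $F$ is tamed. It does, because $a_k\in[0,1]$ and $\Re\la u,F(u)\ra_Y\le0$, so the factor $a_k\Re\la u,F(u)\ra_Y\le0$ and the $G$-term is controlled by linear growth. Second, all cited results must be confirmed to hold in the complex-Hilbert-space framework with the real adjoint, which Lemma \ref{lem:ItoFormulaApprox} explicitly allows.
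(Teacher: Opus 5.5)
Your proposal is correct and matches the paper's proof: Theorem~\ref{thm:convergenceMain} with $\theta=\frac14$, $\alpha=\frac12$, $\nu=2$, $\rho=3$ gives $r=15q$, Example~\ref{ex:fractionalTaming} together with Theorem~\ref{thm:convergenceMainTamed} handles both fractional tamings, and the linear-growth case of Corollary~\ref{cor:optimalSeparateTaming} with $\theta_F=\frac12$, $\theta_G=0$, $\beta_*=6$ gives $r=12q$. Your remark that the $G$-term in the $r$-coercivity is absorbed by linear growth of $G$ on $Y$, so no smallness of $\sigma$ is needed, is the same reduction the paper makes when it checks coercivity only for $F$.
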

\begin{proof}
    Theorem \ref{thm:convergenceMain} applies with $\alpha=\frac{1}{2}$, $\nu=2$, $\rho=3$, and $\theta=\frac14$, which proves the first assertion. Linear growth of $G$ on $Y$ allows us to apply Corollary \ref{cor:optimalSeparateTaming} with $\theta_F=\frac12$ and $\theta_G=0$, resulting in the third claim with $r=q(\rho_F(1+2\alpha)+\rho\nu)=12q$. The second and the last assertion follow from Theorem \ref{thm:convergenceMainTamed} as outlined in Example \ref{ex:fractionalTaming}.
\end{proof}

If $\gamma >\frac{1}{\sqrt{3}}|\kappa|$, the remaining negative terms in the monotonicity and coercivity estimates can be used to compensate for small quadratic noise following the ideas from the previous subsection.

\subsection{The stochastic Klein--Gordon equation with nonlinear velocity damping}
\label{subsec:kleinGordon}

As a further second-order example, we consider a stochastic Klein--Gordon equation with nonlinear damping in the velocity variable. Let $\calO\seq \R$ be a bounded interval and set $\Lambda \ce I-\Delta$ on $D(\Lambda)=H^2(\calO)\cap H_0^1(\calO)$, where $\Delta$ denotes the Dirichlet Laplacian on $L^2(\calO)$. Consider
\begin{align}\label{eq:KleinGordon} \tag{dKG}
    \Bigg\{\begin{split}
        \rmd u&= v\dt,\quad u(0)=u_0\\
        \rmd v + \Lambda u \dt &= (v-v^3)\dt + \sigma v \dWQ,\quad v(0)=v_0,
    \end{split}
\end{align}
where $\sigma\in \R$ and $(W_Q(t))_{t\ge 0}$ is a $Q$-Wiener process. We work with the phase spaces $X\ce D(\Lambda^{1/2})\times L^2(\calO)=H_0^1(\calO)\times L^2(\calO)$ and $Y\ce D(\Lambda) \times D(\Lambda^{1/2})$ equipped with the norms defined by
\begin{equation*}
    \|(u,v)\|_X^2=\|\Lambda^{1/2}u\|_{L^2}^2+\|v\|_{L^2}^2,\quad \|(u,v)\|_Y^2=\|\Lambda u\|_{L^2}^2+\|\Lambda^{1/2}v\|_{L^2}^2
\end{equation*}
where here and in the following we abbreviate $L^p\ce L^p(\calO)$, $p\in [2,\infty]$, and $H^s\ce H^s(\calO)$, $s>0$. The norms are equivalent to the ones given by $(\|u\|_{H^1}^2+\|v\|_{L^2}^2)^{1/2}$ and $(\|u\|_{H^2}^2+\|v\|_{H^1}^2)^{1/2}$, respectively. The phase spaces are constructed precisely such that the operator $A$ given by $A(u,v)\ce (-v,\Lambda u)$ for $(u,v)\in Y$ has domain $D(A)=Y$. Since $A$ is skew-adjoint on $X$, Stone's theorem~\cite[II.3.24]{EngelNagel} yields that $-A$ generates a unitary group and thus, in particular, a contractive semigroup $(S(t))_{t\ge 0}$ on $X$. The semigroup leaves $Y=D(A)$ invariant and commutes with $A$, so that by definition of the $Y$-norm
\begin{align*}
    \|S(t)U\|_Y=\|AS(t)U\|_X=\|S(t)AU\|_X=\|AU\|_X=\|U\|_Y,
\end{align*}
i.e.\ $S$ is also contractive on $Y$. Due to $Y=D(A)\hra D_A(\frac12,\infty)$, we may take $\alpha=\frac12$.

Let $F(U)\ce (0,f(v))\ce (0,v-v^3)$ for $U=(u,v)$ and $G(U)\ce (0,g(v)) \ce (0,\sigma M_v Q^{1/2})$, and set $H\ce L^2(\calO)$. The estimates for the nonlinearities are obtained as in the preceding subsections, observing that boundedness of $\calO$ yields that $H^1\hra L^\infty$ and $H^1$ is a Banach algebra. Using the equivalent norms and reducing from the phase space to the velocity coordinate, we then deduce
\begin{align*}
    \|F(U)\|_Y=\|f(v)\|_{H^1}=\|v-v^3\|_{H^1}\le C(1+\|v\|_{H^1}^3)\le C(1+\|U\|_Y^3).
\end{align*}
The noise $G$ is linear and thus globally Lipschitz on $X$ for sufficiently smooth $Q^{1/2}$. Since $U=(u,v)\in Y$ implies $v\in H_0^1(\calO)$, the noise term $g(v)$ also satisfies Dirichlet boundary conditions. By the Banach algebra property, $G$ thus maps $Y$ into $\LHY$ and is of linear growth between these spaces.
In summary, Assumption \ref{ass:locLipPolGro}($\frac12,2,3$) holds provided that $Q^{1/2}\in \calL_2(L^2,H^1)$. Coercivity follows from the arguments in the previous subsections via
\begin{align*}
    \la U,F(U)\ra_Y &= \la \Lambda^{1/2} v, \Lambda^{1/2}f(v)\ra_{L^2} = \la v, (v-v^3)\ra_{L^2} + \la \partial_x v, \partial_x(v-v^3)\ra_{L^2}\\
    &\le \|v\|_{L^2}^2+\|\partial_x v\|_{L^2}^2 = \|\Lambda^{1/2}v\|_{L^2}^2 \le \|U\|_Y^2.
\end{align*}
Likewise, the monotonicity argument can be reduced to previous $L^2$-estimates.

\begin{theorem}
\label{thm:KleinGordonConvergence}
    Let $\calO\seq\R$ be a bounded interval, let $p\in[2,\infty)$, $\lambda\in(0,1)$, $q=p/\lambda$, $\sigma\in\R$, and $Y\ce (H^2(\calO)\cap H_0^1(\calO))\times H_0^1(\calO)$. 
    Suppose that $Q^{1/2}\in\calL_2(L^2(\calO),H^1(\calO))$ and set $F_k(U)\ce (0,(1+k^{1/2}\|v-v^3\|_{H^1})^{-1}(v-v^3))$ for $U=(u,v)\in Y$ and $G_k\ce G$. Then for all $U_0=(u_0,v_0)$ with $u_0\in L_{\F_0}^{12q}(\Omega;H^2(\calO)\cap H_0^1(\calO))$ and $v_0\in L_{\F_0}^{12q}(\Omega; H_0^1(\calO))$, \eqref{eq:KleinGordon} admits a unique global mild solution and the
    tamed exponential Euler scheme $(U^j)_{0\le j\le N}$ converges pathwise uniformly at rate $\frac12$ with 
    \begin{align*}
        \Big\|\max_{0\leq j\le N} \|U^j-U(t_j)\|_{H^1(\calO)\times L^2(\calO)}\Big\|_{L^p(\Omega)}
        &\le C\big(1+\|U_0\|_{L^{12q}(\Omega;Y)}^{12}\big)k^{1/2}
    \end{align*}
    for some $C\ge 0$ independent of $k$, $N$, and $U_0$. The same estimate holds if $(U^j)_j$ is the tamed exponential Euler scheme with $F_k(U)\ce \1_{\{\|v-v^3\|_{H^1}\le k^{-1/2}\}}(0,v-v^3)$ and $G_k\ce G$.
\end{theorem}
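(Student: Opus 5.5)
The plan is to reduce Theorem \ref{thm:KleinGordonConvergence} to Corollary \ref{cor:optimalSeparateTaming} (for the stopped drift) and to Example \ref{ex:fractionalTaming} in its globally Lipschitz form (for the taming $(1+k^{1/2}\|F\|_Y)^{-1}$). Both routes rest on Theorem \ref{thm:convergenceMainTamed} with $\alpha=\frac12$, $\nu=2$, $\rho=\rho_F=3$. Global well-posedness and the a priori bound come from Theorem \ref{thm:aprioriY}.

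\textbf{Step 1: structural assumptions.} Assumption \ref{ass:locLipPolGro}($\frac12,2,3$) has already been checked in the discussion preceding the theorem. Local Lipschitz continuity in $X$ needs only a remark. Since $F$ acts only on the velocity component and the $X$-norm contains $\|v\|_{L^2}$, the bound reduces to
\begin{equation*}
\|v^3-w^3\|_{L^2}\le C(\|v\|_{L^\infty}^2+\|w\|_{L^\infty}^2)\|v-w\|_{L^2},
\end{equation*}
which follows from $H^1\hra L^\infty$ on the bounded interval. $G$ is linear with $\nn G(U)-G(V)\nn_X\le|\sigma|Q_\infty\|v-w\|_{L^2}$. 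Moreover, $G$ has linear growth on $Y$, since $M_vQ^{1/2}e_n\in H_0^1$ by the Banach algebra property and $v\in H_0^1$. This puts us in the linear-growth case of Corollary \ref{cor:optimalSeparateTaming} with $\theta_G=0$, $\theta_F=\frac12$, $\beta_*=\rho_F(1+2\alpha)=6$. Hence $r=q(\beta_*+\rho\nu)=q(6+6)=12q$, matching the claimed moments.

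\textbf{Step 2: coercivity and monotonicity.} Coercivity Assumption \ref{ass:coYq}($r$) must hold for all $r$. The term $\la U,F(U)\ra_Y\le\|U\|_Y^2$ was computed above, using $\la\partial_xv,\partial_x(v^3)\ra=3\|v\partial_xv\|_{L^2}^2\ge0$. The noise term satisfies $\frac{r-1}{2}\nn G(U)\nn_Y^2\le C_r\sigma^2\nn Q^{1/2}\nn_{H^1}^2\|U\|_Y^2$, so no smallness of $\sigma$ is needed.

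For monotonicity Assumption \ref{ass:monqEta}($q,\eta$), the $X$-inner product of $F(U)-F(V)$ with $U-V$ equals
\begin{equation*}
\la v-w,(v-w)-(v^3-w^3)\ra_{L^2}\le\|v-w\|_{L^2}^2,
\end{equation*}
because $(a-b)(a^3-b^3)\ge0$. The $G$-term is bounded by $C\sigma^2Q_\infty^2\|v-w\|_{L^2}^2$ from global Lipschitz continuity. Both are bounded by $\|U-V\|_X^2$.

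\textbf{Step 3: apply the abstract results.} For the indicator taming, $F_k=\1_{\{\|F(U)\|_Y\le k^{-1/2}\}}F$ and $G_k=G$. Here $\|F(U)\|_Y=\|v-v^3\|_{H^1}$ under the chosen norms; I would note this up to the norm equivalence on $H^1_0$, or simply define the indicator with the $Y$-norm. The second part of Corollary \ref{cor:optimalSeparateTaming} then gives the estimate with $r=12q$.

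For the fractional taming $a_k(U)=(1+k^{1/2}\|F(U)\|_Y)^{-1}$, $G_k=G$, I would follow Example \ref{ex:fractionalTaming}:
\begin{itemize}
\item $\|F_k\|_Y\le k^{-1/2}$ gives Assumption \ref{ass:growthBoundTaming}($\frac12,0$);
\item consistency $(1-a_k)\|F\|_Y\le k^{1/2}\|F\|_Y^2\le Ck^{1/2}(1+\|U\|_Y^{6})$ gives $\beta=2\rho=6$;
\item tamed coercivity holds since $a_k\in[0,1]$ and $G$ is untamed.
\end{itemize}
Theorem \ref{thm:convergenceMainTamed} then yields $r=q(6+6)=12q$ and the stated bound with exponent $r/q=12$.

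\textbf{Expected obstacle.} I expect the main difficulty to be bookkeeping rather than new analysis. The delicate points are checking that $G$ really maps $Y$ into $\calL_2(H,Y)$ (Dirichlet traces of $vQ^{1/2}e_n$) and that the taming threshold uses a norm equivalent to the $Y$-norm, so that Assumption \ref{ass:growthBoundTaming} holds with an adjusted constant.
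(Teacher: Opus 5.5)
Your proposal is correct and follows the paper's proof. Both use Theorem \ref{thm:aprioriY} for global well-posedness and Theorem \ref{thm:convergenceMainTamed} for the rate, with $\alpha=\frac12$, $\nu=2$, $\rho=3$, $\theta_F=\frac12$, $\theta_G=0$, $\beta=6$, $r=12q$. Your routing of the stopped drift through the linear-growth clause of Corollary \ref{cor:optimalSeparateTaming}, and of the fractional taming through Example \ref{ex:fractionalTaming}, is just that theorem applied with the same data.

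The norm-equivalence worry you flag is unnecessary. Since $\|\Lambda^{1/2}w\|_{L^2}^2=\|w\|_{L^2}^2+\|\partial_x w\|_{L^2}^2$ for $w\in H_0^1(\calO)$, one has $\|F(U)\|_Y=\|v-v^3\|_{H^1}$ exactly in the standard $H^1$-norm.
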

\begin{proof}
    This is a consequence of Theorems \ref{thm:aprioriY} and \ref{thm:convergenceMainTamed} as well as Example \ref{ex:fractionalTaming} with parameters $\alpha=\frac12$, $\nu=2$, $\rho=3$, $\theta_F=\frac12$, $\theta_G=0$, $\beta=6$, and $r=(\beta+\rho\nu)q=12q$.
\end{proof}

As for the dissipative NLS equation, analogous results hold for the nonlinearities-stopped exponential Euler scheme, i.e.\ stopping both $F$ and $G$ rather than just the drift, under the stronger moment condition $U_0\in L_{\F_0}^{15q}(\Omega;Y)$. Via the arguments from Subsection \ref{subsec:transportExample}, also quadratic noise of the form $G(U)=g(v)=\sigma M_{v|v|}Q^{1/2}$ could be dealt with under a smallness condition on $|\sigma|$.

In the additive noise case, convergence rates for full discretisations of a closely related nonlinearly damped stochastic wave equation in dimensions $d\in \{1,2\}$ were studied in \cite{CaiCohenWang25}. For a modified implicit exponential Euler method combined with a spectral Galerkin approximation in space, they obtain mean-square convergence rate $1$ in time and $\frac12$ in space w.r.t.\ the $X$-norm in dimension $1$, cf.\ \cite[Thms.~4,~5]{CaiCohenWang25}. Theorem \ref{thm:KleinGordonConvergence} is the counterpart of the temporal result in $d=1$ for multiplicative noise, where the natural limitation to rate $\frac12$ occurs. Moreover, we obtain error estimates for the stronger pathwise uniform error for a class of explicit schemes. Structure-preserving semi-implicit SAV schemes for stochastic Klein--Gordon equations with nonlinearities depending only on the displacement variable and additive noise were shown to converge at rate $1$ in a mean-square sense in \cite[Thm.~5.1]{CuiHongSun25KleinGordon}. 

\subsection{The nonlinear stochastic Airy equation}
\label{subsec:AiryExample}

Lastly, we investigate a higher-order example with a polynomial nonlinearity of arbitrary odd order. The Airy equation arises from the Korteweg--de Vries equation by linearisation and the leading operator is of third order. As illustrated below, the latter leads to suboptimal rates in our current setting. Coercivity as in Assumption \ref{ass:coYq} can only be verified on $H^1$, since in higher-order Sobolev or Bessel potential spaces, superlinear terms without a negative sign arise from the product rule. As a third-order equation, $D(A)=H^3$ and thus $Y=H^1=D(A^{1/3})$, which limits the rate to $\alpha\le \frac{1}{3}$. Achieving the optimal rate $\alpha=\frac12$ for multiplicative noise would require a generalised coercivity assumption, which is left for future work.

Consider the real-valued Airy equation with a generalised Allen--Cahn nonlinearity
\begin{align}\tag{NAIR}
\label{eq:airy}
    \rmd u+\kappa\partial_x^3u \dt
    &=(u-u^{2m+1})\dt +\sigma u\dWQ\text{ on }[0,T], \quad u(0)=u_0,
\end{align}
where $\kappa\in\R\setminus\{0\}$, $m\in\N$, and $(W_Q(t))_{t\ge 0}$ is a $Q$-Wiener process on $L^2(\T)$. Abbreviate $L^p\ce L^p(\T)$, $p\in [2,\infty]$, and $H^1\ce H^1(\T)$ in this subsection.
We consider $X=L^2$,
\begin{equation}
\label{eq:YFGAiry}
    Y=H^1,\quad F(u)=u-u^{2m+1},\quad G(u)=\sigma M_u Q^{1/2}\quad (u\in Y).
\end{equation}

\begin{theorem}
\label{thm:airyConvergence}
    Let $p\in[2,\infty)$, $\lambda\in(0,1)$, $q=p/\lambda$, $\kappa\in\R\setminus\{0\}$, $m\in \N$, and set $r\ce(2m+1)(2m+\frac{7}{3})q$. 
    Suppose that $Q^{1/2}\in\calL_2(L^2,H^1)$. Then for all $u_0\in L_{\F_0}^r(\Omega;H^1(\T))$, \eqref{eq:airy} admits a unique global mild solution. Moreover, the
    nonlinearities-stopped exponential Euler scheme $(U^j)_{0\le j\le N}$ with $Y,F,G$ as in \eqref{eq:YFGAiry} has pathwise uniform convergence rate $\frac13$ and satisfies
    \begin{align*}
        \Big\|\max_{0\leq j\le N} \|U^j-U(t_j)\|_{L^2(\T)}\Big\|_{L^p(\Omega)}
        &\le C\big(1+\|u_0\|_{L^r(\Omega;H^1(\T))}^{r/q}\big)k^{1/3}
    \end{align*}
    for some $C\ge 0$ independent of $k$, $N$, and $u_0$. If $(U^j)_j$ is the tamed exponential Euler scheme with $F_k(u)\ce \1_{\{\|F(u)\|_{H^1}\le k^{-1/2}\}}F(u)$ and $G_k\ce G$, the estimate holds under the assumptions with $r\ce(2m+1)(2m+\frac{5}{3})q$.
\end{theorem}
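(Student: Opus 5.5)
The plan is to verify the hypotheses of Theorem \ref{thm:convergenceMain} with $\alpha=\frac13$, $\nu=2m$, $\rho=2m+1$, $\theta=\frac14$, and then read off $r$. With these parameters,
\[
r=\rho q\big(1+\nu+\tfrac{\alpha}{\theta}\big)=(2m+1)\big(2m+\tfrac73\big)q,
\]
which matches the statement. Global well-posedness then follows from Theorem \ref{thm:aprioriY}, applied with the same $r$-coercivity. For the drift-only stopping I will use the linear-growth case of Corollary \ref{cor:optimalSeparateTaming}, with $\theta_F=\frac12$, $\theta_G=0$ and $\beta_*=\rho_F(1+2\alpha)=\frac53(2m+1)$. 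This gives
\[
r=q(\beta_*+\rho\nu)=(2m+1)\big(2m+\tfrac53\big)q,
\]
again as claimed.

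\emph{Semigroup setting.} The operator $A=\kappa\partial_x^3$ on $D(A)=H^3(\T)$ is skew-adjoint on $L^2$, since it is a Fourier multiplier $i\kappa n^3$ (up to sign). By Stone's theorem $-A$ generates a unitary group on $X=L^2$. This group commutes with $\partial_x$ and with $I-\Delta$, so it is also unitary on $Y=H^1$. For the interpolation embedding, I will use Fourier multipliers: $D(A^{1/3})$, equipped with the graph norm of $(I+|A|)^{1/3}$, coincides with $H^1$, and $D(A^{1/3})\hra D_A(\frac13,\infty)$ as recalled in Section \ref{sec:preliminaries}. This gives $\alpha=\frac13$. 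Identifying $Y$ with $D(A^{1/3})$ is the only point needing care, because $A$ is not sectorial. I would handle it directly via the multiplier $(1+|\kappa n^3|)^{1/3}\eqsim(1+n^2)^{1/2}$, or alternatively by estimating $\|S(t)-S(s)\|_{\calL(H^1,L^2)}$ directly from $|e^{i\kappa n^3t}-1|\le\min\{2,|\kappa n^3 t|\}\lesssim t^{1/3}|n|$.

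\emph{Assumption \ref{ass:locLipPolGro}($\frac13,2m,2m+1$).} I will use that $H^1(\T)$ is a Banach algebra embedded in $L^\infty$. Polynomial growth $\|F(u)\|_{H^1}\lesssim 1+\|u\|_{H^1}^{2m+1}$ follows directly. For $G$, the Banach algebra property gives the bound
\[
\nn G(u)\nn_{H^1}\lesssim|\sigma|\,\nn Q^{1/2}\nn_{\calL_2(L^2,H^1)}\|u\|_{H^1},
\]
which is linear growth. Local Lipschitz continuity comes from the factorisation $u^{2m+1}-v^{2m+1}=(u-v)\sum_{j=0}^{2m}u^jv^{2m-j}$ combined with $H^1\hra L^\infty$, yielding the weight $1+\|u\|_{H^1}^{2m}+\|v\|_{H^1}^{2m}$. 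The map $G$ is linear, with $\nn G(u)-G(v)\nn_{L^2}\le|\sigma|Q_\infty\|u-v\|_{L^2}$.

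\emph{Monotonicity.} Since $t\mapsto t^{2m+1}$ is increasing, $(a-b)(a^{2m+1}-b^{2m+1})\ge0$. Combining this with global Lipschitz continuity of $G$ gives Assumption \ref{ass:monqEta}($q,\eta$) for every $q,\eta$ and every $\sigma$, with constant $1+(1+\eta)\frac{q-1}{2}\sigma^2Q_\infty^2$.

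\emph{Coercivity in $H^1$.} Integrating by parts on $\T$ gives
\[
\la u,F(u)\ra_{H^1}=\|u\|_{H^1}^2-\|u\|_{L^{2m+2}}^{2m+2}-(2m+1)\int_\T u^{2m}(\partial_xu)^2\dx\le\|u\|_{H^1}^2 .
\]
Adding $\frac{r-1}{2}\nn G(u)\nn_{H^1}^2\lesssim_r\sigma^2\|u\|_{H^1}^2$ yields Assumption \ref{ass:coYq}($r$) for every $r$, with no smallness condition on $\sigma$. I would verify this identity on smooth functions and extend it by density.

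With all hypotheses checked, Theorem \ref{thm:convergenceMain} gives the rate $\frac13$ with prefactor $1+\|u_0\|_{L^r(\Omega;H^1)}^{r/q}$, and Corollary \ref{cor:optimalSeparateTaming} gives the drift-stopped variant.
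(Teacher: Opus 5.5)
Your proposal is correct and follows the paper's proof essentially step by step: the same parameters $\alpha=\frac13$, $\nu=2m$, $\rho=2m+1$, $\theta=\frac14$ in Theorem~\ref{thm:convergenceMain}, Stone's theorem on $L^2$ and $H^1$, the Banach-algebra and factorisation arguments for growth, local Lipschitz continuity and monotonicity, and the same $H^1$ coercivity identity, with global well-posedness from Theorem~\ref{thm:aprioriY}. For the drift-only stopping you invoke the linear-growth case of Corollary~\ref{cor:optimalSeparateTaming}, whereas the paper cites Theorem~\ref{thm:convergenceMainTamed} together with Example~\ref{ex:fractionalTaming}; this is the same mechanism, and it is exactly how the paper argues in the dissipative NLS case.
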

\begin{proof}
    The well-posedness follows from Theorem \ref{thm:aprioriY} and the convergence claims from Theorems \ref{thm:convergenceMain} and \ref{thm:convergenceMainTamed} together with Example \ref{ex:fractionalTaming} with $\alpha=\frac{1}{3}$, $\nu=2m$, $\rho=2m+1$, and $\theta=\frac14$ or $\theta_F=\frac12$ and $\theta_G=0$, respectively. Regarding their applicability, we only comment on the parts which are novel compared to the previous subsections.

    The operator $-A$ given by $Au=\kappa \partial_x^3 u$ for $u\in D(A)=H^3(\T)$ generates a unitary group on both $L^2$ and $H^1$ by Stone's theorem~\cite[II.3.24]{EngelNagel}. Hence, $Y\hra D(A^{1/3})\hra D_A(\frac13,\infty)$, meaning that we can take $\alpha=\frac13$. For local Lipschitz continuity, we observe that the factorisation
    \begin{equation*}
        v^{2m+1}-u^{2m+1}=-(u-v)(u^{2m}+u^{2m-1}v+\ldots+v^{2m})
    \end{equation*}
    combined with Hölder's inequality and the embedding $H^1\hra L^\infty$ as before results in $\nu=2m$. The same factorisation implies monotonicity via $\la u-v,v^{2m+1}-u^{2m+1}\ra_{L^2}\le 0$ and linearity of the remaining terms. Polynomial growth of $F$ with $\rho=2m+1$ is a consequence of the Banach algebra property of $Y=H^1$. Lastly, coercivity holds for any $r\ge 2$ by linear growth of $G$ on $Y$ and, for $u\in H^1$,
    \begin{align*}
        \la u,F(u)\ra_{H^1}&=\la u,u\ra_{H^1}-\la u,u^{2m+1}\ra_{L^2}-\la \partial_xu,(2m+1)u^{2m}\partial_x u\ra_{L^2}\\
        &=\|u\|_{H^1}^2-\|u\|_{L^{2m+2}}^{2m+2}-(2m+1)\|u^{m}\partial_x u\|_{L^2}^2 \le \|u\|_{H^1}^2. \qedhere
    \end{align*}
\end{proof}

Similar results can be obtained for fractional tamings for the Airy equation as in Theorem \ref{thm:transportFractional}.
For the cubic Allen--Cahn nonlinearity, Theorem \ref{thm:airyConvergence} gives $r=13q$ and $r=11q$ for the nonlinearities-stopped exponential Euler scheme and the tamed version stopping only $F$, respectively. In the case of a quintic nonlinearity, one has $r=\frac{95}{3}q$ and $r=\frac{85}{3}q$, respectively. Higher-order nonlinearities thus translate to higher moment assumptions on the initial values and, in the case of quadratic noise as in Subsection \ref{subsec:transportExample}, a more restrictive smallness condition. 

\begin{remark}
    Motivated by the rate restriction $\alpha\le \frac13$ arising from the choice $Y=H^1$, one is tempted to choose a more regular space like $Y=H^2$ to reach the optimal rate $\frac12$. However, already for the cubic Allen--Cahn nonlinearity, integration by parts for the leading term of the $H^2$-scalar product gives
    \begin{align*}
        \la \partial_x^2 u,\partial_x^2 F(u)\ra_{L^2} &= \la \partial_x^2 u,\partial_x (\partial_x u-3u^2\partial_xu)\ra_{L^2} = \la \partial_x^2 u,\partial_x^2 u-6u(\partial_x u)^2-3u^2\partial_x^2u\ra_{L^2}\\
        &= \|\partial_x^2 u\|_{L^2}^2+2\|\partial_x u\|_{L^4}^4-3\|u\partial_x^2u\|_{L^2}^2.
    \end{align*}
    The quartic growth and the positive sign of the second term do not yield a bound in terms of $C(1+\|u\|_{H^2}^2)$, so coercivity in $H^2$ seems out of reach. This may be circumvented via a weaker coercivity condition, which is left for future work. 
\end{remark}

\def\polhk#1{\setbox0=\hbox{#1}{\ooalign{\hidewidth
			\lower1.5ex\hbox{`}\hidewidth\crcr\unhbox0}}} \def\cprime{$'$}

\end{document}